\DeclareSymbolFontAlphabet{\mathbb}{AMSb} %to ensure that the meaning of \mathbb does not change
\DeclareSymbolFontAlphabet{\mathbbl}{bbold}
\numberwithin{equation}{section} 
\newtheorem{thm}[equation]{Theorem}
\newtheorem{prop}[equation]{Proposition}
\newtheorem{lemma}[equation]{Lemma}
\newtheorem{cor}[equation]{Corollary}
\theoremstyle{definition}
\newtheorem{defn}[equation]{Definition}
\newtheorem{constr}[equation]{Construction}
\newtheorem{example}[equation]{Example}
\theoremstyle{remark}
\newtheorem{rmk}[equation]{Remark}
\newcommand{\F}{\mathbb F}
\newcommand{\Z}{\mathbb Z}
\newcommand{\Spec}{\operatorname{Spec}}
\newcommand{\G}{\mathbb G}
\renewcommand{\P}{\mathbb P}
\newcommand{\C}{\mathbb C}
\newcommand{\mc}[1]{\mathcal{#1}}
\newcommand{\cl}{\overline}
\newcommand{\set}[1]{\{#1\}}
\newcommand{\on}[1]{\operatorname{#1}}
\newcommand{\ang}[1]{\left \langle{#1}\right \rangle}
\DeclareMathAlphabet{\mathpzc}{OT1}{pzc}{m}{it}
\newcommand{\sm}{{\mathrm{sm}}}
\DeclareMathOperator{\Aff}{Aff}
\DeclareMathOperator{\Tot}{\mathrm{Tot}}
\newcommand{\op}{{\mathrm{op}}}
\newcommand{\ra}{{\rightarrow}}
\newcommand{\Mod}{\mathrm{Mod}}
\newcommand{\DMod}[1]{D(\Mod_{#1})}
\newcommand{\Hdg}{{\mathrm{H}}}
\newcommand{\RG}{R\Gamma}
\newcommand{\mbb}{\mathbb}
\newcommand{\mf}{\mathfrak}
\newcommand{\dR}{{\mathrm{dR}}}
\newcommand{\tr}{{\mathrm{tr}}}
\newcommand{\mr}{\mathrm}
\newcommand{\gr}{{\mathrm{gr}}}
\newcommand{\tto}{\xymatrix{\ar[r]&}}
\newcommand{\ism}{\xymatrix{\ar[r]^\sim&}}
\title[Eilenberg-Moore spectral sequence and Hodge cohomology]{Eilenberg-Moore spectral sequence and Hodge cohomology of classifying stacks}
\author{Dmitry Kubrak}
\address{Max Planck Institute\\
	Vivatgasse 7\\
	Bonn, 53111\\ Germany}
\email{dmkubrak@gmail.com} 
\author{Federico Scavia}
\address{Department of Mathematics\\
	%7917 Mathematical Sciences Building\\
	University of California\\
	Los Angeles, CA 90095-1555 \\ USA}
\email{scavia@math.ucla.edu}
\subjclass[2020]{14L30, 20G15, 13A18, 18G40, 57T35}   
\begin{document}

	\begin{abstract}
		Let $G$ be a smooth connected reductive group over a field $k$ and $\Gamma$ be a central subgroup of $G$. We construct Eilenberg-Moore-type spectral sequences converging to the Hodge and de Rham cohomology of $B(G/\Gamma)$. As an application, building upon %the 
  work of Toda %\cite{toda1987cohomology} 
  and using Totaro's inequality, we show that for all $m\geq 0$ the Hodge and de Rham cohomology algebras of the classifying stacks $B\mr{PGL}_{4m+2}$ and $B\mr{PSO}_{4m+2}$ over $\F_2$ are isomorphic to the singular  $\F_2$-cohomology of the classifying space of the corresponding Lie group. From this we obtain a full description of %higher cohomology 
  $H^{>0}(\mr{GL}_{4m+2}, \on{Sym}^j(\mf{pgl}_{4m+2}^\vee))$ and $H^{>0}(\mr{SO}_{4m+2}, \on{Sym}^j(\mf{pso}_{4m+2}^\vee))$ over $\mbb F_2$.% as a corollary. 
	\end{abstract}

 	\maketitle

{
  \hypersetup{linkcolor=black}
  \tableofcontents
}
	
	\section{Introduction}
	
	Let $p$ be a prime number, and let $G$ be a split reductive group over $\Z$. We denote by $BG$ the  classifying stack of $G$, and by $BG(\C)$ the classifying space of the topological Lie group $G(\C)$. The computation of the mod $p$ singular cohomology ring $H^*_{\on{sing}}(BG(\C);\F_p)$, or equivalently the determination of mod $p$ characteristic classes of principal $G(\C)$-bundles, is one of the most classical problems in algebraic topology, with contributions from a long list of illustrious authors. 
	
	Recently, in \cite{totaro2018hodge}, B. Totaro initiated the study of Hodge cohomology $H^*_{\on{H}}(BG/\F_p)$ and de Rham cohomology $H^*_{\on{dR}}(BG/\F_p)$ of the classifying stack $BG_{\F_p}$. Similarly to the topological situation, one can think of elements of these rings as Hodge and de Rham characteristic classes for $G_{\mbb F_p}$-torsors. However, as Totaro showed, $H^*_{\on{H}}(BG/\F_p)$ also has a purely representation-theoretic interpretation in terms of rational cohomology of the algebraic group $G_{\mbb F_p}$ with coefficients $\on{Sym}^i\mathfrak g^\vee$, where $\mathfrak g$ is the adjoint representation of $G$. In \cite[Theorem 9.2]{totaro2018hodge}, he established a general result, stating that if $p$ is not a torsion prime\footnote{A prime $p$ is called torsion if there is non-trivial $p$-torsion $H^*_{\on{sing}}(G(\C);\Z)$. For any given $G$ there are only finitely many torsion primes, and there also is a simple recipe to find them all (see \cite[Example 6.1.5]{KubrakPrikhodko_HdR}).} for $G$ then Hodge and de Rham cohomology of $BG_{\F_p}$ are in fact isomorphic to the mod $p$ singular cohomology of $BG(\C)$. 	The subtlety of the situation, however, is that there is no natural map between Hodge (or de Rham) and singular cohomology: the above isomorphisms are constructed by explicitly computing and comparing the two sides.
	
	Totaro also investigated what happens at torsion primes in some particular examples. For $p=2$ and $G=\mr{SO}_n$ he constructed isomorphisms of graded rings
	\begin{equation}\label{totaro-so}H^*_{\on{H}}(B\mr{SO}_n/\F_2)\simeq H^*_{\on{dR}}(B\mr{SO}_n/\F_2)\simeq H^*_{\on{sing}}(B\mr{SO}_n(\C);\F_2).\end{equation}
  On the other hand, he computed that %for $p=2$ and $G=\mr{Spin}_{11}$ 
	\[\dim_{\F_2} H^{32}_{\on{dR}}(B\mr{Spin}_{11}/\F_2)> \dim_{\F_2} H^{32}_{\on{sing}}(B\mr{Spin}_{11}(\C);\F_2),\]
	showing that Hodge and de Rham cohomology of $BG_{\F_p}$ are not isomorphic to mod $p$ singular cohomology of $BG(\C)$ in general, even as graded vector spaces. Some further calculations of $H^*_{\on{H}}(BG/\F_2)$ and $H^*_{\on{dR}}(BG/\F_2)$ have %also 
 been performed %later 
 by E. Primozic \cite{primozic2019computations} for $G=G_2$ and $G=\on{Spin}_n$ for $n\leq 11$. 
	
	A general statement which holds even for torsion primes is the inequality of dimensions. First of all, the existence of the Hodge-to-de Rham spectral sequence implies the inequality $$\dim_{\mbb F_p} H^i_\Hdg(BG/\mbb F_p) \ge \dim_{\mbb F_p} H^i_\dR(BG/\mbb F_p).$$ Moreover, as conjectured by Totaro and recently proved by A.~Prikhodko and the first author in \cite{kubrak2021p-adic}, one also has an inequality
	$$\dim_{\mbb F_p} H^i_\dR(BG/\mbb F_p)\ge \dim_{\mbb F_p} H^i_{\mr{sing}}(BG(\C);\mbb F_p).$$ Therefore%, we have
	\begin{equation}\label{eq:inequalities}
		\dim_{\mbb F_p} H^i_\Hdg(BG/\mbb F_p) \ge   \dim_{\mbb F_p} H^i_\dR(BG/\mbb F_p)\ge \dim_{\mbb F_p} H^i_{\mr{sing}}(BG(\C);\mbb F_p).
	\end{equation}
	We will refer to (\ref{eq:inequalities}) as Totaro's inequality.
	
	\subsection*{Main results}
	
	The computations of Totaro and Primozic are based on a version of the Hoch\-schild--Serre spectral sequence in Hodge cohomology (see \cite[Proposition 9.3]{totaro2018hodge}). In this paper we attempt to compute the Hodge and de Rham cohomology of the classifying stacks of classical simple adjoint groups  $\mr{PGL}_n$, $\mr{PSp}_n$, $\mr{PSO}_n$ over $\F_2$ by using the \textit{Eilenberg-Moore} spectral sequence instead. The Hodge cohomology in these situations can also be reinterpreted in terms of cohomology of the classical groups $\mr{GL}_n$, $\mr{Sp}_n$, $\mr{SO}_n$, but with coefficients in  modules that are slightly more complicated than $\mr{Sym}^j \mf g^\vee$ (see \Cref{sec-reptheory} for more details).
	
	Let us describe our setup. Let $G$ be a split connected reductive group over a field $k$, let $\Gamma\subset G$ be a central subgroup (so $\Gamma$ is of multiplicative type) and consider the quotient $\overline G\coloneqq G/\Gamma$. For example, we could take $G=\mr{GL}_n$ and $\Gamma=\mbb G_m$, in which case we get $\overline{G}=\mr{PGL}_n$. The multiplication map $\Gamma \times G\to G$ defines an action $B\Gamma\times BG\to BG$ of the group stack $B\Gamma$ and so induces a coaction of the Hopf algebra $H^*_\Hdg(B\Gamma/k)$ on $H^*_\Hdg(BG/k)$. We also get a similar structure for de Rham cohomology. This coaction can be used to give a first approximation to $H^*_\Hdg(B\overline{G}/k)$, as our first general result shows.
	\begin{thm}[Eilenberg-Moore spectral sequence]\label{eilenberg-moore-easy}
		Let $k$ be a field, and consider a short exact sequence of linear algebraic $k$-groups 
		\begin{equation*}\label{central-seq}
			1\to \Gamma\to G\to \cl{G}\to 1.
		\end{equation*}
		where $G$ and $\cl{G}$ are smooth and $\Gamma$ is a central subgroup of multiplicative type. Then we have two (cohomological) first-quadrant convergent spectral sequences
		\begin{align*}
			E_2^{i,j}\coloneqq& \left(\on{Cotor}^i_{H^*_{\on{H}}(B\Gamma/k)}(k, H^{*}_{\on{H}}(BG/k))\right)^j\Rightarrow H^{i+j}_{\on{H}}(B\cl{G}/k), \\
			E_2^{i,j}\coloneqq& \left(\on{Cotor}^i_{H^*_{\on{dR}}(B\Gamma/k)}(k, H^*_{\on{dR}}(BG/k))\right)^j\Rightarrow H^{i+j}_{\on{dR}}(B\cl{G}/k).
		\end{align*}
	\end{thm}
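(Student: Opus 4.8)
The plan is to realize the Eilenberg--Moore spectral sequence as the spectral sequence of a cosimplicial (or totalization) resolution, exactly as in the classical topological setting, but carried out inside the derived category of $k$-modules for Hodge and de Rham cohomology. The starting point is the fiber sequence of stacks
\[
BG \longrightarrow B\cl{G} \longrightarrow B^2\Gamma,
\]
coming from the short exact sequence $1\to\Gamma\to G\to\cl G\to 1$ together with the fact that $\Gamma$ is central, so that $B^2\Gamma \coloneqq B(B\Gamma)$ makes sense as a group stack. Equivalently, $BG$ is the fiber of $B\cl G \to B^2\Gamma$ over the base point, so $BG \simeq B\cl G \times_{B^2\Gamma} \Spec k$. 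First I would check that Hodge cohomology (and de Rham cohomology) sends this pullback square to a pushout of commutative algebras in $\DMod{k}$, i.e.\ that there is a base-change/Künneth equivalence
\[
\RG_{\Hdg}(BG/k) \simeq \RG_{\Hdg}(B\cl G/k) \otimes_{\RG_{\Hdg}(B^2\Gamma/k)} k.
\]
This is the key input and, I expect, the main obstacle: one needs a Künneth formula for Hodge cohomology of a fiber product of stacks over a base stack, which is not formal and will require flatness/finiteness hypotheses — here supplied by $\Gamma$ being of multiplicative type (so $B\Gamma$, and hence $B^2\Gamma$, has very controlled, polynomial-type Hodge cohomology) and by $G$, $\cl G$ being smooth. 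I would prove it by descent along the $\check{\mathrm{C}}$ech nerve of $\Spec k \to B^2\Gamma$, whose $n$-th term is $(B\Gamma)^{\times n}$, reducing to the ordinary Künneth formula for Hodge cohomology of products, which is available because $\RG_\Hdg(B\Gamma/k)$ is a perfect (indeed finite flat in each degree) $k$-complex.

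Granting the base-change equivalence, the second step is to produce the spectral sequence from it. Resolving $k$ as a module over $A\coloneqq \RG_\Hdg(B^2\Gamma/k)$ by the cobar construction gives a cosimplicial object whose totalization computes $\RG_\Hdg(BG/k) = k\otimes^{\mathbb L}_A \RG_\Hdg(B\cl G/k)$ wait — I have the roles reversed; more precisely I want to compute $\RG_\Hdg(B\cl G/k)$, so I run the argument the other way: $B\cl G \simeq \Spec k \times_{B^2\Gamma} BG$ is false, rather $BG \simeq \Spec k \times_{B^2\Gamma} B\cl G$, and Hodge cohomology turns this into $\RG_\Hdg(BG/k)\simeq k\otimes^{\mathbb L}_A \RG_\Hdg(B\cl G/k)$. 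To invert this and get at $\RG_\Hdg(B\cl G/k)$ one instead uses the bar resolution of $\RG_\Hdg(B\cl G/k)$ as a comodule: writing $C\coloneqq H^*_\Hdg(B\Gamma/k)$ as a Hopf algebra (it is graded-commutative and graded-cocommutative, being the Hodge cohomology of a commutative group stack), the coaction $B\Gamma\times BG\to BG$ makes $H^*_\Hdg(BG/k)$ a $C$-comodule, and the descent/cobar spectral sequence for the cosimplicial object $[n]\mapsto C^{\otimes n}\otimes H^*_\Hdg(BG/k)$ has $E_2$-page the derived cotensor product, i.e.\ $\on{Cotor}_{C}(k, H^*_\Hdg(BG/k))$, converging to $H^*_\Hdg(B\cl G/k)$. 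Concretely: apply $\RG_\Hdg(-/k)$ to the $\check{\mathrm C}$ech nerve of $BG \to B\cl G$, which is the simplicial stack $[n]\mapsto (B\Gamma)^{\times n}\times BG$ (using centrality to identify the fiber products); this yields a cosimplicial $E_\infty$-$k$-algebra whose totalization is $\RG_\Hdg(B\cl G/k)$ by cohomological descent for the fppf (in fact smooth) cover $BG\to B\cl G$, and the Bousfield--Kan spectral sequence of this totalization, after identifying cohomology of the cosimplicial cochain complex with the cobar complex of the comodule, is the asserted Eilenberg--Moore spectral sequence.

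The remaining points are bookkeeping. Convergence and first-quadrant-ness follow because each term $(B\Gamma)^{\times n}\times BG$ is a stack with cohomologically bounded-below, connective Hodge cohomology concentrated in nonnegative degrees, and the cotensor grading contributes the horizontal index $i$ while the internal cohomological grading contributes $j$; conditional convergence upgrades to strong convergence in each bidegree because $\on{Cotor}^i$ vanishes for $i<0$ and, in each fixed total degree $i+j = n$, only finitely many $(i,j)$ contribute (here one uses that $H^*_\Hdg(BG/k)$ is bounded below and that $C$ is connected, $C^0 = k$, so $\on{Cotor}^i$ in internal degree $j$ requires $i\le j$ roughly). The multiplicative structure of the spectral sequence comes for free from the fact that the cosimplicial object is one of $E_\infty$-algebras. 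The de Rham statement is proved by the identical argument, replacing $\RG_\Hdg$ by $\RG_\dR$ throughout; the only thing to recheck is that $\RG_\dR(B\Gamma/k)$ is again a perfect $k$-complex and a Hopf algebra object, which holds since $\Gamma$ is of multiplicative type, and that de Rham cohomology satisfies the same Künneth and descent properties (it does, being the cohomology of the de Rham complex of a stack, which is lax symmetric monoidal and satisfies fppf descent). I would organize the writeup so that the Künneth/base-change lemma for Hodge and de Rham cohomology of stacks is isolated as its own lemma, then invoke it twice, and handle convergence as a short separate paragraph.
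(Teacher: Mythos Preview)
Your final approach---apply $\RG_{\Hdg}$ (resp.\ $\RG_{\dR}$) to the \v{C}ech nerve of $BG\to B\cl G$, identify the terms with $(B\Gamma)^{\times n}\times BG$ using centrality, invoke K\"unneth to recognize the cobar complex, and take the descent/Bousfield--Kan spectral sequence---is exactly the paper's strategy. The initial detour through $B^2\Gamma$ is irrelevant (you correctly abandon it).

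There is, however, a genuine gap at the descent step. You write that $BG\to B\cl G$ is an ``fppf (in fact smooth) cover''. It is fppf, but it is \emph{not} smooth in general: the fiber is $B\Gamma$, and $\Gamma$ of multiplicative type need not be smooth---$\mu_p$ in characteristic $p$ is the case that actually matters for the paper's applications to $\mr{PSp}_{4m+2}$ and $\mr{PSO}_{4m+2}$. The paper's definition of Hodge cohomology is via the sheaf $\Omega^j$ on the big \'etale site, and the reduction to simplicial schemes (\cite[p.~1577]{totaro2018hodge}) uses \emph{smooth} covers. So you cannot simply assert cohomological descent along $BG\to B\cl G$; you would need to prove fppf descent for $\RG_{\Hdg}$ and $\RG_{\dR}$ separately, which you do not do. (Note also that interpreting $\RG_{\Hdg}$ as $\RG(-,\wedge^j\mbb L)$ does not immediately help: the pullback of $\wedge^j\mbb L_{B\cl G}$ to $(B\Gamma)^n\times BG$ is not $\wedge^j\mbb L_{(B\Gamma)^n\times BG}$, so naive quasi-coherent descent does not identify the $E_1$-terms with the Hodge cohomology of the \v{C}ech nerve.)

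The paper sidesteps this entirely by a concrete device you are missing: embed $\Gamma\hookrightarrow T$ into a torus, form $\tilde G\coloneqq (T\times G)/\Gamma$, and use the resulting $G$- and $T$-torsors (both smooth, since $G$ and $T$ are smooth) to build an explicit bisimplicial \emph{scheme} whose columns are smooth \v{C}ech covers of the stacks $(B\Gamma)^i\times BG$ and whose bottom row is the \v{C}ech nerve of $BG\to B\cl G$. Friedlander's bisimplicial spectral sequence then applies directly, with no need for fppf descent. If you want to keep your stack-level argument, you must either supply the fppf descent statement for $\RG_{\Hdg}$ and $\RG_{\dR}$, or incorporate this torus trick.
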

	\noindent Here $k$ is the trivial comodule over $H^*_{\on{H}}(BG/k)$ (and $H^*_{\dR}(BG/k)$) and $\on{Cotor}^i$ are the derived functors of cotensor product (the definition is essentially dual to $\on{Tor}^i$ in the algebra setting, see \Cref{sec:cotensor products} for a reminder). 
	
	\begin{rmk}\label{rem: intro bigrading on the cotor} Recall that Hodge cohomology is in fact a bigraded algebra (see \Cref{ssec:prelim}  for a short reminder). By our construction, in the Hodge setting, the Eilenberg-Moore spectral sequence $E_r^{i,j}$ splits as a direct sum $E_r^{i,j}\simeq \oplus_{h} E_r^{i,j}$ where
	$$
	(E_2^{i,j})^h\coloneqq  \left(\on{Cotor}^i_{H^{*,*}_{\on{H}}(B\Gamma/k)}(k, H^{*,*}_{\on{H}}(BG/k))\right)^{h,j}\Rightarrow H^{h,i+j}_{\on{H}}(B\cl{G}/k).
	$$
	\end{rmk}

	A spectral sequence analogous to those of \Cref{eilenberg-moore-easy} for singular cohomology was used by H. Toda in \cite{toda1987cohomology} to compute the $\mbb F_2$-singular cohomology of $B\mr{PGL}_n(\mbb C)$, $B\mr{PSp}_n(\mbb C)$ and $B\mr{PSO}_n(\mbb C)$ when $n=4m+2$. The main result of our paper is that the answer for Hodge and de Rham cohomology over $\mbb F_2$ stays essentially the same. (When $n$ is odd, $2$ is not a torsion prime for any of these groups, and so Hodge and de Rham cohomology are isomorphic to singular cohomology by the aforementioned result of Totaro.)
	
	\begin{thm}\label{mainthm}
		(1) Let $\overline G$ be either $\mr{PSO}_{4m+2}$ or $\mr{PGL}_{4m+2}$. Then we have isomorphisms of graded rings 
			\[H^*_{\on{H}}(B\overline G/\F_2)\simeq H^*_{\on{dR}}(B\overline G/\F_2)\simeq H^*_{\on{sing}}(B\overline G(\C);\F_2).\]
			
		(2) In the case $\overline G=\mr{PSp}_{4m+2}$ we have an isomorphism of graded vector spaces 
			\[
			H^*_{\on{H}}(B\overline G/\F_2)\simeq H^*_{\on{dR}}(B\overline G/\F_2)\simeq H^*_{\on{sing}}(B\overline G(\C);\F_2).
			\]		
	\end{thm}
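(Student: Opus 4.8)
The plan is to apply the Eilenberg--Moore spectral sequences of \Cref{eilenberg-moore-easy} to the three central short exact sequences
\[
1\to\G_m\to\mr{GL}_{4m+2}\to\mr{PGL}_{4m+2}\to 1,\qquad 1\to\mu_2\to\mr{SO}_{4m+2}\to\mr{PSO}_{4m+2}\to 1,\qquad 1\to\mu_2\to\mr{Sp}_{4m+2}\to\mr{PSp}_{4m+2}\to 1,
\]
and to compare the outcome with Toda's computations in \cite{toda1987cohomology} for the associated complex Lie groups; on the Hodge side one works throughout with the extra Hodge-weight grading of \Cref{rem: intro bigrading on the cotor}, which is respected by all differentials. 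The first task is to identify the inputs. Since $2$ is not a torsion prime for $\mr{GL}_n$ or $\mr{Sp}_n$, Totaro's general theorem \cite[Theorem~9.2]{totaro2018hodge} identifies $H^*_{\on{H}}$, $H^*_{\on{dR}}$ of $B\mr{GL}_{4m+2}$ and $B\mr{Sp}_{4m+2}$ over $\F_2$ with the corresponding $\F_2$-singular cohomology, while for $\mr{SO}_{4m+2}$ one uses Totaro's isomorphism \eqref{totaro-so}. On the subgroup side, $H^*_{\on{H}}(B\G_m/\F_2)=\F_2[c]$ with $c$ primitive of Hodge bidegree $(1,1)$, and the coaction on $H^*_{\on{H}}(B\mr{GL}_{4m+2}/\F_2)=\F_2[c_1,\dots,c_{4m+2}]$ is the ``twist by a line bundle'' coaction $c_i\mapsto\sum_j\binom{4m+2-j}{i-j}\,c^{i-j}\otimes c_j$, exactly the one in Toda's setting. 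The genuinely new ingredient is $H^*_{\on{H}}(B\mu_2/\F_2)$: as $\mu_2$ is non-smooth over $\F_2$, this is computed directly from the cotangent complex $L_{B\mu_2/\F_2}$, whose cohomology lies in degrees $0$ and $1$; one finds that $H^*_{\on{H}}(B\mu_2/\F_2)$ and $H^*_{\on{dR}}(B\mu_2/\F_2)$ are one-dimensional in every total degree --- so have the same Poincar\'{e} series as $H^*_{\on{sing}}(B\Z/2;\F_2)$ --- but, because of the degree-$0$ part of $L_{B\mu_2/\F_2}$ (equivalently, because $R\Gamma(B\mu_2,\mc O)=\F_2$ is concentrated in degree $0$), this is a different bigraded Hopf algebra, not concentrated in Hodge weight $0$. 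One must then record the induced coactions on $H^*_{\on{H}}(B\mr{SO}_{4m+2}/\F_2)$ and $H^*_{\on{H}}(B\mr{Sp}_{4m+2}/\F_2)$, together with their de Rham analogues.

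Next I would compute the $\on{Cotor}$ groups forming the $E_2$-pages and verify the crucial identity
\[
\dim_{\F_2}\bigl(E_2^{\mr{alg}}\bigr)^n=\dim_{\F_2}H^n_{\on{sing}}(B\overline{G}(\C);\F_2)\qquad\text{for all }n,
\]
the right-hand side being known explicitly from \cite{toda1987cohomology}. Since the Hopf algebra $H^*_{\on{H}}(B\mu_2/\F_2)$ differs from the topological $H^*_{\on{sing}}(B\Z/2;\F_2)$, this is a genuine computation rather than a formal consequence: one has to show that the contribution of the ``non-classical'' part of $H^*_{\on{H}}(B\mu_2/\F_2)$ to the $\on{Cotor}$ exactly matches the contribution of the polynomial generator of $H^*_{\on{sing}}(B\Z/2;\F_2)$ in Toda's $E_2$-page. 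Granting this, convergence of the spectral sequence gives $\dim_{\F_2}H^n_{\on{H}}(B\overline{G}/\F_2)\le\dim_{\F_2}(E_2^{\mr{alg}})^n$, and Totaro's inequality \eqref{eq:inequalities} gives $\dim_{\F_2}H^n_{\on{H}}(B\overline{G}/\F_2)\ge\dim_{\F_2}H^n_{\on{dR}}(B\overline{G}/\F_2)\ge\dim_{\F_2}H^n_{\on{sing}}(B\overline{G}(\C);\F_2)$; combining the two forces equalities throughout. Hence the three cohomology theories have the same graded dimensions, the algebraic (Hodge) spectral sequence degenerates at $E_2$, and the Hodge-to-de Rham spectral sequence of $B\overline{G}$ degenerates as well. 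This proves part~(2) and the graded-vector-space content of part~(1).

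For the ring statement in part~(1) I would use that the Eilenberg--Moore spectral sequence is multiplicative: degeneration then gives isomorphisms of bigraded $\F_2$-algebras $\on{gr}\, H^*_{\on{H}}(B\overline{G}/\F_2)\cong E_2^{\mr{alg}}\cong\on{gr}\, H^*_{\on{sing}}(B\overline{G}(\C);\F_2)$ for $\overline{G}=\mr{PGL}_{4m+2}$ and $\overline{G}=\mr{PSO}_{4m+2}$, and likewise on the de Rham side via the degenerate Hodge-to-de Rham spectral sequence. To upgrade this to a genuine graded ring isomorphism I would lift Toda's explicit polynomial and exterior generators of $H^*_{\on{sing}}(B\overline{G}(\C);\F_2)$ to classes in $H^*_{\on{H}}$ (and $H^*_{\on{dR}}$) and verify the finitely many explicit defining relations among them, using that for these two groups Toda's presentation has no multiplicative extension ambiguity. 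For $\mr{PSp}_{4m+2}$ the ring structure of the answer is considerably more intricate --- already topologically --- and the present argument recovers only the underlying graded vector space, which is why part~(2) is phrased the way it is.

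The main obstacle is the $\on{Cotor}$ computation of the second step: understanding $H^*_{\on{H}}(B\mu_2/\F_2)$ and $H^*_{\on{dR}}(B\mu_2/\F_2)$ as Hopf algebras together with their coactions on the Hodge and de Rham cohomology of $B\mr{SO}_{4m+2}$ and $B\mr{Sp}_{4m+2}$, and proving that the resulting $E_2$-pages have the same graded dimensions as Toda's topological $E_2$-pages (equivalently, as $H^*_{\on{sing}}(B\overline{G}(\C);\F_2)$). A secondary, bookkeeping-heavy difficulty is verifying the defining relations in the final step.
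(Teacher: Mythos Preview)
Your strategy is exactly the paper's: Eilenberg--Moore spectral sequence, identification of the $E_2$-page with Toda's topological $E_2$, Totaro's inequality to force degeneration, then lifting generators and relations for the ring statement. Two places where the paper is sharper than your sketch are worth knowing. For $\mr{PSp}_{4m+2}$ the $E_2$-comparison is nearly formal: although $H^*_{\on{H}}(B\mu_2/\F_2)\simeq\F_2[x_1,x_2]/(x_1^2)$ and $H^*_{\on{sing}}(B\Z/2;\F_2)\simeq\F_2[z]$ are not isomorphic as algebras, they \emph{are} isomorphic as coalgebras ($x_2^i\mapsto z^{2i}$, $x_1x_2^i\mapsto z^{2i+1}$), and since $H^*_{\on{H}}(B\mr{Sp}_{4m+2})$ is concentrated in even total degree the $x_1$-part of the coaction vanishes, so the comodules match and $\on{Cotor}$ agrees as a bigraded vector space (\Cref{comparison-sp}). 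For $\mr{PSO}_{4m+2}$ this shortcut genuinely fails: the coalgebra isomorphism does \emph{not} extend to the comodules, because the coaction on $u_{2a}$ has an $x_1\otimes u_{2a-1}$ term (\Cref{comparison-so}). The paper therefore computes $\on{Cotor}$ on the Hodge side independently, via a twisted-tensor-product resolution of $\F_2$ over $\Lambda_2$ (\Cref{compute-cotor}) together with a $\Lambda_2$-adapted version of Toda's decomposition lemma (\Cref{toda36-new}); a pleasant surprise is that $P_2A$ is an honest subring in the $\Lambda_2$ setting, which actually simplifies the computation relative to topology. Only after this independent computation does one compare dimensions and invoke Totaro's inequality (\Cref{cor:EM-for SO_4k+2 degenerates}). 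Finally, in the ring step the extension problem is not dismissed a priori: the paper chooses lifts $y_I$ so that $x_3y_I=0$ (resp.\ $x_2y_I=0$ for $\mr{PSO}$), which is possible because $\on{Ker}Bp^*=x_3\F_2[x_2,x_3,b_h]$ (resp.\ $x_2\F_2[x_2,b_h]$); the remaining relations then hold on the nose since they hold after $Bp^*$ and each side is annihilated by $x_3$ (resp.\ $x_2$), while nothing nonzero in $\on{Ker}Bp^*$ is.
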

	\noindent  %Let us note, that 
 We don't know if there exists an algebra isomorphism in \Cref{mainthm}(2), the main reason being that the algebra structure on $H^*_{\on{sing}}(B\mr{PSp}_{4m+2}(\C);\F_2)$ is not fully understood (see e.g. \cite[Proposition 4.7]{toda1987cohomology}). In contrast, the algebra structures of $ H^*_{\on{sing}}(B\mr{PSO}_{4m+2}(\C);\F_2)$ and $H^*_{\on{sing}}(B\mr{PGL}_{4m+2}(\C);\F_2)$ can be described explicitly in terms of generators and relations; see \cite[Proposition 4.2, Proposition 4.5]{toda1987cohomology}.

	Let us sketch the main ideas which go into the proof of \Cref{mainthm}. With \Cref{eilenberg-moore-easy} at our disposal, one can try to make direct computations similar to the ones in Toda's work \cite{toda1987cohomology}. After some extra work, this is possible to achieve for $\mr{PGL}_{4m+2}$ and $\mr{PSp}_{4m+2}$. However, Toda's argument doesn't seem to go through directly for $\mr{PSO}_{4m+2}$ (\Cref{rem:Toda's strategy doesn't work}). Our key observation is that if we assume the results of \cite{toda1987cohomology} as given, there is an easier way: some parts of \Cref{mainthm} are implied by Totaro's inequality almost for free, while with a little more computational work one can also get the rest of \Cref{mainthm}, including the most complicated case of $\mr{PSO}_{4m+2}$. This motivated to split the proof of \Cref{mainthm} in two parts. %to two relatively independent parts. %in the first we squeeze everything that we can out of inequalities in (\ref{eq:inequalities}) and \Cref{eilenberg-moore-easy} with essentially no further work, and then finish the rest by more brutal computations. %, the proof of first being relatively easy and using Totaro's inequality \ref{eq:inequalities} as the main tool, while the other is mostly relying on some brute force computations.
	
	\textit{Part 1. Isomorphisms as graded vector spaces for $B\mr{PGL}_{4m+2}$ and $B\mr{PSp}_{4m+2}$.} The main step in Toda's computation of $H^*_{\on{sing}}(B\overline G(\C);\F_2)$ consists in showing that the Eilenberg-Moore spectral sequence degenerates at the second page. Assuming Toda's result, it is enough to identify the second sheets of the Eilenberg-Moore spectral sequences for Hodge and singular cohomology: indeed, by Totaro's inequality (\ref{eq:inequalities}) this would immediately imply the degeneration in the Hodge setting and then also give an equality of dimensions of cohomology. This identification is done by explicitly comparing the comodule structures on $H^*_\Hdg(B\mr{GL}_{4m+2}/\F_2)$ and $H^*_\Hdg(B\mr{Sp}_{4m+2}/\F_2)$ with the ones for singular cohomology (having identified $H^*_\Hdg(B\mbb G_{\mr{m}}/\F_2)$ with $H^*_{\mr{sing}}(B\mbb C^\times,\mbb F_2)$ as Hopf algebras and $H^*_\Hdg(B\mu_2/\mbb F_2)$ with $H^*_{\mr{sing}}(B\mbb  Z/2,\mbb F_2)$ as coalgebras); see \Cref{sec:GL SP graded vector space}.
	
	\textit{Part 2. Isomorphisms as graded rings for $B\mr{PGL}_{4m+2}$ and $B\mr{PSO}_{4m+2}$.}
	For $B\mr{PGL}_{4m+2}$, the spectral sequence argument above already produces a \textit{ring} isomorphism between the Hodge and singular cohomology, but only after passing to the associated graded. To lift this to an isomorphism between the original rings we imitate the computation of Toda in the Hodge setting (see Sections \ref{sec: Cotor} and \ref{sec: PGL}). The case of $B\mr{PSO}_{4m+2}$ is more difficult, as the comodule structure on $H^*_\Hdg(B\mr{SO}_{4m+2}/\F_2)$ is \textit{not} compatible with the one on singular cohomology. Nevertheless, we get around this by directly replacing some results on the structure of the comodule $H^*_{\mr{sing}}(B\mr{SO}_{4m+2}(\mbb C),\F_2)$ with suitable Hodge cohomology analogues (see \Cref{sec: Cotor}). In some ways, the Hodge context actually turns out to be easier for the computation via Toda's method; see \Cref{easier}. After computing $\on{Cotor}$ via an explicit resolution (which is set up in \Cref{compute-cotor}) we deduce the degeneration of the Eilenberg-Moore spectral sequence in the $\mr{PSO}_{4m+2}$-case by comparing to topological side and using Totaro's inequality. To conclude, we identify the resulting descriptions of Hodge and singular cohomology in terms of generators and relations. For more details see \Cref{sec: PSO}.

	Even though the algebras $H^*_\Hdg(B\mr{SO}_{4m+2}(\mbb C)/\F_2)$ and $H^*_{\mr{sing}}(B\mr{SO}_{4m+2}(\mbb C),\F_2)$ are abstractly isomorphic, there is a rather subtle implicit distinction between topological and Hodge settings. Namely, the square
	$$\xymatrix{
		H^*_{\on{H}}(B\mr{PSO}_{4m+2}/\F_2) \ar[rr]_(.465){\text{Section 7}}^(.465)\sim \ar[d] & & H^*_{\on{sing}}(B\mr{PSO}_{4m+2}(\C),\F_2) \ar[d] \\
		H^*_{\on{H}}(B\mr{SO}_{4m+2}/\F_2)  \ar[rr]_(.465){\text{Totaro}}^(.465)\sim & & H^*_{\on{sing}}(B\mr{SO}_{4m+2}(\C),\F_2), 
	}
	$$
	induced by pull-back with respect to the map $B\mr{SO}_{4m+2}\to B\mr{PSO}_{4m+2}$ and isomorphism (\ref{totaro-so}) is \textit{not} commutative. %Moreover, it is impossible to find some other (horizontal) isomorphisms that would make this square into a commutative one. 
	%We do not know if one can find some other horizontal isomorphisms which would make this square commute. 
	
	\subsection*{Applications to representation theory}
	Recall that Hodge cohomology comes with a natural bigrading: $H^n_\Hdg(X/k)\simeq \oplus_{i+j=n} H^{i,j}(X/k)$, where $H^{i,j}(X/k)\simeq H^j(X,\Omega^i)$. In \cite[Theorem 2.4]{totaro2018hodge}, Totaro showed that if $G$ is a smooth affine $k$-group, one has the following representation-theoretic formula for $H^{i,j}(BG/k)$:
	$$
	H^{i,j}(BG/k)\simeq H^{i-j}(G,\on{Sym}^i \mathfrak g^\vee).
	$$
	The right hand side denotes the cohomology of $G$ as an algebraic group (sometimes also called ``rational cohomology''), and the $G$-action on $\on{Sym}^i \mathfrak g^\vee$ is the natural adjoint action. This gives a geometric interpretation of the cohomology of representations like $\on{Sym}^i \mathfrak g^\vee$, which Totaro used to make new computations. % of the latter in some cases. 
If $\Gamma\subset G$ is a central subgroup and $\cl{G}\coloneqq G/\Gamma$, the %computation of 
Hodge cohomology of $B\overline{G}$ %(for $\overline{G}\coloneqq G/\Gamma$) 
can also be interpreted in terms of rational cohomology of $G$, but with coefficients in more complicated modules, namely $\on{Sym}^i \cl{ \mf g}^\vee$, where $\cl{\mf g}\coloneqq \mr{Lie}(\cl G)$. 
	
	%\begin{rmk} 
	%In \Cref{sec-reptheory} we discuss in some detail the relation between $\cl{\mf g}$ and $\mf g$ for the groups of our interest.
	%\end{rmk}

 \begin{rmk}
In order to compute the groups $H^{j}(G,\on{Sym}^i \mathfrak g^\vee)$ by the above method, it is necessary to describe Hodge cohomology of $B\cl G$ as a \textit{bigraded} algebra. Given the degeneration of the Eilenberg-Moore spectral sequence, this reduces to understanding the bigraded components of the cotorsion groups $\on{Cotor}^i$ from \Cref{eilenberg-moore-easy}. In order to keep track of the bigrading, we use the explicit resolution of the trivial comodule given by twisted tensor product (see \Cref{constr:twisted tensor product} and \Cref{compute-cotor}).% provides a relatively easy to also keep track of the bigrading.
 \end{rmk}

	For brevity, let us only discuss the result of the computation in the case $\cl{G}=\mr{PGL}_{n}$ here, and refer the reader to \Cref{sec-reptheory} for the remaining cases. For every $n\geq 1$ we have a short exact sequence of $\mr{GL}_n$-modules $0 \to \F_2 \to \mathfrak{gl}_n \to \mathfrak{pgl}_n \to 0$, which is non-split if and only if $n$ is even. From the Hochschild-Serre spectral sequence in rational cohomology one can see that $H^{*}(\mr{GL}_n,\on{Sym}^i \mathfrak{pgl}_n^\vee)\simeq H^{*}(\mr{PGL}_n,\on{Sym}^i \mathfrak{pgl}_n^\vee)$, and, thus 
	$$
	H^{i,j}(B\mr{PGL}_{n}/\F_2)\simeq H^{j-i}(\mr{GL}_n,\on{Sym}^i \mathfrak{pgl}_n^\vee).
	$$

	From \Cref{thm-hodge-pgl}, giving the description of the left hand side in the case $n=4m+2$, we get a full computation of higher cohomology (over $\mbb F_2$) of $\mr{GL}_{4m+2}$ with coefficients in $\on{Sym}^i \mathfrak{pgl}_n^\vee$. To be more precise, there is a certain class $z\in H^{1}(\mr{GL}_{4m+2}, \mathfrak{pgl}_{4m+2}^\vee)$ and a polynomial subalgebra $A\coloneqq \mbb F_2[c_1,b_i]_{1 \le i\le 2m+1}\subset (\oplus_i \on{Sym}^i \mathfrak{pgl}_{4m+2}^\vee)^{\mr{GL}_{4m+2}}$ in the $\mr{GL}_{4m+2}$-invariants such that for any $j>0$ one has 
	$$
	H^{j}(\mr{GL}_{4m+2}, \oplus_i\on{Sym}^i \mathfrak{pgl}_{4m+2}^\vee)\ism A\cdot z^j
	$$ is a free $A$-module of rank 1 generated by $z^j\in H^{j}(\mr{GL}_{4m+2}, \on{Sym}^j\mathfrak{pgl}_{4m+2}^\vee)$. Here $c_1$ has degree 1 and each $b_i$ has degree $4i$. In particular, for all $i,j\geq 0$, we get a formula for the dimension of 
	$H^{j}(\mr{GL}_{4m+2}, \on{Sym}^i \mathfrak{pgl}_{4m+2}^\vee)$ as the number of ways to write $i-j$ as a sum \[\gamma_1+4\beta_2+8\beta_3+\dots+(8m+4)\beta_{2m+1},\] where $\gamma_1$ and the $\beta_h$ are non-negative integers. In particular, $H^{j}(\mr{GL}_{4m+2}, \on{Sym}^i \mathfrak{pgl}_{4m+2}^\vee)\neq 0$ if and only if $i\ge j$.
	
\subsection*{Acknowledgements} The first-named author is grateful to Max Planck Institute for the excellent work conditions during his stay there while this work was being written. He would also like to thank Peter Scholze and Xing Gu for fruitful and helpful conversations.  The second-named author thanks K{\fontencoding{T1}\selectfont\k{e}}stutis \v{C}esnavi\v{c}ius and the Laboratoire de Math\'ema\-tiques d'Orsay (Universit\'e Paris-Saclay) for hospitality during Summer 2021, and the Institut des Hautes \'Etudes Scientifiques for hospitality in the Fall 2021.

	\section{Preliminaries}\label{prelim}
	\subsection{Hodge and de Rham cohomology of stacks}  \label{ssec:prelim} 
	Let $k$ be a field and $X$ be a smooth Artin stack of finite type over $k$. For every $i,j\geq 0$, we denote by $H^i(X,\Omega^j)$ the $i$-th cohomology of the sheaf $\Omega^j$ of $j$-differential forms on the big \'etale site of $X$ (see \cite[Section 2]{totaro2018hodge}).
	
	We denote by $H^{i,j}_\Hdg(X/k)\coloneqq H^j(X,\Omega^i)$ and $H^n_\Hdg(X/k)\coloneqq \oplus_{i+j=n} H^{i,j}_\Hdg(X/k)$ the $(i,j)$-th component and the total $n$-th Hodge cohomology group, respectively. The algebra $H^*_{\on{H}}(X/k)\coloneqq \oplus_{n=0}^\infty H^n_{\Hdg}(X/k)\simeq \oplus_{i,j\geq 0}H^{i,j}_\Hdg(X/k)$ has a natural bigraded $k$-algebra structure.

	We also denote by $H^*_{\on{dR}}(X/k)$ the de Rham cohomology of $X$ (which is a $\mbb Z$-graded $k$-algebra): it can be defined as the hypercohomology of the (de Rham) complex of sheaves $\Omega^*_\dR\coloneqq \Omega^0\to \Omega^1\to \Omega^2\to \ldots $ on big \'etale site of $X$. See \cite[\S 1]{totaro2018hodge} or \Cref{app:de rham cohomology} for more details.  
	
	The abutment filtration $F^i(\Omega^*_\dR)\coloneqq \Omega^{\ge i}_\dR\subset \Omega^*_\dR$ has the associated graded $\oplus_i \Omega^i[-i]$ and induces the Hodge-de Rham spectral sequence:
	$$
	E_1^{i,j}\coloneqq H^{i,j}_\Hdg(X/k)\Rightarrow H^{i+j}_\dR(X/k).
	$$
	
	\begin{rmk}[Another formula for Hodge cohomology]
	By flat descent for the cotangent complex, for any $j\ge 0$ one has a quasi-isomorphism $$\RG(X,\Omega^j)\ism \RG(X,\wedge^j\mbb L_{X/k}),$$ where $\mbb L_{X/k}\in \on{QCoh}(X)$ is the cotangent complex of $X$ (see e.g. \cite[Proposition 1.1.4]{KubrakPrikhodko_HdR}). If $X=BG$ for some smooth $k$-group scheme $G$ then, under the identification $\on{QCoh}(X)^+\simeq D(\on{Rep}(G))^+$, one has an equivalence $\mbb L_{X/k}\simeq \mf g^\vee[-1]$, where $\mf g^\vee$ is the coadjoint representation. This leads to an equivalence $\RG(BG,\Omega^j)\xrightarrow{\sim} \RG(G, \on{Sym}^j\mf g^\vee[-j])$, since $\wedge^j(\mf g^\vee[-1])\simeq (\on{Sym}^j\mf g^\vee)[-j]$ by the decalage isomorphism (see e.g. \cite[Proposition A.2.49]{kubrak2021p-adic}). In particular: \[H^{i,j}(BG)\simeq H^{j-i}(G,\on{Sym}^{i}\mf g^\vee).\]
	\end{rmk}

	\subsection{Cotensor product and Cotor} \label{sec:cotensor products}
	Our main reference on this material is \cite[Appendix A]{ravenel1986complex}. Let $k$ be a field, $L$ be an abelian group (in most examples $L$ will be $0$, $\Z$ or $\Z^2$) and let $\Lambda$ be an $L$-graded
 %\footnote{By some abelian group $L$. In most examples it will be $0$, $\Z$ or $\Z^2$.} 
 Hopf $k$-algebra. A left (resp. right) $\Lambda$-comodule is an $L$-graded $k$-vector space $M$ together with a $k$-linear graded map $\phi_M\colon M\to \Lambda\otimes_kM$ (resp. $\phi_M\colon M\to M\otimes_k\Lambda$) which is coassociative and counital. If $M$ is also a $k$-algebra and $\phi_M$ is a $k$-algebra homomorphism then $M$ is called a comodule algebra; see \cite[Definition A1.1.2]{ravenel1986complex}. By \cite[Theorem A1.1.3]{ravenel1986complex}, the category of $L$-graded $\Lambda$-comodules is abelian.

\begin{defn}\label{def:cotensor_product}	\begin{enumerate}
    \item Let $M$ be a right $\Lambda$-comodule and $N$ be a left $\Lambda$-comodule. The \textit{cotensor product} of $M$ and $N$ over $\Lambda$ is defined as the $L$-graded $k$-vector space
	\[M\square_{\Lambda}N\coloneqq \on{Ker}(M\otimes_kN\xrightarrow{\phi_M\otimes 1-1\otimes\phi_N}M\otimes_k\Lambda\otimes_kN);\]
	see \cite[Definition A1.1.4]{ravenel1986complex} for more details. 
	\item Given a left $\Lambda$-comodule $N$, the $\Lambda$-subcomodule $PN\subset N$ of \textit{primitive elements} is defined as 
	\[PN\coloneqq \set{n\in N: \phi_N(n)=1\otimes n}.\]
	\end{enumerate}
\end{defn}	

Note that the canonical isomorphism $k\otimes_kN\simeq N$ induces an isomorphism
	\[k\square_{\Lambda}N\simeq PN.\]
It is also not hard to see that if $N=A$ is a left $\Lambda$-comodule $k$-algebra, then $PA\subset A$ is a $k$-subalgebra.

The functor $M\square_{\Lambda}N$ is left exact in $N$. (Here we use that $k$ is a field.) Thus the following definition makes sense: 
\begin{defn}
    If $i\geq 0$ is an integer, we define the $i$-th \textit{cotorsion group} $\on{Cotor}^i_{\Lambda}(M,N)$ as the $i$-th right derived functor of $M\square_{\Lambda}N$, regarded as an additive functor of $N$ with values in $L$-graded vector spaces; see \cite[Definition A1.2.3]{ravenel1986complex}. We also let \[\on{Cotor}^*_\Lambda(M,N)\coloneqq \oplus_{i\geq 0}\on{Cotor}^i_\Lambda(M,N).\] There is a canonical isomorphism
	\[\on{Cotor}^0_\Lambda(M,N)\simeq M\square_{\Lambda}N.\]Each vector space $\on{Cotor}^i_\Lambda(M,N)$ comes with a natural $L$-grading. 
\end{defn}	

	%We may naturally endow the graded $k$-vector space $\on{Cotor}_{\Lambda}(M,N)$ with an internal cup product, which makes  $\on{Cotor}_{\Lambda}(M,N)$ into a graded-commutative associative algebra; see \cite[Definition A1.2.13]{ravenel1986complex}.
	
	Given two left comodules $M$ and $N$, we write $M\otimes_k N$ for the comodule tensor product of $M$ and $N$, see \cite[Definition A1.1.2]{ravenel1986complex}. The left coaction on $M\otimes_k N$ is given by the composition 
	$$
	\xymatrix{M\otimes_k N \ar[rr]^(.4){\phi_M\otimes \phi_N}&& \Lambda\otimes_k M\otimes_k \Lambda \otimes_k N \ar[r]^\sim & \Lambda\otimes_k \Lambda \otimes_k M\otimes_k N \ar[rr]^(.55){\mu_{\Lambda}\otimes \mr{id}_M \otimes \mr{id}_N} &&\Lambda \otimes_k M\otimes_k N},
	$$
	where $\mu_\Lambda\colon \Lambda\otimes_k\Lambda \to \Lambda$ is the multiplication map. Note that this operation depends crucially on the \textit{algebra} structure on $\Lambda$.
	
	If $M_1$ and $M_2$ are right $\Lambda$-comodules and $N_1$ and $N_2$ are left $\Lambda$-comodules, we have an external cup product map
	\[\on{Cotor}^{*_1}_\Lambda(M_1,N_1)\otimes_k \on{Cotor}^{*_2}_\Lambda(M_2,N_2)\tto \on{Cotor}^{*_1+*_2}_\Lambda(M_1\otimes_kM_2,N_1\otimes_kN_2);\]
	see 	\cite[Definition A1.2.13]{ravenel1986complex}. If $M$ is a left $\Lambda$-comodule algebra and $N$ is a right $\Lambda$-comodule algebra, then letting $M_1=M_2=M$ and $N_1=N_2=N$ and composing the external cup product with the map
	\[\on{Cotor}^*_\Lambda(M\otimes_kM,N\otimes_kN)\tto \on{Cotor}^*_\Lambda(M,N)\]
	induced by the multiplication maps $M\otimes_kM\to M$ and $N\otimes_kN\to N$ gives $\on{Cotor}^*_\Lambda(M,N)$ the structure of a $(\Z\oplus L)$-graded\footnote{Here $\on{Cotor}^i_\Lambda(M,N)$ has $\Z$-grading $i$ and $L$-grading is the one coming from $\Lambda$, $M$ and $N$.} $k$-algebra. If $M$ and $N$ are commutative algebras, so is $\on{Cotor}^*_\Lambda(M,N)$.
	
	\begin{constr}[Cobar construction]
	   	Let $\Delta\colon \Lambda \to \Lambda\otimes_k\Lambda$ and $\epsilon\colon \Lambda\to k$ denote the comultiplication and the counit maps of $\Lambda$, respectively. If $M$ is a graded left $\Lambda$-comodule, we may construct a cosimplicial object $\mc{D}_{\Lambda}(M)$ as follows. For all $s\geq 0$, set $\mc{D}_{\Lambda}(M)^s\coloneqq \Lambda^{\otimes s+1}\otimes_kM$. For every $0\leq i\leq s$, the $i$-th codegeneracy map $\sigma^s_i\colon \mc{D}_{\Lambda}(M)^{s+1}\to \mc{D}_{\Lambda}(M)^s$ is given by
	\[\sigma^s_i(\gamma_0\otimes\cdots\otimes\gamma_{s+1}\otimes m)=\epsilon(\gamma_{i+1})\gamma_0\otimes\cdots\otimes \gamma_{i}\otimes\gamma_{i+2}\otimes\cdots\otimes\gamma_{s+1}\otimes m\]
	for all $\gamma_0,\dots,\gamma_{s+1}\in \Lambda$ and $m\in M$. For every $0\leq i\leq s-1$, the $i$-th coface map $\delta_i^s\colon \mc{D}_{\Lambda}(M)^{s-1}\to \mc{D}_{\Lambda}(M)^{s}$ is given by 
	\[\delta^s_i(\gamma_0\otimes\cdots\otimes\gamma_{s}\otimes m)=\gamma_0\otimes\cdots\otimes\gamma_{i-1}\otimes\Delta(\gamma_i)\otimes\gamma_{i+1}\otimes\dots\otimes\gamma_s\otimes m\]
	for all $\gamma_0,\dots,\gamma_s\in \Lambda$ and $m\in M$, and $\delta_s^s$ is given by 
	\[\delta^s_s(\gamma_0\otimes\cdots\otimes\gamma_{s}\otimes m)=\gamma_0\otimes\dots\otimes\gamma_s\otimes \phi_M(m).\]

	There is a natural augmentation map $M\to \mc{D}_{\Lambda}(M)$. By definition, the {\em non-normalized cobar resolution} of $M$ is the cochain complex $\tilde{D}_{\Lambda}^*(M)$ such that $\tilde{D}_{\Lambda}^s(M)= \mc{D}_{\Lambda}(M)^s$ for all $s\geq 0$ and whose differentials are given by the alternating sums of the codegeneracy maps of $\mc{D}_{\Lambda}(M)$. 
	
	The (normalized) {\em cobar resolution} $D_{\Lambda}^*(M)$ of $M$ is defined as the normalized cochain complex associated to $\mc{D}_{\Lambda}(M)$, that is, by \[D^s_{\Lambda}(M)=\bigcap_{i=1}^{s+1}(\on{Ker}(\delta_i^{s+1}:\mc{D}_{\Lambda}(M)^{s}\to \mc{D}_{\Lambda}(M)^{s+1})=\Lambda\otimes_k \cl{\Lambda}^{\otimes s}\otimes_kM,\] where $\cl{\Lambda}$ is the kernel of the counit of $\Lambda$, and differential $d^s\colon D^s_{\Lambda}(M)\to D^{s+1}_{\Lambda}(M)$ induced by the one on $\tilde{D}_{\Lambda}^*(M)$:
	\begin{align}
		d^s(\gamma_0\otimes \gamma_1\otimes\dots\otimes\gamma_s\otimes m)\coloneqq &\sum_{i=0}^s(-1)^i\gamma_0\otimes\dots\gamma_{i-1}\otimes\Delta(\gamma_i)\otimes\gamma_{i+1}\otimes\dots\otimes\gamma_s\otimes m\\
		&+(-1)^{s+1}\gamma_0\otimes \gamma_1\otimes\dots\otimes\gamma_s\otimes \phi_M(m)  \nonumber
	\end{align}
	for all $\gamma_0\in \Lambda$, $\gamma_1,\dots,\gamma_s\in \cl{\Lambda}$ and $m\in M$. Our definition of the cobar resolution agrees with \cite[Definition A1.2.11]{ravenel1986complex}. It is a part of the cosimplicial Dold-Kan correspondence that the natural inclusion $D^*_{\Lambda}(M)\hookrightarrow \tilde{D}^*_{\Lambda}(M)$ is split and a homotopy equivalence; see e.g. \cite[Lemma 019I, (3)]{stacks-project} (the proof is dual to that of \cite[Lemma 019A]{stacks-project}). The complex $D^*_{\Lambda}(M)$ with the natural map $M \to D^*_{\Lambda}(M)$ gives an injective resolution of $M$; indeed, all of its terms are injective $\Lambda$-comodules by \cite[A1.2.2]{ravenel1986complex}. In particular, given a right $\Lambda$-comodule $N$ one can explicitly compute $\on{Cotor}^*_\Lambda(M,N)$ as the cohomology of $N\square_\Lambda D^*_{\Lambda}(M)$. 
	
	If $\Lambda$, $N$ and $M$ are $L$-graded, so are $D^*_{\Lambda}(M)$ and $N\square_\Lambda D^*_{\Lambda}(M)$; then $H^i(N\square_\Lambda D^*_{\Lambda}(M))$ computes $\on{Cotor}^i_\Lambda(M,N)$ as an $L$-graded vector space. 
	\end{constr}

	\begin{example}\label{main-comodule-algebra}
		If $G$ is a linear algebraic group over a field $k$ and $\Gamma\subset G$ is a central subgroup, the multiplication map $\Gamma\times G\to G$ is a $k$-group homomorphism and so induces a morphism of stacks $B\Gamma\times BG\to BG$. Passing to Hodge cohomology and applying K\"unneth's formula \cite[Proposition 5.1]{totaro2018hodge} we obtain a homomorphism of $\Z^2$-graded $k$-algebras
		\begin{equation}\label{coaction-bg}H^{*,*}_{\on{H}}(BG/k)\tto H^{*,*}_{\on{H}}(B\Gamma/k)\otimes_k H^{*,*}_{\on{H}}(BG/k).\end{equation}
		If $G=\Gamma$ is commutative, (\ref{coaction-bg}) makes $H^{*,*}_{\on{H}}(BG/k)$ into a $\Z^2$-graded Hopf algebra over $k$. If $G$ is an arbitrary linear algebraic group, (\ref{coaction-bg}) makes $H^{*,*}_{\on{H}}(BG/k)$ into a left $H^{*,*}_{\on{H}}(B\Gamma/k)$-comodule algebra. The same holds for de Rham cohomology, using the K\"unneth formula (\Cref{cor: Kunneth in de Rham over a field}). Namely, $H^{*}_{\dR}(B\Gamma/k)$ is a $\Z$-graded Hopf algebra and $H^{*}_{\dR}(BG/k)$ has a natural left $H^{*}_{\dR}(B\Gamma/k)$-comodule algebra structure.
	\end{example}
	
	\begin{comment}
	\subsection{K\"unneth formula}
	
	\begin{lemma}
	Let $k$ be a field, and let $X$ and $Y$ be smooth algebraic $k$-stack with affine diagonal. Then we have canonical isomorphisms
	\[H^*_{\on{H}}(X\times_kY/k)\xrightarrow{\sim}H^*_{\on{H}}(X/k)\otimes H^*_{\on{H}}(Y/k)\]
	\[H^*_{\on{dR}}(X\times_kY/k)\xrightarrow{\sim}H^*_{\on{dR}}(X/k)\otimes H^*_{\on{dR}}(Y/k)\]
	\end{lemma}
	
	\begin{proof}
	For Hodge cohomology, this is \cite{totaro2018hodge}.
	
	The proof for de Rham cohomology is entirely analogous, using \cite[Tag 0FMA]{stacks-project}.
	\end{proof}
	\end{comment}
	
	\section{The Eilenberg-Moore spectral sequence}
	
	Let $K^{\bullet\bullet}$ be a first-quadrant double cochain complex, and write $\on{Tot}(K^{\bullet\bullet})$ for the associated total complex. By definition, the first spectral sequence associated to $K^{\bullet\bullet}$ is the spectral sequence
	\begin{equation}\label{first-ss}
	E_1^{ij}\coloneqq H^j(K^{i\bullet}) \Rightarrow H^n(\on{Tot}(K^{\bullet\bullet}))    
	\end{equation}
	associated to the filtration $F^*\on{Tot}(K^{\bullet\bullet})$ of $\on{Tot}(K^{\bullet\bullet})$ given by column degree, that is
	\[F^{i}\on{Tot}^n(K^{\bullet\bullet})\coloneqq\bigoplus_{h=i}^nK^{i',n-h};\]
	see \cite[012X]{stacks-project}.
	
	Let $X_{\bullet\bullet}$ be a bisimplicial scheme and $F$ be an abelian sheaf on the small \'etale site of $X_{\bullet\bullet}$. By \cite[Proposition 2.6]{friedlander1982etale}, there exists a first-quadrant spectral sequence
		\begin{equation}\label{friedlander}
		    E_1^{ij}\coloneqq H^j(X_{s\bullet},F|_{X_{s\bullet}})\Rightarrow H^{i+j}(X_{\bullet\bullet},F).
		\end{equation}
	It is defined as the first spectral sequence (\ref{first-ss}) associated to the double complex $K_{\bullet\bullet}=\on{Hom}_{\on{AbSh}(X_{\bullet\bullet})}(\Z_{X_{*\cdot}}, I)$, where $\on{AbSh}(X_{\bullet\bullet})$ is the category of abelian sheaves on the small \'etale site of $X_{\bullet\bullet}$, $\Z_{X_{*\bullet}}\to \Z$ is a certain projective resolution in $\on{AbSh}(X_{\bullet\bullet})$ and $F\to I$ is an injective resolution in $\on{AbSh}(X_{\bullet\bullet})$. (In \cite[Proposition 2.4]{friedlander1982etale}, the simplicial analogue of (\ref{friedlander}) is proved with more details than \cite[Proposition 2.6]{friedlander1982etale}.)
	
	%\[E_2=\on{Cotor}^{H^*(B\Gamma)}(k,H^*(BG))\Rightarrow H^*(B\cl{G})\]

	\begin{proof}[Proof of \Cref{eilenberg-moore-easy}]
		We start by constructing the spectral sequence for Hodge cohomology. Since $\Gamma$ is a $k$-group of multiplicative type, there exist a $k$-torus $T$ and a $k$-group embedding $\Gamma \hookrightarrow T$. Define
		$$\tilde{G}\coloneqq (T\times G)/\Gamma.$$
		%\qquad \tilde{\Gamma}\coloneqq (T\times \Gamma)/\Gamma \subset \tilde{G}.
		The projection $\tilde{G}\to T/\Gamma$ is a $G$-torsor, and the projection $\tilde{G}\to \cl{G}$ is a $T$-torsor, where $T$ acts as the subgroup $(T\times \Gamma)/\Gamma\subset \tilde{G}$. We obtain the following commutative diagram
		\begin{equation}\label{bisimp}
			\xymatrix{
				\vdots \ar[d] & \vdots \ar[d] &  \vdots \ar[d] &  \vdots \ar[d] \\
				\cl{G}^2 \ar[d] & \tilde{G}^2 \ar[d] \ar[l] & T\times \tilde{G}^2 \ar[d] \ar[l] & T^3\times \tilde{G}^2 \ar[l] \ar[d] &  \ar[l] \cdots \\
				\cl{G} \ar[d] & \tilde{G} \ar[d] \ar[l] & T\times \tilde{G} \ar[d] \ar[l] & T^3\times \tilde{G} \ar[l]  \ar[d] & \ar[l]\cdots  \\
				\Spec k \ar[d]  & T/\Gamma \ar[d] \ar[l] & (T/\Gamma)^2 \ar[d] \ar[l] & (T/\Gamma)^3 \ar[d] \ar[l] & \ar[l]\cdots   \\
				B\cl{G}  & BG \ar[l] & B\Gamma\times BG \ar[l] & (B\Gamma)^2\times BG \ar[l] & \ar[l]\cdots.
			}
		\end{equation}
		Here, the vertical maps at the bottom are the smooth morphisms induced by the trivial $\cl{G}$-torsor and the $(\Gamma^i\times G)$-torsor $T^i\times \tilde{G}\to (T/\Gamma)^{i+1}$, and the columns are the associated \v Cech covers. The horizontal maps on the left are induced by the natural projection $G\to \cl{G}$. The bottom row is $C(BG\to B\cl{G})$, and the collection of the horizontal arrows above it is the \v Cech nerve of the morphism of simplicial schemes $C(\Spec k \to BG)\to C(\Spec k\to B\cl{G})$.
		
		By \cite[p. 1577]{totaro2018hodge}, the Hodge cohomology of a smooth algebraic stack $X$ is isomorphic to the Hodge cohomology of the simplicial scheme associated to a smooth cover of $U\to X$, where $U$ is a $k$-scheme. Combining this with \cite[Proposition 3.7]{friedlander1982etale}, we obtain for all $h\geq 0$ an isomorphism $H^*(BG, \Omega^h)\simeq H^*(X_{\bullet\bullet}, \Omega^h)$, where $X_{\bullet\bullet}$ is the simplicial scheme consisting of (\ref{bisimp}) with the bottom row removed.
		Now (\ref{friedlander}) for $X_{\bullet\bullet}$ and $F=\Omega^h$ gives a spectral sequence
		$$E^{ij}_1(h)\coloneqq H^j((B\Gamma)^i\times BG,\Omega^h)\Rightarrow H^{i+j}(B\cl{G},\Omega^h).$$
		Letting $h$ vary, the spectral sequences $E^{ij}(h)$ assemble into a spectral sequence of graded $k$-algebras
		$$E^{ij}_1\coloneqq H^j_{\on{H}}((B\Gamma)^i\times BG/k)\Rightarrow H^{i+j}_{\on{H}}(B\cl{G}/k).$$
		By the K\"unneth formula in Hodge cohomology \cite[Proposition 5.1]{totaro2018hodge}, for every $i\geq 0$, \[E^{i*}_1=H^*_{\on{H}}(B\Gamma^i\times BG/k)\simeq H^*_{\on{H}}(B\Gamma/k)^{\otimes i}\otimes H^*_{\on{H}}(BG/k).\]
		This is the non-normalized cobar construction of the $H^*_{\on{H}}(B\Gamma/k)$-comodule algebra $H^*_{\on{H}}(BG/k)$. Indeed, the differentials are alternating sums of projection maps in (\ref{bisimp}), and using this one may check that they agree with those of the cobar construction. Therefore, by \cite[Corollary A1.2.12]{ravenel1986complex}, the $E_2$ page of the spectral sequence computes $\on{Cotor}^*_{H^*(B\Gamma)}(k,H^*_{\on{H}}(BG))$.
		
		The construction for the spectral sequence in de Rham cohomology is entirely analogous. Indeed,  while \cite[Proposition 2.6, Poposition 3.7]{friedlander1982etale} are only phrased for sheaves of abelian groups, they also hold for complexes of sheaves. For the K\"unneth formula in de Rham cohomology, see \Cref{cor: Kunneth in de Rham over a field}. 
	\end{proof}
	
	\begin{rmk}\label{column}
	Let $K^{\bullet\bullet}$ be a first-quadrant double cochain complex, and consider the spectral sequence (\ref{first-ss}) for $K^{\bullet\bullet}$. Let $u\in F^iH^{i+j}(\on{Tot}K^{\bullet\bullet})$ and $v\in F^{i'}H^{i'+j'}(\on{Tot}K^{\bullet\bullet})$. Then $u$ and $v$ are represented by classes $\cl{u}\in E_{\infty}^{i,j}$ and $\cl{v}\in E_{\infty}^{i',j'}$. Suppose that $\cl{u}\cdot\cl{v}=0$ in $E_{\infty}$. This means that $uv\in F^{i+i'+1}H^{i+i'+j+j'}(\on{Tot}K^{\bullet\bullet})$, and so there exists an integer $d\geq 1$ such that the representative  in $\cl{uv}\in E_{\infty}$ of $uv$ has bidegree $(i+j+d,i'+j')$. In other words, if $\cl{u}\cdot \cl{v}=0$ then the column degree of $\cl{uv}$ is strictly greater than the sum of the column degrees of $\cl{u}$ and $\cl{v}$.
		
	Since the Eilenberg-Moore spectral sequence is defined in terms of (\ref{first-ss}), this remark applies to it. We will make use of this observation during the proof of Theorems \ref{thm-hodge-pgl} and \ref{thm-hodge-pso}.
	\end{rmk}
		
	\begin{rmk}It follows from the construction of the Eilenberg-Moore spectral sequence in Hodge cohomology that 
		\[E_2^{0,*}\coloneqq \on{Cotor}^0_{H^*_{\on{H}}(B\Gamma/k)}(k, H^*_{\on{H}}(BG/k)) = PH^{*}_{\on{H}}(BG/k),\]
where we regard $H^*_{\on{H}}(BG/k)$ as a left $H^*_{\on{H}}(B\Gamma/k)$-comodule algebra.
		The corresponding edge homomorphism 
		\[H^*_{\on{H}}(B\cl{G}/k)\to PH^*_{\on{H}}(BG/k)\subset H^*_{\on{H}}(BG/k)\]
		is exactly the pull-back with respect to the map $BG\to B\cl{G}$ induced by the projection $G\to \cl{G}$. A similar description  holds for de Rham cohomology.
	\end{rmk}
	
	\begin{rmk}\label{rem:about Hodge grading}
	    Let us emphasize that, taking into account the bigrading on Hodge cohomology and the construction of the Eilenberg-Moore spectral sequence, $E_2^{i,j}$ decomposes as a direct sum $E_2^{i,j}\simeq \oplus_{h}(E_2^{i,j})^h$ where 
	    $$
	    (E_2^{i,j})^h\coloneqq \left(\on{Cotor}^i_{H^*_{\on{H}}(B\Gamma/k)}(k, H^{*}_{\on{H}}(BG/k))\right)^{h,j}\Rightarrow H^{h,i+j}_{\on{H}}(B\cl{G}/k).
	    $$
	    Here the bigrading on $\on{Cotor}^i_{H^*_{\on{H}}(B\Gamma/k)}(k, H^{*}_{\on{H}}(BG/k))$ is the one coming from $H^{*,*}_{\on{H}}(B\Gamma/k)$ and $H^{*,*}_{\on{H}}(BG/k)$.
	\end{rmk}

	\section{Additive part of Theorem \ref{mainthm} for projective linear and symplectic groups} \label{sec:GL SP graded vector space}
	The goal of this section is to identify the $E_2$ page of the Eilenberg-Moore spectral sequence for the Hodge and de Rham cohomology with their topological counterparts in the case of $\mr{PGL}_n$ and $\mr{PSp}_{n}$. This is done by explicitly comparing the coalgebras and comodules that take part in the Cotor description on both sides. In \Cref{additive} we then record how to deduce the degeneration of the above spectral sequences in the case $n=4m+2$ from Totaro's inequality and the results of Toda \cite{toda1987cohomology} for the singular cohomology. 
	\subsection{Projective linear group}\label{ssect:projective}

	Let $n\geq 1$ be an integer and $p$ be a prime number. We have isomorphisms
	$$H^*_{\on{H}}(B\G_{\on{m}}/\F_p)\simeq \F_p[x_2],\qquad H^*_{\on{H}}(B\mr{GL}_n/\F_p)\simeq \F_p[c_1,\dots,c_n],$$ where $x_2\in H^{1,1}_\Hdg(B\G_{\on{m}}/\F_p)$ and $c_i\in H^{i,i}_\Hdg(B\mr{GL}_n/\F_p)$. If $\iota\colon T_n\hookrightarrow \on{GL}_n$ is the diagonal maximal torus, then by the K\"unneth formula $H^*_{\on{H}}(BT_n/\F_p)$ is a polynomial ring in $n$ generators $t_1,\dots,t_n$ of bidegree $(1,1)$, and the pullback map
	\[B\iota^*\colon H^*_{\on{H}}(B\mr{GL}_n/\F_p)\tto H^*_{\on{H}}(BT_n/\F_p)\simeq \F_p[t_1,\ldots,t_n]\]
	is injective and identifies $c_i$ with the $i$-th symmetric function on variables $t_i$; see \cite[End of proof of Theorem 9.2]{totaro2018hodge}.
	
	If we identify $\G_{\on{m}}$ with the center of $\on{GL}_n$, the multiplication map $\G_{\on{m}}\times \on{GL}_n\to \on{GL}_n$ is a group homomorphism, and so induces a morphism of stacks $B\G_{\on{m}}\times B\mr{GL}_n\to B\mr{GL}_n$. By the K\"unneth formula, we obtain a ring homomorphism 
	\begin{equation}\label{coaction-gl-eq}\phi\colon H^*_{\on{H}}(B\mr{GL}_n/\F_p)\to H^*_{\on{H}}(B\G_{\on{m}}/\F_p)\otimes H^*_{\on{H}}(B\mr{GL}_n/\F_p),\end{equation}
	which endows $H^*_{\on{H}}(B\mr{GL}_n/\F_p)$ with the structure of an $H^*_{\on{H}}(B\G_{\on{m}}/\F_p)$-comodule algebra (as discussed in  \Cref{main-comodule-algebra}).
	
	Let us explicitly describe the coalgebra structure on $H^*_{\on{H}}(B\G_{\on{m}}/\F_p)$. The restriction of the product map $\G_{\on{m}}\times\G_{\on{m}}\to \G_{\on{m}}$ to $\G_{\on{m}}\times\set{1}$ and $\set{1}\times \G_{\on{m}}$ is the identity, hence the comultiplication map 
 %From the symmetry and compatibility with counit map one sees that the comultiplication 
	$$
	\Delta\colon H^2_{\on{H}}(B\G_{\on{m}}/\F_p) \to H^2_{\on{H}}(B\G_{\on{m}}\times B\G_{\on{m}}/\F_p) \xymatrix{\ar[r]^{\text{K\"unneth}}_\sim&} H^2_{\on{H}}(B\G_{\on{m}}/\mbb F_p)\oplus H^2_{\on{H}}(B\G_{\on{m}}/\mbb F_p)$$
	is the diagonal embedding. It then follows that the Hopf algebra structure on $H^*_{\on{H}}(B\G_{\on{m}}/\F_p)\simeq \F_p[x_2]$ is the unique one given by $\Delta(x_2)=x_2\otimes 1 + 1\otimes x_2$, namely 
	$$
	\Delta(x_2^n)=\sum_{i=0}^n \binom{n}{i}x_2^{n-i}\otimes x_2^i.
	$$

	Since $H^{*,*}_{\on{H}}(B\mr{GL}_n/\F_p)$ is concentrated in bidegrees $(i,i)$ and is a polynomial algebra, the Hodge-de Rham spectral sequence for $B\mr{GL}_n$ degenerates and induces an isomorphism \[H^*_{\on{H}}(B\mr{GL}_n/\F_p)\simeq H^*_{\on{dR}}(B\mr{GL}_n/\F_p).\] Thus the previous discussion also applies to $H^*_{\on{dR}}(B\mr{GL}_n/\F_p)$. Below, in this section we will be giving proofs in the case of Hodge cohomology, but the same arguments then apply to de Rham context.
	
	We now write \[H^*_{\on{sing}}(B\C^{\times};\F_p)\simeq \F_p[x_2^{\on{top}}],\qquad H^*_{\on{sing}}(B\mr{GL}_n(\C);\F_p)\simeq \F_p[c_1^{\on{top}},\dots,c_n^{\on{top}}],\]
	where $|x^{\on{top}}_2|=2$ and $|c_i^{\on{top}}|=2i$. We naturally regard $H^*_{\on{sing}}(B\mr{GL}_n(\mbb C);\F_p)$ as a $H^*_{\on{sing}}(B\C^{\times};\F_p)$-comodule algebra, and let $\phi^{\on{top}}$ be the coaction map. We note that the coalgebra structure on $H^*_{\on{sing}}(B\C^{\times};\F_p)$ is the unique one given by $\Delta(x^{\on{top}}_2)=x^{\on{top}}_2\otimes 1 + 1\otimes x^{\on{top}}_2$.
	
	It is easy to see that there is an isomorphism of Hopf algebras $$H^*_{\on{H}}(B\G_{\on{m}}/\F_p)\simeq H^*_{\on{sing}}(B\C^{\times};\F_p)$$ defined by sending $x_2$ to $x_2^{\on{top}}$. Lemma below shows that it can be extended to an isomorphism between the comodule-algebras $H^*_{\on{H}}(B\mr{GL}_n/\F_p)$ and $H^*_{\on{sing}}(B\mr{GL}_n(\mbb C);\F_p)$, inducing in particular an isomorphism of cotorsion groups of our interest. 
	\begin{lemma}\label{comparison}
		Consider the $H^*_{\on{H}}(B\G_{\on{m}}/\F_p)$-comodule algebra $H^*_{\on{H}}(B\mr{GL}_n/\F_p)=\F_p[c_1,\dots,c_n]$, with the coaction $\phi$ of (\ref{coaction-gl-eq}).
		
		(a) For all $i=1,\dots,n$, we have \[\phi(c_i)=\sum_{i_1+i_2=i}\binom{n-i_2}{i_1}x_2^{i_1}\otimes c_{i_2},\]
		where we use the convention that $c_0\coloneqq 1$.

		(b) The isomorphisms
		\[H^*_{\on{H}}(B\G_{\on{m}}/\F_p)\xrightarrow{\sim}H^*_{\on{sing}}(B\C^{\times};\F_p),\qquad H^*_{\on{H}}(B\mr{GL}_n/\F_p)\xrightarrow{\sim}H^*_{\on{sing}}(B\mr{GL}_n(\C);\F_p)\]
		given by sending $x_2\mapsto x_2^{\on{top}}$ and $c_i\mapsto c_i^{\on{top}}$ induce an isomorphism of bigraded algebras
		\[\on{Cotor}^*_{H^*_{\on{H}}(B\G_{\on{m}}/\F_p)}(\F_p, H^*_{\on{H}}(B\mr{GL}_n/\F_p)) \simeq \on{Cotor}^*_{H^*_{\on{sing}}(B\C^{\times};\F_p)}(\F_p, H^*_{\on{sing}}(B\mr{GL}_n(\C);\F_p)).\]
		
		Entirely analogous statements hold with Hodge cohomology replaced by de Rham cohomology.
	\end{lemma}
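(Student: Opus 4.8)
The plan is to compute the coaction $\phi$ explicitly by restricting everything to the diagonal maximal torus $T_n\subset\mr{GL}_n$, and then to observe that the same computation carries over verbatim to singular cohomology.

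For part (a), I would first exploit that $\G_{\on{m}}$ is central in $\mr{GL}_n$: the ``multiplication by a central element'' maps $\G_{\on{m}}\times T_n\to T_n$ and $\G_{\on{m}}\times\mr{GL}_n\to\mr{GL}_n$ are then group homomorphisms fitting into a commutative square with the vertical maps $1\times\iota$ and $\iota$. Applying $B(-)$, Hodge cohomology and the K\"unneth formula, this yields $(\mr{id}\otimes B\iota^*)\circ\phi=\phi_{T_n}\circ B\iota^*$, where $\phi_{T_n}$ is the coaction of $H^*_{\on{H}}(B\G_{\on{m}}/\F_p)$ on $H^*_{\on{H}}(BT_n/\F_p)=\F_p[t_1,\dots,t_n]$ induced by the analogous multiplication map. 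Next I would compute $\phi_{T_n}$: under the identification $BT_n=(B\G_{\on{m}})^n$ the generator $t_i$ is the pull-back of $x_2$ along the $i$-th projection, and the composite of $\G_{\on{m}}\times T_n\to T_n$ with the $i$-th projection $T_n\to\G_{\on{m}}$ is the multiplication map $\G_{\on{m}}\times\G_{\on{m}}\to\G_{\on{m}}$ precomposed with $\mr{id}\times\chi_i$; together with the formula $\Delta(x_2)=x_2\otimes 1+1\otimes x_2$ established above, this gives $\phi_{T_n}(t_i)=x_2\otimes 1+1\otimes t_i$. Since $B\iota^*$ identifies $c_i$ with the elementary symmetric polynomial $e_i(t_1,\dots,t_n)$, I would then conclude $(\mr{id}\otimes B\iota^*)(\phi(c_i))=e_i(x_2\otimes 1+1\otimes t_1,\dots,x_2\otimes 1+1\otimes t_n)$.

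The remaining input is a purely combinatorial identity: expanding $e_i(y+t_1,\dots,y+t_n)=\sum_{|S|=i}\prod_{j\in S}(y+t_j)$ over subsets $T\subseteq S$, swapping the order of summation and counting the $i$-subsets containing a fixed $T$, one gets \[e_i(y+t_1,\dots,y+t_n)=\sum_{i_1+i_2=i}\binom{n-i_2}{i_1}\,y^{i_1}\,e_{i_2}(t_1,\dots,t_n),\] with the convention $e_0=1$. Plugging this in, using that $B\iota^*$ is injective and that $H^*_{\on{H}}(B\G_{\on{m}}/\F_p)$ is free over $\F_p$ (so $\mr{id}\otimes B\iota^*$ is injective), part (a) follows.

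For part (b), I would observe that every ingredient of the proof of (a) has a formally identical classical counterpart for the singular cohomology of $B\C^\times$ and $B\mr{GL}_n(\C)$ — the polynomial descriptions, injectivity of restriction to the torus with $c_i^{\on{top}}\mapsto e_i$, the coproduct $\Delta(x_2^{\on{top}})=x_2^{\on{top}}\otimes 1+1\otimes x_2^{\on{top}}$, and compatibility of the coaction with the torus — so the same argument gives $\phi^{\on{top}}(c_i^{\on{top}})=\sum_{i_1+i_2=i}\binom{n-i_2}{i_1}(x_2^{\on{top}})^{i_1}\otimes c_{i_2}^{\on{top}}$. Hence the ring isomorphisms $x_2\mapsto x_2^{\on{top}}$, $c_i\mapsto c_i^{\on{top}}$ intertwine $\phi$ with $\phi^{\on{top}}$ and (the coproducts already matching) assemble into an isomorphism of comodule algebras over the Hopf-algebra isomorphism $H^*_{\on{H}}(B\G_{\on{m}}/\F_p)\simeq H^*_{\on{sing}}(B\C^\times;\F_p)$; functoriality of the cobar construction together with its external cup product then yields the asserted isomorphism of $\on{Cotor}$-algebras, the bigrading causing no ambiguity because all the Hodge cohomology groups in sight sit in bidegrees $(a,a)$, so the total Hodge grading matches the singular one. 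The de Rham statement follows from the degeneration of the Hodge--de Rham spectral sequences of $B\G_{\on{m}}$ and $B\mr{GL}_n$, already recorded above. The one step with genuine content — though not a serious difficulty — is the identification of $\phi$ by reduction to the torus; everything else is formal or classical on the topological side.
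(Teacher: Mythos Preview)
Your proposal is correct and follows essentially the same route as the paper: reduce the coaction to the diagonal torus via the injective restriction $B\iota^*$, compute $\phi_{T_n}(t_i)=x_2\otimes 1+1\otimes t_i$, expand the elementary symmetric functions, and then for (b) match the resulting formula with the topological one to obtain an isomorphism of comodule algebras and hence of $\on{Cotor}$-algebras. The only cosmetic difference is that the paper cites Toda's \cite[Proposition 3.2]{toda1987cohomology} for the topological coaction formula rather than rederiving it, and you supply the combinatorial identity for $e_i(y+t_1,\dots,y+t_n)$ explicitly where the paper just writes ``elementary calculations.''
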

	
	\begin{proof}
		(a) The proof is analogous to that of \cite[Proposition 3.2]{toda1987cohomology}. We have a commutative diagram
		\[
		\xymatrix{
			H^*_{\on{H}}(B\mr{GL}_n/\F_p) \ar[r]^(.36){\phi} \ar@{^{(}->}[d]^{B\iota^*}  & H^*_{\Hdg}(B\mr{GL}_n/\F_p)\otimes H^*_\Hdg(B\G_{\on{m}}/\F_p) \ar@{^{(}->}[d]^{1\otimes B\iota^*}  \\
			H^*_{\on{H}}(BT_n/\F_p) \ar[r]^(.36)\phi  & H_{\on{H}}^*(BT_n/\F_p)\otimes H_{\on{H}}^*(B\G_{\on{m}}/\F_p),
			}
		\]
		where $\phi'$ is the coaction map for $T_n$. We have $H^*_{\on{H}}(BT_n/\F_p)=\F_p[t_1,\dots,t_n]$, where $t_i$ has degree $2$ for all $i$. Then \[\phi'(t_i)=1\otimes t_i+x_2\otimes 1,\qquad B\iota^*(\sum c_i)=\prod_{i=1}^n(1+t_i).\]
		Elementary calculations show that
		%\[(1\otimes B\iota)^*\phi(\sum c_i)=\prod(1\otimes 1+t\otimes 1 +1\otimes t_i)=\sum (1+t)^{n-j}\otimes B\iota^*c_j=\]   
		\[(1\otimes B\iota)^*\phi(\sum c_i)=(1\otimes B\iota)^*\sum_{j=0}^n\sum_{i=0}^j\binom{n-j}{i} x_2^i\otimes c_j.\]
		Now (a) follows from the injectivity of $(1\otimes B\iota)^*$.
		
		(b) By (a) and \cite[Proposition 3.2]{toda1987cohomology}, the stated isomorphisms induce a commutative square 
		\[
		\xymatrix{
			H^*_{\on{H}}(B\mr{GL}_n/\F_p) \ar[r]^(.37){\phi} \ar[d]^\wr  & H^*_{\on{H}}(B\G_{\on{m}}/\F_p)\otimes H^*_{\on{H}}(B\mr{GL}_n/\F_p) \ar[d]^\wr \\
			H^*_{\on{sing}}(B\mr{GL}_n(\C);\F_p) \ar[r]^(.38){\phi^{\on{top}}} & H^*_{\on{sing}}(B\C^{\times};\F_p)\otimes H^*_{\on{sing}}(B\mr{GL}_n(\C);\F_p),
 }
		\]
	where vertical maps are the ones in the statement of the proposition. This shows that the corresponding comodule-algebras are equivalent, and, consequently, one also has an isomorphism of the corresponding $\on{Cotor}$-algebras.
	\end{proof}

	\subsection{Projective symplectic group}\label{sec:PSp}
	
	By \cite[Theorem 9.2]{totaro2018hodge}, we have an isomorphism \[H^*_{\on{H}}(B\mr{Sp}_{2n}/\F_2)=\F_2[q_1,\dots,q_n],\]
	where $q_i$ has bidegree $(2i,2i)$ for $i=1,\dots,n$.
	By \cite[Proposition 10.1]{totaro2018hodge}, we have an isomorphism of Hopf algebras
	%\[H^*(B\mu_2/\F_2)\simeq \Lambda_2,\]
	%where $\Lambda_2$ was defined in (\ref{lambda}). 
	\[H^*_{\on{H}}(B\mu_2/\F_2)\simeq \F_2[x_1,x_2]/(x_1^2),\]
	where $x_1\in H^{1,0}_\Hdg(B\mu_2/\F_2)$ and $x_2\in H^{1,1}_\Hdg(B\mu_2/\F_2)$. If we identify $\mu_2$ with the center of $\on{Sp}_{2n}$, the multiplication map $\mu_2\times \on{Sp}_{2n}\to \on{Sp}_{2n}$ induces a ring homomorphism.
	Let 
	\begin{equation}\label{coaction-sp-eq}
	\phi\colon H^*_{\on{H}}(B\mr{Sp}_{2n}/\F_2)\tto H^*_{\on{H}}(B\mu_2/\F_2)\otimes H^*_{\on{H}}(B\mr{Sp}_{2n}/\F_2).
	\end{equation}
	We can view $H^*_{\on{H}}(B\mr{Sp}_{2n}/\F_2)$ as an $H^*_{\on{H}}(B\mu_2/\F_2)$-comodule algebra, with coaction map $\phi$.
	
	\begin{lemma}\label{restrict}
		Let $j\colon\on{Sp}_{2n}\hookrightarrow \on{GL}_{2n}$ be the tautological inclusion. Then
		\[Bj^*(c_{2h})=q_h,\qquad Bj^*(c_{2h+1})=0\]
		for all $h=1,\dots,n$.	In particular, $Bj^*$ is injective.
	\end{lemma}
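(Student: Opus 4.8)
The plan is to verify both formulas by restricting to maximal tori, where they reduce to an elementary identity between symmetric functions. First I would choose compatible maximal tori: the diagonal torus $\iota\colon T_{2n}\hookrightarrow \mr{GL}_{2n}$ as in \Cref{ssect:projective}, and the standard torus $\iota'\colon S_n\hookrightarrow \mr{Sp}_{2n}$, embedded so that $j$ restricts to the map $S_n\hookrightarrow T_{2n}$ given by $\mr{diag}(a_1,\dots,a_n)\mapsto \mr{diag}(a_1,\dots,a_n,a_1^{-1},\dots,a_n^{-1})$. Applying $B(-)$ and then Hodge cohomology produces a commutative square whose top arrow is $Bj^*$, whose bottom arrow is the restriction $r\colon H^*_\Hdg(BT_{2n}/\F_2)\to H^*_\Hdg(BS_n/\F_2)$ along this torus inclusion, and whose vertical arrows are $B\iota^*$ and $B\iota'^*$.

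Next I would collect the two inputs. From \Cref{ssect:projective}, $B\iota^*$ is injective and sends $\sum_i c_i$ to $\prod_{i=1}^{2n}(1+t_i)$. For $\mr{Sp}_{2n}$ the prime $2$ is not a torsion prime, so by \cite[Theorem 9.2]{totaro2018hodge} the restriction $B\iota'^*$ is also injective, with image the subring of Weyl-invariants; since the Weyl group of $\mr{Sp}_{2n}$ acts on $H^*_\Hdg(BS_n/\F_2)=\F_2[s_1,\dots,s_n]$ (each $s_i$ of bidegree $(1,1)$) by permutations and sign changes, this invariant subring is the polynomial ring on $\sigma_h(s_1^2,\dots,s_n^2)$, $h=1,\dots,n$. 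I will normalize the generators $q_h$ of $H^*_\Hdg(B\mr{Sp}_{2n}/\F_2)$ to be the (unique) elements with $B\iota'^*(q_h)=\sigma_h(s_1^2,\dots,s_n^2)$.

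With this in place the computation is immediate. Under $r$ one has $t_i\mapsto s_i$ and $t_{n+i}\mapsto -s_i$ for $1\le i\le n$, since the first Chern class is additive in the character lattice; hence $r(B\iota^*(\sum_i c_i))=\prod_{i=1}^n(1+s_i)(1-s_i)=\prod_{i=1}^n(1+s_i^2)$ over $\F_2$. The bidegree-$(2h,2h)$ component of the right-hand side equals $\sigma_h(s_1^2,\dots,s_n^2)=B\iota'^*(q_h)$, and the right-hand side has no component in any bidegree $(j,j)$ with $j$ odd; so $r(B\iota^*(c_{2h}))=B\iota'^*(q_h)$ and $r(B\iota^*(c_{2h+1}))=0$. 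Commutativity of the square together with the injectivity of $B\iota'^*$ then yield $Bj^*(c_{2h})=q_h$ and $Bj^*(c_{2h+1})=0$; in particular the image of $Bj^*$ contains the algebra generators $q_1,\dots,q_n$, so $Bj^*$ is surjective. I do not expect a genuine obstacle here: the only points needing care are the injectivity of $B\iota'^*$ (which is what allows the identities to be checked on the torus) and pinning down the normalization of the $q_h$, since for a different choice of polynomial generators the identity $Bj^*(c_{2h})=q_h$ would only hold modulo decomposables.
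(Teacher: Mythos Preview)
Your argument is correct and follows essentially the same route as the paper: reduce to compatible maximal tori, use that the restriction maps from $B\mr{GL}_{2n}$ and $B\mr{Sp}_{2n}$ to their tori are injective, and then verify the elementary symmetric-function identity $\prod_i(1+t_i)\mapsto\prod_i(1+s_i^2)$ over $\F_2$. The paper presents the torus of $\mr{Sp}_{2n}$ as the quotient $H^*_{\Hdg}(BT_{2n})/(t_i+t_{n+i})$ and cites \cite{chaput2010adjoint} for the identification $B\iota'^*(q_h)=\sigma_h(t_1^2,\dots,t_n^2)$, but this amounts to exactly your normalization.

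One remark: the final clause of the lemma (``$Bj^*$ is injective'') is a slip in the paper---the map visibly kills $c_1$. What the formulas actually give, and what is used later (in the proof of \Cref{comparison-sp}(a)), is that $Bj^*$ is \emph{surjective}, which is precisely what you concluded.
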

	
	\begin{proof}
		Let $T_{2n}\subset \on{GL}_{2n}$ be the diagonal maximal torus, and let $T'_{2n}\coloneqq j^{-1}(T_{2n})$. Then $T'_{2n}$ is a maximal torus of $\on{Sp}_{2n}$, and $j$ induces a commutative square
		\[
		\xymatrix{
			H^*_{\on{H}}(B\mr{GL}_{2n}/\F_2) \ar[r]^{Bj^*} \ar@{^{(}->}[d] &  H^*_{\on{H}}(B\mr{Sp}_{2n}/\F_2) \ar@{^{(}->}[d] \\ 
			H^*_{\on{H}}(BT_{2n}/\F_2)\ar[r] &  H^*_{\on{H}}(BT'_{2n}/\F_2),
		}
		\]
		where the vertical are injective by \cite[End of proof of Theorem 9.2]{totaro2018hodge}. For $i=1,\dots,n$, let  $t_i\in H^2(BT_{2n}/\F_2)$ be the generator corresponding to the $i$-th coordinate of $T_{2n}$. Then the bottom horizontal arrow gives an identification \[H^*_{\on{H}}(BT'_{2n}/\F_2)\simeq  H^*_{\on{H}}(BT_{2n}/\F_2)/(t_1+t_{n+1}, t_2+t_{n+2},\dots, t_n+t_{2n}).\] 
		The conclusion follows from the fact that $c_i$ is the $i$-th symmetric function of $t_1,\dots,t_n$, and that under the previous identification $q_i$ is the $i$-th symmetric function of $t_1^2,\dots,t^2_n$; see \cite[Proof of Theorem 6.2.2]{chaput2010adjoint}.
	\end{proof}
	
	Let us also identify the coalgebra structure on $H^*_{\on{H}}(B\mu_2/\F_2)$. Similarly to the $\mbb G_m$-case one can see that the comultiplication map 
	$$
	\Delta\colon H^1_{\on{H}}(B\mu_2/\F_p) \tto H^1_{\on{H}}(B\mu_2\times B\mu_2/\F_p) \xymatrix{\ar[r]^{\text{K\"unneth}}_\sim&} H^1_{\on{H}}(B\mu_2/\mbb F_p)\oplus H^1_{\on{H}}(B\mu_2/\mbb F_p)$$
	is the diagonal embedding, so $\Delta(x_1)=x_1\otimes 1 + 1\otimes x_1$. The class $x_2\in H^*_{\on{H}}(B\mu_2/\F_2)$ is the pullback of the generator of $H^*_{\on{H}}(B\G_{\on{m}}/\F_2)$ (see the proof of \cite[Proposition 10.1]{totaro2018hodge}) and so we also have $\Delta(x_2)=x_2\otimes 1 + 1\otimes x_2$. This defines a Hopf-algebra structure on  $H^*_{\on{H}}(B\mu_2/\F_2)\simeq \mbb F_2[x_1,x_2]/x_1^2$ uniquely. 
	
	Recall that $H^*_{\mr{sing}}(B\mbb Z/2;\mbb F_2)\simeq \mbb F_2[z]$, where $|z|=1$. The natural Hopf algebra structure here is uniquely defined by $\Delta(z)=z\otimes 1+ 1\otimes z$.
	\begin{rmk}\label{rem:Sp case iso as coalgebras}
	Note that $H^*_{\on{H}}(B\mu_2/\F_2)$ is \textit{not} isomorphic to $H^*_{\mr{sing}}(B\mbb Z/2;\mbb F_2)$ as an algebra. Nevertheless, the unique $\mbb F_2$-linear map $\psi\colon H^*_{\on{H}}(B\mu_2/\F_2) \to H^*_{\mr{sing}}(B\mbb Z/2;\mbb F_2)$ which sends a $x_2^i$ to $z^{2i}$ and $x_1x_2^i$ to $z^{2i+1}$ is easily checked to be an isomorphism of \textit{coalgebras}. 
	\end{rmk}
	
	Recall that
		\[H^*_{\on{sing}}(B\mr{Sp}_{2n};\F_2)=\F_2[q^{\on{top}}_1,\dots,q^{\on{top}}_n],\]
		where $q^{\on{top}}_i$ has degree $4i$  for $i=1,\dots,n$.
	
	\begin{lemma}\label{comparison-sp}
		Consider $H^*_{\on{H}}(B\mr{Sp}_{2n}/\F_2)=\F_2[q_1,\dots,q_n]$, with the coaction $\phi$ of (\ref{coaction-sp-eq}). 
		
		(a) For all $i=1,\dots,n$, we have \[\phi(q_i)=\sum_{i_1+i_2=i} \binom{n-i_2}{i_i}x_2^{2i_1}\otimes q_{i_2}.\]
		
		(b) The isomorphism of coalgebras $\psi\colon H^*_{\on{H}}(B\mu_2/\F_2) \to H^*_{\mr{sing}}(B\mbb Z/2;\mbb F_2) $ together with the map of comodules $H^*_{\on{H}}(B\mr{Sp}_{2n}/\F_2) \to  H^*_{\mr{sing}}(B\mr{Sp}_{2n};\mbb F_2)$ sending $q_i$ to $q^{\on{top}}_i$ induces an isomorphism of bigraded $\F_2$-vector spaces
		\[\on{Cotor}^*_{H^*_{\on{H}}(B\mu_2/\F_2)}(\F_2, H^*_{\on{H}}(B\mr{Sp}_{2n}/\F_2))=\on{Cotor}^*_{H^*_{\on{sing}}(B(\Z/2\Z);\F_2)}(\F_2, H^*_{\on{sing}}(B\mr{Sp}_{2n};\F_2)).\]
	\end{lemma}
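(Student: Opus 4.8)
The plan is to imitate the proof of \Cref{comparison}, feeding in \Cref{restrict} and the elementary congruence $\binom{2a}{2b}\equiv\binom{a}{b}\pmod{2}$ (immediate from $(1+t)^{2a}\equiv(1+t^2)^a$ over $\F_2$). For part (a), the key point is that the tautological inclusion $j\colon\mr{Sp}_{2n}\hookrightarrow\mr{GL}_{2n}$ carries the center $\mu_2$ of $\mr{Sp}_{2n}$ (the scalars $\pm I$) into the center $\G_{\on{m}}$ of $\mr{GL}_{2n}$, so that the multiplication actions $\mu_2\times\mr{Sp}_{2n}\to\mr{Sp}_{2n}$ and $\G_{\on{m}}\times\mr{GL}_{2n}\to\mr{GL}_{2n}$ are compatible along $j$. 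Applying Hodge cohomology and the K\"unneth formula produces a commutative square whose horizontal arrows are the coaction $\phi$ of (\ref{coaction-sp-eq}) and the $\G_{\on{m}}$-coaction $\phi_{\mr{GL}}$ on $H^*_{\on{H}}(B\mr{GL}_{2n}/\F_2)$, and whose vertical arrows are $Bj^*$ and $r\otimes Bj^*$, where $r\colon H^*_{\on{H}}(B\G_{\on{m}}/\F_2)\to H^*_{\on{H}}(B\mu_2/\F_2)$ is the restriction sending $x_2\mapsto x_2$. Evaluating this square on $c_{2i}$, substituting the formula $\phi_{\mr{GL}}(c_{2i})=\sum_{i_1+i_2=2i}\binom{2n-i_2}{i_1}x_2^{i_1}\otimes c_{i_2}$ of \Cref{comparison}(a) (with $n$ replaced by $2n$), and using that $Bj^*(c_{i_2})$ equals $q_{i_2/2}$ for $i_2$ even and $0$ otherwise (\Cref{restrict}), only the summands with $i_1,i_2$ both even survive; writing $i_2=2l$, reducing $\binom{2n-2l}{2(i-l)}\equiv\binom{n-l}{i-l}\pmod{2}$, and reindexing then yields the claimed formula.

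For part (b), I would first record the topological counterpart of (a). Running the same argument on the topological side --- using $c_{2h}^{\on{top}}\mapsto q_h^{\on{top}}$, $c_{2h+1}^{\on{top}}\mapsto 0$ under $\mr{Sp}_{2n}(\C)\hookrightarrow\mr{GL}_{2n}(\C)$, Toda's formula \cite[Proposition 3.2]{toda1987cohomology} for the $\mr{GL}_{2n}(\C)$-coaction, and the fact that $x_2^{\on{top}}$ restricts to $z^2$ along $H^*_{\on{sing}}(B\C^\times;\F_2)\to H^*_{\on{sing}}(B\Z/2;\F_2)$ --- one obtains $\phi^{\on{top}}(q_i^{\on{top}})=\sum_{i_1+i_2=i}\binom{n-i_2}{i_1}z^{4i_1}\otimes q_{i_2}^{\on{top}}$. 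Since $\psi(x_2^{2i_1})=z^{4i_1}$, comparison with the formula of (a) shows that the ring isomorphism $g\colon H^*_{\on{H}}(B\mr{Sp}_{2n}/\F_2)=\F_2[q_1,\dots,q_n]\to\F_2[q_1^{\on{top}},\dots,q_n^{\on{top}}]=H^*_{\on{sing}}(B\mr{Sp}_{2n};\F_2)$ sending $q_i\mapsto q_i^{\on{top}}$ satisfies $\phi^{\on{top}}\circ g=(\psi\otimes g)\circ\phi$ on the generators $q_i$.

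The one genuine subtlety --- and the reason we assert only an isomorphism of bigraded vector spaces --- is that $\psi$ is \emph{not} an algebra map, so one cannot argue through an ``equivalence of comodule algebras'' as in \Cref{comparison}(b). The remedy is that, by part (a), the coaction $\phi$ takes values in the subalgebra $\F_2[x_2]\otimes H^*_{\on{H}}(B\mr{Sp}_{2n}/\F_2)$, on which $\psi\otimes g$ \emph{is} a ring homomorphism, being the tensor product of the ring maps $\psi|_{\F_2[x_2]}$ and $g$. Hence both $\phi^{\on{top}}\circ g$ and $(\psi\otimes g)\circ\phi$ are ring homomorphisms out of the polynomial ring $\F_2[q_1,\dots,q_n]$ agreeing on its generators, so they coincide: $g$ is a morphism of comodules over the coalgebra isomorphism $\psi$. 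Finally, $\on{Cotor}^*_{\Lambda}(\F_2,M)$ is computed by the cobar complex built from tensor powers of $\cl{\Lambda}$ and $M$, whose differential is assembled solely from the comultiplication of $\Lambda$ and the coaction on $M$; since $\psi$ is a coalgebra isomorphism carrying the augmentation ideal of $H^*_{\on{H}}(B\mu_2/\F_2)$ isomorphically onto that of $H^*_{\on{sing}}(B\Z/2;\F_2)$ and $g$ is bijective, the pair $(\psi,g)$ induces an isomorphism of cobar complexes in each cohomological degree, hence an isomorphism on $\on{Cotor}^*$, compatible with the internal grading because $\psi$ and $g$ preserve total degree. The only real work is this bookkeeping around the non-multiplicativity of $\psi$; everything else transports verbatim from the $\mr{GL}_n$ case.
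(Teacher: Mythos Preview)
Your proof is correct and follows essentially the same route as the paper: for (a), both you and the paper use the commutative square coming from $\mr{Sp}_{2n}\hookrightarrow\mr{GL}_{2n}$ and $\mu_2\hookrightarrow\G_{\on{m}}$, together with \Cref{comparison}(a) and \Cref{restrict}, to read off the coaction on $q_i$; for (b), both compare with Toda's topological formula and transport the comodule structure along the coalgebra isomorphism $\psi$. The only minor differences are that the paper cites Toda's \cite[Proposition~3.4]{toda1987cohomology} directly for $\phi^{\on{top}}(q_i^{\on{top}})$ whereas you re-derive it, and the paper disposes of the $x_1$-terms in $\phi(q_i)$ by a direct bigrading argument whereas you get this automatically from the factorization through $H^*_{\on{H}}(B\G_{\on{m}})$. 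Your explicit handling of the non-multiplicativity of $\psi$ --- observing that $\phi$ lands in $\F_2[x_2]\otimes H^*_{\on{H}}(B\mr{Sp}_{2n})$, where $\psi\otimes g$ \emph{is} multiplicative, so the comodule identity can be checked on generators --- is a point the paper glosses over, and makes your argument slightly more careful.
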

	
	\begin{proof}
		(a) Since $\phi$ respects the gradings and $H^*_{\on{H}}(B\mr{Sp}_{2n}/\F_2)$ is concentrated in even degrees, the  $x_2^ix_1$-component of $\phi(q_h)$ must be zero for all $h$. To compute the $x_2^i$-components, observe that we have the following commutative square
		\[
		\xymatrix{
			H^*_{\on{H}}(B\mr{Sp}_{2n}/\F_2) \ar@{^{(}->}[r]^{Bj^*} \ar[d] & H^*_{\on{H}}(B\mr{GL}_{2n}/\F_2) \ar[d]  \\
			H^*_{\on{H}}(B\mu_2/\F_2) \ar[r] &  H^*_{\on{H}}(B\G_{\on{m}}/\F_2),
	}
		\]
		where $\G_{\on{m}}$ is viewed as the center of $\on{GL}_{2n}$ and $\mu_2$ as the center of $\on{Sp}_{2n}$. The conclusion follows from \Cref{comparison}(a) and \Cref{restrict}.
		
		(b) We have
		\[H^*_{\on{sing}}(B\mr{Sp}_{2n};\F_2)=\F_2[q^{\on{top}}_1,\dots,q^{\on{top}}_n],\]
		where $q^{\on{top}}_i$ has degree $4i$  for $i=1,\dots,n$. The coaction map
		\[\phi^{\on{top}}\colon H^*_{\on{sing}}(B\mr{Sp}_{2n};\F_2)\to H^*_{\on{sing}}(B(\Z/2\Z);\F_2) \otimes H^*_{\on{sing}}(B\mr{Sp}_{2n}/\F_2)\]
		has been computed in \cite[Proposition 3.4]{toda1987cohomology} and agrees with the formula that we have proved in part (a). Recall (\Cref{rem:Sp case iso as coalgebras}) that we have an isomorphism of coalgebras
		\[\psi\colon H^*_{\on{H}}(B\mu_2/\F_2)\xrightarrow{\sim} H^*_{\on{sing}}(B(\Z/2\Z);\F_2).\]
		
		Comparison of (a) with Toda's formula for $\phi^{\on{top}}$ yields the commutativity of the following diagram of graded linear maps
		\[
		\xymatrix{
			H^*_{\on{H}}(B\mr{Sp}_{2n}/\F_2) \ar[r]^(.38)\phi \ar[d]^\wr  & H^*_{\on{H}}(B\mu_2/\F_2)\otimes H^*_{\on{H}}(B\mr{Sp}_{2n}/\F_2) \ar[d]^\wr \\
			H^*_{\on{sing}}(B\mr{Sp}_{2n};\F_2) \ar[r]^(.37){\phi^{\on{top}}} & H^*_{\on{sing}}(B(\Z/2\Z);\F_2)\otimes H^*_{\on{sing}}(B\mr{Sp}_{2n};\F_2), 
}
		\]
		where the vertical maps are induced by $\psi$ and the graded algebra isomorphism 
		\[H^*_{\on{H}}(B\mr{Sp}_{2n}/\F_2)\ism  H^*_{\on{sing}}(B\mr{Sp}_{2n};\F_2),\qquad q_h\mapsto q_h^{\on{top}}.\]
		This induces the desired isomorphism between $\on{Cotor}^*$ (as graded vector spaces).
	\end{proof}
	
	%\begin{rmk}
	%When $n=4m+2$, one could prove as for $\on{PSO}_{4m+2}$ that the isomorphism of \Cref{comparison-sp}(b) is also an isomorphism of algebras. However, since in \Cref{mainthm}(2) we only claim an isomorphism of graded vector spaces, we will not need this stronger result.
	%\end{rmk}
	
	\subsection{Proof of the additive part of Theorem \ref{mainthm} for general linear and symplectic groups}\label{additive}
	
	Let $G$ be one of the split reductive $\Z$-groups $\on{GL}_{4m+2}$ and $\on{Sp}_{4m+2}$, and let $\Gamma$ be the center of $G$.  We now prove that we have isomorphisms of $\Z$-graded $\F_2$-vector spaces
	\[H^*_{\on{H}}(B\cl{G}/\F_2)\simeq H^*_{\on{dR}}(B\cl{G}/\F_2)\simeq H^*_{\on{sing}}(B\cl{G}(\C);\F_2).\]
	In particular, this establishes \Cref{mainthm}(2).

	Consider the Eilenberg-Moore spectral sequence associated by \Cref{eilenberg-moore-easy} to the central short exact sequence
	\begin{equation}\label{ces}1\to \Gamma_{\F_2} \to {G}_{\F_2}\to \cl{G}_{\F_2}\to 1.\end{equation}
	By Lemmas \ref{comparison}(b) %, \ref{comparison-so}(b) 
	and \ref{comparison-sp}(b), its $E_2$ page is isomorphic to the $E_2$ page of the topological Eilenberg-Moore spectral sequence of \cite[(4.1)]{toda1987cohomology} for the fibration $\Gamma(\C)\to G(\C)\to \cl{G}(\C)$. Toda showed that the latter spectral sequence degenerates on the second page; see \cite[p. 99, line 8]{toda1987cohomology} for $G=\on{GL}_{4m+2}$ and \cite[p. 102, line 9]{toda1987cohomology} for $\on{Sp}_{4m+2}$. % \cite[p. 100, line 20]{toda1987cohomology} for $\on{SO}_{4m+2}$
	Therefore,
	\[ \dim_{\F_2}H^i_{\on{H}}(B\cl{G}/\F_2)\leq \dim_{\F_2}H^i_{\on{sing}}(B\cl{G}(\C);\F_2)\]
	for all $i\geq 0$ (where the equality holds for all $i$ if and only if the spectral sequence for Hodge cohomology also degenerates). On the other hand, by \cite[Theorem 5.3.6]{kubrak2021p-adic} and the existence of the Hodge-de Rham spectral sequence, we have 
	\[\dim_{\F_2}H^i_{\on{H}}(B\cl{G}/\F_2)\geq \dim_{\F_2}H^i_{\on{dR}}(B\cl{G}/\F_2)\geq  \dim_{\F_2}H^i_{\on{sing}}(B\cl{G}(\C);\F_2).\]
	We conclude that the algebraic Eilenberg-Moore spectral sequence associated to (\ref{ces}) degenerates and that we have equality of dimensions, as desired.
	\begin{rmk}
	    The same argument would also apply for $n\neq 4m+2$, if we knew that the Eilenberg-Moore spectral degenerates on the topological side in this case.
	\end{rmk}
	
	\section{Computation of Cotor}	
	\label{sec: Cotor}
	
	In this section we develop some tools for the explicit computation of $\on{Cotor}$ groups in subsequent sections. In \Cref{ssect:resolution_of_trivial} we construct an explicit resolution of the trivial comodule over $\Lambda_1\coloneqq H_{\Hdg}^*(B\mbb G_m/\mbb F_2)$ and $\Lambda_2\coloneqq H_{\Hdg}^*(B\mu_2/\mbb F_2)$ via the twisted tensor product construction (\Cref{constr:twisted tensor product}). Then, in \Cref{ssect:subalgebra_like_in_Toda} we extend a key computational lemma of Toda that allows to decompose a graded $\Lambda_1$-comodule algebra $A$ as a tensor product, to graded $\Lambda_2$-comodule algebras. These results will be %crucially 
 used in Sections \ref{sec: PGL} and \ref{sec: PSO} to compute the bigrading on the $E_2$ page of the Eilenberg-Moore spectral sequence for $\mr{PGL}_{4m+2}$ and $\mr{PSO}_{4m+2}$.

	\subsection{Totalizations}
	Let $k$ be a field and let $V^{*_1,*_2}$ be a $\Z^2$-graded vector space over $k$. Given a %(bi)homogeneous 
 bihomogeneous element $v\in V^{i_1,i_2}$ we denote $|v|\coloneqq (\deg_1(v),\deg_2(v))$ where $\deg_1(v)\coloneqq i_1$ and $\deg_2(v)\coloneqq i_2$. Given a bigraded vector space $V^{*_1,*_2}$, we can associate to it a $\Z$-graded vector space $V^{\mr{tot}}$ by putting $(V^{{\mr{tot}}})^i\coloneqq \oplus_{i_1+i_2=i} V^{i_1,i_2}$. For all $v\in V^{i_1,i_2}$, we denote by $|v|_{\mr{tot}}\coloneqq \deg_1(v)+\deg_2(v)=i_1+i_2$ the total degree of $v$. The \textit{totalization} functor $V^{*_1,*_2}\mapsto V^{{\mr{tot}}}$ from $\mbb Z^2$-graded vector spaces to $\mbb Z$-graded vector spaces is symmetric monoidal. In particular, having a $\Z^2$-graded coalgebra $C^{*_1,*_2}$ or a $\Z^2$-graded algebra $A^{*_1,*_2}$ the corresponding totalizations  $C^{\mr{tot}}$ and $A^{\mr{tot}}$ are naturally a  $\Z$-graded coalgebra and a $\Z$-graded algebra, respectively. Note that the underlying (ungraded) coalgebra $C$ and algebra $A$ stay the same:
	$$
	C\coloneqq \oplus_{i_1,i_2}C^{i_1,i_2}\simeq \oplus_i (C^{\mr{tot}})^i \quad \text{ and } \quad A \coloneqq \oplus_{i_1,i_2}A^{i_1,i_2}\simeq \oplus_i (A^{\mr{tot}})^i.
	$$

	\subsection{An explicit resolution of the trivial comodule as the twisted tensor product}\label{ssect:resolution_of_trivial}
	Let $k=\F_2$. We regard $\F_2$ as a trivial left comodule over any Hopf algebra over $\mbb F_2$ via the counit map. 
	
	\begin{constr}
	   Consider the primitively generated $\Z^2$-graded Hopf algebras
	\begin{equation}\label{lambda}\Lambda_1\coloneqq \F_2[x_2],\qquad \Lambda_2\coloneqq \F_2[x_1,x_2]/(x_1^2),\end{equation} 
	where the bigradings of the generators are $|x_1|\coloneqq (1,0)$ and $|x_2|\coloneqq(1,1)$. Their total gradings are $|x_i|_{\mr{tot}}=i$. As before, by  ``primitively generated,'' we mean that $x_1$ and $x_2$ are primitive elements, that is, the comultiplication map sends $x_i$ to $x_i\otimes 1+1\otimes x_i$ for $i=1,2$. 
	\end{constr} 
	
	\begin{rmk}\label{rem:incarnations of lambdas}
	We note that $\Lambda_1\simeq H^{*,*}_H(B\mbb G_m/\mbb F_2)$ and $\Lambda_1^{\mr{tot}}\simeq H^*_\Hdg(B\mbb G_m/\mbb F_2)\simeq H^*_\dR(B\mbb G_m/\mbb F_2)$, while $\Lambda_2\simeq H^{*,*}_H(B\mu_2/\mbb F_2)$ and  $\Lambda_1^{\mr{tot}}\simeq H^*_\Hdg(B\mu_2/\mbb F_2)\simeq H^*_\dR(B\mu_2/\mbb F_2)$.
	\end{rmk}
	
	%\begin{rmk}
	%    Forgetting the (bi)grading, $\Spec \Lambda_1$ can be identified with $\mbb G_a$, while $\Spec \Lambda_2$ can be viewed as $\mbb G_a\times \alpha_2$. The data of a grading can then be reinterpreted as the $\mbb G_m^2$-action which rescales the natural linear coordinates on $\mbb G_a$ and $\alpha_2$ by $t_1t_2$ and $t_1$, respectively. 
	%\end{rmk}
	
	Below we will often not distinguish between $\Lambda_i$ and $\Lambda_i^{\mr{tot}}$, considering the same coalgebra as $\mbb Z^2$ or $\mbb Z$-graded depending on the context. 
	
	\begin{constr}\label{constr:twisted cochains}\begin{enumerate} \item Consider a bigraded polynomial algebra 
	\[R_1\coloneqq\F_2[z_3,z_5,\dots,z_{2^h+1},\dots]\]
	with the bigradings of the variables given by $|z_{2^h+1}|\coloneqq (2^{h-1}, 2^{h-1})$. The total gradings are $|z_i|_{\mr{tot}}=i-1$. 
	Define an $\mbb F_2$-linear graded map $\theta_1\colon \Lambda_1\to R_1$ by setting:\begin{itemize}
	    \item $\theta_1(x_2^{2^h})=z_{2^{h+1}+1}$;
	    \item $\theta_1(x_2^j)=0$ if $j$ is not a power of $2$.
	\end{itemize} 
\medskip 
	\item Similarly, consider a bigraded polynomial algebra 
	\[R_2\coloneqq\F_2[z_2,z_3,z_5,\dots,z_{2^h+1},\dots]\]
	with bigradings $|z_2|=(1,1)$ and $|z_{2^h+1}|=(2^{h-1}, 2^{h-1})$ for $h\ge 1$. The total gradings are given by $|z_i|_{\mr{tot}}=i-1$. Define a graded map $\theta_2\colon \Lambda_2\to R_2$ by setting:
	\begin{itemize}
	    \item $\theta_2(x_1)=z_2$,
	    \item $\theta_2(x_2^{2^h})=z_{2^{h+1}+1}$,
	    \item $\theta_2(x_2^j)=0$  if $j$ is not a power of $2$,
	    \item $\theta_2(x_1x_2^l)=0 $ for all $l\geq 1$.
	\end{itemize}
	    \end{enumerate}
	\end{constr}
	
		\begin{rmk}
	    The algebras $R_1$, $R_2$ are quite large, but they are not arbitrary, in fact $R_1\simeq H^{*,*}_\Hdg(K(\mbb G_m,2)/\mbb F_2)$ and $R_2\simeq H^{*,*}_\Hdg(K(\mu_2,2)/\mbb F_2)$, but with a slight change of the bigrading: namely ``Hodge bigradings" (see \Cref{rem:Hodge bigrading}) of $z_{2^h+1}$ are $(2^{h-1},2^{h-1}+1)$, while for $z_2$ it should be $(1,1)$. The maps $\theta_i$ then appear as the non-zero differentials that appear in the Leray-Serre spectral sequence associated to the fibrations $B\mbb G_m \to \mr{pt} \to K(\mbb G_m,2)$ and $B\mbb \mu_2 \to \mr{pt} \to K(\mu_2,2)$.
	\end{rmk}

%	We claim that in fact 
%	\[R_i\simeq \on{Cotor}^*_{\Lambda_i}(\F_2,\F_2).\]
	
%	The isomorphism for $i=1$ can be constructed via an explicit computation with the cobar resolution (see  \cite[Definition A1.2.11, Corollary A1.2.12]{ravenel1986complex}). For the isomorphism in the case $i=2$, note that $\Lambda_2\simeq \Lambda_1\otimes\F_2[x_1]/(x_1^2)$, and so
%	\[\on{Cotor}^*_{\Lambda_2}(\F_2,\F_2)\simeq \on{Cotor}^*_{\Lambda_1}(\F_2,\F_2)\otimes \on{Cotor}^*_{\F_2[x_1]/(x_1^2)}(\F_2,\F_2)\simeq R_1\otimes \F_2[z_2]\simeq R_2,\]
%	where the first isomorphism comes from \cite[Proposition 10.18.3]{neisendofer2010algebraic}.
%	Moreover, a standard argument using the topological Serre spectral sequences associated to the fibrations
%	\[B\C^{\times}\to *\to B^2\C^{\times},\qquad B(\Z/2\Z)\to *\to B^2(\Z/2\Z)\]
%	shows that \[H^*_{\on{sing}}(B^2\C^{\times};\F_2)\simeq R_1,\qquad H^*_{\on{sing}}(B^2\Z/2\Z;\F_2)\simeq R_2;\]
%	see \cite[Th\'eor\`eme 3, Th\'eor\`eme 4]{serre1953cohomologie}.
%	\todo{maybe just start with the twisted cochain? :)}

%	For $i=1,2$, define the graded $\F_2$-linear maps $\theta_i:\Lambda_i\to R_i[1]$ by setting
%	\[\theta_1(x_2^{2^h})=z_{2^{h+1}+1},\quad \theta_1(x_2^j)=0 \text{ ($j$ not $2$-power)},\]
%	\[\theta_2(x_1)=z_2,\quad \theta_2(x_2^{2^h})=z_{2^{h+1}+1},\quad \theta_2(x_2^j)=0 \text{ ($j$ not $2$-power)},\quad \theta_2(x_1x_2^l)=0 \text{ ($l\geq 1$)}.\]
	\begin{lemma}\label{lem:thetas are twisted cochains}
	    The maps $\theta_i$ satisfy the following equation:
	    \begin{equation}\label{eq:twisted cochain equation}
	        \mu_{R_i}\circ (\theta_i\otimes \theta_i)\circ \Delta_{\Lambda_i}=0,
	    \end{equation}
	    where $\mu_{R_i}\colon R_i\otimes R_i \to R_i$ and $\Delta_{\Lambda_i}\colon \Lambda_i\to \Lambda_i\otimes \Lambda_i$ are the multiplication and comultiplication maps, respectively.
	\end{lemma}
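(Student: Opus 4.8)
The statement is a direct verification, so I would prove it by computing $\mu_{R_i}\circ(\theta_i\otimes\theta_i)\circ\Delta_{\Lambda_i}$ on a basis of $\Lambda_i$ and checking the result vanishes. Since both $\theta_i$ and $\Delta_{\Lambda_i}$ are graded, it suffices to work degree by degree.

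\emph{Case $i=1$.} A basis of $\Lambda_1=\F_2[x_2]$ is $\{x_2^n\}_{n\ge 0}$, and $\Delta_{\Lambda_1}(x_2^n)=\sum_{a=0}^n\binom{n}{a}x_2^a\otimes x_2^{n-a}$. Applying $\theta_1\otimes\theta_1$ and then multiplying in $R_1$, only the terms with both $a$ and $n-a$ equal to powers of $2$ (or $0$) survive; but $\theta_1$ also kills $x_2^0=1$ since $1$ is not a power of $2$ in the indexing convention used (more precisely $\theta_1(1)=0$ as $1=x_2^0$ and $0$ is not of the form $2^h$). Hence the only potentially nonzero contributions come from $n=2^h+2^{h'}$ with $a=2^h$, $n-a=2^{h'}$, $h,h'\ge 0$; the coefficient is $\binom{2^h+2^{h'}}{2^h}\bmod 2$. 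If $h\ne h'$ this binomial coefficient is $1$ by Lucas's theorem, but the term $x_2^{2^h}\otimes x_2^{2^{h'}}$ and the term $x_2^{2^{h'}}\otimes x_2^{2^h}$ both occur (from $a=2^h$ and $a=2^{h'}$), and they map to the same product $z_{2^{h+1}+1}z_{2^{h'+1}+1}$ in $R_1$, so they cancel mod $2$. If $h=h'$ then $\binom{2^{h+1}}{2^h}\equiv 0\bmod 2$, again by Lucas. In all cases the sum is zero.

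\emph{Case $i=2$.} Here $\Lambda_2=\F_2[x_1,x_2]/(x_1^2)$ has basis $\{x_2^n, x_1x_2^n\}_{n\ge 0}$. On $x_2^n$ the computation is identical to the $\Lambda_1$ case, using $\theta_2(x_2^{2^h})=z_{2^{h+1}+1}$ and $\theta_2(x_2^j)=0$ otherwise, with the single extra subtlety that $\theta_2(x_2)=\theta_2(x_2^{2^0})=z_3$ is allowed, but this does not change the cancellation argument. On $x_1x_2^n$ we have $\Delta_{\Lambda_2}(x_1x_2^n)=\Delta(x_1)\Delta(x_2)^n=(x_1\otimes 1+1\otimes x_1)\sum_a\binom{n}{a}x_2^a\otimes x_2^{n-a}$, so the image under $\theta_2\otimes\theta_2$ involves terms of the form $\theta_2(x_1x_2^a)\otimes\theta_2(x_2^{n-a})$ and $\theta_2(x_2^a)\otimes\theta_2(x_1x_2^{n-a})$. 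Since $\theta_2(x_1x_2^l)=0$ for $l\ge 1$, the only surviving contributions have $a=0$ (giving $\theta_2(x_1)\otimes\theta_2(x_2^n)=z_2\otimes\theta_2(x_2^n)$) or $n-a=0$ (giving $\theta_2(x_2^n)\otimes z_2$). When $n=2^h$ these are $z_2\otimes z_{2^{h+1}+1}$ and $z_{2^{h+1}+1}\otimes z_2$, which multiply to the same element $z_2 z_{2^{h+1}+1}$ of $R_2$ and cancel mod $2$; when $n$ is not a power of $2$ (and $n\ge 1$) both are zero; and when $n=0$ we get $z_2\otimes 1$ and $1\otimes z_2$, which again multiply to $z_2$ and cancel. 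So the sum vanishes in every case.

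\emph{Main obstacle.} There is no serious obstacle — the content is purely combinatorial and rests on two elementary facts: Lucas's theorem telling us $\binom{2^h+2^{h'}}{2^h}\equiv 0\bmod 2$ iff $h=h'$, and the symmetry of the product in $R_i$ combined with the cocommutativity of $\Delta_{\Lambda_i}$, which forces the off-diagonal terms to pair up and cancel in characteristic $2$. The only thing to be careful about is bookkeeping of the edge cases involving the unit $1$ and the generator $x_2$ (respectively $x_1$), which I would handle explicitly as above; once that is done the identity (\ref{eq:twisted cochain equation}) follows.
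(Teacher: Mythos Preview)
Your proof is correct and follows essentially the same direct verification as the paper, which only writes out the case $i=1$ (via the identical pairing/cancellation argument, with the diagonal term $d=e$ handled by $\binom{2^{d+1}}{2^d}\equiv 0$) and leaves $i=2$ to the reader. One tiny slip in your bookkeeping: in the $i=2$, $n=0$ edge case you write the surviving terms as $z_2\otimes 1$ and $1\otimes z_2$, but in fact $\theta_2(1)=0$ (as you yourself noted earlier), so both terms are already zero---the conclusion is unaffected.
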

	\begin{proof}
	    We will only show the case $i=1$, leaving the case $i=2$ to the reader. If $x=\sum_j x_2^{a_j}$, then
		\[\Delta_{\Lambda_1}(x)=\sum_j(x_2\otimes 1+1\otimes x_2)^{a_j}=\sum_j \binom{a_j}{h} x_2^h\otimes x_2^{a_j-h}.\]
		We have $\theta_1(x_2^h)\otimes \theta_1(x_2^{a_j-h})=0$ unless $h=2^d$ and $a_j-h=2^e$ for some $d,e\geq 0$. In that case
		\[
		(\mu_{R_1}\circ (\theta_1\otimes\theta_1))(x_1^{2^d}\otimes x_1^{2^e})=z_{2^{d+1}+1}z_{2^{e+1}+1}=(\mu_{R_1}\circ (\theta_1\otimes\theta_1))(x_1^{2^e}\otimes x_1^{2^d}).
		\]
		Since $\binom{2^d+2^e}{2^d}=\binom{2^d+2^e}{2^e}$, the contributions of $\binom{2^d+2^e}{2^d}x_1^{2^d}\otimes x_1^{2^e}$ and $\binom{2^d+2^e}{2^e}x_1^{2^e}\otimes x_1^{2^d}$ cancel each other out, hence $(\mu_{R_1}\circ (\theta_1\otimes\theta_1)\circ\Delta_{\Lambda})(x)=0$, as desired.
	\end{proof}
	
	\begin{rmk}
	    Maps $\theta$ that satisfy (\ref{eq:twisted cochain equation}) are usually called \textit{twisting cochains}. They allow to define the \textit{twisted tensor product} (see \Cref{constr:twisted tensor product} below).
	\end{rmk}

	To proceed further we need to introduce another auxiliary $\Z$-grading on $R_i$:
	\begin{constr}
	    Define a grading $\deg_z\colon R_i\to \Z$ by putting $\deg_z(z_j)=1$ for all generators $z_j\in R$. In other words, $\deg_z$ is the degree of an element of $R_i$ as a polynomial in $z_j$'s. Given any $\F_2$-vector space $M$ we put a grading $\deg_z$ on $M\otimes R_i$ by setting $\deg_z(m\otimes f)\coloneqq \deg_z(f)$. Note that $\deg_z(\theta_i(x))=1$ for any $x\in \Lambda_i$.
	\end{constr}
	
	\begin{constr}\label{constr:twisted tensor product}
	    Let $M$ be a $\Z^2$-graded left $\Lambda_i$-comodule. Let $\phi_M\colon M \to \Lambda_i\otimes M$ be the coaction map and $m_{R_i}$ be the multiplication. Consider $R_i\otimes M$ and endow it with an operator $d_{\theta_i}\colon R_i\otimes M \to R_i\otimes M$ defined as the composition
	    $$
	    d_{\theta_i}\coloneqq (m_{R_i}\otimes 1)\circ (1\otimes \theta_i\otimes 1)\circ (1\otimes \phi_M).
	    $$
	    One can visualize it as
	    $$
	    \xymatrix{R_i\otimes M \ar[r]^(.42){1\otimes \phi_M} & R_i\otimes \Lambda_i \otimes M \ar[r]^(.47){1\otimes \theta_i \otimes 1} & R_i\otimes R_i\otimes M \ar[r]^(.58){m_{R_i}\otimes 1}& R_i\otimes M.}
	    $$
	    Since $\deg_z(\theta_i(x))=1$ for any $x\in \Lambda_i$ it follows that $d_{\theta_i}$ shifts the grading $\deg_z$ by 1. By \Cref{compute-cotor}(a) below, we have $d_{\theta_i}^2=0$ and so  we can consider a complex 
	    $$
	    (R_i\otimes_{\theta_i}M,d_{\theta_i})\coloneqq 
		\ldots \to (R_i\otimes M)^{\deg_z=0} \xrightarrow{d_{\theta_i}} (R_i\otimes M)^{\deg_z=1}\xrightarrow{d_{\theta_i}} (R_i\otimes M)^{\deg_z=2}\xrightarrow{d_{\theta_i}} \ldots 
		$$
	Moreover, $d_{\theta_i}$ preserves the natural\footnote{Namely, both $R_i$ and $M$ are $\Z^2$-graded, and we consider the natural $\Z^2$-grading on the tensor product.} $\Z^2$-grading on $R_i\otimes M$. This gives a well-defined (extra) $\mbb Z^2$-grading on the complex $(R_i\otimes_{\theta_i}M,d_{\theta_i})$.
	\end{constr}
	
	\begin{rmk}
	    Similarly, if $M$ is a $\Z$-graded comodule over $\Lambda_i^{\mr{tot}}$ the twisted tensor product $(R_i\otimes_{\theta_i} M,d_{\theta_i})$ has a natural $\Z$-grading.
	\end{rmk}
	
	\begin{rmk}\label{rem:terms of the twisted complex}
	    Note that $(R_i\otimes M)^{\deg_z=k}\simeq (R_i)^{\deg_z=k}\otimes M$, so the complex $(R_i\otimes_{\theta_i}M,d_{\theta_i})$ can also be rewritten as 
	    $$
	    \ldots \to (R_i)^{\deg_z=0}\otimes M \xrightarrow{d_{\theta_i}} (R_i)^{\deg_z=1}\otimes M\xrightarrow{d_{\theta_i}} (R_i)^{\deg_z=2}\otimes M\xrightarrow{d_{\theta_i}} \ldots 
	    $$
	\end{rmk}

	The following lemma shows that the twisted tensor product in fact computes $\on{Cotor}$:
	
	\begin{lemma}\label{compute-cotor}
	   Let $i\in\set{1,2}$.
		\begin{enumerate}[label=(\alph*)]
		    \item We have $d_{\theta_i}^2=0$, and so the complex $(R_i\otimes_{\theta_i}M,d_{\theta_i})$ from \Cref{constr:twisted tensor product} is well-defined.
		    \item Let $M=\Lambda_i$. Then \[H^*(R_i\otimes_{\theta_i}\Lambda_i,d_{\theta_i})=\F_2,\]
		    where the $\Z^2$-grading on the right hand side is $(0,0)$.
		    \item Let $M$ be any left $\Z^2$-graded $\Lambda_i$-comodule. Then there is an isomorphism of $\Z^3$-graded vector spaces \[H^*(R_i\otimes_{\theta_i} M,d_{\theta_i})\simeq \on{Cotor}^*_{\Lambda_i}(\F_2, M).\]
		\end{enumerate}	 
	\end{lemma}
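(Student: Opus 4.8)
The plan is to verify (a) by a direct computation, deduce (b) as a special case of an explicit contracting homotopy, and then obtain (c) by a standard comparison of resolutions. For part (a), I would unwind the composition defining $d_{\theta_i}$ and compute $d_{\theta_i}^2$ using coassociativity of the coaction $\phi_M$ together with the twisting cochain equation (\ref{eq:twisted cochain equation}) from \Cref{lem:thetas are twisted cochains}. Concretely, $d_{\theta_i}^2$ applied to $r\otimes m$ produces a term involving $\mu_{R_i}\circ(\theta_i\otimes\theta_i)\circ\Delta_{\Lambda_i}$ applied to the $\Lambda_i$-component of $\phi_M(m)$, which vanishes by (\ref{eq:twisted cochain equation}); one must also check that the ``$R_i$ does not act on itself'' terms cancel, which is immediate from associativity of $\mu_{R_i}$ and the fact that $\theta_i$ raises $\deg_z$ by exactly $1$. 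Sign issues are absent since $k=\F_2$.

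For part (b), take $M=\Lambda_i$ with its regular (left-)comodule structure $\phi_{\Lambda_i}=\Delta_{\Lambda_i}$. Here I would exhibit an explicit contracting homotopy on $(R_i\otimes_{\theta_i}\Lambda_i,d_{\theta_i})$ in positive $\deg_z$-degree, built from the splitting $\Lambda_i\simeq \F_2\oplus\overline{\Lambda_i}$ and a section of $\theta_i$ on the span of the $z_j$'s (recall $\theta_1(x_2^{2^h})=z_{2^{h+1}+1}$ and $\theta_2(x_1)=z_2$, so $\theta_i$ is surjective onto the degree-one part of $R_i$ when we include the full set of polynomial generators). Alternatively — and this is probably cleaner to write — one can argue by a bookkeeping/size count: both $R_i$ and $\Lambda_i$ have explicit $\Z^2$-graded Poincar\'e series, and the twisted differential $d_{\theta_i}$ is, up to the comodule twist, modeled on the differential of the (acyclic) Koszul-type complex computing the cohomology of the path-space fibration $B\G_m\to\mr{pt}\to K(\G_m,2)$ (resp. for $\mu_2$). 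So the claim $H^*=\F_2$ in bidegree $(0,0)$ amounts to checking that this twisted complex is the ``divided power / polynomial'' acyclic complex in characteristic $2$; one verifies acyclicity by the explicit homotopy, and the fact that the surviving class sits in bidegree $(0,0)$ is immediate from the gradings of $z_j$ and $x_i$.

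For part (c), the strategy is to promote (b) to the statement that $R_i\otimes_{\theta_i}\Lambda_i$, together with the augmentation $R_i\otimes_{\theta_i}\Lambda_i\to\Lambda_i$ (projection $R_i\to(R_i)^{\deg_z=0}=\F_2$ composed with the identity on $\Lambda_i$), is a resolution of $\F_2$ by $\Lambda_i$-comodules which are \emph{relatively injective} (equivalently, extended/cofree), since each term $(R_i)^{\deg_z=k}\otimes\Lambda_i$ is a cofree $\Lambda_i$-comodule (cf. \cite[A1.2.2]{ravenel1986complex}). Then cotensoring the whole complex over $\Lambda_i$ with an arbitrary comodule $M$ — or rather, using that $\on{Cotor}^*_{\Lambda_i}(\F_2,M)$ can be computed from \emph{any} resolution of $\F_2$ by relatively injective comodules and that $(R_i\otimes_{\theta_i}\Lambda_i)\,\square_{\Lambda_i} M\simeq R_i\otimes_{\theta_i}M$ as complexes — yields the desired isomorphism of $\Z^3$-graded vector spaces. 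One must check this last identification of complexes: cotensoring a cofree comodule $N\otimes\Lambda_i$ with $M$ gives $N\otimes M$, and chasing the differentials shows the induced differential is precisely $d_{\theta_i}$ on $R_i\otimes_{\theta_i}M$.

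\textbf{Main obstacle.} The only genuinely nontrivial point is part (b): producing the contracting homotopy (or the acyclicity argument) for the twisted complex $(R_i\otimes_{\theta_i}\Lambda_i,d_{\theta_i})$, since the polynomial algebras $R_i$ are infinitely generated and one must handle all the Frobenius powers $x_2^{2^h}$ simultaneously while keeping track of both the $\deg_z$-grading and the $\Z^2$-grading. Once (b) is in hand, (c) is the usual homological-algebra comparison and requires no new ideas; (a) is a short formal computation. I would therefore devote most of the write-up to a careful choice of homotopy in (b), perhaps reducing first to the case of the sub-Hopf-algebra generated by a single $x_2^{2^h}$ (a height-one truncated polynomial algebra, i.e.\ $\F_2[x]/(x^2)$ after renaming, whose twisted tensor product is the elementary two-term acyclic complex) and then assembling via the tensor-product decomposition of $\Lambda_i$ and the multiplicativity of the twisted tensor product construction.
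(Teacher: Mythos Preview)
Your proposal is correct and follows essentially the same strategy as the paper: part (a) is the same formal computation from the twisting-cochain equation, part (c) is the same comparison-of-resolutions argument (the paper also notes that each term $(R_i)^{\deg_z=k}\otimes\Lambda_i$ is cofree and that $(R_i\otimes_{\theta_i}\Lambda_i)\square_{\Lambda_i}M\simeq R_i\otimes_{\theta_i}M$), and for part (b) the paper implements precisely your tensor-product-of-Koszul-complexes idea, introducing a weight filtration on $R_i\otimes_{\theta_i}\Lambda_i$ (with $w(x_2^{2^h})=w(z_{2^{h+1}+1})=1$, extended multiplicatively) whose associated graded is the infinite tensor product $\bigotimes_h\bigl(\F_2[t,y]/(t^2),\, d(t)=y\bigr)$, and then concluding by exhaustiveness. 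One caution on wording: the relevant decomposition of $\Lambda_i$ is as a tensor product of \emph{coalgebras} (the sub-Hopf-algebra generated by $x_2^{2^h}$ is all of $\F_2[x_2^{2^h}]$, not $\F_2[x]/(x^2)$), and the ``multiplicativity'' you invoke is compatibility of the twisted tensor product with such a coalgebra tensor decomposition --- it is \emph{not} a Leibniz rule for $d_{\theta_i}$, which in fact fails (cf.\ \Cref{not-algebra}); this is exactly why the paper passes to the associated graded before identifying with the Koszul complex.
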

	\begin{proof} By a simple diagram chase the operator $d_{\theta_i}^2$ can be identified with the composition
	$$
\xymatrix{R_i\otimes M \ar[r]^(.45){1\otimes \phi_M} & R_i\otimes \Lambda_i \otimes M \ar[r]^(.45){1\otimes \Delta_i \otimes 1} & R_i\otimes \Lambda_i \otimes \Lambda_i \otimes M\ar[d]^{1\otimes \theta_i\otimes \theta_i\otimes 1}\\
	R_i\otimes M &\ar[l]_(.55){\mu_i\otimes 1} R_i\otimes R_i \otimes M &\ar[l]_(.55){1\otimes \mu_i \otimes 1} R_i\otimes R_i \otimes R_i \otimes M.}
	$$
	However, the composition of three middle arrows is 0 by \Cref{lem:thetas are twisted cochains}. Thus $d_{\theta_i}^2=0$ and we get a).

	 For each $n$ consider a Koszul complex $K_n^*$ given as a DG-algebra by $\mbb F_2[t_n, y_{n+1}]/(t_n^2)$ with $|t_n|=0$ and $|y_{n+1}|=1$,  and with differential $d$ determined by $d(t_n)=y_{n+1}$ and $d(y_{n+1})=0$. A direct easy computation shows that $K_n^*$ is quasi-isomorphic to the base field $\mbb F_2$ in degree 0 (considered as a DG-algebra with trivial differential). Consider the infinite tensor products \[K_{\infty,1}^*\coloneqq \otimes_{h=0}^\infty K_{2^{h+1}}^*,\qquad K_{\infty,2}^*\coloneqq \otimes_{h=0}^\infty K_{2^{h}}^*.\] Both of these complexes are quasi-isomorphic to $\mbb F_2$ in degree 0. 
		We now will deform $R_i\otimes_{\theta_i}\Lambda_i$ to $K_{\infty,i}^*$. Namely, let us define a ``weight" function $w$ on $R_i\otimes_{\theta_i}\Lambda_i$ by putting $w(x_2^{2^h})=w(z_{2^h+1})=1$, and then extending to monomials by multiplicativity in the following sense: any power $x_2^n$ can be uniquely represented as a product $x_2^{2^{h_1}}\cdot \ldots \cdot x_2^{2^{h_k}}$ where all $h_i$ are distinct, we then put $w(x_2^n)\coloneqq k$; for a monomial in $z_{2^h+1}$ weight is just given by the degree. Consider a filtration $R_i\otimes_{\theta_i}\Lambda_i$, whose $r$-th filtered piece is given by $x\in R_i\otimes\Lambda_i$ with $w(x)\leq r$. The differential $d_{\theta_i}$ preserves the filtration, and, moreover, the differential on the associated graded now satisfies the Leibniz's rule (which we leave to the reader to check). We can now define natural maps \[\iota_1\colon K_{\infty,1} \to \gr^*(R_1\otimes_{\theta_1}\Lambda_1,d_{\theta_1}),\qquad \iota_2\colon K_{\infty,2} \to \gr^* (R_2\otimes_{\theta_2}\Lambda_2,d_{\theta_2})\] of DG-algebras by sending $y_{2^h+1}$ to $z_{2^h +1}$,  $t_{2^h}$ to $x_2^{2^h}$ for $h\ge 1$ and $t_1$ to $x_2$, which are easily checked to be isomorphisms of complexes. In particular, the associated graded in both cases is quasi-isomorphic to $\mbb F_2$.
		Note that the filtration by weight is exhaustive because $(R_i\otimes_{\theta_i}\Lambda_i,d_{\theta_i})$ is the colimit of subcomplexes $(R_i\otimes_{\theta_i}\Lambda_i)_{\le w}$ of weight $\le w$ where maps $(R_i\otimes_{\theta_i}\Lambda_i)_{\le w}\to (R_i\otimes_{\theta_i}\Lambda_i)_{\le w+1}$ are termwise embeddings of complexes, and thus is equivalent to the corresponding homotopy colimit. Thus, since the associated graded is quasi-isomorphic to $\mbb F_2$, so is $R_i\otimes_{\theta_i}\Lambda_i$. This gives b).

	For c), note that the construction $M\mapsto (R_i\otimes_{\theta_i}M,d_{\theta_i})$ is functorial in $M$. The comultiplication $\Delta_{\Lambda_i}\colon \Lambda_i \to \Lambda_i\otimes \Lambda_i$ is a map of left $\Lambda_i$-modules, where the coaction on $\Lambda_i\otimes \Lambda_i$ is through the left factor. Since the action is through the left factor, one can move the right factor $\Lambda_i$ out of the twisted tensor product: $$(R_i\otimes_{\theta_i}(\Lambda_i\otimes \Lambda_i),d_{\theta_i})\simeq R_i\otimes_{\theta_i}\Lambda_i,d_{\theta_i})\otimes \Lambda_i.$$ Thus we obtain a map of complexes $$(R_i\otimes_{\theta_i}\Lambda_i,d_{\theta_i})\to (R_i\otimes_{\theta_i}(\Lambda_i\otimes \Lambda_i),d_{\theta_i})\otimes \Lambda_i,$$ which in fact defines a right coaction of $\Lambda_i$ on $(R_i\otimes_{\theta_i}\Lambda_i,d_{\theta_i})$. Note that the terms of $(R_i\otimes_{\theta_i}\Lambda_i,d_{\theta_i})$ are of the form $(R_i)^{\deg_z=k}\otimes \Lambda_i$ and thus are free right $\Lambda_i$-comodules; in particular, they are injective. By b) we get that $(R_i\otimes_{\theta_i}\Lambda_i,d_{\theta_i})$ is an injective resolution of the trivial right $\Lambda_i$-comodule $\mbb F_2$.
	
	Finally, from the definiton of the cotensor product and \Cref{constr:twisted tensor product} one sees that there is a natural equivalence $(R_i\otimes_{\theta_i}M,d_{\theta_i}) \simeq (R_i \otimes_{\theta_i}\Lambda_i,d_{\theta_i})\square_{\Lambda_i}M$. Thus (e.g. by \cite[Lemma A1.2.2]{ravenel1986complex}) the ($\Z^2$-graded) complex $(R\otimes_{\theta_i}M,d_{\theta_i})$ computes the right derived functor of $\mbb F_2\square_{\Lambda_i}-$ applied to $M$, whose cohomology is $\on{Cotor}^*_{\Lambda_i}(\F_2, M)$.
	\end{proof}
	
		\begin{rmk}
	    It is not hard to see directly from the definition that $(R_i\otimes_{\theta_i}\F_2,d_{\theta_i})$ is isomorphic to $R_i$ with zero differential.  It follows from \Cref{compute-cotor}(c) that $R_i\simeq \on{Cotor}^*_{\Lambda_i}(\mbb F_2,\mbb F_2)$, where the $*$-grading corresponds to $\deg_z$, while the natural $\Z^2$-grading on $\on{Cotor}$ agrees with the one in \Cref{constr:twisted cochains}. 
	\end{rmk}

	\begin{rmk}\label{not-algebra}
		We would like to emphasize that when $M$ is a comodule algebra, even though the underlying graded module of $R_i\otimes_{\theta_i} M$ has a natural algebra structure, the differential $d_{\theta_i}$ does not satisfy the Leibnitz rule. In particular, a priori there is no clear way to see an algebra structure on $\on{Cotor}^*_{\Lambda_i}(\mbb F_2,M)$ via $R_i\otimes_{\theta_i} M$.
	\end{rmk}

	Here is an observation which partially remedies the problem mentioned in \Cref{not-algebra}. Let $i\in\set{1,2}$ and $A$ be a ($\Z^2$-graded) comodule $\Lambda_i$-algebra, and consider a commutative square of graded vector space homomorphisms
	\begin{equation}\label{compatib}
		\xymatrix{
			R_i\otimes PA \ar[r] \ar[d]& H^*(R_i\otimes_{\theta_i}A,d_{\theta_i}) \ar[d]^\wr \\
			\on{Cotor}^*_{\Lambda_i}(\F_2,\F_2)\otimes \on{Cotor}_{\Lambda_i}^*(\F_2,A) \ar[r] & \on{Cotor}^*_{\Lambda_i}(\F_2,A).
		}
	\end{equation}
	Here, the top horizontal map is the composition %\[\eta:PA\otimes R_i=H^*(PA\otimes R_i)\tto H^*(A\otimes R_i,d_i),\] 
	\[\eta:R_i\otimes PA \simeq H^*(R_i\otimes_{\theta_i} PA,d_{\theta_i})\tto H^*(R_i\otimes_{\theta_i} A,d_{\theta_i}),\] 
	where the identification comes from the fact that $d_{\theta_i}$ is zero on $R_i\otimes PA$ (as is easily checked from the definition) and the second map is given by functoriality. The bottom horizontal map is induced by the external cup product of (\cite[Definition A1.2.13]{ravenel1986complex}). The vertical map on the right comes from \Cref{compute-cotor}(c), while the left vertical map comes from the isomorphism $PA\coloneqq \on{Cotor}_{\Lambda_i}^0(\F_2,A)$ (which gives an embedding $PA\to \on{Cotor}^*_{\Lambda_i}(\mbb F_2,A)$) and the identifiaction $R_i\simeq \on{Cotor}^*_{\Lambda_i}(\F_2,\F_2)$ (\Cref{compute-cotor}(b)).
	
	\begin{lemma}\label{compatible}
		For every $i\in\set{1,2}$ and every comodule $\Lambda_i$-algebra $A$, the square (\ref{compatib}) is commutative.
	\end{lemma}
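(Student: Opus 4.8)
The plan is to deduce the commutativity of (\ref{compatib}) from the naturality of the external cup product in its second comodule argument, applied to the inclusion $\iota\colon PA\hookrightarrow A$; note that $\iota$ is a morphism of $\Lambda_i$-comodules once $PA$ is equipped with the trivial comodule structure.

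The first step is a degeneration statement. If $V$ is any trivial $\Z^2$-graded left $\Lambda_i$-comodule, then the identity $\phi_V(v)=1\otimes v$ together with $\theta_i(1)=0$ forces $d_{\theta_i}$ to vanish on all of $R_i\otimes V$; hence \Cref{compute-cotor}(c) gives $\on{Cotor}^*_{\Lambda_i}(\F_2,V)\simeq H^*(R_i\otimes_{\theta_i}V,d_{\theta_i})=R_i\otimes V$, and $\on{Cotor}^0_{\Lambda_i}(\F_2,V)=PV=V$ sits inside this as $1\otimes V$. I would then check that, under these identifications and $R_i\simeq\on{Cotor}^*_{\Lambda_i}(\F_2,\F_2)$ of \Cref{compute-cotor}, the external product $\on{Cotor}^*_{\Lambda_i}(\F_2,\F_2)\otimes\on{Cotor}^0_{\Lambda_i}(\F_2,V)\to\on{Cotor}^*_{\Lambda_i}(\F_2,V)$ becomes the identity map of $R_i\otimes V$. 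Since the external product is additive in the second variable, and $(R_i\otimes_{\theta_i}-)$ carries direct sums of comodules to direct sums, this reduces to $V=\F_2$, where it is just the unitality $c\mapsto c\cup 1$ of the external product. In particular (\ref{compatib}) commutes tautologically when $A$ is replaced by the trivial comodule $PA$.

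The second step is naturality bookkeeping. By construction the top map $\eta$ of (\ref{compatib}) is the composite $R_i\otimes PA=H^*(R_i\otimes_{\theta_i}PA,d_{\theta_i})\xrightarrow{H^*(R_i\otimes_{\theta_i}\iota)}H^*(R_i\otimes_{\theta_i}A,d_{\theta_i})$, and under \Cref{compute-cotor}(c) the second arrow is the map $\iota_*\colon\on{Cotor}^*_{\Lambda_i}(\F_2,PA)\to\on{Cotor}^*_{\Lambda_i}(\F_2,A)$ induced by $\iota$. On the other hand, the left vertical map of (\ref{compatib}) factors through $\on{Cotor}^*_{\Lambda_i}(\F_2,\F_2)\otimes\on{Cotor}^0_{\Lambda_i}(\F_2,A)$, and on $\on{Cotor}^0$ one has $\on{Cotor}^0_{\Lambda_i}(\F_2,A)=PA=\on{Cotor}^0_{\Lambda_i}(\F_2,PA)$ with $\iota_*$ the identity; so by naturality of the external product in the second variable, the bottom composite of (\ref{compatib}), applied to $r\otimes a$ with $a\in PA$, equals $\iota_*$ of the external product of $r$ and $a$ formed inside $\on{Cotor}^*_{\Lambda_i}(\F_2,PA)$. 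By the first step that external product is just $r\otimes a\in R_i\otimes PA\simeq\on{Cotor}^*_{\Lambda_i}(\F_2,PA)$, and applying $\iota_*$ yields the class of $r\otimes a$ in $H^*(R_i\otimes_{\theta_i}A,d_{\theta_i})\simeq\on{Cotor}^*_{\Lambda_i}(\F_2,A)$. This is exactly $\eta(r\otimes a)$ transported along the right vertical isomorphism, so the square commutes; all maps in play are graded, so the identification respects the $\Z^2$- (equivalently $\Z^3$-) gradings.

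I expect the only genuine obstacle to be the claim made in the first step: that the external cup product of \cite[Definition A1.2.13]{ravenel1986complex} specializes, for a trivial comodule $V$, to the naive identity of $R_i\otimes V$ on $\on{Cotor}^*_{\Lambda_i}(\F_2,\F_2)\otimes\on{Cotor}^0_{\Lambda_i}(\F_2,V)$. This cannot be read off by multiplying cochains in $R_i\otimes_{\theta_i}A$, since that complex is not a differential graded algebra (\Cref{not-algebra}); one must instead argue with the unitality and additivity of the external product at the level of the cobar complex used to define it in \emph{loc. cit.} Once that point is settled, the remainder is formal.
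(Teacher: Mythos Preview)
Your proposal is correct and close in spirit to the paper's argument: both hinge on the inclusion $\iota\colon PA\hookrightarrow A$ of the trivial sub-comodule and reduce the claim to the degenerate case where the second comodule is trivial. The paper carries this out at the chain level, taking $I_1^*=R_i\otimes_{\theta_i}\Lambda_i$ for the first resolution and an arbitrary $I_2^*$ for the second, and writing down a commutative square of complexes whose top row is literally $R_i\otimes PA\to R_i\otimes_{\theta_i}A$ and whose bottom row is Ravenel's cup-product map; the vertical maps come from the augmentation $\F_2\hookrightarrow I_2^*$ and the embedding $PA\subset I_2^0\square_{\Lambda_i}A$. You instead package the same move as naturality of the external product in the second argument together with unitality $c\cup 1=c$ on $\on{Cotor}^*_{\Lambda_i}(\F_2,\F_2)$. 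Your flagged ``genuine obstacle'' is not one: once $V=\F_2$, the statement is precisely that $1\in\on{Cotor}^0_{\Lambda_i}(\F_2,\F_2)$ is a unit for the cup product, which is part of the standard package in \cite[A1.2]{ravenel1986complex}, and the identifications of \Cref{compute-cotor}(c) agree with this in degree $0$ since both are computed from the same injective resolution $I_1^*$.
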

	
	\begin{proof}
	Consider the resolution $\mbb F_2\to I_1^*\coloneqq R_i\otimes_{\F_2} \Lambda_i$ provided by \Cref{compute-cotor}(c), and let $\F_2\to I_2$ be any other injective resolution as a right $\Lambda_i$-module. Following \cite[Definition A.1.2.13]{ravenel1986complex}, the multiplication $ \on{Cotor}^*_{\Lambda_i}(\F_2,\F_2)\otimes\on{Cotor}_{\Lambda_i}^*(\F_2,A)\to \on{Cotor}_{\Lambda_i}^*(\F_2,A)$ is induced by the map of complexes 
	$$
	(I_1^*\square_{\Lambda_i}\mbb F_2)\otimes_{\mbb F_2} (I_2^*\square_{\Lambda_i}A)\tto (I_1^*\otimes_{\mbb F_2} I_2^*)\square_{\Lambda_i} A
	$$
	by passing to cohomology. Note that $PA\coloneqq\on{Cotor}_{\Lambda_i}^0(\F_2,A)$ is the 0-th cohomology of $I_2^*\square_{\Lambda_i}A$ and embedds to the latter as $\mbb F_2\otimes PA\subset I_2^0\square A$. We get a commutative diagram 
	$$
	\xymatrix{
			(I_1^*\square_{\Lambda_i}\mbb F_2)\otimes_{\F_2}(\mbb F_2\otimes PA) \ar[r] \ar[d] & (I_1^*\otimes_{\mbb F_2} \mbb F_2)\square_{\Lambda_i}A  \ar[d]\\
			(I_1^*\square_{\Lambda_i}\mbb F_2)\otimes_{\mbb F_2} (I_2^*\square_{\Lambda_i}A) \ar[r]& (I_1^*\otimes_{\mbb F_2} I_2^*)\square_{\Lambda_i} A.
		}
	$$
which gives the desired commutative diagram 	
$$
		\xymatrix{
			R_i\otimes PA \ar[r] \ar[d]& H^*(R_i\otimes_{\theta_i}A,d_{\theta_i}) \ar[d]^\wr \\
			\on{Cotor}^*_{\Lambda_i}(\F_2,\F_2)\otimes \on{Cotor}_{\Lambda_i}^*(\F_2,A) \ar[r] & \on{Cotor}^*_{\Lambda_i}(\F_2,A).
		}
$$
by applying $H^*$.
	\end{proof}
	
	It is important to note that, while by \Cref{not-algebra} the square (\ref{compatib}) is not a diagram of algebras, 
	\begin{equation}\label{algebra-map}
		\text{The map $PA\otimes R_i\tto \on{Cotor}^*_{\Lambda_i}(\F_2,A)$ of (\ref{compatib})  is  an algebra homomorphism.}
	\end{equation}
	
	%Now fix a sequence of the form (\ref{central-seq}), assume that $\Gamma=\mu_2$, so that $H^*_{\on{H}}(B\mu_2)=\Lambda$ as a Hopf algebra; see \cite{totaro2018hodge}.
	
	%We have \[\on{Cotor}_{H^*_{\on{H}}(B\Gamma)}(k, H^*_{\on{H}}(BG))\simeq H^*(R\otimes_{\theta}H^*_{\on{H}}(BG),d_\theta).\]	

	\subsection{Decomposition of a comodule algebra}\label{ssect:subalgebra_like_in_Toda}
	Let $i\in\set{1,2}$. Let $A$ be a $\Z^2$-graded left $\Lambda_i$-comodule algebra and $\phi\colon A\to \Lambda_i\otimes A$ be the corresponding coaction map.
	
		Assume first that $i=1$. Recall that $\Lambda_1\simeq \mbb F_2[x_2]$, where $x_2$ is primitive and $|x_2|=(1,1)$.
		
\begin{constr}\label{const:d in the G_m case}
    For all $j\geq 0$, define linear maps $d_j\colon A\to A$ of bidegree $(-j,-j)$ by
	\[\phi(a)=\sum_{j\ge 0} x_2^j\otimes d_j(a).\]
	
	For all $a,b\in A$, we have
	\begin{equation}\label{eq:relations on d_i in G_m}
	d_0(a)=a,\qquad d_h(ab)=\sum_{i+j=h}d_i(a)d_j(b),\qquad d_id_j(a)=\binom{i+j}{i}d_{i+j}(a)
	\end{equation}
\end{constr}

\begin{example}\label{ex:d_i's for GL_n}
    Let us view $\Lambda_1$ as $H^{*,*}_\Hdg(B\mbb G_m/\mbb F_2)$ and consider the comodule algebra given by $H^{*,*}_\Hdg(B\mr{GL}_n/\mbb F_2)\simeq \mbb F_2[c_1,\ldots, c_n]$, as in \Cref{ssect:projective}. Then from \Cref{comparison}(a) one sees that $d_i(c_j)= \binom{n-j+i}{i} c_{j-i}$, where we put $c_0\coloneqq 1$.
\end{example}

One has the following lemma by Toda, which allows to decompose $A$ into two simpler parts, provided that there is an element $a_\sharp\in A$ satisfying a special condition.
	\begin{lemma}\label{toda36}
		Let $q$ be a power of $2$, and let $a_{\sharp}\in A$ be a bihomogeneous element such that \[d_q(a_{\sharp})=1,\qquad d_h(a_{\sharp})=0 \text{ for all $h> q$.}\] Consider the linear subspace
		\[P_qA\coloneqq\set{a\in A: d_i(a)\text{ for all $i\geq q$}}\subset A.\]
		Then the map $\F_2[x]\otimes P_qA\to A$ with $|x|\coloneqq |a_{\sharp}|=2q$ given by $f(x)\otimes b\mapsto f(a_{\sharp})b$ is a graded linear isomorphism. Consequently, the projection $A\twoheadrightarrow A/(a_\sharp)$ induces an additive isomorphism $P_qA\simeq A/(a_\sharp)$.
	\end{lemma}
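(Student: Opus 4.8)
The plan is to prove this as a direct analogue of Toda's Lemma 3.6 in \cite{toda1987cohomology}, adapting the argument to the bigraded setting. The strategy is to exhibit an explicit inverse to the map $\F_2[x]\otimes P_qA\to A$, $f(x)\otimes b\mapsto f(a_\sharp)b$, by using $a_\sharp$ to ``strip off'' powers of $a_\sharp$ one at a time. The key structural input is the relations (\ref{eq:relations on d_i in G_m}): $d_0=\mathrm{id}$, the Leibniz-type rule $d_h(ab)=\sum_{i+j=h}d_i(a)d_j(b)$, and the identity $d_id_j=\binom{i+j}{i}d_{i+j}$. Since $q$ is a power of $2$, Kummer's theorem gives $\binom{2q}{q}\equiv 0\pmod 2$, so $d_q d_q(a_\sharp)=\binom{2q}{q}d_{2q}(a_\sharp)=0$; combined with $d_q(a_\sharp)=1$ and $d_h(a_\sharp)=0$ for $h>q$, this shows that the whole ``coaction tail'' of $a_\sharp$ is governed just by the single operator $d_q$.

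First I would verify that $a_\sharp\cdot P_qA$ lands where expected: using the Leibniz rule, for $a\in P_qA$ one computes $d_i(a_\sharp a)=\sum_{j+l=i}d_j(a_\sharp)d_l(a)$, and since $d_j(a_\sharp)=0$ for $0<j<q$ and $=0$ for $j>q$, the only surviving terms have $j=0$ or $j=q$; so $d_i(a_\sharp a)=a_\sharp d_i(a)+d_{i-q}(a)$ (interpreting $d_{<0}=0$). This shows in particular that multiplication by $a_\sharp$ does not preserve $P_qA$, but rather shifts it, which is exactly what makes the filtration by powers of $a_\sharp$ work. Next I would check surjectivity: given any homogeneous $a\in A$, let $N$ be maximal such that $d_{Nq}(a)\ne 0$ (this is finite by the grading, since each $d_j$ lowers bidegree by $(j,j)$); one shows by downward induction that $a-\sum_{s}a_\sharp^{\,s} b_s$ can be made to lie in $P_qA$ for suitable $b_s\in P_qA$, extracting the top term $b_N$ from $d_{Nq}(a)$ and using the formula above together with $d_id_j=\binom{i+j}{i}d_{i+j}$ and the vanishing of the relevant binomial coefficients mod $2$ (again Kummer, since $q$ is a $2$-power) to see that $d_{Nq}(a)\in P_qA$. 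For injectivity, I would use the same filtration: if $\sum_{s=0}^N a_\sharp^{\,s}b_s=0$ with $b_s\in P_qA$ and $b_N\ne 0$, apply $d_{Nq}$ and use that $d_{Nq}(a_\sharp^{\,s}b_s)$ vanishes for $s<N$ (too few powers of $a_\sharp$ to feed $d_q$ that many times, using $d_h(a_\sharp)=0$ for $h\ne 0,q$) while $d_{Nq}(a_\sharp^{\,N}b_N)=b_N$ up to the coefficient $\prod\binom{sq}{q}$... wait, one must be careful here because $\binom{2q}{q}\equiv 0$; instead the correct bookkeeping is that $d_q(a_\sharp^{\,s})=a_\sharp^{\,s-1}$ exactly (the binomial obstructions cancel because each application only ever sees one factor of $a_\sharp$ at a time), giving $d_q^{\,N}(a_\sharp^{\,N})=1$ and hence $d_q^{\,N}(a_\sharp^{\,N}b_N)=b_N$ since $d_q(b_N)=0$. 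This forces $b_N=0$, a contradiction, so the map is injective.

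The main obstacle will be the careful combinatorial bookkeeping of which $d_j(a_\sharp^{\,s})$ vanish and which binomial coefficients are even, all of which hinges on $q$ being a power of $2$. The cleanest way to organize this is to first prove the closed formula
\[
\phi(a_\sharp^{\,s})=\sum_{r=0}^{s}\binom{s}{r}x_2^{rq}\otimes a_\sharp^{\,s-r},
\]
which follows from $\phi(a_\sharp)=x_2^q\otimes 1+1\otimes a_\sharp$ (the hypotheses on $a_\sharp$ say precisely this) together with multiplicativity of $\phi$; this is just the binomial theorem in the commutative algebra $\Lambda_1\otimes A$. From this, $d_{kq}(a_\sharp^{\,s})=\binom{s}{k}a_\sharp^{\,s-k}$ and $d_j(a_\sharp^{\,s})=0$ when $q\nmid j$, and the whole argument above becomes a matter of matching up degrees. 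The statement about $P_qA\simeq A/(a_\sharp)$ is then immediate: the isomorphism $\F_2[x]\otimes P_qA\xrightarrow{\sim}A$ carries the ideal $(x)$ onto $(a_\sharp)$, so the quotient $A/(a_\sharp)$ is identified with $\F_2\otimes P_qA=P_qA$, and this identification is realized by the projection $A\twoheadrightarrow A/(a_\sharp)$ restricted to $P_qA$. Finally, since $|x|=|a_\sharp|=(2q,2q)$ is preserved and everything in sight is a map of bigraded vector spaces, the isomorphism is graded (indeed bigraded, in the Hodge setting).
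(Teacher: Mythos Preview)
There is a genuine gap. You repeatedly use the claim that $d_j(a_\sharp)=0$ for $0<j<q$ (equivalently, that $\phi(a_\sharp)=x_2^q\otimes 1+1\otimes a_\sharp$), but this is \emph{not} part of the hypotheses and is false in the main application: for $A=H^{*,*}_{\Hdg}(B\mr{GL}_{qk}/\F_2)$ with $k$ odd and $a_\sharp=c_q$, one has $d_i(c_q)=\binom{q(k-1)+i}{i}c_{q-i}$, and by Lucas this binomial is $1\pmod 2$ for every $0<i<q$, so $d_i(c_q)=c_{q-i}\neq 0$. Consequently your formula $d_i(a_\sharp a)=a_\sharp d_i(a)+d_{i-q}(a)$ and the ``closed formula'' $\phi(a_\sharp^s)=\sum_r\binom{s}{r}x_2^{rq}\otimes a_\sharp^{\,s-r}$ both fail. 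A second, independent problem: your injectivity argument rests on iterating $d_q$, but from $d_id_j=\binom{i+j}{i}d_{i+j}$ and the evenness of $\binom{2q}{q}$ (Kummer, $q$ a $2$-power) one gets $d_q^{\,2}=0$ identically, so $d_q^{\,N}(a_\sharp^{\,N})=0$ for all $N\ge 2$, not $1$.

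The fix is to apply a \emph{single} high operator rather than iterate $d_q$. From the Leibniz rule and $d_{>q}(a_\sharp)=0$ one sees that $d_s(a_\sharp^{\,i})=0$ for $s>iq$, while $d_{iq}(a_\sharp^{\,i})=d_q(a_\sharp)^i=1$ (the only decomposition $s_1+\cdots+s_i=iq$ with each $s_l\le q$ is $s_l=q$ for all $l$). For injectivity, take $\sum_{i=0}^r a_\sharp^{\,i}b_i=0$ with $b_i\in P_qA$, $b_r\neq 0$, let $h$ be maximal with $d_h(b_r)\neq 0$, and apply $d_{rq+h}$: in $\sum_{s+t=rq+h}d_s(a_\sharp^{\,i})d_t(b_i)$ one needs $t<q$ (since $b_i\in P_qA$) and $s\le iq$, forcing $i=r$, $t=h$, $s=rq$, giving $d_h(b_r)\neq 0$, a contradiction. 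Surjectivity goes by the descent you sketched: for $a$ with $d_h(a)\neq 0$ maximal, write $h=qj+h'$ with $0\le h'<q$, set $b\coloneqq d_{qj}(a)\in P_qA$, and check $d_i(a-a_\sharp^{\,j}b)=0$ for $i\ge h$ using only the two facts above and $\binom{qj+h'}{h'}\equiv 1\pmod 2$. This is exactly the argument the paper writes out in detail for the $\Lambda_2$-variant (\Cref{toda36-new}); for \Cref{toda36} itself the paper simply cites \cite[Lemma~3.7]{toda1987cohomology} and notes the bigraded case is identical.
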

	
	\begin{proof}
		This is \cite[Lemma 3.7]{toda1987cohomology}. Even though $A$ is $\Z$-graded in \textit{loc.cit.}, the same proof works in the bigraded setting. Our $P_qA$ is denoted there by $B$.
	\end{proof}
\begin{rmk}
    The main point of \Cref{toda36} is that it provides a unique lift of an element in $A/(a_\sharp)$ to an element in $P_qA\subset A$ (in other words, to an element of $A$ on which $d_i$'s act by 0 for $i\ge q$). 
\end{rmk}
	
	\begin{constr}\label{con:star-product}
	    The additive isomorphism $P_qA\simeq A/(a_\sharp)$ induces a new multiplication $*$ on $P_qA$ by setting
	\begin{equation}\label{asterisk}
		\text{$a*a'\coloneqq a''$ if $aa'\equiv a''\pmod{a_{\sharp}}$.}
	\end{equation}
	\end{constr}
	
\begin{rmk}\label{formula for star-product}
    In the case $q=2$, following \cite[p. 92]{toda1987cohomology} there is also an explicit formula for the $*$-product:
    \begin{equation}\label{eq:formula for *product}
    a*a'= a\cdot a' + d_1(a)\cdot d_1(a')\cdot a_\sharp.
    \end{equation}
    Indeed, $a*a'\equiv aa'\pmod{a_{\sharp}}$, so it is just enough to check that the right hand side lies in $P_2A$. From the relations in (\ref{eq:relations on d_i in G_m}) it is clear that $d_i(a*a')=0$ for $i>2$, while
    $$
    d_2(a*a')=d_2( a\cdot a')+ d_2(d_1(a)\cdot d_1(a')\cdot a_\sharp)= d_1(a)\cdot d_1(a') + d_1(a)\cdot d_1(a')\cdot 1 =0.
    $$
   It also follows from the formula above that if one of the $a$ or $a'$ belongs to $PA\subset P_2A$, then $a*a'=a\cdot a'$.

\end{rmk}	
	
\begin{example}\label{ex:c_q ia asharp}
    Continuing \Cref{ex:d_i's for GL_n}, let $n=qk$ where $q$ is a power of $2$ and $k$ is odd. Then 
    $$d_q(c_q)=\binom{qk}{q} \cdot c_0 = 1.$$ 
    In particular \Cref{toda36} applies to $A=H^{*,*}_\Hdg(B\mr{GL}_n/\mbb F_2)\simeq \mbb F_2[c_1,\ldots, c_n]$ where we take $a_\sharp$ to be $c_q$.
\end{example}

\begin{rmk}\label{rem:d_1 as a differential on P_2}
 It is immediate from the formula for $d_id_j$ in \Cref{const:d in the G_m case} that $d_1\colon A\to A$ preserves $P_qA$ and that $d_1^2=0$. Thus $d_1$ can be considered as a differential on $A$ as well as any of the subalgebras $P_qA\subset A$. 
 
 Note, however, that $d_1$ doesn't satisfy the Leibnitz rule with respect to the $*$-multiplication on $P_qA$. Instead, one has the formula
 $$
 d_1(a*b)=d_1(a)*b+a*d_1(b) + d_1(a)\cdot d_1(b)\cdot a_1.
 $$
It follows that $d_1$ is at least $PA$-linear.
\end{rmk}

\begin{rmk}
    \Cref{toda36} also works in $\Z$-graded setting, namely when we consider $\Lambda_1^{\mr{tot}}$ and a $\Z$-graded $\Lambda_1^{\mr{tot}}$-comodule algebra $A$.
\end{rmk}

\medskip 

Assume now that $i=2$. Recall that $\Lambda_2\simeq \mbb F_2[x_1,x_2]/(x_1^2)$, where both $x_i$ are primitive and $|x_1|=(1,0)$, $|x_2|=(1,1)$.
		
\begin{constr}\label{condtr:d_i for so_n}
    For all $j\geq 0$, define maps $d_{2j},d_{2j+1}\colon A\to A$ of bidegrees $(-j,-j)$ and $(-j-1,-j)$ by
		\[\phi(a)=\sum_{j\ge 0} x_2^{j}\otimes d_{2j}(a)+\sum_{j\ge 0} x_1x_2^j\otimes d_{2j+1}(a).\]
Again, we have 
\begin{equation}\label{eq:composition of d_i for mu_2}
d_0(a)=a\qquad \text{and} \qquad d_id_j(a)=\binom{i+j}{i}d_{i+j}(a).
\end{equation}
The formula for $d_i(ab)$ is, however, different from \Cref{const:d in the G_m case}, due to the fact that $x_1^2=0\in\Lambda_2$:
namely, for all $a,b\in A$, we still have
	\[d_{2h+1}(ab)=\sum_{i+j=2h+1}d_i(a)d_j(b),\]
	but 
	\begin{equation}\label{eq:multiplicativity for mu_2}
	  d_{2h}(ab)=\sum_{i+j=h}d_{2i}(a)d_{2j}(b),  
	\end{equation}
	with the terms in the sum only having even indices. 
\end{constr}

Here is a version of Toda's lemma for $\Lambda_2$-comodule algebras.
	\begin{lemma}\label{toda36-new}
		Let $q>1$ be a power of $2$, and let $a_{\sharp}\in A$ be a bihomogeneous element such that \[d_q(a_{\sharp})=1,\qquad d_h(a_{\sharp})=0 \text{ for all $h> q$.}\] 
		(a) Consider the linear subspace
		\[P_{q}A\coloneqq\set{a\in A: d_i(a)=0\text{ for all $i\geq q$}}.\]
		Consider the polynomial ring $\F_2[x]$, where $|x|\coloneqq|a_{\sharp}|=(q/2,q/2)$. Then the map $\F_2[x]\otimes P_{q}A\to A$ given by $f(x)\otimes b\mapsto f(a_{\sharp})b$ is an $\F_2$-linear isomorphism.
		
		(b) If $q=2$, then $P_2A$ is a subring of $A$ and the map $\F_2[x]\otimes P_{2}A\to A$ in (a) is a ring isomorphism.
		
	\end{lemma}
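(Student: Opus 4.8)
The plan is to bootstrap both parts from Toda's Lemma \Cref{toda36} (the $\Lambda_1$-case, already established) by transporting the $\Lambda_2$-coaction along the surjection of $\Z^2$-graded Hopf algebras $\pi\colon\Lambda_2 = \F_2[x_1,x_2]/(x_1^2)\twoheadrightarrow\Lambda_1 = \F_2[x_2]$ given by $\pi(x_1) = 0$, $\pi(x_2) = x_2$. For a $\Z^2$-graded left $\Lambda_2$-comodule algebra $A$ with coaction $\phi$, the composite $(\pi\otimes\mathrm{id})\circ\phi$ makes $A$ into a $\Z^2$-graded left $\Lambda_1$-comodule algebra, and inspection of \Cref{const:d in the G_m case} and \Cref{condtr:d_i for so_n} shows that the Toda operators of this new structure are $d_i^{\Lambda_1} = d_{2i}^{\Lambda_2}$; the relations (\ref{eq:relations on d_i in G_m}) for it are precisely the even-index parts of (\ref{eq:composition of d_i for mu_2}) and (\ref{eq:multiplicativity for mu_2}).

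First I would invoke \Cref{toda36} for this restricted $\Lambda_1$-comodule algebra with $q$ replaced by $q' \coloneqq q/2$ — again a power of $2$ since $q > 1$ — and the same element $a_\sharp$: the hypotheses hold because $d_{q'}^{\Lambda_1}(a_\sharp) = d_q^{\Lambda_2}(a_\sharp) = 1$ and $d_h^{\Lambda_1}(a_\sharp) = d_{2h}^{\Lambda_2}(a_\sharp) = 0$ for $h > q'$ (as then $2h > q$). This yields a graded $\F_2$-linear isomorphism $\F_2[x]\otimes P_{q'}^{\Lambda_1}(A)\xrightarrow{\sim}A$, $f(x)\otimes b\mapsto f(a_\sharp)b$, with $|x| = |a_\sharp| = (q/2,q/2)$. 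The remaining point for (a) is the identification $P_{q'}^{\Lambda_1}(A) = P_qA$. By construction $P_{q'}^{\Lambda_1}(A) = \{a : d_{2i}^{\Lambda_2}(a) = 0\text{ for }i\ge q'\}$, i.e. $d_j^{\Lambda_2}(a) = 0$ for all even $j\ge q$; the inclusion $P_qA\subseteq P_{q'}^{\Lambda_1}(A)$ is clear, and conversely if $j = 2k+1\ge q$ is odd then $2k\ge q$ (since $q$ is even), so $d_{2k+1}^{\Lambda_2}(a) = d_1^{\Lambda_2}d_{2k}^{\Lambda_2}(a) = 0$ by (\ref{eq:composition of d_i for mu_2}) and $\binom{2k+1}{1}\equiv 1\pmod 2$. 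Thus even-index vanishing automatically upgrades to all-index vanishing, giving $P_{q'}^{\Lambda_1}(A) = P_qA$ and hence part (a).

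For part (b), with $q = 2$ one has $P_2A = \{a : d_j(a) = 0\text{ for }j\ge 2\}$, so on $P_2A$ the only possibly nonzero operators among the $d_j$ are $d_0 = \mathrm{id}$ and $d_1$. If $a,b\in P_2A$ then the multiplicativity formulas of \Cref{condtr:d_i for so_n} give $d_{2h}(ab) = \sum_{i+j=h}d_{2i}(a)d_{2j}(b) = 0$ and $d_{2h+1}(ab) = \sum_{i+j=2h+1}d_i(a)d_j(b) = 0$ for all $h\ge 1$, since in each summand one factor has index $\ge 2$; hence $P_2A$ is a subring. Since $A$ is commutative, the linear isomorphism $\F_2[x]\otimes P_2A\to A$ of (a) is then a ring homomorphism, hence a ring isomorphism. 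I expect the main obstacle to be conceptual rather than computational: realizing that the $\Lambda_2$-statement need not be proved from scratch but follows from the $\Lambda_1$-statement applied to the same underlying algebra; the genuinely delicate content — that $a_\sharp$ is a polynomial generator in spite of the correction terms $d_i^{\Lambda_1}(a_\sharp)$ with $0 < i < q/2$ that appear in $d_{q/2}^{\Lambda_1}(a_\sharp\cdot a)$ — is already packaged inside \Cref{toda36}.
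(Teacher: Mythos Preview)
Your proof is correct, and for part (a) it takes a genuinely different route from the paper. The paper reproves Toda's injectivity/surjectivity argument from scratch in the $\Lambda_2$-setting (contradiction via the top nonvanishing $d_h$ for injectivity, and for surjectivity an induction that subtracts off $a_\sharp^j\,d_{qj}(a)$ to lower the top index). You instead observe that restricting scalars along the Hopf-algebra surjection $\pi\colon\Lambda_2\twoheadrightarrow\Lambda_1$ makes $A$ a $\Lambda_1$-comodule algebra with $d_i^{\Lambda_1}=d_{2i}^{\Lambda_2}$, and then invoke \Cref{toda36} for $q'=q/2$ (which is allowed since $q>1$). The only additional content you need is the identity $P_{q'}^{\Lambda_1}(A)=P_qA$, and your use of $d_1d_{2k}=\binom{2k+1}{1}d_{2k+1}=d_{2k+1}$ from (\ref{eq:composition of d_i for mu_2}) together with the parity observation that $2k+1\ge q$ forces $2k\ge q$ is clean and correct. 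What your approach buys is economy and a structural explanation for why the $\Lambda_2$-statement holds: all of the combinatorics with $a_\sharp$ lives in the even-index operators, which already form a $\Lambda_1$-coaction. What the paper's direct approach buys is self-containment and perhaps slightly more transparency about \emph{why} the surjectivity algorithm terminates (one sees explicitly the multiplicativity (\ref{eq:multiplicativity for mu_2}) giving $d_{rq}(a_\sharp^r)=1$). For part (b) the two arguments are essentially identical; the paper works directly with $\phi(a)=1\otimes a+x_1\otimes d_1(a)$ and uses $x_1^2=0$, which is the same computation you write out via the summation formulas.
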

	\begin{rmk}
	Let us point out that there is no analogue of \Cref{toda36-new}(b) in the setting of \Cref{toda36}.
	\end{rmk}
	\begin{rmk}
    As with \Cref{toda36}, \Cref{toda36-new} also works in $\Z$-graded setting, when we consider $\Lambda_2^{\mr{tot}}$ and a $\Z$-graded $\Lambda_2^{\mr{tot}}$-comodule algebra $A$.
\end{rmk}

	\begin{proof}
		(a) The proof is analogous to that of \cite[Lemma 3.6]{toda1987cohomology}. To prove injectivity, it suffices to show the following: if $b_1,\dots,b_r\in P_{q}A$ are bihomogeneous and $\sum_{i=0}^ra_{\sharp}^ib_i=0$, then $b_1=\dots=b_r=0$. To see this, suppose by contradiction that $b_r\neq 0$, and let $h\geq 0$ be such that $d_h(b_r)\neq 0$ and $d_{h'}(b_r)=0$ for all $h'> h$. (Such $h$ exists, because $d_0(b_r)=b_r$.) Note that by our assumptions on $a_\sharp$ and multiplicativity (\ref{eq:multiplicativity for mu_2}) we have $d_{rq}(a_{\sharp}^r)=d_{q}(a_{\sharp})^r=1$. We then also have
		\[0=d_{rq+h}(\sum_{i=0}^ra_{\sharp}^ib_i)=d_{rq}(a_{\sharp}^r)d_h(b_r)=d_h(b_r)\neq 0,\]
		a contradiction. 
		
		We now prove surjectivity. It suffices to show that every bihomogeneous element $a\neq 0$ of $A$ is in the image. Let $h$ be the maximal integer such that $d_h(a)\neq 0$, and write $h=qj+h'$, where $0\leq h'<q$. For all $s> h'$, $d_sd_{qj}(a)$ is a multiple of $d_{s+qj}(a)$, hence zero. Note that this means that $d_{qj}(a)\in P_qA$. Since $q$ is a power of 2 and $h'<q$, e.g. by Lucas's formula we have $\binom{qj+h'}{ h'}=1$, and so by (\ref{eq:composition of d_i for mu_2}) we have $d_{h'}d_{qj}(a)=d_h(a)$. Setting $b\coloneqq d_{qj}(a)\in P_qA$ and $a'\coloneqq a-a_{\sharp}^jb$, then for all $i\geq 0$ we have
		\[d_i(a')=d_i(a)-\sum_{s=0}^{h'}d_{i-h'}(a_{\sharp}^j)d_s(b).\]
		We now show that $d_i(a')=0$ for all $i\geq h$. Indeed, if $i>h$, then $i-s>qj$ for all $0\leq s\leq h'$ and so, due to the multiplicativity of $\phi$, we have $d_{i-s}(a_{\sharp}^j)=0$. It follows that $d_i(a')=d_i(a)=0$ for all $i>h$. On the other hand, when $i=h$, then \[d_h(a')=d_h(a)-d_{qj}(a^j_{\sharp})d_{h'}(b)=d_h(a)-(d_q(a_{\sharp}))^jd_h(a)=d_h(a)-d_h(a)=0.\]
		Thus $a=a_{\sharp}^jb+a'$, where $d_i(a')=0$ for all $i\geq h$. Since $a$, $a_{\sharp}$ and $b$ are bihomogeneous, $a_{\sharp}^jb$ and $a'$ are homogeneous of degree $|a|$. Therefore, applying the same reasoning to $a'$ and iterating, we eventually write $a$ as $\sum_{i=0}^ra_{\sharp}^ib_i$ for some $b_i\in B$.
		
		(b) Let $a,b\in P_2A$. Then $\phi(a)=a+d_1(a)x_1$ and $\phi(b)=b+d_1(b)x_1$. Since $x_1^2=0$, we obtain $\phi(ab)=\phi(a)\phi(b)=ab+(d_1(a)+d_1(b))x_1$, that is, $ab\in P_2A$. It follows that $P_2A$ is a subring of $A$. It remains to note that the explicit formula for isomorphism in (a) is obviously multiplicative.
 	\end{proof}

	\begin{rmk}\label{rem:d_1 is a differential in O_n case}
	   It follows from \Cref{eq:composition of d_i for mu_2} that $d_1^2=0$ and that $d_1$ preserves $P_qA$. Thus, as in \Cref{rem:d_1 as a differential on P_2}, $d_1$ defines a differential on both $A$ and $P_qA\subset A$.
	\end{rmk}

	\begin{rmk}
	    	   We also get that the differential $d_1$ satisfies the following cyclic equation: 
	   \begin{equation}\label{eq:cyclic property for d_1}
	       d_1(ab)d_1(c)+d_1(bc)d_1(a)+d_1(ac)d_1(b)=0.
	   \end{equation} 
	   Indeed, the latter expression is nothing but 
	   $
	   d_1(d_1(abc))
	   $, which is equal to 0.
	\end{rmk}
	
	\section{Hodge cohomology of \texorpdfstring{$B\mr{PGL}_{4m+2}$}{BPGL}} \label{sec: PGL}

	The goal of this section is to prove \Cref{thm-hodge-pgl}, which gives an explicit description of $H^{*,*}_{\mr H}(B\mr{PGL}_{4m+2}/\F_2)$ and $H^*_{\dR}(B\mr{PGL}_{4m+2}/\F_2)$ as bigraded and graded algebras, respectively. It also implies the full version of \Cref{mainthm}(1) for $\cl{G}=\mr{PGL}_{4m+2}$. For the proof we use the Eilenberg-Moore spectral sequence (\Cref{eilenberg-moore-easy}) and the strategy devised by Toda \cite{toda1987cohomology} in the topological setting.
	
	Consider $H^{*,*}_{\on{H}}(B\mr{GL}_{4m+2}/\F_2)$ as a $H^{*,*}_{\on{H}}(B\mbb G_m/\F_2)$-comodule algebra. By \Cref{ex:c_q ia asharp}, $c_2\in H^{1,1}_{\on{H}}(B\mr{GL}_{4m+2}/\F_2)$ can be taken as $a_\sharp$, and \Cref{toda36} applies.

	We will start by identifying the subalgebras 
	$$
	PH^{*,*}_{\on{H}}(B\mr{GL}_{4m+2}/\F_2)\subset  P_2H^{*,*}_{\on{H}}(B\mr{GL}_{4m+2}/\F_2)\subset H^{*,*}_{\on{H}}(B\mr{GL}_{4m+2}/\F_2).$$
	Recall that the operation $d_1$ (defined in \Cref{const:d in the G_m case}) induces a $PH^{*,*}_{\on{H}}(B\mr{GL}_{4m+2}/\F_2)$-linear differential on $P_2H^{*,*}_{\on{H}}(B\mr{GL}_{4m+2}/\F_2)$ (\Cref{rem:d_1 as a differential on P_2}).
	\begin{lemma}\label{toda36-311}
		Assume that $n=4m+2$.
		
		\smallskip \begin{enumerate}[label=(\alph*)]
		    \item 
	
		There exists a unique sequence \[\cl{c}_1,\dots,\cl{c}_{4m+2}\in H^{*,*}_{\on{H}}(B\mr{GL}_{4m+2}/\F_2)\] such that $\cl{c}_i$ has bidegree $(i,i)$ and
		\begin{enumerate}[label=(\arabic*)]
		\item $\cl{c}_i$ are polynomial generators: $H^{*,*}_{\on{H}}(B\mr{GL}_{4m+2}/\F_2)=\F_2[\cl{c}_1,\cl{c}_2\dots,\cl{c}_{4m+2}]$;
			\item $\cl{c}_1\coloneqq c_1$, $\cl{c}_2\coloneqq c_2$;
			\item for all $j>1$, we have 
			$$\cl{c}_{2j}\equiv c_{2j}\pmod{c_2} \quad\text{and}\quad  \cl{c}_{2j} \in P_2H^{j,j}_{\on{H}}(B\mr{GL}_{4m+2}/\F_2)$$
			(so $d_i(\cl{c}_{2j})=0$ for $i\geq 2$).
			\item $\cl{c}_{2j-1}= d_1(\cl{c}_{2j})$; 
			
			\noindent one automatically has that $\cl{c}_{2j-1}$ is primitive: $\cl{c}_{2j-1}\in PH^{*,*}_{\on{H}}(B\mr{GL}_{4m+2}/\F_2)$.

		\end{enumerate}  
\smallskip 
		\item  \begin{enumerate}[label=(\arabic*)]
		    \item The subalgebra $P_2H^{*,*}_{\on{H}}(B\mr{GL}_{4m+2}/\F_2)\subset H^{*,*}_{\on{H}}(B\mr{GL}_{4m+2}/\F_2)$ is  freely generated by $\cl{c}_1,\cl{c}_3,\cl{c}_4,\cl{c}_5,\dots,\cl{c}_{4m+2}$ under the $*$-multiplication (see \Cref{con:star-product});
		    \item For all $1< h\leq 2m+1$ define elements
		\[b_h\coloneqq\cl{c}_{2h}*\cl{c}_{2h}+c_1  \cl c_{2h} \cl c_{2h-1}\in H^{4h,4h}_{\on{H}}(B\mr{GL}_n/\F_2).\]
		Then $b_h$ are primitive (in particular, $b_h \in P_2H^{*,*}_{\on{H}}$ and $d_1(b_h)=0$) and the natural map 
		$$
		\F_2[c_1, b_2,\ldots, b_{2m+1}] \tto H^*(P_2H^{*,*}_{\on{H}}(B\mr{GL}_{4m+2}/\F_2),d_1)
		$$ is an isomorphism.
		\end{enumerate}
		\smallskip 
		
	    \item For any (unordered) tuple of integers $I=\{i_1,\ldots, i_r\}$ write $l(I)\coloneqq r$ and $d(I)\coloneqq \sum_{k=1}^r i_k$. Define
		\[y_I\coloneqq d_1(\cl{c}_{2i_1}\!*\ldots*\cl{c}_{2i_r})\in PH^{2d(I)-1,2d(I)-1}_{\on{H}}(B\mr{GL}_n/\F_2).\]
		In particular, $y_{\{i\}}=\cl c_{2i-1}$. 
		
		\begin{enumerate}[label=(\arabic*)]
		    \item The subalgebra $PH^{*,*}_{\on{H}}(B\mr{GL}_n/\F_2)\subset H^{*,*}_{\on{H}}(B\mr{GL}_n/\F_2)$ of primitive elements is generated by $c_1$, $b_h$ and $y_I$ for $I=\{1<i_1<\ldots<i_r \le 2m+1\}$, and the relations given by
		\begin{equation*}%\label{rel1}
			y_Iy_J=\sum_{\emptyset\neq K\subset I}y_{(I-K)\cup J}y_{\{k_1\}}\ldots y_{\{k_s\}}c_1^{l(K)-1},
		\end{equation*}
		for all subsets $I,J\subset \{2,\ldots, 2m+1\}$ and where we put
		\begin{equation*}%\label{rel2}
			y_{\{h,h,j_1,\ldots,j_s\}}\coloneqq y_{\{j_1,\ldots,j_s\}}b_h+y_{\{h,j_1,\ldots,j_s\}}y_{\{h\}}c_1.
		\end{equation*}
		\item $PH^{*,*}_{\on{H}}(B\mr{GL}_n/\F_2)$ is a finitely generated module over the polynomial subalgebra 
		$$\mbb F_2[c_1,\cl{c}_3,\cl{c}_5,\ldots, \cl{c}_{4m-1}, b_2,b_3,\ldots,b_{2m+1}]\subset PH^{*,*}_{\on{H}}(B\mr{GL}_n/\F_2).$$
		\end{enumerate}
		\end{enumerate}
		
		Analogous statements hold with Hodge cohomology replaced by de Rham cohomology. (In this case $\cl{c}_i, b_h, y_I\in H^*_{\on{dR}}$ have degrees $2i$, $8h$ and $4d(I)-2$,  respectively.)	
	\end{lemma}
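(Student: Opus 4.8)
The plan is to carry Toda's argument \cite[\S 3]{toda1987cohomology} over to the bigraded Hodge setting, using the tools of \Cref{sec: Cotor}. Write $n\coloneqq 4m+2$ and $A\coloneqq H^{*,*}_{\on{H}}(B\mr{GL}_n/\F_2)=\F_2[c_1,\dots,c_n]$, regarded as a comodule algebra over $\Lambda_1=H^{*,*}_{\on{H}}(B\mbb G_m/\F_2)$; by \Cref{ex:d_i's for GL_n} one has $d_i(c_j)=\binom{n-j+i}{i}c_{j-i}$, so in particular $d_1(c_1)=0$, $d_1(c_2)=c_1$, $d_2(c_2)=1$. Since $n=2(2m+1)$ with $2m+1$ odd, \Cref{ex:c_q ia asharp} lets us take $a_\sharp\coloneqq c_2$ in \Cref{toda36} (with $q=2$), giving a graded isomorphism $\F_2[c_2]\otimes P_2A\xrightarrow{\sim}A$ and, via $A\twoheadrightarrow A/(c_2)$, a ring isomorphism $(P_2A,*)\xrightarrow{\sim}A/(c_2)$ for the $*$-product of \Cref{con:star-product}; recall also that $a*b=ab+d_1(a)d_1(b)c_2$ (\Cref{formula for star-product}), and that $d_1$ carries $P_2A$ into $PA$, because $d_1^2=\binom21 d_2=0$ and $d_kd_1=\binom{k+1}1 d_{k+1}$ kills $P_2A$ for $k\ge 1$. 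For part (a) I would then define $\bar c_{2j}\in P_2A$ to be the unique lift of the class of $c_{2j}$ in $A/(c_2)$ (this is forced by (3) by the uniqueness in \Cref{toda36}), and set $\bar c_1\coloneqq c_1$, $\bar c_2\coloneqq c_2$, $\bar c_{2j-1}\coloneqq d_1(\bar c_{2j})$; primitivity of $\bar c_{2j-1}$ and uniqueness of the whole sequence are then immediate. To see the $\bar c_i$ generate $A$ polynomially, note $A/(c_2)=\F_2[c_1,c_3,c_4,\dots,c_n]$ and, since $n-2j+1$ is odd, $\bar c_{2j-1}\equiv c_{2j-1}$ modulo $(c_2)$ and modulo a polynomial in the $c_i$ with $i<2j-1$; hence $\{\bar c_i\}_{i\ne 2}$ is a degree-unitriangular change of variables away from $\{c_i\}_{i\ne 2}$, so it is again a polynomial generating set of $A/(c_2)$, and with $\bar c_2=c_2$ and the splitting $A\cong\F_2[c_2]\otimes P_2A$ this gives $A=\F_2[\bar c_1,\dots,\bar c_n]$. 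Part (b)(1) then drops out: $(P_2A,*)\cong A/(c_2)=\F_2[\bar c_1,\bar c_3,\bar c_4,\dots,\bar c_n]$.

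For part (b)(2) I would first verify that $b_h$ is primitive by a direct computation: in $A$ one has $b_h=\bar c_{2h}^{\,2}+c_2\bar c_{2h-1}^{\,2}+c_1\bar c_{2h}\bar c_{2h-1}$, and applying the Leibniz rule $d_1(xy)=d_1(x)y+xd_1(y)$ together with the values of $d_i$ on $c_1,c_2$ and the primitivity of $\bar c_{2h-1}$ gives $d_i(b_h)=0$ for all $i\ge 1$. The main step is the computation of $H^*(P_2A,d_1)$. I would filter $P_2A$ by powers of $c_1$: this filtration is multiplicative and $d_1$-stable, $d_1$ is $c_1$-linear, and the term $c_1 d_1(x)d_1(y)$ measuring the failure of Leibniz for $*$ raises $c_1$-degree, so the induced differential $D$ on $\gr P_2A\cong\F_2[c_1,\bar c_3,\bar c_4,\dots,\bar c_n]$ is an honest derivation with $D(\bar c_{2j})=\bar c_{2j-1}$ and $D(\bar c_{2j-1})=0=D(c_1)$. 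The Koszul computation gives $E_1=H(\gr P_2A,D)=\F_2[c_1,\bar c_4^{\,2},\bar c_6^{\,2},\dots,\bar c_{4m+2}^{\,2}]$, and since the filtration is finite in each bidegree the spectral sequence converges. Its differentials $d_r$ ($r\ge 1$) are derivations (the twist always lands in high enough filtration), and $c_1$, $b_h$ are honest $d_1$-cocycles with $c_1$-leading terms $c_1$, $\bar c_{2h}^{\,2}$, so they lift the $E_1$-algebra generators and the spectral sequence degenerates at $E_1$. Finally, $d_1$ is $PA$-linear (\Cref{rem:d_1 as a differential on P_2}), so $d_1(P_2A)$ is an ideal of the ring $PA$, whence $H^*(P_2A,d_1)=PA/d_1(P_2A)$ is a ring and $\F_2[c_1,b_2,\dots,b_{2m+1}]\to H^*(P_2A,d_1)$ is a ring map inducing an isomorphism on associated graded, hence an isomorphism.

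For part (c), recall $PA=\ker(d_1\colon P_2A\to P_2A)$. Writing $P_2A=\F_2[c_1,\bar c_3,\bar c_5,\dots,\bar c_{4m+1}][\bar c_4,\bar c_6,\dots,\bar c_{4m+2}]$ with $d_1$ linear over the (primitive) coefficient ring and $d_1(\bar c_{2j})=\bar c_{2j-1}$, I would analyze this kernel as in Toda: every element of $PA$ is an $\F_2[c_1,b_2,\dots,b_{2m+1}]$-combination of the classes $y_I=d_1(\bar c_{2i_1}*\cdots*\bar c_{2i_r})$. Expanding $d_1$ of a product of two square-free $*$-monomials via the twisted Leibniz rule, the identity $d_1(d_1(abc))=0$, and the relation $\bar c_{2h}*\bar c_{2h}=b_h+c_1\bar c_{2h}\bar c_{2h-1}$, one obtains exactly the stated quadratic relations among the $y_I$, the convention for repeated subscripts encoding the last of these. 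Finiteness over $\F_2[c_1,\bar c_3,\dots,\bar c_{4m-1},b_2,\dots,b_{2m+1}]$ then follows because these relations reduce every product of two or more $y_I$'s to this subring times a single $y_I$, and there are finitely many $I$. The de Rham statements follow formally: the Hodge--de Rham spectral sequence of $B\mr{GL}_n$ degenerates since everything lies in bidegrees $(i,i)$ (\Cref{ssect:projective}), so $H^*_{\dR}(B\mr{GL}_n/\F_2)\cong H^*_{\on{H}}(B\mr{GL}_n/\F_2)$ as a comodule algebra over $H^*_{\dR}(B\mbb G_m/\F_2)\cong H^*_{\on{H}}(B\mbb G_m/\F_2)$, and a class of bidegree $(i,i)$ acquires total de Rham degree $2i$.

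The main obstacle throughout is that $d_1$ fails the Leibniz rule for $*$, so that a priori none of $H^*(P_2A,d_1)$, $PA$, or the comparison maps in (b) and (c) are visibly algebras of the advertised shape. Two observations rescue the situation: the $c_1$-adic filtration, on whose associated graded the twisting term disappears and everything reduces to a standard Koszul computation; and the $PA$-linearity of $d_1$, which makes $d_1(P_2A)$ an ideal of $PA$ and thereby upgrades the vector-space identifications to ring isomorphisms. One must also order the three parts carefully — the polynomial structure of $P_2A$ and the cocycles $b_h$ before the cohomology computation, and that before the identification of $PA$ — to avoid circular reasoning.
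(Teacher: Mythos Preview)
Your approach is correct, but it takes a genuinely different route from the paper's. The paper's proof is a one-liner: having already established in \Cref{comparison}(b) that $H^{*}_{\on{H}}(B\mr{GL}_n/\F_2)$ and $H^{*}_{\on{sing}}(B\mr{GL}_n(\C);\F_2)$ are isomorphic as comodule algebras over the (isomorphic) Hopf algebras $H^{*}_{\on{H}}(B\G_{\on m}/\F_2)\simeq H^{*}_{\on{sing}}(B\C^\times;\F_2)$, all of the statements (a), (b), (c) transfer verbatim from Toda's topological results \cite[Proposition 3.7, Lemma 3.10(ii), Proposition 3.11]{toda1987cohomology}; the only new content is tracking the Hodge bigrading, which is immediate since everything is built from the $c_i$ (of bidegree $(i,i)$) via operations that respect the bigrading. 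You instead redo Toda's computation from scratch in the Hodge setting. This is exactly the alternative the paper sketches in the remark following the proof (and carries out in detail for $B\mr{SO}_{4m+2}$ in \Cref{toda36-311-so}, where the transfer argument is unavailable). Your $c_1$-adic filtration argument for (b)(2) is a clean way to handle the failure of Leibniz for $d_1$ on $(P_2A,*)$; this is implicit in Toda but not spelled out there. The trade-off: the paper's route is much shorter given that \Cref{comparison}(b) is already in hand, while yours is self-contained and would survive even without that comparison.

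One caution: your treatment of (c) is thin. You write that ``one obtains exactly the stated quadratic relations among the $y_I$'' by expanding $d_1$ of products and using $d_1^2=0$, but actually deriving Toda's relation $y_Iy_J=\sum_{\emptyset\neq K\subset I}y_{(I-K)\cup J}\,y_{\{k_1\}}\cdots y_{\{k_s\}}\,c_1^{l(K)-1}$ --- with its powers of $c_1$ coming from the twisting term in the $*$-Leibniz rule --- and then showing these relations are \emph{all} the relations, is the most delicate part of Toda's argument and does not drop out of a single identity like $d_1(d_1(abc))=0$. (Contrast the $\mr{SO}$ case, \Cref{toda36-311-so}(c), where the $*$-product coincides with the ordinary product and the analogous step really is that easy.) If you pursue this route you should either reproduce Toda's induction carefully or, as the paper does, invoke \Cref{comparison}(b) at this point.
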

	
	\begin{proof}It is enough to show the corresponding statement for the corresponding $\Z$-graded algebra $H^*_\Hdg(B\mr{GL}_n/\F_2)$.
		By \Cref{comparison}(b), it then suffices to prove the analogous assertions in the topological setting. Thus (a) and (b1) follow from \cite[Proposition 3.7]{toda1987cohomology}, (b2) is given by \cite[Lemma 3.10(ii)]{toda1987cohomology}, and (c) follows from \cite[Proposition 3.11]{toda1987cohomology}. 
		
		It remains to explain the bidegrees of the elements. The $\cl{c}_i$ are constructed from the $c_i$ by applying \Cref{toda36}. In particular, since the isomorphism of \Cref{toda36} respects the grading, $\cl{c}_i$ and $c_i$ have the same bidegree $(i,i)$. The $*$-product preserves bigrading and so we get $b_h$ is homogeneous of bidegree $(4h,4h)$. Similarly $d_1$ reduces the bigrading by $(1,1)$ and so we get $|y_I|=(2d(I)-1,2d(I)-1)$. Everything works similarly in the de Rham setting where the degrees of $\cl c_i$, $b_h$ and $y_I$ are given by $2i$, $8h$ and $4d(I)-2$, respectively.
	\end{proof}
	
	\begin{rmk}
	    Let us comment upon the logic behind the statements of \Cref{toda36-311} (and how the proof could go without appealing to \cite{toda1987cohomology}). Having \Cref{toda36}, it is more or less immediate that $P_2H^{*,*}_{\on{H}}(B\mr{GL}_n/\F_2)$ is isomorphic to the polynomial ring over $\cl c_1, \cl c_3, \cl c_4,\ldots, \cl c_{4m+2}$ via the $*$-multiplication. The subalgebra $PH^{*,*}_{\on{H}}(B\mr{GL}_n/\F_2)\subset P_2H^{*,*}_{\on{H}}(B\mr{GL}_n/\F_2)$ then is identified with the kernel of the differential $d_1$ on $P_2H^{*,*}_{\on{H}}(B\mr{GL}_n/\F_2)$. One can understand this kernel in two steps: first, by finding a subalgebra in $\on{Ker}(d_1)$ that maps isomorphically to the cohomology $H^*(P_2H^{*,*}_{\on{H}}(B\mr{GL}_n/\F_2),d_1)$ of $d_1$ --- this is given by $\F_2[c_1,b_2,\ldots,b_{2m+1}]$; second, by describing the image of $d_1$ --- the latter contains elements $\cl c_{2i-1}$, and the whole image is spanned by the remaining $y_I$'s (with $l(I)\ge 2$) over the polynomial algebra $\F_2[c_1,\cl c_3, \cl c_5,\ldots, \cl c_{4k+1}, b_2,\ldots,b_{2m+1}]$. 
	    
	    The reader can also look at the proof of the analogous statement for $B\mr{SO}_{4m+2}$ (\Cref{toda36-311-so}), which is slightly more natural due to the fact that the $*$-multiplication on $P_2H^{*,*}_\Hdg(B\mr{SO}_{4m+2}/\F_2)$ coincides with the usual one.
	\end{rmk}
	\begin{example}\label{ex:stuff on new c_i and b_h}
	Formulas for the elements $\cl c_i\in P_2H^{*,*}_{\on{H}}(B\mr{GL}_{4m+2}/\F_2) \subset H^*_{\on{H}}(B\mr{GL}_{4m+2}/\F_2)\simeq \mbb F_2[c_1,\ldots, c_{4m+2}]$ get complicated pretty fast as $i$ grows. Here are the first few of them: 
	\begin{align*}
	    \cl c_1= c_1 \qquad \cl c_2= c_2 \qquad \cl c_3=  c_3 + mc_1^3 & \qquad \cl c_4=  c_4 + m(c_2+c_1^2)c_2 \\
	    \cl c_5=  c_5 + c_4c_1 +c_3(c_2 +c_1^2)   \qquad \cl c_6 &=  c_6 + (c_4 +c_3c_1)c_2.
	\end{align*}
	Let us also record that 
	$$
	b_h=\cl{c}_{2h}*\cl{c}_{2h}+c_1 \cl c_{2h} \cl c_{2h-1}=\cl{c}_{2h}^2+\cl{c}_{2h-1}^2c_2 +c_1 \cl c_{2h}\cl c_{2h-1}
	$$
	(here we just used the explicit formula for $*$-product from \Cref{formula for star-product}). %In particular, since $\cl c_{2h}\equiv c_{2h} \mod (c_2)$ we have
	%$$b_h\equiv c_{2h}^2 \mod (c_1,c_2)$$
	\end{example}
	\smallskip 
	
	We can now compute the $E_2$ page of the Eilenberg-Moore spectral sequence. 
	\begin{lemma}\label{coaction-gl}
		Assume that $n=4m+2$, for some integer $m\geq 0$. Then \[\on{Cotor}^*_{H^{*,*}_{\on{H}}(B\G_{\on{m}}/\F_2)}(\F_2, H^{*,*}_{\on{H}}(B\mr{GL}_{4m+2}/\F_2))\] is isomorphic, as a $\Z^3$-graded algebra, to
		\[(1\otimes PH^{*,*}_{\on{H}}(B\mr{GL}_{4m+2}/\F_2))\oplus (z_3\F_2[z_3]\otimes \F_2[c_1,b_h]_{h=2}^{2m+1}),\]
		where the $b_h$ have been defined in \Cref{toda36-311}(b), and $|z_3|=(1,1)$.	
		Analogous assertion holds with Hodge cohomology replaced by de Rham cohomology (here $|z_3|=2$).
		
	\end{lemma}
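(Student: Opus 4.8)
The plan is to compute the left-hand side via the twisted tensor product model of \Cref{compute-cotor}, where the bigrading is visible, and then to deduce the de Rham statement by a formal regrading. Write $\Lambda_1 \coloneqq H^{*,*}_{\on{H}}(B\G_{\on{m}}/\F_2) = \F_2[x_2]$ and $A \coloneqq H^{*,*}_{\on{H}}(B\mr{GL}_{4m+2}/\F_2) = \F_2[c_1,\dots,c_{4m+2}]$ with the comodule algebra structure of \Cref{ssect:projective}. By \Cref{compute-cotor}(c), $\on{Cotor}^*_{\Lambda_1}(\F_2,A) \simeq H^*(R_1 \otimes_{\theta_1} A, d_{\theta_1})$ as $\Z^3$-graded vector spaces, and by (\ref{algebra-map}) the map $PA \otimes R_1 \to \on{Cotor}^*_{\Lambda_1}(\F_2,A)$ from (\ref{compatib}) is an algebra homomorphism. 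Since $c_2$ may serve as $a_\sharp$ (\Cref{ex:c_q ia asharp}), \Cref{toda36} gives an isomorphism $A \simeq \F_2[c_2] \otimes_{\F_2} P_2A$ of bigraded vector spaces, where $P_2A = P_2H^{*,*}_{\on{H}}(B\mr{GL}_{4m+2}/\F_2)$ carries the differential $d_1$ of \Cref{rem:d_1 as a differential on P_2}.

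The heart of the argument is to deform $(R_1 \otimes_{\theta_1} A, d_{\theta_1})$ onto the much smaller complex $(\F_2[z_3] \otimes_{\F_2} P_2A,\ z_3 \otimes d_1)$. Since $\phi(c_2) = 1\otimes c_2 + x_2\otimes c_1 + x_2^2\otimes 1$, one gets $\phi(c_2^{2^h}) = 1\otimes c_2^{2^h} + x_2^{2^h}\otimes c_1^{2^h} + x_2^{2^{h+1}}\otimes 1$ by the Frobenius, hence $d_{\theta_1}(1\otimes c_2^{2^h}) = z_{2^{h+1}+1}\otimes c_1^{2^h} + z_{2^{h+2}+1}\otimes 1$. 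So, exactly as in the Koszul deformation of the proof of \Cref{compute-cotor}(b), one equips $R_1 \otimes_{\theta_1} A$ with a weight filtration — counting binary digits of the power of $c_2$, and weighing $c_1$, the $z_j$ with $j\ge 5$, and $P_2A$ so that the cross terms $z_{2^{h+1}+1}\otimes c_1^{2^h}$ and the lower-order part of $d_1$ on $\F_2[c_2]\otimes P_2A$ are pushed off the associated graded — whose associated graded splits as a tensor product of acyclic Koszul factors $\F_2[c_2^{2^h}, z_{2^{h+2}+1}]/\big((c_2^{2^h})^2\big)$, $h\ge 0$, with $(\F_2[z_3]\otimes P_2A,\ z_3\otimes d_1)$. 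As the filtration is exhaustive and each Koszul factor is quasi-isomorphic to $\F_2$, we obtain $H^*(R_1 \otimes_{\theta_1} A, d_{\theta_1}) \simeq H^*(\F_2[z_3]\otimes P_2A,\ z_3\otimes d_1)$.

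The remaining complex is computed directly: in degree $\deg_z = 0$ its cohomology is $\ker(d_1\colon P_2A \to P_2A) = PA = PH^{*,*}_{\on{H}}(B\mr{GL}_{4m+2}/\F_2)$, and in degree $\deg_z = k\ge 1$ it is $z_3^k \otimes \big(PA/d_1(P_2A)\big) = z_3^k\otimes H^*(P_2A,d_1)$, which equals $z_3^k\otimes\F_2[c_1,b_h]_{h=2}^{2m+1}$ by \Cref{toda36-311}(b)(2). This yields the stated additive decomposition, with $z_3$ of bidegree $(1,1)$ in $\on{Cotor}$-degree $1$ and $c_1, b_h$ of bidegrees $(1,1)$, $(4h,4h)$ in $\on{Cotor}$-degree $0$ (\Cref{toda36-311}). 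That the decomposition is one of $\Z^3$-graded algebras follows from (\ref{algebra-map}): the composite $\F_2[c_1,b_h]\otimes\F_2[z_3]\hookrightarrow PA\otimes R_1\to\on{Cotor}^*_{\Lambda_1}(\F_2,A)$ is an algebra map onto the second summand, while the products of $\on{Cotor}^0 = PA$ with that summand are governed by the relations of \Cref{toda36-311}(c), which confine them to $z_3^k\otimes\F_2[c_1,b_h]$. Finally, the de Rham case follows formally: $H^*_{\on{dR}}(B\G_{\on{m}}/\F_2)\simeq H^*_{\on{H}}(B\G_{\on{m}}/\F_2)$ and $H^*_{\on{dR}}(B\mr{GL}_{4m+2}/\F_2)\simeq H^*_{\on{H}}(B\mr{GL}_{4m+2}/\F_2)$ as (graded) Hopf and comodule algebras, so the cobar complexes and hence the Cotor algebras coincide, with the bigrading collapsed to the total grading ($|z_3| = 2$, $|b_h| = 8h$).

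I expect the deformation step to be the main obstacle: one must choose the weight function so that $d_{\theta_1}$ is filtration-preserving and its associated graded is exactly the clean tensor product above, which amounts to checking that all the cross terms and the non-leading part of $d_1$ on $\F_2[c_2]\otimes P_2A$ raise the weight. An alternative that avoids this is to invoke \Cref{comparison}(b), which identifies the left-hand side with the topological $\on{Cotor}^*_{H^*_{\on{sing}}(B\C^\times;\F_2)}(\F_2, H^*_{\on{sing}}(B\mr{GL}_{4m+2}(\C);\F_2))$ as a bigraded algebra; Toda computed the latter in \cite{toda1987cohomology}, and since the relevant Hodge cohomology is concentrated on the diagonal, the $\Z^3$-grading is recovered from the total degree — one then still uses \Cref{compute-cotor} only to name $z_3$ (the class of $x_2$) and to fix its bidegree.
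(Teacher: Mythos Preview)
Your alternative at the end---reduce to Toda via \Cref{comparison}(b) and read off the $\Z^3$-grading from the diagonal concentration---is exactly the paper's proof. The paper sets up the commutative diagram of twisted tensor products on the Hodge and singular sides, invokes Toda's description \cite[(4.7)]{toda1987cohomology}, and recovers the algebra structure from surjectivity of $PA\otimes R_1\to\on{Cotor}$ on the topological side.

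Your \emph{main} line, the weight-filtration deformation, does not work as written, and the obstruction is exactly the one you flag. Take $a\in P_2A$ and compute
\[
d_{\theta_1}(1\otimes c_2 a)=z_3\otimes(c_2\,d_1(a)+c_1 a)+z_5\otimes(c_1\,d_1(a)+a).
\]
For the associated graded to be the Koszul factor $\F_2[c_2,z_5]/(c_2^2)$ tensored with $(\F_2[z_3]\otimes P_2A,\ z_3\otimes d_1)$, the Leibniz differential on $c_2\otimes a$ would have to be $z_5\otimes a + z_3\otimes c_2\,d_1(a)$; but the actual formula carries the extra term $z_5\otimes c_1\,d_1(a)$, which has the same $c_2$- and $z_5$-content as $z_5\otimes a$ and cannot be pushed to higher weight by any grading that simultaneously keeps $z_3\otimes d_1$ weight-preserving on $P_2A$. (Already at $h=0$, the cross term $z_3\otimes c_1$ in $d_{\theta_1}(c_2)$ forces $\mathrm{wt}(z_3)+\mathrm{wt}(c_1)\ge\mathrm{wt}(c_2)$, incompatible with $\mathrm{wt}(z_3)=\mathrm{wt}(P_2A)=0$.) The reason the analogous argument succeeds verbatim for $B\mr{SO}_{4m+2}$ (\Cref{coaction-so}) is that there $\phi(u_2)=1\otimes u_2+x_2\otimes 1$ has no $c_1$-type middle term, and $P_2A$ is an honest subring by \Cref{toda36-new}(b), so one gets a genuine tensor decomposition of complexes rather than a filtered one.

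What \emph{does} work directly in the Hodge setting---and is what Toda does topologically, as the paper sketches in the remark following the lemma---is to observe that $\F_2[z_3]\otimes P_2A$ is a sub\emph{complex} of $(R_1\otimes_{\theta_1}A,d_{\theta_1})$ and prove that the inclusion is a quasi-isomorphism (e.g.\ by filtering the quotient). That bypasses the Leibniz failure entirely. If you want a self-contained Hodge proof, this is the route to take; otherwise your alternative via \Cref{comparison}(b) is complete and matches the paper.
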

	\begin{rmk}\label{rem:multiplication in cotor}
	    Here the algebra structure is induced by the surjective homomorphism 
	    $$
	    PH^{*,*}_{\on{H}}(B\mr{GL}_{4m+2}/\F_2)) \xymatrix{\ar@{->>}[r]&} H^*(P_2 H^{*,*}_{\on{H}}(B\mr{GL}_{4m+2}/\F_2), d_1)\simeq \F_2[c_1,b_h]_{h=2}^{2m+1},
	    $$
	    where the last isomorphism is given by \Cref{toda36-311}(b). Note that the elements $y_I$ from \Cref{toda36-311}(c) are defined as images under $d_1$ of $\cl c_{i_1}*\ldots *\cl c_{i_r}\in P_2H^{*,*}_\Hdg(B\mr{GL}_n/\mbb F_2)$, and so they map to 0 under the above projection. Consequently, in  $\on{Cotor}^*_{H^{*,*}_{\on{H}}(B\G_{\on{m}}/\F_2)}(\F_2, H^{*,*}_{\on{H}}(B\mr{GL}_{4m+2}/\F_2))$ we have  $y_Iz_3=0$ for any $I$.
	\end{rmk}
	\begin{proof}
It is enough to check the statement for the $\Z$-graded algebras $H^{*}_{\on{H}}$ underlying $H^{*,*}_{\on{H}}$ (via totalization). Recall \Cref{comparison}(b). 
		Recall also that we can compute $\on{Cotor}$ via the twisted tensor product (see \Cref{compute-cotor}(c)). This gives a commutative diagram of graded vector spaces
		\[
		\begin{tikzcd}
			PH^{*}_{\on{H}}(B\mr{GL}_n/\F_2)\otimes R_1 \arrow[r, "\sim"] \arrow[d] & PH^{*,*}_{\on{sing}}(B\mr{GL}_n(\C);\F_2)\otimes R_1 \arrow[d]  \\
			H^*(H^*_{\on{H}}(B\mr{GL}_n/\F_2)\otimes_{\theta_1} R_1) \arrow[r, "\sim"] \arrow[d,"\wr"]  & H^*(H^*_{\on{sing}}(B\mr{GL}_n(\C);\F_2)\otimes_{\theta_1} R_1) \arrow[d,"\wr"]   \\	
			\on{Cotor}^*_{H^*_{\on{H}}(B\G_{\on{m}}/\F_2)}(\F_2, H^*_{\on{H}}(B\mr{GL}_n/\F_2))  \arrow[r, "\sim"] & \on{Cotor}^*_{H^*_{\on{sing}}(B\C^{\times};\F_2)}(\F_2, H^*_{\on{sing}}(B\mr{GL}_n(\C);\F_2)),  
		\end{tikzcd}
		\]
		where the top square comes from (\ref{compatib}) and \Cref{comparison}(b), while the bottom square comes from \Cref{compute-cotor}(c).
		Here, the elements $z_i\in R_1$, $\cl c_i$, $b_h$ and $y_I$ from \Cref{toda36-311} are getting mapped to the analogous elements ($z_i$, $\cl a_i$, $b_h$ and $y_I$) in the notation of \cite[Section 3 and (4.7)]{toda1987cohomology}. The statement of the lemma then follows from the analogous description in \cite[Section 3 and (4.7)]{toda1987cohomology}. To see that the algebra structure above is the correct one, note the maps in the outer rectangle in the above diagram are homomorphisms of algebras, and, by the description in \cite[(4.7)]{toda1987cohomology}, the composition of the vertical maps on the right is surjective, 
		hence the same is true for the composition of the two vertical maps on the left. 
	\end{proof}
	
%		\cite[Theorem 4.1]{toda1987cohomology} identifies $H^*(H^*_{\on{sing}}(B\mr{GL}_n(\C);\F_2)\otimes_{\theta_1} R_1)$ with the cohomology of a subcomplex $C\coloneqq P_2H^*_{\on{sing}}(B\mr{GL}_n(\C);\F_2)\otimes \mbb F_2[z_3]\subset H^*_{\on{sing}}(B\mr{GL}_n(\C);\F_2)\otimes_{\theta_1} R_1$. By the definition of the differential in the twisted tensor product, the differential in $C$ can be identified with $d_1\otimes z_3$, and one gets the description of the cohomology as in the statement of the lemma using an analogue of \Cref{toda36-311}(b).
		
%	To control the algebra structures recall that by (\ref{algebra-map}), 
%		We thus reach a commutative square of algebra maps
%		\[
%		\begin{tikzcd}
%			PH^*_{\on{H}}(B\mr{GL}_n/\F_2)\otimes R_1 \arrow[r, "\sim"] \arrow[d,->>] & PH^*_{\on{sing}}(B\mr{GL}_n(\C);\F_2)\otimes R_1 \arrow[d,->>]  \\
%			\on{Cotor}^*_{H^*_{\on{H}}(B\G_{\on{m}}/\F_2)}(\F_2, H^*_{\on{H}}(B\mr{GL}_n/\F_2))  \arrow[r, "\sim"] & \on{Cotor}^*_{H^*_{\on{sing}}(B\C^{\times};\F_2)}(\F_2, H^*_{\on{sing}}(B\mr{GL}_n(\C);\F_2)).  
%		\end{tikzcd}
%		\]
%		The conclusion now follows from the description of the right hand side in \cite[(4.7)]{toda1987cohomology}.	

\begin{rmk}
For the reader's convenience, let us also sketch the idea behind Toda's computation on the topological side, especially since the original paper is somewhat brief. First of all, \cite[Theorem 4.1]{toda1987cohomology} identifies $H^*(H^*_{\on{sing}}(B\mr{GL}_n(\C);\F_2)\otimes_{\theta_1} R_1)$ with the cohomology of a subcomplex 
$$
C\coloneqq (P_2H^*_{\on{sing}}(B\mr{GL}_n(\C);\F_2)\otimes \mbb F_2[z_3], d_{\theta_1}|_{P_2H^*_{\on{sing}}\otimes \mbb F_2[z_3]})\subset (H^*_{\on{sing}}(B\mr{GL}_n(\C);\F_2)\otimes_{\theta_1} R_1, d_{\theta_1}).
$$ By the definition of $d_{\theta_1}$, its restriction to $C$ can be identified with $d_1\otimes z_3$. The cohomology of $C$ then can be identified with $H^*_{\on{sing}}(B\mr{GL}_n(\C);\F_2)$ plus direct sum of infinitely many copies of $H^*(P_2H^*_{\on{sing}}(B\mr{GL}_n(\C);\F_2), d_1)$ multiplied by powers of $z_3$. Using the topological analogue of \Cref{toda36-311}(b) to describe the latter one arrives at the description of $H^*(H^*_{\on{sing}}(B\mr{GL}_n(\C);\F_2)\otimes_{\theta_1} R_1)$ as in \Cref{coaction-gl}. Moreover, since $H^*(P_2H^*_{\on{sing}}(B\mr{GL}_n(\C);\F_2), d_1)$ is generated by classes in $PH^{*}_{\on{sing}}(B\mr{GL}_n(\C);\F_2)$ one gets that the map 
$$
PH^*_{\on{sing}}(B\mr{GL}_n(\C);\F_2)\otimes R_1 \tto \on{Cotor}^*_{H^*_{\on{sing}}(B\C^{\times};\F_2)}(\F_2, H^*_{\on{sing}}(B\mr{GL}_n(\C);\F_2))
$$
is a surjective homomorphism of algebras and one recovers the algebra structure on $\on{Cotor}$ as well. 
\end{rmk}

To start let us compute Hodge and de Rham cohomology of $B\mr{PGL}_n$ for all $n$ in degrees up to 3.
\begin{lemma}\label{pgl-small}
We have isomorphisms \begin{enumerate}
    \item $H^0_\Hdg(B\mr{PGL}_{n}/\F_2)\simeq \mbb \F_2$,
    \item $H^1_\Hdg(B\mr{PGL}_{n}/\F_2)\simeq 0$,
    \item $H^2_\Hdg(B\mr{PGL}_{n}/\F_2)\simeq H^{1,1}_\Hdg(B\mr{PGL}_{n}/\mbb F_2)\simeq \mbb \F_2$ if $n$ is even and $H^2_\Hdg(B\mr{PGL}_{n}/\mbb \F_2)\simeq 0$ if $n$ is odd,
    \item $H^3_\Hdg(B\mr{PGL}_{n}/\mbb F_2)\simeq H^{1,2}_\Hdg(B\mr{PGL}_{n}/\F_2)\simeq \F_2$ if $n$ is even and $H^3_\Hdg(B\mr{PGL}_{n}/\mbb \F_2)\simeq 0$ if $n$ is odd.
\end{enumerate}  Similar assertions also hold for de Rham cohomology. We will denote by $x_2$ and $x_3$ the unique non-zero elements in $H^2_\Hdg$ and $H^3_\Hdg$ in the case $n$ is even.
\end{lemma}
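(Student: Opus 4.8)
The plan is to apply the Eilenberg--Moore spectral sequence of \Cref{eilenberg-moore-easy} to the central extension $1\to\G_{\on m}\to\mr{GL}_n\to\mr{PGL}_n\to 1$, using the inputs recalled in \Cref{ssect:projective}: $H^{*,*}_\Hdg(B\G_{\on m}/\F_2)\simeq\Lambda_1=\F_2[x_2]$ with $|x_2|=(1,1)$, $H^{*,*}_\Hdg(B\mr{GL}_n/\F_2)\simeq\F_2[c_1,\dots,c_n]$ with $|c_i|=(i,i)$, and the $\G_{\on m}$-coaction of \Cref{ex:d_i's for GL_n}; the only facts about the coaction needed below are that $d_1(c_1)=\binom{n}{1}c_0=n\in\F_2$ and that $d_j(c_1)=0$ for $j\ge 2$. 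First I would trim the $E_2$-page: since the generators $z_{2^h+1}$ of $R_1$ have internal degree $2^h\ge 2$, the twisted tensor product of \Cref{compute-cotor} shows that $\on{Cotor}^i_{\Lambda_1}(\F_2,H^*_\Hdg(B\mr{GL}_n/\F_2))$ is concentrated in internal degrees $\ge 2i$. Via $E_2^{i,j}=\on{Cotor}^i(\F_2,H^*_\Hdg(B\mr{GL}_n/\F_2))^j\Rightarrow H^{i+j}_\Hdg(B\mr{PGL}_n/\F_2)$, this leaves in total degrees $\le 3$ only the entries coming from $\on{Cotor}^0$ and from $\on{Cotor}^1$ in internal degree $2$; as the neighbouring $E_2$-terms vanish there is no room for any differential, and the spectral sequence computes $H^{\le 3}_\Hdg(B\mr{PGL}_n/\F_2)$ directly.

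Next I would evaluate $\on{Cotor}^0=PH^*_\Hdg(B\mr{GL}_n/\F_2)$ in low degrees. It is $\F_2=H^{0,0}$ in internal degree $0$; it vanishes in internal degrees $1$ and $3$ (the algebra $H^{*,*}_\Hdg(B\mr{GL}_n/\F_2)$ sits in bidegrees $(i,i)$, hence has no odd cohomology); and in internal degree $2$ it equals $P(\F_2 c_1)$, which is $\F_2 c_1$ exactly when $d_1(c_1)=n$ is zero in $\F_2$, i.e.\ when $n$ is even, and $0$ otherwise. Since $\on{Cotor}^0$ preserves the internal bidegree and $|c_1|=(1,1)$, this gives items $(1)$, $(2)$, $(3)$, with $H^2_\Hdg(B\mr{PGL}_n/\F_2)$ identified with $H^{1,1}_\Hdg(B\mr{PGL}_n/\F_2)$.

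For item $(4)$, since $H^3_\Hdg(B\mr{GL}_n/\F_2)=0$, the group $H^3_\Hdg(B\mr{PGL}_n/\F_2)$ is computed by $\on{Cotor}^1_{\Lambda_1}(\F_2,H^*_\Hdg(B\mr{GL}_n/\F_2))$ in internal degree $2$. I would compute this using the twisted tensor product $R_1\otimes_{\theta_1}H^*_\Hdg(B\mr{GL}_n/\F_2)$: in internal degree $2$ the complex is
\[
\F_2\cdot(1\otimes c_1)\xrightarrow{d_{\theta_1}}\F_2\cdot(z_3\otimes 1)\to 0,\qquad d_{\theta_1}(1\otimes c_1)=z_3\otimes d_1(c_1)=n\,(z_3\otimes 1),
\]
so its cohomology in $\deg_z=1$ is $\F_2$ for $n$ even and $0$ for $n$ odd. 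Since $z_3$ has $\deg_z=1$ and bidegree $(1,1)$ (\Cref{constr:twisted cochains}), the bigraded form of the spectral sequence (\Cref{rem:about Hodge grading}) places the resulting class in $H^{1,2}_\Hdg(B\mr{PGL}_n/\F_2)$, which is therefore $\F_2$ for $n$ even and $0$ for $n$ odd.

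Finally, the de Rham statements I would obtain by rerunning the identical computation with the de Rham Eilenberg--Moore spectral sequence of \Cref{eilenberg-moore-easy}, using $H^*_\dR(B\G_{\on m}/\F_2)\simeq\Lambda_1^{\mr{tot}}=\F_2[x_2]$ with $|x_2|=2$ (\Cref{rem:incarnations of lambdas}) and $H^*_\dR(B\mr{GL}_n/\F_2)\simeq H^*_\Hdg(B\mr{GL}_n/\F_2)=\F_2[c_1,\dots,c_n]$ from the degeneration of the Hodge--de Rham spectral sequence (\Cref{ssect:projective}), with the same coaction; the computation goes through verbatim. We then let $x_2$ and $x_3$ denote the unique nonzero elements of $H^{1,1}_\Hdg$ and $H^{1,2}_\Hdg$ (and, analogously, of $H^2_\dR$ and $H^3_\dR$) when $n$ is even. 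I do not expect a genuine obstacle here; the step requiring the most care is the de Rham case, where one is tempted to deduce everything from the Hodge statement via the Hodge--de Rham spectral sequence of $B\mr{PGL}_n$, but for $n$ even this would require showing the differential $d_1\colon H^{1,2}_\Hdg(B\mr{PGL}_n/\F_2)\to H^{2,2}_\Hdg(B\mr{PGL}_n/\F_2)$ vanishes, and the target need not be zero --- so one really should run the de Rham Eilenberg--Moore spectral sequence directly. The only other point to watch is the bidegree bookkeeping, so that the degree-$2$ and degree-$3$ classes land in $H^{1,1}$ and $H^{1,2}$ as claimed.
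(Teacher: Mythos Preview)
Your argument is correct and uses a genuinely different route from the paper. The paper computes $H^2_\Hdg(B\mr{PGL}_n/\F_2)$ via Totaro's representation-theoretic formula $H^{1,1}_\Hdg\simeq(\mf t^\vee)^W$ and an explicit analysis of the $S_n$-module $\mf t^\vee$, and then obtains $H^1_\Hdg$ and $H^3_\Hdg$ from the Leray--Serre spectral sequence of Totaro (Proposition~9.5) for the fibration $\P^{n-1}\to B\mr{GL}_{n-1}\to B\mr{PGL}_n$, defining $x_3$ as the transgression $d_3(h)$ of the hyperplane class. You instead run the Eilenberg--Moore spectral sequence of \Cref{eilenberg-moore-easy} directly, using the vanishing bound $\on{Cotor}^i_{\Lambda_1}\subset(\text{internal degree}\ge 2i)$ coming from \Cref{compute-cotor} to see that in total degree $\le 3$ only $\on{Cotor}^0$ (degrees $0$--$3$) and $\on{Cotor}^1$ (internal degree $2$) survive, and that there is no room for differentials; the explicit twisted tensor product then reduces the computation to the single map $1\otimes c_1\mapsto n\,(z_3\otimes 1)$.

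Your approach is more self-contained within the machinery built in the paper (it uses only \Cref{eilenberg-moore-easy}, \Cref{compute-cotor}, and the coaction formula of \Cref{ex:d_i's for GL_n}), and it identifies $x_2$ and $x_3$ directly as the classes lifting $c_1$ and $z_3$ under the Eilenberg--Moore filtration, which meshes well with the subsequent proof of \Cref{thm-hodge-pgl}. The paper's approach, on the other hand, gives a more intrinsic description of $x_2$ as the invariant vector $(1,\dots,1)\in(\mf t^\vee)^{S_n}$ and of $x_3$ as a transgression class; it also avoids any spectral-sequence bookkeeping on the Cotor side. Your remark about the de~Rham case---that one should rerun the Eilenberg--Moore argument rather than try to transport the Hodge result through the Hodge--de~Rham spectral sequence, because the target $H^{2,2}_\Hdg$ of the potential differential out of $x_3$ need not vanish---is well taken.
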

\begin{proof}
    	By \cite[Theorem 2.4]{totaro2018hodge} and \cite[Theorem 1.1]{chaput2010adjoint} (cf. \cite[Theorem 8.1]{totaro2018hodge}) for every split simple $k$-group $G$ not of type $C$ we have
		\begin{align*}H^2_{\on{H}}(BG/k)=&H^2(BG,\mc{O})\oplus H^1(BG,\Omega^1)\oplus H^0(BG,\Omega^2) \\
			\simeq &H^2(G,k)\oplus (\mathfrak{g}^*)^G\oplus H^{-2}(G,S^2(\mathfrak{g}^*)) \\
			\simeq &H^2(G,k)\oplus (\mathfrak{t}^*)^W,		
		\end{align*}
		where $\mathfrak{g}$ and $\mathfrak{t}$ are the Lie algebras of $G$ and a maximal torus $T\subset G$, respectively, and where $W$ is the Weyl group of $G$.
		By \cite[II, Corollary 4.11]{jantzen2003representations},  we have $H^0(G,k)=k$ and $H^i(G,k)=0$ for $i>0$. Setting $k=\F_2$ and $G=\on{PGL}_{n}$ yields
		\begin{equation}\label{pgl-h2} 
			H^2_{\on{H}}(B\mr{PGL}_{n}/\F_2)=(\mathfrak{t}^*)^W.
		\end{equation}
		If $G=\on{PGL}_{n}$, then $W=S_{n}$ and the  $S_{n}$-representation $\mathfrak{t}^*$ fits into a short exact sequence
		\[0\to \mathfrak{t}^*\to \F_2^{\oplus n}\xrightarrow{\Sigma} \F_2\to 0,\]
		where $S_{n}$ acts on $\F_2^{\oplus n}$ by permutation and the map $\Sigma$ is given by the summation of  coordinates. The invariants $(\F_2^{\oplus n})^{S_n}$ are spanned by the vector $(1,1,\dots,1)$ which lies in $\mathfrak{t}^*$ if and only if $n$ is even. This gives (3); in particular, if $n$ is even there is a unique $x_2\in (\mathfrak{t}^*)^{S_n}\simeq  H^1(B\mr{PGL}_{n},\Omega^1)$ such that \[H^2_{\on{H}}(B\mr{PGL}_{n}/\F_2)=\F_2\cdot x_2.\] 
		
		To compute the other cohomology groups we will use a Leray-Serre-type spectral sequence. Namely, let $\on{PGL}_{n}$ act on $\P^{n-1}$ as its automorphism group, and let $P\subset \on{PGL}_{n}$ be the stabilizer of $(1:0:\dots:0)\in \P^{n-1}(\mbb F_2)$. We have $H^*_{\on{H}}(\P^{n-1}/\mbb F_2)= \F_2[h]/(h^{n})$, where $h$ has Hodge bidegree $(1,1)$ (meaning  $h\in H^1(\mbb P^{n-1},\Omega^1)\simeq H^2_\Hdg(\mbb P^{n-1}/\mbb F_2)$). The Levi subgroup corresponding to the parabolic $P$ is isomorphic to $\on{GL}_{n-1}$. Therefore, by \cite[Proposition 9.5]{totaro2018hodge} we have a spectral sequence
		\begin{equation}\label{parabolic-ss}E_2^{i,j}\coloneqq H^i_{\on{H}}(B\mr{PGL}_{n}/\F_2)\otimes H^j_{\on{H}}(\P^{n-1}/\F_2)\Rightarrow H^*_{\on{H}}(B\mr{GL}_{n-1}/\F_2).\end{equation}
	  We know $H^2_{\on{H}}(B\mr{GL}_{n-1}/\F_2)=\F_2$ and $H^3_{\on{H}}(B\mr{GL}_{n-1}/\F_2)=0$. From this it is easy to see that the $E_2$ page of the spectral sequence (\ref{parabolic-ss}) is given in low degrees by
\[
\text{$n$ even} \begin{tikzpicture}
			\matrix (m) [matrix of math nodes,
			nodes in empty cells,nodes={minimum width=5ex,
				minimum height=5ex,outer sep=-5pt},
			column sep=1ex,row sep=1ex]{
				&     &   &  \\
				2	  &  h & \cdot	& \cdot	&\\
				1     &  0 &  0  &  \cdot & \\
				0     &  1  & 0 & x_2 & x_3 \\
				\quad\strut &   0  &  1  &  2  & 3 & \strut \\};
			\draw[thick] (m-1-1.east) -- (m-5-1.east) ;
			\draw[thick] (m-5-1.north) -- (m-5-5.north) ;
		\end{tikzpicture} \quad
			\text{$n$ odd}	\begin{tikzpicture}
			\matrix (m) [matrix of math nodes,
			nodes in empty cells,nodes={minimum width=5ex,
				minimum height=5ex,outer sep=-5pt},
			column sep=1ex,row sep=1ex]{
				&     &   &  \\
				2	  &  h & \cdot	& \cdot	&\\
				1     &  0 &  0  &  \cdot & \\
				0     &  1  & 0 & 0 & 0 \\
				\quad\strut &   0  &  1  &  2  & 3 & \strut \\};
			\draw[thick] (m-1-1.east) -- (m-5-1.east) ;
			\draw[thick] (m-5-1.north) -- (m-5-5.north) ;
		\end{tikzpicture} 			    
\]
		where  $x_3\in H^2(B\mr{PGL}_{4m+2},\Omega^1)$ is defined by $d_3(h)=x_3$. Thus for all $n$ we get 
		\begin{equation}\label{pgl-h1}H^1_{\on{H}}(B\mr{PGL}_{n}/\F_2)\simeq 0. \end{equation} 
		We also get that
		\[
		H^3_{\on{H}}(B\mr{PGL}_{n}/\F_2)\simeq H^{1,2}_{\on{H}}(B\mr{PGL}_{n}/\F_2)\simeq \F_2 \cdot {x_3}
		\]
		when $n$ is even and 
		\[
		H^3_{\on{H}}(B\mr{PGL}_{n}/\F_2)\simeq 0
		\]
		if $n$ is odd.
		This gives us the groups $H^i_{\on{H}}(B\mr{PGL}_{n}/\F_2)$ for $i\leq 3$. By looking at the bigrading of generators it is also easy to see that the Hodge-de-Rham spectral sequence necessarily degenerates in degrees $\leq 3$, and so
		\begin{equation}\label{hdr-pgl-leq3} H^i_{\on{H}}(\on{PGL}_{n}/\F_2)\simeq H^i_{\on{dR}}(\on{PGL}_{n}/\F_2) \text{ for all $i\leq 3$.}\qedhere\end{equation}
\end{proof}

	Now let us compute the rest of the Hodge cohomology ring in the case of $B\mr{PGL}_{4m+2}$.
	\begin{rmk}
	If $n$ is odd, then $2$ is not a torsion prime for $\mr{PGL}_n$, then by the general result of Totaro \cite[Theorem 9.2]{totaro2018hodge} one has $H^*_\Hdg(B\mr{PGL}_n/\mbb F_2)\simeq H^*_\dR(B\mr{PGL}_n/\mbb F_2)\simeq \mbb F_2[c_2,\ldots,c_n]$ where $|c_i|=2i$. Thus understanding the case $n=4k+2$ is the next natural step.  
	\end{rmk}
	\begin{thm}\label{thm-hodge-pgl}
		The bigraded ring $H^{*,*}_{\on{H}}(B\mr{PGL}_{4m+2}/\F_2)$ is generated by elements 
		\smallskip
		
		\[x_2\in H^{1,1}_\Hdg(B\mr{PGL}_{4m+2}/\mbb F_2),\quad x_3\in H^{1,2}_\Hdg(B\mr{PGL}_{4m+2}/\mbb F_2),\]

		\[b_h\in H^{4h,4h}_\Hdg(B\mr{PGL}_{4m+2}/\mbb F_2),\quad y_I\in H^{2d(I)-1,2d(I)-1}_\Hdg(B\mr{PGL}_{4m+2}/\mbb F_2).\]
	\smallskip 
		
		Here $1< h\leq 2m+1$, $I=(i_1,\ldots,i_r)\in \{1<i_1<\dots<i_r\leq 2m+1\}$, and $d(I)\coloneqq i_1+\dots+i_r$. The relations are generated by
		\begin{equation}\label{rel1}
			y_Iy_J=\sum_{\emptyset\neq K\subset I}y_{(I-K)\cup J}y_{\{k_1\}}\dots y_{\{k_s\}}x_2^{l(K)-1},
		\end{equation}
		\begin{equation}\label{rel2}
			y_{\{h,h,j_1,\dots,j_s\}}=y_{\{j_1,\dots,j_s\}}b_h+y_{\{h,j_1,\dots,j_s\}}y_{\{h\}}x_2=0,
		\end{equation}
		\begin{equation}\label{rel3}
			x_3y_I=0 \text{ for all $I$.}
		\end{equation}
		%We have an isomorphism of graded rings $H^*_{dR}(B\mr{PGL}_{4m+2})\simeq H^*(B\mr{PGL}_{4m+2})$.
		Similarly, the graded ring $H^*_{\on{dR}}(B\mr{PGL}_{4m+2}/\mbb F_2)$ has generators $x_2\in H^2_\dR$, $x_3\in H^3_\dR$, $b_h\in H^{8h}_\dR$, $y_I\in H^{4d(I)-2}_\dR$, where $h$, $I$ and $d(I)$ are as above, and relations generated by (\ref{rel1}), (\ref{rel2}) and (\ref{rel3}). In particular, we have an isomorphism of graded rings $H^*_{\on{H}}(B\mr{PGL}_{4m+2}/\F_2)\simeq H^*_{\on{dR}}(B\mr{PGL}_{4m+2}/\F_2)$.
	\end{thm}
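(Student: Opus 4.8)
\emph{Plan.} I would run the Eilenberg--Moore spectral sequence of \Cref{eilenberg-moore-easy} for the central short exact sequence $1\to\G_{\mathrm m}\to\mr{GL}_{4m+2}\to\mr{PGL}_{4m+2}\to 1$, with $\G_{\mathrm m}$ the center of $\mr{GL}_{4m+2}$. Its $E_2$-page is computed by \Cref{coaction-gl}: as a $\Z^3$-graded algebra it is $\big(1\otimes PH^{*,*}_{\Hdg}(B\mr{GL}_{4m+2}/\F_2)\big)\oplus\big(z_3\F_2[z_3]\otimes\F_2[c_1,b_h]\big)$, with the multiplication rule of \Cref{rem:multiplication in cotor}. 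Substituting the presentation of $PH^{*,*}_{\Hdg}(B\mr{GL}_{4m+2}/\F_2)$ from \Cref{toda36-311}(c)(1) into this, one reads off a presentation of the $E_2$-page with generators $c_1$, $z_3$, $b_h$, $y_I$ and relations consisting of the relations of $PH^{*,*}_{\Hdg}(B\mr{GL}_{4m+2}/\F_2)$ together with $z_3y_I=0$; here one uses that the $y_I$ span the kernel of the projection $PH^{*,*}_{\Hdg}(B\mr{GL}_{4m+2}/\F_2)\twoheadrightarrow\F_2[c_1,b_h]$, so that these relations already control all products $z_3\cdot(-)$. After renaming $c_1=x_2$ and $z_3=x_3$ this is precisely (\ref{rel1})--(\ref{rel3}), and the same computation, applied to the totalized statements, gives the de Rham $E_2$-page. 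It then remains to see that the (algebraic) Eilenberg--Moore spectral sequence degenerates at $E_2$ and to promote this to a statement about the honest (bi)graded ring. Degeneration for both the Hodge and de Rham spectral sequences is the argument of \Cref{additive}: by \Cref{comparison}(b) the $E_2$-page coincides with that of Toda's topological Eilenberg--Moore spectral sequence, which he showed degenerates, and Totaro's inequality~(\ref{eq:inequalities}) then forces the dimensions to agree, hence degeneration.

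\emph{The Hodge case.} Since $H^{*,*}_{\Hdg}(B\G_{\mathrm m}/\F_2)$ and $H^{*,*}_{\Hdg}(B\mr{GL}_{4m+2}/\F_2)$ are concentrated in diagonal Hodge bidegrees, so is the cobar complex, and hence each $\on{Cotor}^i$ is concentrated in bidegrees $(h,j)$ with $h=j$. By \Cref{rem:about Hodge grading} a Cotor-class of homological degree $i$ and bidegree $(p,p)$ contributes to $H^{p,\,p+i}_{\Hdg}(B\mr{PGL}_{4m+2}/\F_2)$, so after degeneration the Eilenberg--Moore filtration on the bigraded piece $H^{p,q}_{\Hdg}(B\mr{PGL}_{4m+2}/\F_2)$ is concentrated in the single column-degree $q-p$; in particular $H^{p,q}_{\Hdg}(B\mr{PGL}_{4m+2}/\F_2)=0$ whenever $q<p$. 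Because the Eilenberg--Moore spectral sequence is one of bigraded algebras, this forces $H^{*,*}_{\Hdg}(B\mr{PGL}_{4m+2}/\F_2)$ to be isomorphic, as a bigraded algebra, to its $E_\infty$-page, i.e.\ to the $\on{Cotor}$ algebra of the previous paragraph. Defining $x_2,x_3,b_h,y_I\in H^{*,*}_{\Hdg}(B\mr{PGL}_{4m+2}/\F_2)$ to be the images of $c_1,z_3,b_h,y_I$ under this isomorphism (consistently with \Cref{pgl-small}, which exhibits the classes $x_2\in H^{1,1}_{\Hdg}$ and $x_3\in H^{1,2}_{\Hdg}$ appearing in the statement) then yields the asserted presentation.

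\emph{The de Rham case.} Here the Eilenberg--Moore filtration genuinely mixes several column-degrees, so generators and relations must be lifted by hand. I would take $x_2\in H^2_{\dR}$ and $x_3\in H^3_{\dR}$ from \Cref{pgl-small}, and lift $b_h,y_I$ to $H^*_{\dR}(B\mr{PGL}_{4m+2}/\F_2)$ using that, after degeneration, the edge homomorphism $H^*_{\dR}(B\mr{PGL}_{4m+2}/\F_2)\to PH^*_{\dR}(B\mr{GL}_{4m+2}/\F_2)$ (pullback along $B\mr{GL}_{4m+2}\to B\mr{PGL}_{4m+2}$) is surjective. To check that (\ref{rel1})--(\ref{rel3}) survive, I would use the multiplicative Hodge--de Rham spectral sequence of $B\mr{PGL}_{4m+2}$, which degenerates because the total dimensions of $H^*_{\dR}$ and $H^*_{\Hdg}$ already agree. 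Restricting along $B\mr{GL}_{4m+2}\to B\mr{PGL}_{4m+2}$ and again invoking the vanishing $H^{p,q}_{\Hdg}(B\mr{PGL}_{4m+2}/\F_2)=0$ for $q<p$ pins down the Hodge filtration level and the symbol of each of $x_2,x_3,b_h,y_I$; hence the symbol of each of the relations (\ref{rel1})--(\ref{rel3}) is the corresponding relation in $H^{*,*}_{\Hdg}(B\mr{PGL}_{4m+2}/\F_2)$, which vanishes by the Hodge case, while the only bidegree the de Rham relation could otherwise land in has $q<p$ and is therefore zero. Thus one obtains a graded algebra map $\mathcal R\to H^*_{\dR}(B\mr{PGL}_{4m+2}/\F_2)$ from the presented algebra $\mathcal R$; it is surjective because the images of the generators already generate $E_\infty$, and since $\mathcal R$ is the totalization of the presented Hodge algebra its graded dimensions equal those of $H^*_{\Hdg}$, hence of $H^*_{\dR}$, so the map is an isomorphism. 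The same identification of $\mathcal R$ with the totalized Hodge algebra gives the final statement $H^*_{\Hdg}(B\mr{PGL}_{4m+2}/\F_2)\simeq H^*_{\dR}(B\mr{PGL}_{4m+2}/\F_2)$.

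\emph{Main obstacle.} The degeneration is routine given \Cref{additive}; the real work is the bookkeeping. One must extract the exact presentation of the $\on{Cotor}$ algebra from \Cref{coaction-gl} and \Cref{toda36-311} --- in particular checking that (\ref{rel1})--(\ref{rel3}) generate \emph{all} the relations --- and, in the de Rham case, verify that these relations lift. The vanishing $H^{p,q}_{\Hdg}(B\mr{PGL}_{4m+2}/\F_2)=0$ for $q<p$, a consequence of the diagonality of the input Hodge cohomologies, is the structural fact that makes both the direct Hodge argument and the de Rham relation-lifting go through without re-running Toda's combinatorial analysis.
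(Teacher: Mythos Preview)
Your Hodge argument is correct and in fact cleaner than the paper's. The paper does not use your observation that both $H^{*,*}_{\Hdg}(B\G_{\mathrm m}/\F_2)$ and $H^{*,*}_{\Hdg}(B\mr{GL}_{4m+2}/\F_2)$ are concentrated on the diagonal, forcing each $(\on{Cotor}^i)^{h,j}$ to vanish unless $h=j$, so that the Eilenberg--Moore filtration on every bigraded piece $H^{p,q}_{\Hdg}(B\mr{PGL}_{4m+2}/\F_2)$ has a single step (column degree $q-p$). This kills the extension problem outright and identifies the ring with its $E_\infty$-page. The paper instead runs the argument you sketch for de Rham in the Hodge case as well: it lifts $b_h, y_I$ through the edge map, uses \Cref{column} to adjust the $y_I$ so that $x_3y_I=0$ exactly, and then checks (\ref{rel1})--(\ref{rel2}) by observing that the difference of the two sides lies in $\ker Bp^*=x_3\F_2[x_2,x_3,b_h]$ yet is annihilated by $x_3$. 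Your route is shorter in the Hodge setting; the paper's route has the advantage of being uniform in Hodge and de Rham.

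There is a gap in your de Rham argument. You lift $b_h,y_I$ via the Eilenberg--Moore edge map and then want to use the Hodge filtration to force the relations. But restriction along $Bp^*$ does not pin down the Hodge filtration level of an EM-lifted class: for instance an EM-lift of $b_h$ lies in some $F^iH^{8h}_{\dR}$ with $i\le 4h$, and for $8h/3\le i<4h$ the symbol space $H^{i,8h-i}_{\Hdg}(B\mr{PGL}_{4m+2}/\F_2)$ is in general nonzero (it contains $x_3^{8h-2i}\cdot(\text{polynomials in }x_2,b_j)$), while its image in $H^{i,8h-i}_{\Hdg}(B\mr{GL}_{4m+2}/\F_2)=0$ gives no information. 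The fix is simple: once the Hodge presentation is established and the Hodge--de Rham spectral sequence degenerates, lift the generators via the Hodge filtration (choose $b_h^{\dR}\in F^{4h}H^{8h}_{\dR}$ with symbol $b_h^{\Hdg}$, and similarly for $y_I$). Your relation-lifting argument then applies verbatim: the symbol of each relation is the Hodge relation, hence zero, and the deeper filtration pieces vanish by $H^{p,q}_{\Hdg}=0$ for $q<p$. Surjectivity and injectivity of $\mathcal R\to H^*_{\dR}$ then follow from the dimension count, with no further appeal to the Eilenberg--Moore spectral sequence needed. Alternatively, you could simply note (as the paper does) that the paper's EM-filtration argument via \Cref{column} works verbatim for de Rham.
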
	
	\begin{rmk}
	    From \Cref{toda36-311}(c) and the conclusion of \Cref{thm-hodge-pgl} one can see that there is a slightly more economical expression for $H^*_{\on{H}}(B\mr{PGL}_{4m+2}/\F_2)$ as the middle term in the short exact sequence
	    $$0\to \mbb F_2[x_2,x_3,b_2,\ldots, b_{2m+1}]\xrightarrow{\cdot x_3}H^*_{\on{H}}(B\mr{PGL}_{4m+2}/\F_2) \xrightarrow{Bp^*} PH^{*,*}_{\on{H}}(B\mr{GL}_{4m+2}/\F_2)\to 0 .$$
	\end{rmk}
	 
	\begin{proof}[Proof of \Cref{thm-hodge-pgl}]

 Consider the Eilenberg-Moore spectral sequence of \Cref{eilenberg-moore-easy} associated to the short exact sequence
		\[1\to \G_{\on{m}}\to \on{GL}_{4m+2}\xrightarrow{p} \on{PGL}_{4m+2}\to 1.\]
		In \Cref{additive} we proved that it degenerates on the $E_2$ page, which (by the computation in \Cref{coaction-gl}) is given by
		\[(1\otimes PH^{*,*}_\Hdg(B\mr{GL}_{4m+2}/\F_2))\oplus (z_3\F_2[z_3]\otimes \F_2[c_1,b'_h]_{h=2}^{2m+1}),\]
		with $E_2^{0,*}\simeq E_\infty^{0,*}\simeq (1\otimes PH^{*,*}_\Hdg(B\mr{GL}_{4m+2}/\F_2))$ and $E_2^{>0,*}\simeq E_\infty^{>0,*}\simeq (z_3\F_2[z_3]\otimes \F_2[c_1,b'_h]_{h=2}^{2m+1})$.
		Below let us denote by $b_h'\in PH^{4h,4h}_\Hdg(B\mr{GL}_{4m+2}/\F_2)$ the elements that we called $b_h$ in \Cref{coaction-gl}. Let  $b_h\in H^{4h,4h}_\Hdg(B\mr{PGL_{4m+2}}/\mbb F_2)$ be a choice liftings of $b_h'$, meaning that $Bp^*(b_h)=1\otimes b'_h$ (such a lifting exists since the spectral sequence degenerates). Note that since $Bp^*$ preserves the Hodge bigrading we can pick $b_h$ to be bihomogeneous (explicitly, lying in $H^{4h,4h}_\Hdg(B\mr{PGL_{4m+2}}/\mbb F_2)$).
		
		Recall that (by \Cref{rem:about Hodge grading}) a homogeneous element $$x\in (\on{Cotor}^i_{H^{*,*}_\Hdg(B\mbb G_m)}(\F_2,H^{*,*}_\Hdg(B\mr{GL}_{4m+2}/\mbb F_2)))^{h,j}\simeq (E_\infty^{i,j})^h$$ gives a class in $\gr_i(H_\Hdg^{h,i+j}(B\mr{PGL}_{4m+2}/\F_2))$. We let $i+j+h$ be the ``total degree". For elements in $(1\otimes PH^{s,t}_\Hdg(B\mr{GL}_{4m+2}/\F_2))$ the $\Z^3$-grading $(i,j,k)$ is $(0,s,t)$, while for $z_3$ it is $(1,1,1)$.
		
		Note that by \Cref{coaction-gl} we have an embedding of the subalgebra $\F_2[z_3]\otimes \F_2[c_1,b'_2,\dots,b'_{2m+1}]\subset E_\infty^{*,*}$. The elements $c_1$ and $z_3$ are the only non-zero elements of total degree 2 and 3, thus they have to be\footnote{Since $H^2_\Hdg(B\mr{PGL}_{4m+2}/\mbb F_2)\simeq\mbb F_2\cdot x_2$ and $H^3_\Hdg(B\mr{PGL}_{4m+2}/\mbb F_2)\simeq \mbb F_2\cdot x_3$.} the images of elements $x_2$ and $x_3$ from \Cref{pgl-small} in the infinite page $E_\infty^{*,*}$ (which is identified with the associated graded for the spectral sequence filtration, and by the ``image'' we mean the image in this associated graded). More generally, by definition, the image of each $b_h$ is  $b_{h'}$, and so the subalgebra $\F_2[z_3]\otimes \F_2[c_1,b'_2,\dots,b'_{2m+1}]$ is the image in the associated graded of the subalgebra in $H^*_\Hdg(B\mr{PGL}_{4m+2}/\mbb F_2)$ generated by $x_2,x_3$ and $b_h$'s. In particular, we see that there are no non-trivial relations between $x_2,x_3$ and $b_h$'s (otherwise there would be some in the associated graded as well) and we get an embedding $\F_2[x_2,x_3,b_2,\dots,b_{2m+1}]\subset H^*_\Hdg(B\mr{PGL}_{4m+2}/\mbb F_2)$. Moreover, looking at the description in  \Cref{coaction-gl} once again, we see that $x_3\F_2[x_2,x_3,b_2,\dots,b_{2m+1}]$ maps isomorphically to $\on{Ker}Bp^*$ (indeed, this follows from the isomorphism $z_3\F_2[c_1,z_3,b_2,\dots,b_{2m+1}]\simeq E_\infty^{>0,*}$ for the associated graded). We thus have isomorphisms		
		\begin{equation}\label{ker-im}
			\on{Im}Bp^*= PH^{*,*}_{\on{H}}(B\mr{GL}_{4m+2}/\F_2),\qquad \on{Ker}Bp^*=x_3\F_2[x_2,x_3,b_2,\dots,b_{2m+1}].
		\end{equation}
		To complete the proof of \Cref{thm-hodge-pgl}, it remains to construct elements $y_I\in H^*_{\on{H}}(B\mr{PGL}_{4m+2}/\F_2)$ and show that all relations are generated by (\ref{rel1}), (\ref{rel2}), (\ref{rel3}).

		By \Cref{toda36-311}, the subalgebra  $PH^{*,*}_{\on{H}}(B\mr{GL}_{4m+2}/\F_2)$ is generated by
		\[y_I'=y'(i_1,\ldots,i_r)\coloneqq d_1(\cl{c}_{2i_1}\!*\ldots *\cl{c}_{2i_r}),\qquad I=\set{i_1,\ldots,i_r},\quad 1<i_j\leq 2m+1\text{ for all $j$}.\]
		as an $\F_2[c_1,b_2',\dots,b_{2m+1}']$-module (here again we call by $y_I'$ the elements that were called $y_I$ in \Cref{toda36-311}).
		
		By the degeneration of the spectral sequence, we may pick $y_I\in H^{4d(I)-2,4d(I)-2}_{\on{H}}(B\mr{PGL}_{4m+2})$  such that $Bp^*(y_I)=y'_I$. Recall from \Cref{column} that the spectral sequence filtration on $H^*_{\on{H}}(B\mr{PGL}_{4m+2}/\F_2)$ is given by column degree. %Thus,  is given by powers of $x_3$.
		
		Note that $E_\infty^{>k,*}\simeq z_3^k\F_2[z_3]\otimes \F_2[c_1,b'_2,\dots,b'_h]$ and $x_3$ maps to $z_3$ in the associated graded. %, the spectral sequence filtration on $H^*_{\on{H}}(B\mr{PGL}_{4m+2}/\F_2)$
		 By \Cref{rem:multiplication in cotor}
		we have $(z_3\otimes 1)\cdot (1\otimes y'_I)=0$ in $E_2$, hence $x_3y_I=0$ in the associated graded in the $E_\infty$-page. By \Cref{column}, this means that there exists $f\in H^*_{\on{H}}(B\mr{PGL}_{4m+2}/\F_2)$ such that $x_3y_I=x_3^2f$ in $H^*_{\on{H}}(B\mr{PGL}_{4m+2}/\F_2)$. Replacing $y_I$ by $y_I-x_3f$, we now have $Bp^*(y_I)=y_I$ and $x_3y_I=0$, that is, (\ref{rel3}) holds for such $y_I$.
		
			To check the relations (\ref{rel1}) and (\ref{rel2}) we proceed as follows. %Recall the map $Bt^*$ from (\ref{pgl-diag2}). By (\ref{rel3}) we have 
		%\[
	%	0=Bt^*(x_3y_I)=u_3\otimes 1 \cdot Bt^*(y_I)\quad \Rightarrow\quad Bt^*(y_I)=0,\]
		%since $u_3\otimes 1$ is not a zero divisor.
		By \Cref{toda36-311}(c), the relations (\ref{rel1}) and (\ref{rel2}) hold after we apply $Bp^*$, or, in other words, the difference of the left and right hand sides lies in $\on{Ker}Bp^*$. %, that is, the left hand side of (\ref{rel1}) and (\ref{rel2}) belong to $\on{Ker}Bp^*$. 
		Since $x_3y_I=0$ and every term in relations (\ref{rel1}) and (\ref{rel2})  contains at least one $y_I$, they are killed by multiplication by $x_3$. However, by (\ref{ker-im}), no element in $\on{Ker}Bp^*$ is killed by $x_3$, hence relations (\ref{rel1}) and (\ref{rel2}) hold on the nose.
		
	It remains to show that there are no further relations. Since we proved that the relations in the proposition hold, we have the map from the ring in the statement of the theorem (call it $A^*$) to $H^*_{\on{H}}(B\mr{PGL}_{4m+2}/\F_2)$.  The associated graded of $A^*$ by the powers of $x_3$ is isomorphic to $A^*$ again and also coincides with the description of $\on{Cotor}$ in \Cref{compute-cotor} (using the description of $PH^{*}_{\mr{sing}}(B\mr{GL}_{4k+2},\F_2)$ from \Cref{toda36-311}) by an easy direct inspection.  It follows that the map to $E_\infty$-page is an isomorphism. Since both filtrations are complete, this completes the proof for Hodge cohomology. The proof in de Rham cohomology context is entirely analogous, using (\ref{hdr-pgl-leq3}) as a starting input in degrees $\leq 3$.
	\end{proof}
	
	\begin{proof}[Proof of \Cref{mainthm}(1) for $\mr{PGL}_{4m+2}$]
		The conclusion follows by comparing the explicit descriptions in \Cref{thm-hodge-pgl} and \cite[Proposition 4.2]{toda1987cohomology}.
	\end{proof}

	\section{Hodge cohomology of \texorpdfstring{$B\mr{PSO}_{4m+2}$}{BPSO}}\label{sec: PSO}
		In this section we compute the Hodge cohomology ring of $B\mr{PSO}_{4m+2}$ (\Cref{thm-hodge-pso}). This then implies \Cref{mainthm}(1) for $B\mr{PSO}_{4m+2}$ by explicitly comparing the answers in the singular, Hodge and de Rham settings. For the computation we again follow Toda's strategy, but, contrary to the $\mr{PGL}_{4m+2}$ case, the details will be quite different.
	
	\subsection{Cohomology of \texorpdfstring{$B\mr{O}_n$}{BO}  and \texorpdfstring{$B\mr{SO}_n$}{BSO}}\label{ssec:cohomology of BO_n and BSO_n} By the orthogonal group $\mr O_n$ over $\mbb F_2$ we mean the corresponding Chevalley model, namely the group scheme of linear transformations of $(\mbb F_2^{\oplus n},q)$ that preserve the non-singular quadratic form $q=x_1x_2+x_3x_4+\ldots +x_{n-2}x_{n-1}+ x_n^2$ in the case $n$ is odd and $q=x_1x_2+\ldots +x_{n-1}x_n$ in the case $n$ is even. The correct definition of special orthogonal group $\mr{SO}_n$ in characteristic 2 is somewhat tricky: namely, if $n$ is odd, $\mr{SO}_n$ is defined in the usual way as the kernel of the determinant map $\det\colon \mr{O}_n\to \mu_2$, but if $n$ is even one considers the ``Dickson determinant" $D\colon \mr{O_n}\to \mbb Z/2$ (see \cite[Section 4.1.2]{chaput2010adjoint}) instead, and defines $\mr{SO}_n\coloneqq \ker(D)\subset \mr{O}_n$. 
	
	We briefly recall the structure of Hodge cohomology rings of the corresponding classifying stacks $B\mr{O}_n$ and $B\mr{SO}_n$ established in \cite[Section 11]{totaro2018hodge}. If $n=2r$ is even then 
	$$
	H^{*,*}_{\on{H}}(B\mr{O}_{2r}/\F_2)\simeq \F_2[u_1,\dots,u_{2r}] \quad \text{ and } \quad  H^{*,*}_{\on{H}}(B\mr{SO}_{2r}/\F_2)\simeq \F_2[u_2,\dots,u_{2r}] 
	$$
	with $|u_{2a}|=(a,a)$ and $|u_{2a+1}|=({a,a+1})$. The natural restriction map $H^{*,*}_{\on{H}}(B\mr{O}_{2r}/\F_2) \to H^{*,*}_{\on{H}}(B\mr{SO}_{2r}/\F_2)$ induced by the embedding $\mr{SO}_{2r} \to \mr{O}_{2r}$ simply sends $u_1$ to 0 (and $u_i$ to $u_i$ for $i\ge 2$).

	It is often convenient to pull-back cohomology of $B\mr{O}_{2r}$ (and $B\mr{SO}_{2r}$) to the classifying stack of a product of several copies of $B\mr{O}_2$ inside. Namely, we have an embedding $\mr{O}_2^r\to \mr{O}_{2r}$ which induces a cover $(B\mr{O}_{2})^r\to B\mr{O}_{2r}$. Let $s_i,t_i\in H^{*,*}_\Hdg(B\mr{O}_{2}^r/\mbb F_2)$ to be the pull-back of $u_1,u_2\in H^{*,*}_\Hdg(B\mr{O}_{2}/\mbb F_2)$ under the $i$-th projection. 
	By \cite[Lemma 11.3]{totaro2018hodge}, the pull-back map $$H^{*,*}_\Hdg(B\mr{O}_{2r}/\F_2)\tto H^{*,*}_\Hdg (B\mr{O}_2^r/\F_2)\simeq \mbb F_2[s_1,t_1,s_2,t_2,\ldots, s_r,t_r] $$ is an embedding and sends 
		\begin{equation}\label{eq:pull-back for SO_2r}
		    u_{2a}\mapsto \sum_{1\leq i_1<\dots <i_a\leq r}t_{i_1}\cdots t_{i_a}, \quad u_{2a+1}\mapsto \sum_{j=1}^r\left(s_j\cdot \sum_{\stackrel{1\leq i_1<\dots <i_a\leq r}{i_h\text{ \underline{not} equal to } j}}t_{i_1}\cdots t_{i_a}\right).
		\end{equation}
		One can also get similar formulas in the case $n=2r+1$ is odd, but we won't need them further, so let us just refer the reader to \cite[Section 11]{totaro2018hodge}. %let us only discuss the $\mr{SO}$-case. The natural embedding $\mr O_2^r\times \mu_2 \to  \mr O_{2r+1}$ (where $\mu_2$ acts on $x_{2r+1}$ by rescaling) induces an embedding $\mr O_2^r\to \mr{SO}_{2r+1}$. By \cite[Lemma 11.4]{totaro2018hodge} the pull-back 
		%$$
		%H^*_\Hdg(B\mr{SO}_{2r+1}/\F_2)\tto H^*_\Hdg %(B\mr{O}_2^r/\F_2)\simeq \mbb F_2[s_1,t_1,\ldots, s_r,t_r],
		%$$ is also an embedding and given explicitly by 
		%\begin{equation}\label{eq:pull-back for SO_2r+1}u_{2a}\mapsto %\sum_{1\leq i_1<\dots <i_a\leq r}t_{i_1}\cdots t_{i_a}, \quad u_{2a+1}\mapsto \sum_{j=1}^r\left(s_j\cdot \sum_{\stackrel{1\leq i_1<\dots <i_a\leq r}{\text{one of $i_h$ \underline{is} equal to } j}}t_{i_1}\cdots t_{i_a}\right).
		%\end{equation}
	%The cohomology of $B\mr{O}_{2r+1}$ can also be understood via the decomposition $B\mr{O}_{2r+1}\simeq B\mr{SO}_{2r+1}\times B\mu_2$ and a further pull-back to $B\mr{O}_2^r\times B\mu_2$.
	
%	Let $e_1,\ldots ,e_{2r}$ be a basis of $(\mbb F_2^{\oplus 2r},q_{2r})$ in which $q_{2r}=x_1x_2+\ldots +x_{n-1}x_n$. Note that we have a natural embedding $i_r\colon \colon \mr{GL}_r\hookrightarrow \mr{SO}_{2r}$, which in the basis $e_1,e_3,\ldots,e_{2n-1},e_2,\ldots, e_{2n}$ is given by 
%	$$
%	A\in \mr{GL}_r \mapsto \begin{pmatrix}A & 0\\ 0& (A^T)^{-1}\end{pmatrix}\in \mr{SO}_{2r}.
%	$$
%	This induces a map $Bi_r^*\colon H^*_\Hdg(B\mr{SO}_{2r}/\F_2) \to H^*_\Hdg(B\mr{GL}_{r}/\F_2)$.
%	\begin{lemma}\label{lem:pull-back from O to GL}
%	   We have: 
%	   $Bi_r^*(u_{2h+1})=0$ and $Bi_r^*(u_{2h})=c_h$.
%	\end{lemma}
%	\begin{proof}
%	    \todo{proof}
%	\end{proof}
%	We also have a natural embedding $j_r\colon \mr{O}_n \hookrightarrow \mr{GL}_n$, which induces 

	Finally, let us note that by \cite[Theorem 10.1 and Theorem 11.1]{totaro2018hodge}, the Hodge-de Rham spectral sequences for $B\mr{O}_n$, $B\mr{SO}_n$ and $B\mu_2$ degenerate and induce natural isomorphisms of graded rings $H^*_{\on{H}}(B\mr{O}_n/\F_2)\simeq H^*_{\on{dR}}(B\mr{O}_n/\F_2)$ and $H^*_{\on{H}}(B\mr{SO}_n/\F_2)\simeq H^*_{\on{dR}}(B\mr{SO}_n/\F_2)$. This way the above discussion also applies to the de Rham cohomology rings $H^*_{\on{dR}}(B\mr{O}_n/\F_2)$ and $H^*_{\on{dR}}(B\mr{SO}_n/\F_2)$. 

\subsection{The coaction of \texorpdfstring{$H^*_{\on{H}}(B\mu_2/\F_2)$}{H*(B\textmu}}	Let $n=2r$ be an even integer. We have 
	\[H^{*,*}_{\on{H}}(B\mu_2/\F_2)\simeq \Lambda_2\coloneqq  \F_2[x_1,x_2]/(x_1^2),\qquad H^{*,*}_{\on{H}}(B\mr{SO}_{2r}/\F_2)\simeq \F_2[u_2,\dots,u_{2r}],\] where \[x_1\in H^{1,0}_\Hdg(B\mu_2/\F_2),\ x_2\in H^{1,1}_\Hdg(B\mu_2/\F_2),\ u_{2a}\in H^{a,a}_\Hdg(B\mr{SO}_n/\F_2),\ u_{2a+1}\in H^{a,a+1}_\Hdg(B\mr{SO}_n/\F_2).\] 
	
	%If $\iota: T_n\hookrightarrow \on{GL}_n$ is the diagonal maximal torus, then by the K\"unneth formula $H^*_{\on{H}}(BT_n/\F_p)$ is a polynomial ring in $n$ generators $t_1,\dots,t_n$ of bidegree $(1,1)$, and the pullback map
	%\[\iota^*: H^*_{\on{H}}(B\mr{GL}_n/\F_p)\to H^*_{\on{H}}(BT_n/\F_p)\]
	%is injective and identifies $c_i$ with the $i$-th symmetric function on the $y_i$; see \cite[End of proof of Theorem 9.2]{totaro2018hodge}.
	
	For even $n$, the center of $\on{SO}_{n}$ is non-trivial and isomorphic to $\mu_2$. As in \Cref{main-comodule-algebra}, we can consider the multiplication map $\mu_2\times \on{SO}_{2r}\to \on{SO}_{2r}$ that induces a ring map
	\begin{equation}\label{coaction-so-eq}\phi\colon H^{*,*}_{\on{H}}(B\mr{SO}_{2r}/\F_2)\tto H^{*,*}_{\on{H}}(B\mu_2/\F_2)\otimes H^{*,*}_{\on{H}}(B\mr{SO}_{2r}/\F_2).\end{equation}
	We view $H^{*,*}_{\on{H}}(B\mr{SO}_{2r}/\F_2)$ as a $H^{*,*}_{\on{H}}(B\mu_2/\F_2)$-comodule algebra, with the coaction map $\phi$.	
Similarly, we can consider de Rham cohomology instead of Hodge.

	We will first describe the $H^{*,*}_{\on{H}}(B\mu_2/\F_2)$-comodule structure on $H^{*,*}_{\on{H}}(B\mr{SO}_{2r}/\F_2)$.
	
	\begin{lemma}\label{comparison-so}
		Let $n=2r$ be an even integer, and consider the $H^{*,*}_{\on{H}}(B\mu_2/\F_2)$-comodule algebra $H^{*,*}_{\on{H}}(B\mr{SO}_{2r}/\F_2)$, with coaction $\phi$ as in (\ref{coaction-so-eq}). We have
		\[\phi(u_{2a})=\sum_{i+j=a}\binom{r-j}{i}x_2^i\otimes u_{2j}+\sum_{i+j=a}\binom{r-j}{i}x_2^ix_1\otimes u_{2j-1}\]
		and
		\[\phi(u_{2a+1})=\sum_{i+j=a}\binom{r-j}{i}x_2^i\otimes u_{2j+1},\]
		where we put $u_1\coloneqq 0$ and $u_0\coloneqq 1$.
		
		%(b) Assume that $n=4m+2$. The isomorphisms $\psi$ and 
		%\[H^*_{\on{H}}(B\mr{SO}_n/\F_2)\xrightarrow{\sim} H^*_{\on{sing}}(B\mr{SO}_n;\F_2)\]
		%induce an isomorphism of bigraded vector spaces
		%\[\on{Cotor}_{H^*_{\on{H}}(B\mu_2/\F_2)}(\F_2, H^*_{\on{H}}(B\mr{SO}_{4m+2}/\F_2)) \xrightarrow{\sim} \on{Cotor}_{H^*_{\on{sing}}(B\Z/2\Z;\F_2)}(\F_2, H^*_{\on{sing}}(B\mr{SO}_{4m+2};\F_2)).\]
	\end{lemma}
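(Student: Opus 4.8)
The plan is to mimic the proofs of \Cref{comparison}(a) and \Cref{comparison-sp}(a): transport the problem to a situation where the coaction is transparent — here the cover of $B\mr O_{2r}$ by classifying stacks of copies of $\mr O_2$ — compute there, and pull back along an injective restriction map. Note that, unlike the $\mr{GL}_n$ and $\mr{Sp}_{2n}$ cases, there is no topological model to copy (the singular $\mu_2$-comodule structure on $H^*_{\mr{sing}}(B\mr{SO}_{2r}(\C),\F_2)$ differs from the Hodge one), so the computation must be done from scratch. First I would observe that the central $\mu_2\subset\mr{SO}_{2r}$ is the scalar subgroup $\{\lambda I_{2r}\}$, hence also central in $\mr O_{2r}$, and that the closed immersion $\mr{SO}_{2r}\hookrightarrow\mr O_{2r}$ is $\mu_2$-equivariant. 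Passing to Hodge cohomology of classifying stacks, this yields a commutative square relating the coaction $\phi$ on $H^{*,*}_{\on{H}}(B\mr{SO}_{2r}/\F_2)$ to the analogous coaction $\bar\phi$ on $H^{*,*}_{\on{H}}(B\mr O_{2r}/\F_2)$, the vertical maps being the restriction homomorphism which (by \Cref{ssec:cohomology of BO_n and BSO_n}) sends $u_1\mapsto 0$ and $u_i\mapsto u_i$ for $i\ge 2$. So it is enough to compute $\bar\phi(u_{2a})$ and $\bar\phi(u_{2a+1})$ in $H^{*,*}_{\on{H}}(B\mr O_{2r}/\F_2)$ and then set $u_1=0$.

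Next, using the cover $(B\mr O_2)^r\to B\mr O_{2r}$, whose induced pullback $H^{*,*}_{\on{H}}(B\mr O_{2r}/\F_2)\hookrightarrow\F_2[s_1,t_1,\dots,s_r,t_r]$ is injective by \cite[Lemma 11.3]{totaro2018hodge} with explicit formula (\ref{eq:pull-back for SO_2r}), I would reduce to the induced coaction on $\F_2[s_i,t_i]$. This pullback is $\mu_2$-equivariant because, under $\mr O_2^r\hookrightarrow\mr O_{2r}$, the scalar subgroup $\mu_2$ becomes the diagonal copy in $(\mr O_2)^r$ of the center $\mu_2=Z(\mr O_2)=\{\lambda I_2\}$. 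Hence $\mu_2$ acts on $\mr O_2^r$ by diagonal left-multiplication, so the coaction on $\bigotimes_i\F_2[s_i,t_i]$ is the composite of $\bigotimes_i\phi_0$ with the $r$-fold multiplication $\Lambda_2^{\otimes r}\to\Lambda_2$, where $\phi_0\colon\F_2[s,t]\to\Lambda_2\otimes\F_2[s,t]$ is the coaction for a single $\mr O_2$ (with $s=u_1\in H^{0,1}_{\on{H}}(B\mr O_2/\F_2)$, $t=u_2\in H^{1,1}_{\on{H}}(B\mr O_2/\F_2)$). Everything is thus reduced to $\phi_0$ on these two generators.

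For $\phi_0$: counitality together with a bidegree count — using that $H^{0,j}_{\on{H}}(B\mu_2/\F_2)=0$ for $j>0$ — forces $\phi_0(s)=1\otimes s$ and $\phi_0(t)=1\otimes t+x_2\otimes 1+\beta\,x_1\otimes s$ for some $\beta\in\F_2$. The coefficient of $x_2\otimes 1$ is $1$ by comparison with $B\mr{SO}_2=B\mathbb G_m$ along $\mr{SO}_2\hookrightarrow\mr O_2$, whose $\mu_2$-coaction is the restriction of the primitive $\mathbb G_m$-coaction on $H^{*,*}_{\on{H}}(B\mathbb G_m/\F_2)$. The remaining coefficient $\beta$ — equivalently the value on $u_2$ of the operation $d_1$ of \Cref{condtr:d_i for so_n} — is not formal and is the heart of the argument; I expect to pin it down by a direct computation, for instance by running the whole method for the small group $\mr O_4$ and matching against the already known ring $H^{*,*}_{\on{H}}(B\mr O_4/\F_2)$, or by a \v Cech/cobar computation exploiting the extension $1\to\mathbb G_m\to\mr O_2\to\mathbb Z/2\to 1$.

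Finally I would substitute $\phi_0$ into (\ref{eq:pull-back for SO_2r}): since $\phi_0$ is a comodule-algebra map, $\bar\phi(u_{2a})$ and $\bar\phi(u_{2a+1})$ are obtained by expanding the relevant products of the three-term factors $\phi_0(t_i)=1\otimes t_i+x_2\otimes 1+\beta\,x_1\otimes s_i$, collecting the power of $x_2$ according to how many factors contribute $x_2\otimes 1$ — this produces precisely the binomial coefficients $\binom{r-j}{i}$, exactly as in the $\mr{GL}_n$ and $\mr{Sp}_{2n}$ computations — and rewriting the resulting symmetric functions in the $s_i$'s and $t_i$'s in terms of the $u_j$'s via (\ref{eq:pull-back for SO_2r}). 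Because $x_1^2=0$ in $\Lambda_2$, any term with two or more factors supplying the $x_1$-summand vanishes, so only $x_1$-linear terms survive; a short mod-$2$ cancellation then accounts for the absence of $x_1$-terms in $\bar\phi(u_{2a+1})$ and for the $x_1$-terms in $\bar\phi(u_{2a})$. Setting $u_1=0$ yields the stated formulas in $H^{*,*}_{\on{H}}(B\mr{SO}_{2r}/\F_2)$. The de Rham statement follows because the Hodge--de Rham spectral sequences for $B\mr O_n$, $B\mr{SO}_n$ and $B\mu_2$ degenerate, identifying de Rham with Hodge cohomology compatibly with the coaction. The main obstacle, as flagged, is the value of $\beta$: comparison with $B\mathbb G_m$ only detects the $x_2$-part (since $u_1$ restricts to $0$ on $B\mr{SO}_2$), and coassociativity imposes no constraint, so an honest computation is unavoidable; the remaining bookkeeping with binomial coefficients mod $2$ is routine.
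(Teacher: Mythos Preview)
Your overall architecture is exactly the paper's: pass from $B\mr{SO}_{2r}$ to $B\mr O_{2r}$ (setting $u_1=0$ at the end), restrict to $(B\mr O_2)^r$ along the injective pullback (\ref{eq:pull-back for SO_2r}), reduce to a single copy of $B\mr O_2$, and then expand. You have also correctly isolated the one nontrivial point, namely the coefficient $\beta$ in $\phi_0(t)=1\otimes t+x_2\otimes 1+\beta\,x_1\otimes s$.

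Where you diverge from the paper is precisely here, and this is also where your proposal has a genuine gap. The paper determines $\beta=1$ cleanly by passing to the subgroup $H=\Z/2\Z\times\mu_2\subset\mr O_2$ (the $\Z/2\Z$ permuting the two basis vectors, the $\mu_2$ scaling). On $BH$ the $\mu_2$-coaction is transparent because $\mu_2$ is a direct factor, and Totaro \cite[discussion before Lemma~11.3]{totaro2018hodge} supplies the explicit restriction $H^{*,*}_{\Hdg}(B\mr O_2/\F_2)\to H^{*,*}_{\Hdg}(BH/\F_2)$, namely $u_1\mapsto z$ and $u_2\mapsto x_2+x_1z$. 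From this one reads off $\phi_0(u_2)=1\otimes u_2+x_2\otimes 1+x_1\otimes u_1$ immediately.

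Your two suggested workarounds are less convincing. Matching against the ``already known ring'' $H^{*,*}_{\Hdg}(B\mr O_4/\F_2)$ cannot determine $\beta$: that ring is just a polynomial ring, and for either value of $\beta$ the expanded coaction on $(B\mr O_2)^2$ lands in symmetric functions expressible in the $u_j$'s, so nothing is constrained. (Concretely, $\phi(u_4)=1\otimes u_4+x_2\otimes u_2+x_2^2\otimes 1+\beta\,x_1\otimes u_3+\beta\,x_1x_2\otimes u_1$ is consistent for both $\beta=0,1$.) The \v Cech/cobar idea via $1\to\G_{\on m}\to\mr O_2\to\Z/2\to 1$ could in principle work, but it is essentially a harder version of the paper's trick; the subgroup $H$ already contains both pieces $\mu_2$ and $\Z/2$ and the needed pullback formula is sitting in \cite{totaro2018hodge}. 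I would replace your undetermined-$\beta$ step with the $H$-restriction argument; after that, your expansion and the mod~$2$ cancellations go through exactly as you describe.
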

	
	\begin{proof}
		Let $V\simeq \mbb F_2^{\oplus 2}$ be the tautological $2$-dimensional $k$-linear representation of $\on{O}_2$, and let $H\subset \on{O}_2$ be the subgroup isomorphic to $\Z/2\Z\times \mu_2$, where $\Z/2\Z$ permutes the coordinates of $V$ and $\mu_2$ acts by scaling. We %temporarily conform to the notation of \cite[Lemma 11.3]{totaro2018hodge} and write
		have
		\[H^{*,*}_{\on{H}}(B(\Z/2\Z)/\F_2)\simeq \F_2[z],\ \ H^{*,*}_{\on{H}}(B\mu_2/\F_2)\simeq \F_2[x_1,x_2]/(x_1^2),\ \  H^{*,*}_{\on{H}}(BH/\F_2)\simeq \F_2[z,x_1,x_2]/(x_1^2),\]
		where $z$, $x_2$ and $x_1$ have bidegrees $(0,1)$, $(1,1)$ and $(1,0)$, respectively.
		
		The coaction $H^{*,*}_{\on{H}}(BH/\F_2)\to H^{*,*}_{\on{H}}(B\mu_2/\F_2)\otimes H^{*,*}_{\on{H}}(BH/\F_2)$ induced by $\mu_2\times H \to H$ sends \[z\mapsto 1\otimes z,\quad t\mapsto 1\otimes x_2+x_2\otimes 1,\quad x_1\mapsto 1\otimes x_1+x_1\otimes 1.\]
		By \cite[Discussion above Lemma 11.3]{totaro2018hodge},\footnote{In \textit{loc.cit.} $z,x_1,x_2$ are denoted by $s,v,t$, respectively.} the pullback map $H^{*,*}_{\on{H}}(B\mr{O}_2/\F_2)\to H^{*,*}_{\on{H}}(BH/\F_2)$ sends $u_1\mapsto z$, $u_2\mapsto x_2+x_1z$. This allows to compute the coaction $\phi\colon H^{*,*}_{\on{H}}(B\mr{O}_2/\F_2)\to H^{*,*}_{\on{H}}(B\mu_2/\F_2)\otimes H^{*,*}_{\on{H}}(B\mr{O}_2/\F_2)$: it sends \[u_1\mapsto 1\otimes u_1,\quad u_2\mapsto 1\otimes u_2+x_2\otimes 1+x_1\otimes u_1.\]
		Write \[H^*_{\on{H}}(B\mr{O}_2^m/\F_2)=\F_2[s_1,\dots,s_m,t_1,\dots,t_m],\] where $s_i,t_i$ are pullbacks of $u_1,u_2\in H^{*,*}_{\on{H}}(B\mr{O}_2/\F_2)$ along the $i$-th projection. By the above computation the coaction $H^{*,*}_{\on{H}}(B\mr{O}_2^m/\F_2)\to H^{*,*}_{\on{H}}(B\mu_2/\F_2)\otimes H^{*,*}_{\on{H}}(B\mr{O}_2^m/\F_2)$ sends
		\[s_i\mapsto 1\otimes s_i,\quad t_i\mapsto 1\otimes t_i+t\otimes 1+v\otimes s_i.\]
		 Finally, following \Cref{eq:pull-back for SO_2r}, the pullback $H^{*,*}_\Hdg(B\mr{O}_{2m}/\F_2)\to H^{*,*}_\Hdg(B\mr{O}_2^m/\F_2)$ sends 
		\[u_{2a}\mapsto \sum_{1\leq i_1<\dots <i_a\leq m}t_{i_1}\cdots t_{i_a}, \quad u_{2a+1}\mapsto \sum_{j=1}^rs_j\cdot \left(\sum_{1\leq i_1<\dots <i_a\leq m, i_h\neq j}t_{i_1}\cdots t_{i_a}\right).\]
		One then checks formulas in statement of the lemma by plugging $\phi(t_i)$ and $\phi(s_i)$, and opening the brackets (this is a direct computation that we leave to the reader).

		Finally, following the discussion in \Cref{ssec:cohomology of BO_n and BSO_n}, to pass from $\mr O_{2m}$ to $\mr{SO}_{2m}\subset \mr{O}_{2m}$ we just need to set $u_1=0$ in all the formulas.
	\end{proof}

	\begin{rmk}
	On the topological side we have isomorphisms  \[H^*_{\on{sing}}(B(\Z/2\Z);\F_2)\simeq \F_2[z_1],\qquad H^*_{\on{sing}}(B\mr{SO}_n(\C);\F_2)=\F_2[w_2,\dots,w_n],\] where $|z_1|=1$, and $w_i$, with $|w_i|=i$ are the Stiefel-Whitney classes. We immediately see that $H^*_{\on{H}}(B\mu_2/\F_2)$ is \emph{not} isomorphic to $H^*_{\on{sing}}(B(\Z/2\Z);\F_2)$ as an algebra. In fact the situation is even worse: if we take the (unique) isomorphism of $H^*_{\on{H}}(B\mu_2/\F_2)$ and $H^*_{\on{sing}}(B(\Z/2\Z);\F_2)$ as coalgebras (\Cref{rem:Sp case iso as coalgebras}), it can't be extended to an isomorphism of the comodule algebras $H^*_{\on{H}}(B\mr{SO}_n/\F_2)$ and $H^*_{\on{sing}}(B\mr{SO}_n(\C);\F_2)$. This is the reason why the argument from \Cref{sec:GL SP graded vector space} doesn't apply to $\mr{PSO}_n$ as well. In particular we don't yet know that the Eilenberg-Moore spectral sequence collapses on second page (but we will show this in \Cref{cor:EM-for SO_4k+2 degenerates} below).
	\end{rmk}
	\subsection{Computation of \texorpdfstring{$\on{Cotor}$}{Cotor}}
	Note that $A=H^{*,*}_\Hdg(B\mr{SO}_{4m+2}/\mbb F_2)$ with $a_\sharp\coloneqq u_2$ satisfies the assumptions of \Cref{toda36-new} with $q=2$. Indeed, $d_2(\cl u_2)=1$, and $d_i(\cl u_2)=0$ for $i>2$ by degree reasons. We now establish an analogue of \Cref{toda36-311} in the setting of the special orthogonal group. Namely, we will describe more or less explicitly the subalgebras	
	$$
	PH^{*,*}_{\on{H}}(B\mr{SO}_{4m+2}/\F_2)\subset  P_2H^{*,*}_{\on{H}}(B\mr{SO}_{4m+2}/\F_2)\subset H^{*,*}_{\on{H}}(B\mr{SO}_{4m+2}/\F_2),$$
	as well as the cohomology of $P_2H^{*,*}_{\on{H}}(B\mr{SO}_{4m+2}/\F_2)$ with respect to the differential induced by $d_1$. %Since in this situation the proof the proof %Luckily for us, they are easier than so we are not allowed to cite Toda's computation. However, the computations turn out to be easier
	
	\begin{lemma}\label{toda36-311-so}
		Assume that $n=4m+2$.
		\begin{enumerate}[label=(\alph*)]
		    \item  There exists a unique sequence \[\cl{u}_2,\dots,\cl{u}_{4m+2}\in H^{*,*}_{\on{H}}(B\mr{SO}_{4m+2}/\F_2)\] with bidegrees $|\cl u_{2a}|=(a,a)$ and $|\cl u_{2a+1}|=(a,a+1)$, such that
		\begin{enumerate}[label=(\arabic*)]
			\item $\cl{u}_2=u_2$,
			\item For all $j\ge 2$, $\cl{u}_{j}\equiv u_{j}\pmod{u_2}$ and $\cl{u}_{j}\in P_2H^{*,*}_{\on{H}}(B\mr{SO}_{4m+2}/\F_2)$,
			\item For all $j\geq 2$, $\cl{u}_{2j-1}=d_1(\cl{u}_{2j})$,
			\item $H^{*,*}_{\on{H}}(B\mr{SO}_{4m+2}/\F_2)=\F_2[\cl{u}_2,\cl{u}_3,\dots,\cl{u}_{4m+2}]$,
			\item $P_2H^{*,*}_{\on{H}}(B\mr{SO}_{4m+2}/\F_2)=\F_2[\cl{u}_3,\cl{u}_4,\dots,\cl{u}_{4m+2}]$.
		\end{enumerate}  
		\smallskip
		
		\item 
		\begin{enumerate}[label=(\arabic*)]
		    \item 
	The elements $\cl u_3, \cl u_5, \ldots, \cl u_{4m+1}$ and $b_h\coloneqq {\cl u}_{2h}^2$ for $h>1$ are primitive (equivalently, they lie in $P_2H^{*,*}_{\on{H}}$ and are killed by $d_1$). 
	
	\item 
The natural map 
$$
\mbb F_2[b_2,\ldots, b_{2m+1}]\tto H^*(P_2H^{*,*}_{\on{H}}(B\mr{SO}_{4m+2}/\F_2), d_1)
$$
is an isomorphism.

	%	and for all $1< h\leq 2m+1$ we define 
		%\[b_h\coloneqq\cl{u}_{2h}^2\in PH^*_{\on{H}}(B\mr{SO}_{4m+2}/\F_2).	%	Then:
		
	%	    \item the $C$-module $P_2H^*_{\on{H}}(B\mr{SO}_{4m+2})$ is free and has a basis consisting of $1$ and all the $\cl{u}_I$,
	%	    \item we have $H^*(P_2H^*_{\on{H}}(B\mr{SO}_{4m+2}),d_1)=\F_2[b_2,\dots,b_{2m+1}]$.
		\end{enumerate}
		\smallskip 
		\item For any (unordered) tuple of integers $I=\{i_1,\ldots, i_r\}$, set $d(I)\coloneqq i_1+\ldots +i_r$, 
	 \[\cl{u}_I\coloneqq\cl{u}_{2i_1}\cdot\ldots\cdot \cl{u}_{2i_r}\in P_2H^{d(I),d(I)}_{\on{H}}(B\mr{SO}_{4m+2}/\F_2),\] and
		\[y_I=y(i_1,\ldots,i_r)\coloneqq d_1(u_{\cl{I}})\in PH^{d(I)-1,d(I)}_{\on{H}}(B\mr{SO}_{4m+2}/\F_2).\]
		In particular, $y_{\{i\}}=\cl u_{2i-1}$.
		
		\begin{enumerate}[label=(\arabic*)] \item The subalgebra $PH^{*,*}_{\on{H}}(B\mr{SO}_{4m+2}/\F_2)\subset H^*_{\on{H}}(B\mr{SO}_{4m+2}/\F_2)$ of primitive elements is generated by the $b_h$ and the $y_I$ for $I=\set{1<i_1< \dots< i_r\leq 2m+1}$, with the relations  generated by	
		\begin{equation}\label{eq:rel on y_I for SO_N}
			y_{\{i_1,\ldots,i_r\}}\cdot y_{\{j_1,\ldots j_s\}}=\sum_{h=1}^r y_{\{i_1,\ldots,i_{h-1},i_{h+1},\ldots,i_r, j_1,\ldots j_s\}}\cdot y_{\{i_h\}},
		\end{equation}
		    where we have $y_{\{h,h,i_1,\ldots, i_r\}}= b_h\cdot y_{I}$.

\smallskip
		\item $PH^{*,*}_{\on{H}}(B\mr{SO}_{4m+2}/\F_2)$ is a finitely generated module over the polynomial subalgebra
		$$
		\mbb F_2[\cl u_3,\cl u_5, \ldots,\cl u_{4m+1},b_2,b_3,\ldots,b_{2m+1}]\subset PH^{*,*}_{\on{H}}(B\mr{SO}_{4m+2}/\F_2).
		$$
		\end{enumerate}
		
		\end{enumerate}
		
		Entirely analogous statements hold with Hodge cohomology replaced by de Rham cohomology.

	\end{lemma}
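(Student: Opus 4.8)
The plan is to run Toda's strategy \cite{toda1987cohomology} directly in the Hodge setting, which is in fact cleaner here: since $x_1^2=0$ in $\Lambda_2=H^{*,*}_{\on{H}}(B\mu_2/\F_2)$, the operation $d_1$ from \Cref{condtr:d_i for so_n} is an honest derivation of $A$ and $P_2A$ is an honest subring (by \Cref{eq:multiplicativity for mu_2} and \Cref{toda36-new}(b)), so there is no need for the $*$-product gymnastics of \cite[\S3]{toda1987cohomology}. Write $A\coloneqq H^{*,*}_{\on{H}}(B\mr{SO}_{4m+2}/\F_2)\simeq\F_2[u_2,\dots,u_{4m+2}]$. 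Since $n=2r$ with $r=2m+1$ odd, \Cref{comparison-so} gives $d_2(u_2)=r\cdot u_0=1$ in $\F_2$, while $d_i(u_2)=0$ for $i>2$ for bidegree reasons; thus $a_\sharp\coloneqq u_2$ satisfies the hypotheses of \Cref{toda36-new} with $q=2$, and I would use the resulting graded ring isomorphism $\F_2[u_2]\otimes P_2A\xrightarrow{\sim}A$, equivalently $A/(u_2)\xrightarrow{\sim}P_2A$, as the starting point.

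For (a) I would set $\cl u_2\coloneqq u_2$, let $\cl u_{2j}\in P_2A$ ($j\ge2$) be the unique lift of the class of $u_{2j}$ under $A/(u_2)\cong P_2A$ (so $\cl u_{2j}\equiv u_{2j}\pmod{u_2}$, $d_i(\cl u_{2j})=0$ for $i\ge2$, and $\cl u_{2j}$ is bihomogeneous of bidegree $(j,j)$), and put $\cl u_{2j-1}\coloneqq d_1(\cl u_{2j})$, of bidegree $(j-1,j)$ since $d_1$ lowers bidegree by $(1,0)$. Properties (1),(3) hold by construction. For (2) in the odd case, $d_1(u_2 g)=u_2\,d_1(g)$ because $d_1(u_2)=u_1=0$, so $\cl u_{2j-1}\equiv d_1(u_{2j})=u_{2j-1}\pmod{u_2}$; and $d_i(\cl u_{2j-1})=\binom{i+1}{i}d_{i+1}(\cl u_{2j})=0$ for $i\ge2$ by \Cref{eq:composition of d_i for mu_2}, so $\cl u_{2j-1}\in P_2A$. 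The isomorphism $A/(u_2)\cong P_2A$ carries $\bar u_j\mapsto\cl u_j$ and $A/(u_2)=\F_2[\bar u_3,\dots,\bar u_{4m+2}]$, giving (5); then (4) follows from $A\cong\F_2[u_2]\otimes P_2A$. Uniqueness is forced step by step by \Cref{toda36-new}(a). For (b1): $\cl u_{2j-1}$ is primitive since it lies in $P_2A$ and $d_1(\cl u_{2j-1})=d_1^2(\cl u_{2j})=0$; and from $\phi(\cl u_{2h})=1\otimes\cl u_{2h}+x_1\otimes d_1(\cl u_{2h})$ one gets $\phi(\cl u_{2h}^2)=1\otimes\cl u_{2h}^2$ using $x_1^2=0$, so $b_h\coloneqq\cl u_{2h}^2$ is primitive of bidegree $(2h,2h)$. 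For (b2): $d_1$ being a derivation of $P_2A=\F_2[\cl u_3,\dots,\cl u_{4m+2}]$ with $d_1(\cl u_{2h})=\cl u_{2h-1}$, $d_1(\cl u_{2h-1})=0$, one has $(P_2A,d_1)\simeq\bigotimes_{h=2}^{2m+1}(\F_2[\cl u_{2h-1},\cl u_{2h}],d_1)$ as a dg-algebra, and K\"unneth together with the trivial computation $H^*(\F_2[a,v],\,v\mapsto a)=\F_2[v^2]$ identifies $H^*(P_2A,d_1)$ with $\F_2[b_2,\dots,b_{2m+1}]$ via the asserted map.

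For (c), since $P_2A=\{a:d_i(a)=0,\ i\ge2\}$ we have $PH^{*,*}_{\on{H}}(B\mr{SO}_{4m+2}/\F_2)=\ker(d_1\colon P_2A\to P_2A)$. Setting $S\coloneqq\F_2[\cl u_3,\cl u_5,\dots,\cl u_{4m+1},b_2,\dots,b_{2m+1}]$, the decompositions $\F_2[\cl u_{2h-1},\cl u_{2h}]=\F_2[\cl u_{2h-1},b_h]\{1,\cl u_{2h}\}$ exhibit $P_2A$ as a free $S$-module on the monomials $\prod_{h\in T}\cl u_{2h}$, with $d_1$ acting $S$-linearly as the Koszul differential of the sequence $(\cl u_3,\cl u_5,\dots,\cl u_{4m+1})$; this sequence is part of a system of polynomial generators of $S$, hence regular, so $(P_2A,d_1)$ is the associated Koszul resolution of $S/(\cl u_3,\dots,\cl u_{4m+1})=\F_2[b_\bullet]$ and is exact in positive Koszul degree. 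Therefore $\ker(d_1)=S\oplus\operatorname{im}(d_1)$, with $\operatorname{im}(d_1)$ the $S$-span of the finitely many $y_I=d_1(\cl u_{2i_1}\cdots\cl u_{2i_r})$; this yields (c2), and shows $PH^{*,*}$ is generated as an $\F_2$-algebra by the $b_h$ and the $y_I$ (recall $\cl u_{2h-1}=y_{\{h\}}\in S$). The relations \Cref{eq:rel on y_I for SO_N}, with the convention $y_{\{h,h,\dots\}}=b_h y_{\{\dots\}}$, I would verify by expanding $y_Iy_J=d_1(v_I\cdot y_J)$ in the Koszul model using $d_1^2=0$ and the Leibniz rule.

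The step I expect to be the main obstacle is proving that the relations \Cref{eq:rel on y_I for SO_N} are \emph{complete}: the Koszul picture gives $\ker(d_1)$ transparently as an $S$-module, but to see that the graded algebra presented by the $b_h$, $y_I$ and \Cref{eq:rel on y_I for SO_N} maps isomorphically onto $PH^{*,*}$ one must also match the syzygies among the $y_I$ over $S$. Concretely I would check that the first Koszul syzygies $\sum_{h\in U}\cl u_{2h-1}\,y_{U\setminus h}=0$ are themselves consequences of \Cref{eq:rel on y_I for SO_N} and then conclude by a Hilbert-series count — alternatively one may at this point import the purely combinatorial part of Toda's argument \cite{toda1987cohomology}, which uses only the dg-structure and so goes through verbatim (indeed more easily, since here $d_1$ is a genuine derivation). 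Finally, the de Rham statements follow by the same arguments, with bidegrees replaced by total degrees, using the ring isomorphisms $H^*_{\on{H}}(B\mu_2/\F_2)\simeq H^*_{\on{dR}}(B\mu_2/\F_2)$ and $H^*_{\on{H}}(B\mr{SO}_n/\F_2)\simeq H^*_{\on{dR}}(B\mr{SO}_n/\F_2)$ from \Cref{ssec:cohomology of BO_n and BSO_n} and their compatibility with the coaction of $\mu_2$.
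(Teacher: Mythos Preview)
Your approach is correct and essentially coincides with the paper's: both use \Cref{toda36-new}(b) with $a_\sharp=u_2$ to define the $\cl u_j$, the derivation property of $d_1$ (coming from $x_1^2=0$) to get $b_h=\cl u_{2h}^2$ primitive, and the Koszul complex identification of $(P_2A,d_1)$ over $S=\F_2[\cl u_3,\cl u_5,\dots,\cl u_{4m+1},b_2,\dots,b_{2m+1}]$ to compute both $H^*(P_2A,d_1)$ and $\ker(d_1)=S\oplus\operatorname{im}(d_1)$.

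The one place where the paper is sharper is the completeness of relations in (c1), which you flagged as the main obstacle. Rather than a Hilbert-series count, the paper argues directly: given any $S$-linear relation $\sum f_I y_I=0$, the element $\sum f_I\cl u_I$ lies in $\ker(d_1)$, hence by Koszul exactness equals $d_1(\sum g_J\cl u_J)+z$ with $z\in\F_2[b_\bullet]$; expanding $d_1(\cl u_J)$ by Leibniz and comparing coefficients in the free $S$-basis $\{\cl u_I\}$ shows the original relation is an $S$-combination of the relations $d_1^2(\cl u_K)=0$, i.e.\ the special case $|J|=1$ of \Cref{eq:rel on y_I for SO_N}. The general case of \Cref{eq:rel on y_I for SO_N} is obtained by iterating the cyclic identity \Cref{eq:cyclic property for d_1} (applied to $a=\cl u_{I\setminus\{i_r\}}$, $b=\cl u_{2i_r}$, $c=\cl u_J$), so no separate syzygy or counting argument is needed.
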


	\begin{proof}
		We write $A$ for  $H^*_{\on{H}}(B\mr{SO}_{4m+2}/\F_2)$.
		
		(a)  By \Cref{toda36-new}(b) applied to $A$ and $a_{\sharp}=u_2$, the composite map \[P_2A\hookrightarrow A\to A/(u_2)\]
		is a ring isomorphism. Thus we can let $\cl{u}_2\coloneqq u_2$ and $\cl{u}_j\in P_2A$ be the inverse image of $u_j\pmod{u_2}$ for all $j\geq 2$ under this isomorphism. Elements $\cl{u}_j$ satisfy all the properties except, possibly, 3). For that one, note that by \Cref{comparison-so} we have $d_1(u_2)=0$ and $d_1(u_{2j})=u_{2j-1}$ for all $j> 1$. We have $d_1(\cl u_{2j})\in P_2A$ and it follows from the above that $d_1(\cl u_{2j}) \equiv d_1( u_{2j}) \equiv u_{2j-1} \pmod{u_2}$ and so $d_1(\cl u_{2j})=\cl u_{2j-1}$.\smallskip
		
		(b) Recall that we put $b_h\coloneqq \cl u_{2h}^2$. We have $d_1(b_h)=2\cdot \cl u_{2h}\cdot d_1(\cl u_{2h})=0$, while $d_1(\cl u_{2i-1})=d_1(d_1(\cl u_{2i}))=0$, so (1) follows. For (2) let us describe the differential $d_1$ on $P_2H^{*,*}_{\on{H}}(B\mr{SO}_{4m+2}/\F_2)\simeq \F_2[\cl u_3,\cl u_4, \ldots, \cl u_{4m+2}]$: $$ d_1(\cl u_{2i-1})=0\quad \text{and} \quad d_1(\cl u_{2i})=\cl u_{2i-1}.$$ Let $C\coloneqq \F_2[\cl{u}_3,\cl{u}_5,\ldots, \cl{u}_{4m+1},b_2,b_3,\ldots,b_{2m+1}]$. Note that $C\subset \on{Ker} d_1$, and so $d_1$ is $C$-linear. Moreover, elements $\cl u_I\coloneqq u_{2i_1}\cdot\ldots \cdot u_{2i_r}$ for all $I=\{1<i_1<\ldots<i_r\le 2m+1\}$ form a basis of $P_2H^{*,*}_{\on{H}}(B\mr{SO}_{4m+2}/\F_2)$ over $C$. One can then identify $(P_2H^{*,*}_{\on{H}}(B\mr{SO}_{4m+2}/\F_2),d_1)$ with the Koszul complex over $C$ for the regular sequence $\cl{u}_3,\cl{u}_5,\ldots, \cl{u}_{4m+1}$. This way we get that the cohomology of $(P_2H^{*,*}_{\on{H}}(B\mr{SO}_{4m+2}/\F_2),d_1)$ is given by $C/(\cl{u}_3,\cl{u}_5,\ldots, \cl{u}_{4m+1})\simeq \F_2[b_2,b_3,\ldots,b_{2m+1}]$.
		\smallskip 
		
		(c) Recall that $PH^{*,*}_{\on{H}}(B\mr{SO}_{4m+2}/\F_2)$ is identified with $\on{Ker}(d_1)\subset P_2H^{*,*}_{\on{H}}(B\mr{SO}_{4m+2}/\F_2)$. In (b) we showed that $\F_2[b_2,b_3,\ldots,b_{2m+1}]\subset \on{Ker}(d_1)$ maps isomorphically to the cohomology. From this we get that the whole $\on{Ker}(d_1)$ is a direct sum of $\F_2[b_2,b_3,\ldots,b_{2m+1}]$ and the image $\on{Im}(d_1)$. By the discussion in b) we have that $\on{Im}(d_1)$ is spanned over $C$ by $y_I\coloneqq d_1(\cl{u}_I)$ for all $I=\{1<i_1<\ldots<i_r\le 2m+1\}$. Moreover $y_{\{i\}}=d_1(\cl{u_{2i}})=\cl u_{2i-1}\in C$, so we get that $PH^{*,*}_{\on{H}}(B\mr{SO}_{4m+2}/\F_2)$ is spanned over $C$ by $y_I$'s with $l(I)\ge 2$. It remains to understand the relations between $y_I$. 
		
		First of all, indeed 
		$$
		y_{\{h,h,i_1,\ldots, i_r\}}=d_1(\cl u_h^2\cdot \cl u_I)=\cl u_h^2\cdot d_1(\cl u_I)=b_h\cdot y_{I}.
		$$
		Then, applying (\ref{eq:cyclic property for d_1}) to $a=\cl u_{I\setminus \{i_r\}}$, $b=\cl u_{i_r}$, $c=\cl u_J$, for any $I=\{i_1,\ldots,i_r\}$ and $J=\{j_1,\ldots j_s\}$ we get
		$$
		y_{\{i_1,\ldots, i_{r-1},i_r\}}\cdot y_{\{j_1,\ldots j_s\}}= y_{\{i_1,\ldots, i_{r-1}\}}\cdot y_{\{i_r,j_1,\ldots j_s\}}+ y_{\{i_1,\ldots, i_{r-1},j_1,\ldots j_s\}}\cdot y_{\{i_r\}}.
		$$\smallskip 
		Applying the same equation again to the first term on the right and continuing, we end up with (\ref{eq:rel on y_I for SO_N}). Taking $J=\{j\}$ be a 1-element set, we get
		\begin{equation}\label{eq:some equation on y_I}
		 y_{\{i_1,\ldots,i_r\}}y_{\{j\}}=\sum_{s=1}^r y_{\{i_1,\ldots,i_{s-1},i_{s+1},\ldots, i_r,j\}}\cdot y_{\{i_s\}}.   
		\end{equation}
		Note that this is exactly the relation which is obtained from $d_1^2(\cl u_{I\cup \{j\}})=0$. Indeed, for any $J=\{j_1,\ldots,j_r\}$
		\begin{equation}\label{eq:d_1(u_I)}
		 d_1(\cl u_I)=\sum_{s=1}^r \cl u_{J\ \! \setminus \{j_s\}}\cdot d_1(\cl u_{2j_s})= \sum_{s=1}^r \cl u_{J\ \! \setminus \{j_s\}}\cdot y_{j_s}.  
		\end{equation}
				Putting $J=I\cup \{j\}$ and applying again we exactly get the sum of the left and right hand sides in \Cref{eq:some equation on y_I}.
		
		Now, to show that Equations (\ref{eq:rel on y_I for SO_N}) generate all the relations
		%\todo{finish} As in the proof of \cite[Proposition 3.11]{toda1987cohomology}, using the stated relations, we reduce the proof of (c) to the following claim: as a $C$-module, any relation between the $y_I$ with $l(I)\geq 2$ is generated by relations as in \cite[(3.13)]{toda1987cohomology}:
		%\begin{equation}\label{as-in-3.13}
	%	y_I\cl{u}_{2j-1}=\sum_{i\in I}y_{(I-\set{i})\cup\set{j}}\cl{u}_{2i-1}.\end{equation}
	%	(To match the notation used in \cite[(3.13)]{toda1987cohomology} with our own, set $a_1=u_1=0$ and  $y_i=d_1(\cl{u}_{2i})=\cl{u}_{2i-1}$.) Note that these relations are special cases of the relations in the statement of the proposition. To prove the claim, 
		assume $\sum f_Iy_I=0$ for some $f_I\in C$. Since $d_1(f_I)=0$ for all $I$, we get that $\sum f_I\cl{u}_I\in \on{Ker}d_1$. From the proof of b) we know that we can write \[\sum f_I\cl{u}_I=d_1(\sum g_J\cl{u}_J)+z,\] for some $g_J\in C$ and $z\in \F_2[b_2,b_3,\dots,b_{2m+1}]$. Note that $d_1(g_J)=0$ and so we get that 
		$$
		\sum f_I\cl{u}_I=\sum g_J d_1(\cl{u}_J)+z.
		$$ By the discussion above (\ref{eq:d_1(u_I)}) the expression for $d_1(\cl u_I)$ as a sum of $u_J$'s with coefficients in $C$ gives (\ref{eq:some equation on y_I}) after plugging $y_J$'s in place of $u_J$'s. Since $\cl u_I$ (with $I=\set{1<i_1< \dots< i_r\leq 2m+1}$) form a basis of $P_2H^{*,*}$ over the algebra $C$ we get that $\sum f_Iy_I=0$ in fact is a linear combination of relations in (\ref{eq:d_1(u_I)}) which are a particular case of (\ref{eq:rel on y_I for SO_N}).
	\end{proof}
	
	\begin{rmk}\label{easier}
	Note the difference in the formulas for elements $b_h$ as defined in \Cref{toda36-311-so}(b) and those defined in the topological setting \cite[p. 92]{toda1987cohomology}. Again, this is due to the fact that $H^{*}_{\on{H}}(B\mu_2/\F_2)$ and $H^*_{\on{sing}}(B(\Z/2\Z);\F_2)$ are not isomorphic as algebras: \Cref{toda36-new}(b), which is available only for $H^{*}_{\on{H}}(B\mu_2/\F_2)$, makes the formula for $b_h$ very simple.
	\end{rmk}
%	
%	For the next computation, we will need the following lemma.	
%	\begin{lemma}\label{abelian-cat}
%		Let $k$ be a field, $M^* \in \on{Ch}(k)^{\ge 0}$ be a complex of $k$-vector spaces concentrated in non-negative degrees, let  $F^0M^*\subset F^1M^*\subset F^2M^* \subset \ldots $ be an increasing (termwise-)exhaustive filtration on $M^*$ indexed by $\mathbb{Z}_{\ge 0}$. If the induced map $F^0M^*\simeq \on{gr}^0_F(M^*)\to \on{gr}^*_F(M^*)$ is a quasi-isomorphism, then so is the map $F^0M^*\to M^*$. 
%	\end{lemma}
	
%	\begin{proof} 
%		After replacing $M^*$ by the quotient $M^*/F^0 M^*$ with filtration $F^iM^*/F^0M^*$ we can assume that $F^0M^*$ is quasi-isomorphic to 0, in which case we need to show that so is $M^*$.
%		Since we work over a field, any of the complexes $F^iM$ is cofibrant in the injective module structure on $\on{Ch}(k)^{\ge 0}$; moreover, each of the maps $F^iM^*\to F^{i+1}M^*$ is a cofibration. It follows that $M^*$ is equivalent to the colimit of $F^i M$ in the $\infty$-category $D(k)^{\ge 0}$. The spectral sequence associated to $F^*M$ is convergent (see e.g. \cite[1.2.2.14]{lurieha}) and its second sheet (given by cohomology of $\gr^i_F(M^*)$) is identically 0. Thus $M^*$ is quasi-isomorphic to 0.
%	\end{proof}
	\smallskip
	
	We are ready to compute the $\on{Cotor}^*_{H^{*,*}_{\on{H}}(B\mu_2/\mbb F_2)}(\F_2, H^{*,*}_{\on{H}}(B\mr{SO}_{4m+2}/\mbb F_2))$.
	
	\begin{lemma}\label{coaction-so}
		For every $m\geq 0$, we have an isomorphism of $\Z^3$-graded algebras
		\smallskip
		$$\on{Cotor}^*_{H^{*,*}_{\on{H}}(B\mu_2)}(\F_2, H^{*,*}_{\on{H}}(B\mr{SO}_{4m+2}))\simeq (1\otimes PH^{*,*}_{\on{H}}(B\mr{SO}_{4m+2}))\oplus (z_2\F_2[z_2]\otimes \F_2[b_2,\dots,b_{2m+1}]),$$
		where the natural bigrading of $z_2$ is $(1,0)$, while the $\on{Cotor}$-grading is 1. Also, the elements $b_h\in PH^{2h,2h}_{\on{H}}(B\mr{SO}_{4m+2})$ are the ones defined in \Cref{toda36-311-so}.
		
	\end{lemma}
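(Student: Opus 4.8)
The plan is to run Toda's strategy directly via the twisted tensor product, using \Cref{toda36-311-so} in place of the structural results that, in the topological setting, come from \cite{toda1987cohomology} — these are genuinely unavailable here, since for $\mr{SO}$ the comodule structures on the Hodge and singular cohomology are incompatible. Write $A\coloneqq H^{*,*}_{\on{H}}(B\mr{SO}_{4m+2}/\F_2)$, regarded as a $\Lambda_2$-comodule algebra, and recall that $a_\sharp\coloneqq u_2$ satisfies the hypotheses of \Cref{toda36-new} with $q=2$, since $d_2(u_2)=1$ and $d_1(u_2)=0$ (as $u_1=0$ for $\mr{SO}$, by \Cref{comparison-so}). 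By \Cref{compute-cotor}(c), $\on{Cotor}^*_{\Lambda_2}(\F_2,A)$ is the cohomology of $(R_2\otimes_{\theta_2}A,\,d_{\theta_2})$, and the first — and main — step is to decompose this complex as an external tensor product.

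Concretely, I would use that $R_2=\F_2[z_2]\otimes R_1$ with $R_1=\F_2[z_3,z_5,\dots]$, and, by \Cref{toda36-new}(b), that $A\simeq\F_2[u_2]\otimes P_2A$ as $\Z^2$-graded rings. The relevant observations are: $\F_2[u_2]\subset A$ is a sub-comodule-algebra whose coaction has no $x_1$-component, hence factors through the Hopf quotient $\Lambda_2\twoheadrightarrow\Lambda_1$, and $x_2\mapsto u_2$ identifies $\F_2[u_2]$ with $\Lambda_1$ as a $\Lambda_1$-comodule; moreover $\theta_2$ restricts to $\theta_1$ on $\Lambda_1\subset\Lambda_2$, and on $P_2A$ all operators $d_i$ with $i\ge 2$ vanish. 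Using the multiplicativity relations of \Cref{condtr:d_i for so_n} (and Lucas's theorem to evaluate $d_{2^{h+1}}$ on powers of $u_2$), I expect to verify that under the identification $R_2\otimes A=(\F_2[z_2]\otimes P_2A)\otimes(R_1\otimes\F_2[u_2])$ the operator $d_{\theta_2}$ splits as $d'\otimes 1+1\otimes d''$, where $d''$ is $z_2\cdot d_1$ on $\F_2[z_2]\otimes P_2A$ and $d'=d_{\theta_1}$ on $R_1\otimes_{\theta_1}\F_2[u_2]$. Since all signs are trivial in characteristic $2$, this exhibits
$$(R_2\otimes_{\theta_2}A,\,d_{\theta_2})\;\simeq\;\bigl(\F_2[z_2]\otimes P_2A,\ z_2\cdot d_1\bigr)\otimes_{\F_2}\bigl(R_1\otimes_{\theta_1}\F_2[u_2],\ d_{\theta_1}\bigr)$$
as complexes of $\Z^3$-graded vector spaces.

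Next I would observe that the second factor is contractible: since $\F_2[u_2]\simeq\Lambda_1$ as $\Lambda_1$-comodules, \Cref{compute-cotor}(b) gives that $R_1\otimes_{\theta_1}\F_2[u_2]$ is quasi-isomorphic to $\F_2$ in tridegree $(0,0,0)$. By the Künneth formula over a field, $\on{Cotor}^*_{\Lambda_2}(\F_2,A)\simeq H^*(\F_2[z_2]\otimes P_2A,\ z_2\cdot d_1)$. In this complex the $k$-th term is $z_2^k\otimes P_2A$ with differential $p\mapsto z_2\cdot d_1(p)$, so its $0$-th cohomology is $\ker(d_1|_{P_2A})=PH^{*,*}_{\on{H}}(B\mr{SO}_{4m+2}/\F_2)$ (a $\Z^2$-homogeneous element of $P_2A$ killed by $d_1$ has all $d_i$, $i\ge1$, vanishing), while for $k\ge1$ the $k$-th cohomology is $z_2^k\otimes H^*(P_2A,d_1)\simeq z_2^k\otimes\F_2[b_2,\dots,b_{2m+1}]$ by \Cref{toda36-311-so}(b)(2). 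This yields the asserted additive decomposition, with $\on{Cotor}$-grading equal to the power of $z_2$ and $|z_2|=(1,0)$, because $z_2$ enters $d_{\theta_2}$ through $\theta_2(x_1)$ with $|x_1|=(1,0)$ while $d_1$ has bidegree $(-1,0)$.

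The remaining task, which I expect to be the genuinely delicate one, is to upgrade this to an isomorphism of \emph{algebras}: by \Cref{not-algebra} the twisted complex carries no evident ring structure computing the product on $\on{Cotor}$. Here I would invoke (\ref{algebra-map}), that $PA\otimes R_2\to\on{Cotor}^*_{\Lambda_2}(\F_2,A)$ is an algebra homomorphism. The cochain computations above show that $z_{2^{h+1}+1}\otimes1=d_{\theta_2}(1\otimes u_2^{2^h})$ for every $h\ge0$, so the generators $z_3,z_5,\dots$ of $R_1$ map to $0$, while $z_2$ maps to the nonzero class $[z_2\otimes1]$ and each $b_h$ lies in $PA$; as $PA$ together with the powers of $z_2$ generate the whole right-hand side, this map is surjective. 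Consequently the ring structure on $\on{Cotor}$ is the quotient of $PA\otimes\F_2[z_2]$ by the kernel, and chasing the identification shows that the action of $PA=\on{Cotor}^0$ on $\on{Cotor}^{\ge1}$ factors through the surjection $PH^{*,*}_{\on{H}}(B\mr{SO}_{4m+2}/\F_2)\twoheadrightarrow H^*(P_2H^{*,*}_{\on{H}}(B\mr{SO}_{4m+2}/\F_2),d_1)\simeq\F_2[b_2,\dots,b_{2m+1}]$ of \Cref{toda36-311-so}(b), exactly as in \Cref{rem:multiplication in cotor}; in particular $z_2\cdot y_I=0$ for all $I$, since each $y_I$ is an image of $d_1$. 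Beyond this, the only real obstacle is the bookkeeping in the splitting of $d_{\theta_2}$: one must check that the merely ring-theoretic decomposition $A\simeq\F_2[u_2]\otimes P_2A$ is compatible enough with the coaction for all cross-terms to organize neatly into the two tensor factors. The de Rham analogue, if wanted, follows verbatim, using the degeneration of the Hodge--de Rham spectral sequences for $B\mu_2$ and $B\mr{SO}_n$.
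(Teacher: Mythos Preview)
Your argument is essentially the paper's own proof: both compute $\on{Cotor}$ via the twisted tensor product, verify the partial Leibniz rule $d_{\theta_2}(1\otimes a u_2^n)=d_{\theta_2}(1\otimes a)\cdot(1\otimes u_2^n)+(1\otimes a)\cdot d_{\theta_2}(1\otimes u_2^n)$ for $a\in P_2A$, split $(R_2\otimes_{\theta_2}A,d_{\theta_2})$ as the tensor product of $(\F_2[z_2]\otimes P_2A,\,z_2\cdot d_1)$ with $(R_1\otimes_{\theta_1}\F_2[u_2],d_{\theta_1})$, kill the second factor, and then read off the algebra structure from (\ref{algebra-map}). Your only deviations are cosmetic: you invoke \Cref{compute-cotor}(b) directly (via $\F_2[u_2]\simeq\Lambda_1$) where the paper repeats the infinite Koszul argument, and you have a harmless label swap of $d'$ and $d''$ in the sentence before your displayed tensor decomposition.
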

	\smallskip
	
	\begin{rmk}
	    	    	Here $1\otimes PH^{*,*}_{\on{H}}(B\mr{SO}_{4m+2})$ acts on $z_2\F_2[z_2]\otimes \F_2[b_2,\dots,b_{2m+1}]$ via the surjective homomorphism $$PH^{*,*}_{\on{H}}(B\mr{SO}_{4m+2})\twoheadrightarrow H^*(P_2H^{*,*}_{\on{H}}(B\mr{SO}_{4m+2})),d_1)\simeq \F_2[b_2,\dots,b_{2m+1}].$$ Also, the ``$\on{Cotor}$"-grading of $(1\otimes PH^{*,*}_{\on{H}}(B\mr{SO}_{4m+2}))$ (including $b_i$'s) is 0, while  ``$\on{Cotor}$"-grading of $z_2$ is 1.
	\end{rmk}

	\begin{proof}
	 Let $A\coloneqq H^*_{\on{H}}(B\mr{SO}_n/\F_2)$. By \Cref{compute-cotor}(c), $\on{Cotor}$ groups can be computed as the cohomology $H^*(R_2\otimes_{\theta_2} A,d_{\theta_2})$ of the twisted tensor product. 
	 
	 Let us compute the differential $d_{\theta_2}\colon R_2\otimes A \to R_2\otimes A$ explicitly.  The differential $d_{\theta_2}$ is $R_2\otimes 1$-linear and so it is enough to understand $d_{\theta_2}$ on $1\otimes A$. Recall that $A\simeq P_2A[u_2]$ by \Cref{toda36-new}(b). We have $\phi(u_2)=1\otimes u_2 + x_2\otimes 1$ and $\phi(u_2^{2^i})=\phi(u_2)^{2^i}=1\otimes u_2^{2^i} + x_2^{2^i}\otimes 1$. By definition of $d_{\theta}$ (see \Cref{constr:twisted tensor product}) and our choice of the twisted cochain (\Cref{constr:twisted cochains}) this shows that $d_{\theta_2}(1\otimes u_2^{2^i})= z_{2^{i+1}+1}\otimes 1$ for $i\ge 0$. More generally, if we take $u_2^n$ and take the 2-adic expansion $n=2^{i_1}+2^{i_2}+ \ldots + 2^{i_r}$ with all $i_1< i_2<\ldots < i_r$, then we have
	 $$
	 \phi(u_2^n)=\phi(u_2)^{2^{i_1}}\cdot \ldots \cdot  \phi(u_2)^{2^{i_r}}=(1\otimes u_2^{2^{i_1}} + x_2^{2^{i_1}}\otimes 1)\cdot \ldots \cdot (1\otimes u_2^{2^{i_r}} + x_2^{2^{i_r}}\otimes 1).
	 $$
	Opening the brackets, one gets a formula for $d_{\theta_2}$:
	$$
	d_{\theta_2}(1\otimes u_2^n)=\sum_{j=1}^r z_{2^{i_j+1}+1}\otimes u_2^{n-2^{i_j}}.
	$$ 
	Moreover, if $a\in P_2A$, then $\phi(a)=1\otimes a+x_1 \otimes d_1(a)$ and so $d_{\theta_2}(a)=z_2 \otimes d_1(a)$. Finally, 
	$\phi(au_2^n)= \phi(a)\phi(u_2^n)$ and by opening the brackets in this product one sees that 
	$$d(1\otimes au_2^n)=d(1\otimes a)\cdot 1 \otimes u_2^n  + 1\otimes a \cdot d(u_2^n).$$
	Note that by the above formulas $\F_2[z_2]\otimes P_2A\subset  R_2\otimes A$ is closed under the differential, and so is $ \F_2[z_3,z_5,z_9,\ldots]\otimes\F_2[u_2]\subset R_2\otimes A$. Moreover, the partial Leibnitz rule above shows that there is a decomposition as a tensor product 
	$$
	(R_2\otimes_{\theta_2}A,d_{\theta_2})\simeq (\F_2[z_2]\otimes P_2A, d_{\theta_2}) \bigotimes \ \! (\F_2[z_3,z_5,z_9,\ldots]\otimes \F_2[u_2], d_{\theta_2}).
	$$
	By an analogous argument to \Cref{compute-cotor}(b) one can show that $(\F_2[z_3,z_5,z_9,\ldots]\otimes \F_2[u_2], d_{\theta_2})$ is quasi-isomorphic to the infinite Koszul complex $\otimes_{i=0}^{\infty} (\F_2[x_i]/(x_i^2)\otimes \F_2[y_i], d)$ with $d(x_i)=y_i$ and where $x_i$ and $y_i$ map to $1\otimes u_2^{2^i}$ and $z_{2^i+1}\otimes 1$,  respectively. Consequently, $(\F_2[z_3,z_5,z_9,\ldots]\otimes \F_2[u_2], d_{\theta_2})$ is acyclic\footnote{Meaning that it is quasi-isomorphic to $\mbb F_2$ in degree 0.} and we get a quasi-isomorphism
	$$
	( R_2\otimes_{\theta_2} A,d_{\theta_2})\simeq (\F_2[z_2]\otimes P_2A , d_{\theta_2}|_{\F_2[z_2]\otimes P_2A }).
	$$
	Recall that $d_{\theta_2}|_{\F_2[z_2]\otimes P_2A }$ is sending $ z_2^k\otimes a\mapsto z_2^{k+1}\otimes d_1(a)$. Thus $(\F_2[z_2]\otimes P_2A , d_{\theta_2})$ looks like
	$$
	1 \otimes P_2A \xrightarrow{z_2\otimes d_1(-)}z_2\otimes P_2A  \xrightarrow{z_2\otimes d_1(-)} z_2^2\otimes P_2A \xrightarrow{z_2\otimes d_1(-)} \ldots ,
	$$	
	from which we get that $H^0(\F_2[z_2]\otimes P_2A, d_{\theta_2})\simeq  1\otimes PA \simeq \on{ker}(1\otimes d_1)\subset 1 \otimes P_2A$, while $H^i(\F_2[z_2]\otimes P_2A, d_{\theta_2})\simeq z_2^i\otimes H^*(P_2A,d_1)$, which by \Cref{toda36-311-so}(b2) is isomorphic to $z_2^i\otimes \mbb F_2[b_1,\ldots, b_h]$. The algebra structure then also can be understood via (\ref{algebra-map}): namely, the map $\F_2[z_2]\otimes PA \to H^*(\F_2[z_2]\otimes P_2A, d_{\theta_2})$ is an algebra homomorphism.
	\end{proof}
	
	\begin{cor}\label{cor:EM-for SO_4k+2 degenerates}
	    The Eilenberg-Moore spectral sequence (from \Cref{eilenberg-moore-easy}) for 
	    $$
	    1\to \mu_2\to \mr{SO}_{4k+2} \to \mr{PSO}_{4k+2}\to 1
	    $$
	    over $\F_2$ degenerates on the $E_2$ page. 
	\end{cor}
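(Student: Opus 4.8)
The plan is to deduce degeneration by a Poincar\'e series comparison, in the spirit of \Cref{additive}: we already have the algebraic $E_2$ page on the nose from \Cref{coaction-so}, Toda has computed the topological $E_2$ page and shown that the corresponding Eilenberg--Moore spectral sequence for $\Z/2\to \mr{SO}_{4m+2}(\C)\to \mr{PSO}_{4m+2}(\C)$ degenerates, and Totaro's inequality relates the algebraic and topological abutments. Concretely, by \Cref{coaction-so} the algebraic $E_2$ page, regarded as a $\Z$-graded vector space via the total degree governing the abutment (that is, $\on{Cotor}$-degree plus internal degree), is
\[
PH^{*,*}_{\on{H}}(B\mr{SO}_{4m+2}/\F_2)\ \oplus\ \bigl(z_2\F_2[z_2]\otimes \F_2[b_2,\dots,b_{2m+1}]\bigr),
\]
with $z_2$ in total degree $2$ and each $b_h$ in total degree $4h$. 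Toda's topological computation has exactly the same shape, namely $PH^*_{\on{sing}}(B\mr{SO}_{4m+2}(\C);\F_2)\oplus(z'\F_2[z']\otimes\F_2[b_2^{\on{top}},\dots,b_{2m+1}^{\on{top}}])$, with $z'$ in degree $2$ and $b^{\on{top}}_h$ in degree $4h$; here the relevant statement in \cite{toda1987cohomology} should be cited, as the analogous $\mr{GL}$ and $\mr{Sp}$ statements are in \Cref{additive}.

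The key step is to check that these two $E_2$ pages have the same Poincar\'e series. The second summands visibly agree. For the first summands one compares the description of $PH^{*,*}_{\on{H}}(B\mr{SO}_{4m+2}/\F_2)$ in \Cref{toda36-311-so}(c) with the analogous description of $PH^*_{\on{sing}}(B\mr{SO}_{4m+2}(\C);\F_2)$ in \cite{toda1987cohomology}: in both cases one has the same generators, of total degrees $4h$ for $2\le h\le 2m+1$ and $2d(I)-1$ for $I=\{1<i_1<\dots<i_r\le 2m+1\}$, subject to the same relations (\ref{eq:rel on y_I for SO_N}), so the underlying graded vector spaces are abstractly isomorphic. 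That the degrees line up is immediate from the bidegrees recorded in \Cref{toda36-311-so} and \Cref{coaction-so} together with the fact that $u_i\in H^{*,*}_{\on{H}}(B\mr{SO}_{4m+2}/\F_2)$ and $w_i\in H^*_{\on{sing}}$ both have total degree $i$. The one genuinely delicate point is that, unlike in the $\mr{PGL}_n$ and $\mr{PSp}_n$ cases of \Cref{sec:GL SP graded vector space}, we cannot identify the two $E_2$ pages directly: $H^*_{\on{H}}(B\mu_2/\F_2)$ and $H^*_{\on{sing}}(B(\Z/2);\F_2)$ are not isomorphic as algebras (\Cref{rem:Sp case iso as coalgebras}) and the $H^*(B\mu_2)$-comodule structure on $H^{*,*}_{\on{H}}(B\mr{SO}_{4m+2}/\F_2)$ differs from its topological counterpart, so we must perform the two $\on{Cotor}$ computations separately and compare only the resulting numerics. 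This degree bookkeeping is the main thing requiring care, though it is routine once the preceding lemmas are in hand.

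To conclude, write $P(V)\in\Z_{\ge0}[[t]]$ for the Poincar\'e series of a graded vector space, compared coefficientwise. Convergence of the algebraic spectral sequence gives $P(E_2^{\on{alg}})\ge P(H^*_{\on{H}}(B\mr{PSO}_{4m+2}/\F_2))$; the Hodge--de Rham spectral sequence and Totaro's inequality ((\ref{eq:inequalities}), \cite[Theorem 5.3.6]{kubrak2021p-adic}) give $P(H^*_{\on{H}}(B\mr{PSO}_{4m+2}/\F_2))\ge P(H^*_{\on{dR}}(B\mr{PSO}_{4m+2}/\F_2))\ge P(H^*_{\on{sing}}(B\mr{PSO}_{4m+2}(\C);\F_2))$; and degeneration on the topological side gives $P(H^*_{\on{sing}}(B\mr{PSO}_{4m+2}(\C);\F_2))=P(E_2^{\on{top}})$. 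Since by the previous paragraph $P(E_2^{\on{alg}})=P(E_2^{\on{top}})$, all of these inequalities are forced to be equalities; in particular $P(E_2^{\on{alg}})=P(H^*_{\on{H}}(B\mr{PSO}_{4m+2}/\F_2))$, which is exactly the assertion that the algebraic Eilenberg--Moore spectral sequence degenerates at $E_2$ (and, as a byproduct, that the Hodge--de Rham spectral sequence for $B\mr{PSO}_{4m+2}$ degenerates too).
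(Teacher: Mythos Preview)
Your proposal is correct and follows essentially the same approach as the paper's own proof: both argue by comparing the dimensions of the $E_2$ pages on the Hodge and singular sides (using \Cref{toda36-311-so}(c) versus \cite[Proposition 3.11]{toda1987cohomology} for the primitive summand, and \Cref{coaction-so} versus \cite[Section 4.4]{toda1987cohomology} for the rest), then invoking Toda's degeneration on the topological side together with Totaro's inequality to force degeneration on the algebraic side. Your discussion is slightly more explicit about why a direct comodule comparison is unavailable here, but the logic is the same.
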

	\begin{proof}
	 Similarly to \Cref{additive} we need to show that the dimensions of terms in the second page for the Eilenberg-Moore spectral sequence in Hodge and singular cohomology are the same. Indeed, by Toda's result \cite[Section 4.4]{toda1987cohomology} the spectral sequence degenerates on the singular cohomology side, so by Totaro's inequality we would get that it should also degenerate for Hodge and de Rham cohomology. The comparison is established by direct inspection. First, comparing \Cref{toda36-311-so}(c1) and \cite[Proposition 3.11]{toda1987cohomology} one sees that $PH^{*}_\Hdg(B\mr{SO}_{4k+2}/\F_2)$ and $PH^*_{\mr{sing}}(B\mr{SO}_{4k+2}(\mbb C),\mbb F_2)$ are given by the same generators and relations. This gives an isomorphism between $E_2^{0,*}$ in Hodge and singular cohomology. Finally, one observes $E_2^{>0,*}$ in both settings is given by a free module over a polynomial ring with generators in same degrees, see \Cref{coaction-so} and \cite[Section 4.4]{toda1987cohomology}, so the dimensions are also the same.
	\end{proof}
	
	\begin{rmk}\label{rem:Toda's strategy doesn't work}
	    Let us point out that Toda's strategy of proving degeneration (using pull-back with respect to the ``tensor product" map $\mr{O}_2\times \mr{SO}_{2m+1}\to \mr{SO}_{4m+2}$) doesn't work in the Hodge setting: the reason is that the corresponding pull-back map 
	    $$
	    H^*_\Hdg(B\mr{SO}_{4m+2}/\F_2)\to H^*_\Hdg(B\mr{O}_{2}/\F_2)\otimes H^*_\Hdg(B\mr{SO}_{2m+1}/\F_2)
	    $$
	    is no longer an embedding.
	\end{rmk}
	
	\subsection{Computation of Hodge cohomology of \texorpdfstring{$B\mr{PSO}_{4m+2}$}{BPSO}}
	We start by understanding Hodge cohomology of $B\mr{PSO}_{2r}$ in low degrees. 
	\begin{lemma}\label{lem:SO cohomology in low degrees} We have isomorphisms 
	\begin{enumerate}
	    \item $H^0_{\on{H}}(B\mr{PSO}_{2r}/\mbb F_2)\simeq \mbb F_2$ \
	    \item $H^1_{\on{H}}(B\mr{PSO}_{2r}/\mbb F_2)\simeq 0$,\
	    \item 
	    $
		H^2_{\on{H}}(B\mr{PSO}_{2r}/\mbb F_2)\simeq H^{1,1}_{\on{H}}(B\mr{PSO}_{2r}/\mbb F_2)\simeq 
		\begin{cases}
			\F_2, & \text{if }r=2k+1\\
		\F_2\oplus \F_2, &\text{if } r=2k.
		\end{cases}
		$
	\end{enumerate}
		In the case $r={2k+1}$ we let $x_2$ be the (unique) generator of $H^2_{\on{H}}(B\mr{PSO}_{4k+2}/\mbb F_2)$.	
	\end{lemma}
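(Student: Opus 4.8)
The plan is to follow the proof of \Cref{pgl-small} almost verbatim, changing only the group-theoretic input. Write $G\coloneqq\mr{PSO}_{2r}$, which for $r\geq 3$ is the split adjoint simple $\F_2$-group of type $D_r$ (\cite[Section 4.1.2]{chaput2010adjoint}); the cases $r\leq 2$ are degenerate and will not be needed below. Parts (1) and (2) are then formal consequences of the representation-theoretic formula $H^{i,j}_{\on H}(BG/\F_2)\simeq H^{j-i}(G,\on{Sym}^i\mf g^\vee)$ recalled in \Cref{ssec:prelim}: one has $H^0_{\on H}(BG/\F_2)=H^0(G,\F_2)=\F_2$, while $H^1_{\on H}(BG/\F_2)=H^0(BG,\Omega^1)\oplus H^1(BG,\mc O)=H^{-1}(G,\mf g^\vee)\oplus H^1(G,\F_2)$ vanishes, the first summand trivially (cohomology in negative degree) and the second by \cite[II, Corollary 4.11]{jantzen2003representations} since $G$ is reductive.

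For part (3) I would repeat the computation preceding (\ref{pgl-h2}). As $G$ is split simple and not of type $C$, \cite[Theorem 2.4]{totaro2018hodge} together with \cite[Theorem 1.1]{chaput2010adjoint} gives $H^2_{\on H}(BG/\F_2)\simeq H^2(G,\F_2)\oplus(\mathfrak{t}^*)^W$ for a split maximal torus $T\subset G$ with Weyl group $W$; using $H^2(G,\F_2)=0$ (again \cite[II, Corollary 4.11]{jantzen2003representations}) and $H^{-2}(G,\on{Sym}^2\mf g^\vee)=0$, this becomes $H^2_{\on H}(BG/\F_2)=H^1(BG,\Omega^1)=(\mathfrak{t}^*)^W$, which in particular already identifies $H^2_{\on H}(BG/\F_2)$ with $H^{1,1}_{\on H}(BG/\F_2)$. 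It then remains to compute $(\mathfrak{t}^*)^W=(X^*(T)\otimes_\Z\F_2)^W$. Since $G$ is adjoint of type $D_r$, its character lattice $X^*(T)$ is the root lattice $Q(D_r)=\{a\in\Z^r:\textstyle\sum_i a_i\in 2\Z\}$, and $W=W(D_r)$ acts by signed permutations of the coordinates with an even number of sign changes; in particular the induced action of $W$ on $\F_2^r=\Z^r\otimes_\Z\F_2$ factors through the coordinate-permutation action of $S_r$.

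I would then compute the invariants by tensoring the short exact sequence of $W$-lattices $0\to Q(D_r)\to\Z^r\xrightarrow{\Sigma}\Z/2\to 0$ (with $\Sigma$ the coordinate sum) with $\F_2$ over $\Z$. Since $\on{Tor}^\Z_1(\Z/2,\F_2)\simeq\F_2$, the associated long exact sequence yields a short exact sequence of $W$-modules
\[0\to\F_2\to Q(D_r)\otimes_\Z\F_2\to V\to 0,\qquad V\coloneqq\{a\in\F_2^r:\textstyle\sum_i a_i=0\},\]
on which $W$ acts trivially on the subobject $\F_2$ (it is the kernel of a $W$-equivariant map to $\F_2^r$, and $\F_2$ has no nontrivial automorphism) and through $S_r$ on $V$. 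Hence $\dim_{\F_2}(Q(D_r)\otimes_\Z\F_2)^W\leq 1+\dim_{\F_2}V^{S_r}$, and $V^{S_r}$ is spanned by $(1,\dots,1)$ when $r$ is even and is $0$ when $r$ is odd. For the matching lower bound, the subobject $\F_2$ always contributes one invariant dimension, and when $r$ is even the class in $Q(D_r)\otimes_\Z\F_2$ of $(1,\dots,1)\in Q(D_r)$ is $W$-invariant --- for $w\in W(D_r)$ one has $w\cdot(1,\dots,1)-(1,\dots,1)=-2\,\mathbf 1_J$ for some subset $J$ of even cardinality, and $-2\,\mathbf 1_J\in 2Q(D_r)$ --- and it maps to the nonzero class $(1,\dots,1)\in V$, so is independent of the subobject. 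Therefore $\dim_{\F_2}H^2_{\on H}(BG/\F_2)$ equals $1$ for $r$ odd and $2$ for $r$ even, which for $r=2k+1$ is exactly the one-dimensionality that lets us speak of the generator $x_2$ of the statement.

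There is no genuinely hard step here: once the cited structural inputs are in place the rest is linear algebra over $\F_2$. The one point that deserves care --- and which is precisely what creates the dichotomy between $r$ even and $r$ odd --- is the failure of left-exactness when the lattice sequence is reduced mod $2$: the surviving $\on{Tor}^\Z_1$-term is exactly the ``extra'' invariant class. Beyond that, I would only need to double-check the characteristic-$2$ background facts, namely that $\mr{PSO}_{2r}$ is indeed the split adjoint simple group of type $D_r$ with character lattice $Q(D_r)$, and that the hypotheses of the Totaro--Chaput formula (in particular, $G$ not of type $C$) hold --- which they do for $r\geq 3$, the only range relevant to the applications.
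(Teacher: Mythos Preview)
Your proof is correct and follows the same overall strategy as the paper: reduce to $(\mathfrak t^*)^W$ via the Totaro--Chaput--Romagny formula and Kempf vanishing, then compute the $W$-invariants on the mod~$2$ reduction of the relevant lattice. The only difference is organizational. The paper works on the cocharacter side, comparing the maximal tori of $\mr{SO}_{2r}$ and $\mr{PSO}_{2r}$ to obtain a $W$-equivariant splitting $\mathfrak t\simeq\F_2\oplus L/2$ and then dualizes; you instead work directly on the character side, using that $X^*(T_{\mr{PSO}_{2r}})=Q(D_r)$ and the short exact sequence $0\to Q(D_r)\to\Z^r\to\Z/2\to 0$, and extract the invariants via an upper/lower bound argument. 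Your route is slightly more direct (one lattice, one exact sequence, no passage through $\mr{SO}_{2r}$), while the paper's splitting makes the answer appear as a genuine direct sum; either way the dichotomy between $r$ even and odd comes from the same parity of the ``all ones'' vector.
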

	\begin{proof}As in the proof of \Cref{pgl-small}, using that $\mr{SO}_n$ (and so $\mr{PSO}_{2r}$) is smooth and connected, we have isomorphisms 
	\[
		H^i_{\on{H}}(B\mr{PSO}_{2r}/\mbb F_2)\simeq 
		\begin{cases}
			\F_2 & i=0\\
			0 & i=1\\
		(\mf t^\vee)^W & i=2,
		\end{cases}
		\]
		where $\mf t$ is the Lie algebra of the maximal torus of $\mr{PSO}_{2r}$ and $W$ is the Weyl group. Thus it remains to show that $(\mf t^\vee)^W\simeq \F_2$. The maximal torus $T'\simeq \mathbb G_m^r\subset \mr{SO}_{2r}$ is given by $\{(t_1,t_1^{-1},\ldots, t_r,t_r^{-1})\}\subset \mr{SO}_{2r}$ (in the basis where the quadratic form $q_{2r}$ is given by $x_1x_2+\ldots+x_{2r-1}x_{2r}$). The maximal torus $T\subset \mr{PSO}_{2r}$ is obtained as the quotient of $T'$ by diagonal copy of $\mu_2$. Note that both tori are split and so there is a $W$-equivariant identification of the Lie algebras $\mf t$ and ${\mf t}'$ and the mod $ 2$ reductions $X_*(T)_{\mbb F_2}$ and $X_*(T')_{\mbb F_2}$ of the cocharacter lattices. Let $\chi_i\colon \mbb G_m \to T'$ be cocharacter corresponding to $t_i$; it is not hard to see that $X_*(T')$ is embedded into $X_*(T)$ as the lattice $\mbb Z\cdot \chi_1\oplus \ldots \oplus \mbb Z\cdot \chi_r$ inside the lattice generated by $\frac{1}{2}(\chi_1+\ldots +\chi_r)$ and $\chi_i$. The group $W$ is isomorphic to $S_r\ltimes (\Z/2\Z)^{r-1}$ where $(\Z/2\Z)^{r-1}$ acts trivially on $X_*(T')$ (and $\mf t'$) and $S_r$ acts by transpositions on $\chi_i$'s. For $X_*(T')$ we then have a short exact sequence 
		$$
		0\to \mbb Z\cdot (\chi_1+\ldots+\chi_r) \to X_*(T') \to L \to 0
		$$
		for a $W$-module $L$. This gives a short exact sequence 
		\begin{equation}\label{eq:ses for tori}
		    0\to \mbb F_2 \to \mf t' \to L/2 \to 0.
		\end{equation}
		Note however that the image of $\mf t' \to \mf t$ is exactly given by $L/2$ and so gives a $W$-equivariant splitting $\mf t\simeq \mbb F_2\oplus L/2$. In particular, $(\mf t^\vee)^W\simeq ((L/2)^\vee)^W\oplus \mbb F_2$.
		
		From (\ref{eq:ses for tori}) we have a short exact sequence 
		$$
		0\to (L/2)^\vee \to \mf t'^\vee \to \mbb F_2 \to 0,
		$$
		inducing a left-exact sequence
		$$
		0\to ((L/2)^\vee)^W \to (\mf t'^\vee)^W \to \mbb F_2 \to \ldots
		$$
		By direct inspection $\mf t'^\vee\simeq \mbb \F_2^{\oplus r}$ is the ``permutation module"\footnote{Meaning $(\Z/2\Z)^{r-1}$ acts trivially, while there is a choice of $r$ vectors that form a basis, such that $S_r$ acts on them by permutations.} for $W$ and maps to $\mbb F_2$ by $(x_1,\ldots, x_n)\mapsto \sum x_i$, while $(\mf t'^\vee)^W$ is spanned by the vector $(1,1,\ldots, 1)$. So the map $(\mf t'^\vee)^W\simeq \mbb F_2$ is given by multiplication by $r$ and we get that $((L/2)^\vee)^W$ is $\mbb F_2$ or $0$ depending on whether $r$ is even or odd. 
 	\end{proof}

	\begin{thm}\label{thm-hodge-pso}
		The bigraded ring $H^*_{\mr{H}}(B\mr{PSO}_{4m+2}/\F_2)$ is generated by
		\[x_2\in H^{1,1}_\Hdg(B\mr{PSO}_{4m+2}/\F_2),\qquad b_h\in H^{2h,2h}_\Hdg(B\mr{PSO}_{4m+2}/F_2),\] \[y_I=y_{\{i_1,\dots,i_r\}}\in H^{d(I),d(I)-1}_\Hdg(B\mr{PSO}_{4m+2}/\mbb F_2).\]
		Here $1< h\leq 2m+1$, $I=(i_1,\dots,i_r)$, where $1<i_1<\dots<i_r\leq 2m+1$, and $d(I)\coloneqq i_1+\dots+i_r$. The relations are generated by
		\begin{equation}\label{pso-rel1}
			x_2\cdot y_I=0,
		\end{equation}
		\begin{equation}\label{pso-rel3}
			y_{\{i_1,\dots,i_r\}}y_{\{i_{r+1},\dots,i_s\}}=\sum_{j=1}^ry_{\{i_1,\dots,i_{j-1},i_{j+1},\dots,i_s\}}y_{\{i_j\}} \text{ for $s>r\geq 2$},
		\end{equation}
		with the convention that
				\begin{equation}\label{pso-rel2}	
			y_{\{h,h,j_1,\dots,j_s\}}=y_{\{j_1,\dots,j_s\}}b_h.
		\end{equation}
		Similarly, the graded ring $H^*_{\on{dR}}(B\mr{PSO}_{4m+2}/\F_2)$ has generators $x_2\in H^2$, $b_h\in H^{4h}$, $y_I\in H^{2d(I)-1}$, where $h$, $I$ and $d(I)$ are as above, and relations generated by (\ref{pso-rel1}), (\ref{pso-rel2}) and (\ref{pso-rel3}). In particular, the Hodge-to-de Rham spectral sequence for $B\mr{PSO}_{4m+2}$ degenerates and we have an isomorphism of graded rings $H^*_{\on{H}}(B\mr{PSO}_{4m+2}/\F_2)\simeq H^*_{\on{dR}}(B\mr{PSO}_{4m+2}/\F_2)$.
	\end{thm}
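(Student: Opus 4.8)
\medskip

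\noindent\textbf{Proof proposal.} The plan is to repeat, now that the special--orthogonal ingredients are in place, the argument used for \Cref{thm-hodge-pgl}. First I would run the Eilenberg--Moore spectral sequence of \Cref{eilenberg-moore-easy} attached to the central extension $1\to \mu_2\to \mr{SO}_{4m+2}\xrightarrow{p}\mr{PSO}_{4m+2}\to 1$; by \Cref{cor:EM-for SO_4k+2 degenerates} it degenerates at $E_2$, and by \Cref{coaction-so} its $E_2$ (hence $E_\infty$) page is $\big(1\otimes PH^{*,*}_{\on{H}}(B\mr{SO}_{4m+2}/\F_2)\big)\oplus\big(z_2\F_2[z_2]\otimes\F_2[b_2,\dots,b_{2m+1}]\big)$, with $E_\infty^{0,*}=PH^{*,*}_{\on{H}}(B\mr{SO}_{4m+2}/\F_2)$ the image of the edge map $Bp^*$ and $E_\infty^{>0,*}$ the associated graded of $\on{Ker}Bp^*$ for the column filtration (\Cref{column}). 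Comparing total degrees via \Cref{lem:SO cohomology in low degrees} one sees that $x_2$ necessarily reduces to $z_2$ on $E_\infty$; choosing bihomogeneous lifts $b_h$ of $1\otimes b_h$ and observing that $z_2,b_2,\dots,b_{2m+1}$ satisfy no relations in $E_\infty$ then yields an embedding $\F_2[x_2,b_2,\dots,b_{2m+1}]\hookrightarrow H^{*,*}_{\on{H}}(B\mr{PSO}_{4m+2}/\F_2)$ under which $x_2\cdot\F_2[x_2,b_2,\dots,b_{2m+1}]$ maps isomorphically onto $\on{Ker}Bp^*$.

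Next I would produce the classes $y_I$ and verify the relations. By the degeneration, each $y_I\in PH^{*,*}_{\on{H}}(B\mr{SO}_{4m+2}/\F_2)$ from \Cref{toda36-311-so}(c) lifts to a bihomogeneous class $y_I\in H^{*,*}_{\on{H}}(B\mr{PSO}_{4m+2}/\F_2)$ of the corresponding bidegree with $Bp^*(y_I)=y_I$. Since in $\on{Cotor}$ the class $y_I$ dies under the projection $PH^{*,*}_{\on{H}}(B\mr{SO}_{4m+2})\twoheadrightarrow H^*\big(P_2H^{*,*}_{\on{H}}(B\mr{SO}_{4m+2}),d_1\big)$ appearing in \Cref{coaction-so}, one has $z_2\cdot y_I=0$ on $E_\infty$, so by \Cref{column} $x_2y_I=x_2^2f$ for some $f$, and after replacing $y_I$ by $y_I-x_2f$ relation (\ref{pso-rel1}) holds exactly. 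For (\ref{pso-rel2}) and (\ref{pso-rel3}): these hold after applying $Bp^*$ by \Cref{toda36-311-so}(c1), hence modulo $\on{Ker}Bp^*$; as every term of either relation is divisible by some $y_I$ it is annihilated by multiplication by $x_2$ thanks to (\ref{pso-rel1}), and since multiplication by $x_2$ is injective on $\on{Ker}Bp^*\simeq x_2\F_2[x_2,b_2,\dots,b_{2m+1}]$, the relations must already hold on the nose.

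Finally I would let $A^{*,*}$ be the bigraded ring presented by $x_2,b_h,y_I$ with relations (\ref{pso-rel1})--(\ref{pso-rel3}); the above gives a bigraded ring homomorphism $A^{*,*}\to H^{*,*}_{\on{H}}(B\mr{PSO}_{4m+2}/\F_2)$, which I claim is an isomorphism. The decisive step --- and the one I expect to be the main obstacle --- is the bookkeeping here: filtering $A^{*,*}$ by powers of $x_2$, relation (\ref{pso-rel1}) makes $\gr A^{*,*}$ again look like $A^{*,*}$, and a direct inspection using the presentation of $PH^{*,*}_{\on{H}}(B\mr{SO}_{4m+2}/\F_2)$ in \Cref{toda36-311-so}(c) (namely that the $b_h$, the $\cl u_{2i-1}=y_{\{i\}}$, and the $y_I$ with $l(I)\ge 2$ account for all of $\on{Cotor}^0$) together with \Cref{coaction-so} identifies $\gr A^{*,*}$ with the $E_\infty$ page above; hence the map is an isomorphism on associated gradeds, and since both filtrations are exhaustive and complete it is itself an isomorphism. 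The de Rham assertion follows by the identical argument, the low-degree input being the degeneration of the Hodge-to-de Rham spectral sequence in degrees $\le 2$ forced by the bigrading (as in \Cref{lem:SO cohomology in low degrees}); comparing the two resulting presentations yields the graded ring isomorphism $H^*_{\on{H}}(B\mr{PSO}_{4m+2}/\F_2)\simeq H^*_{\on{dR}}(B\mr{PSO}_{4m+2}/\F_2)$.
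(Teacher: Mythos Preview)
Your proposal is correct and follows essentially the same route as the paper's own proof: run the Eilenberg--Moore spectral sequence, invoke \Cref{cor:EM-for SO_4k+2 degenerates} and \Cref{coaction-so} to identify $E_\infty$, use \Cref{lem:SO cohomology in low degrees} to pin down $x_2$, lift $b_h$ and $y_I$, adjust the $y_I$ via the \Cref{column} trick to enforce (\ref{pso-rel1}), deduce (\ref{pso-rel2})--(\ref{pso-rel3}) from injectivity of $x_2$ on $\on{Ker}Bp^*$, and finish by the filtration-by-powers-of-$x_2$ comparison with $E_\infty$. The paper in fact abbreviates the last two steps by writing ``as in \Cref{thm-hodge-pgl}'', so your write-up is if anything slightly more explicit than the original.
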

	
	\begin{proof}
		The proof is similar to that of \Cref{thm-hodge-pgl}. As there, we only prove the theorem for Hodge cohomology, but a similar argument applies to de Rham cohomology. By \Cref{cor:EM-for SO_4k+2 degenerates}, Eilenberg-Moore spectral sequence for 
		$$1\to \mu_2\to \mr{SO}_{4k+2}\xrightarrow{p}  \mr{PSO}_{4k+2} \to 1$$ 
		degenerates on the $E_2$ page, which is isomorphic to  
		\[1\otimes PH^{*}_{\on{H}}(B\mr{SO}_{4m+2})\oplus z_2\F_2[z_2]\otimes \F_2[b'_2,\dots,b'_{2m+1}]\]
		by an explicit computation which we made in \Cref{coaction-so} (and where we now call $b_h'\in H^{2h,2h}_{\on{H}}(B\mr{SO}_{4m+2}/\mbb F_2)$ what we called $b_h$ in \Cref{coaction-so}(b)). From degeneration, we see that the pull-back map $Bp^*\colon H^*_{\on{H}}(B\mr{PSO}_{4m+2})\twoheadrightarrow 1\otimes PH^{*,*}_{\on{H}}(B\mr{SO}_{4m+2})$ is a surjection. We let $b_h\!\in\! H^{2h,2h}_{\on{H}}(B\mr{PSO}_{4m+2}/\mbb F_2)$ be any fixed lift of $1\otimes b_h'$. 
	
	The only element of total degree 2 in $E_\infty^{*,*}\simeq E_2^{*,*}$ is $z_2$. Thus it has to be the image of the class $x_2\in H^{1,1}_\Hdg(B\mr{PSO}_{4m+2}/\mbb F_2)$ from \Cref{lem:SO cohomology in low degrees} in $E_\infty^{*,*}$. Since $\mbb F_2[z_2]\otimes \mbb F_2[b_2',\ldots, b_{2m+1}']$ embeds into $E_2^{*,*}$ we get that the natural map $\mbb F_2[x_2]\otimes \mbb F_2[b_2,\ldots,b_{2m+1}] \to H^*_{\on{H}}(B\mr{PSO}_{4m+2}/\mbb F_2)$ is an embedding. We also get isomorphisms 
			\begin{equation}\label{ker-im-so}
			\on{Im}Bp^*= PH^{*,*}_{\on{H}}(B\mr{SO}_{4m+2}/\F_2),\qquad \on{Ker}Bp^*=x_2\F_2[x_2,b_2,\dots,b_{2m+1}].
		\end{equation}
		
				By \Cref{toda36-311-so}, the subalgebra  $PH^{*,*}_{\on{H}}(B\mr{SO}_{4m+2}/\F_2)$ is generated by
		\[y_I'=y'(i_1,\ldots,i_r)\coloneqq d_1(\cl{u}_{2i_1}\!*\ldots *\cl{u}_{2i_r}),\qquad I=\set{i_1,\ldots,i_r},\quad 1<i_j\leq 2m+1\text{ for all $j$}.\]
		as an $\F_2[b_2',\dots,b_{2m+1}']$-module (here again we call by $y_I'$ the elements that were called $y_I$ in \Cref{toda36-311}).
	Then, as in \Cref{thm-hodge-pgl}, taking $y_I\in H^{4d(I),4d(I)-1}_\Hdg(B\mr{PSO}_{4m+2}/\mbb F_2)$ such that $Bp^*(y_I)=y_I'$ and possibly replacing them by $y_I-x_2f$ one makes them satisify the relations \Cref{pso-rel1} and \Cref{pso-rel3}. The same argument as in \Cref{thm-hodge-pgl} also shows that these are the only relations.
	\end{proof}
	
	\begin{proof}[Proof of \Cref{mainthm} for $\mr{PSO}_{4m+2}$]
		The conclusion follows by comparing the descriptions given by \Cref{thm-hodge-pso} and \cite[Proposition 4.5]{toda1987cohomology}.
	\end{proof}

	\section{Applications to representation theory}\label{sec-reptheory}
	
	In this section, we reinterpret our computations in terms of representation theory. Let $k$ be a field, $G$ be a connected reductive $k$-group, $\Gamma$ be a central subgroup of $G$, and $\cl{G}\coloneqq G/\Gamma$ be the adjoint quotient. Recall that by Totaro's work \cite[Corollary 2.2]{totaro2018hodge} one has the following interpretation of the Hodge cohomology of $\cl G$: for all $i,j\geq 0$ we have
	\[H^{i,j}(B\cl{G}/k)\ism H^{j-i}(\cl{G},\on{Sym}^i \cl{\mathfrak{g}}^\vee),\]
	where $\cl{\mf g}$ is the Lie algebra of $\cl{G}$. 	
	Note that $\cl{\mf g}$ is also a $G$-module via the projection $G\twoheadrightarrow \cl{G}$. Since $\Gamma$ is of multiplicative type, the functor of $\Gamma$-invariants is exact, hence the Hochschild-Serre spectral sequence for $1\to \Gamma \to G \to \cl G \to 1$ provides isomorphisms
	$$
	H^{*}({G},\on{Sym}^i \cl{\mathfrak{g}}^\vee)\ism H^{*}(\cl{G},\on{Sym}^i \cl{\mathfrak{g}}^\vee).$$ Altogether, this shows the following: $H^{i,j}(B\cl{G}/k)\simeq 0$ if $i>j$, the ``pure" part $\oplus_{i}H^{i,i}(B\cl{G}/k)$ of Hodge cohomology is isomorphic to the algebra $$H^{0}({G},\on{Sym}^* \cl{\mathfrak{g}}^\vee)\simeq {(\on{Sym}^* \cl{\mathfrak{g}}^\vee)}^G,$$ and the ``non-pure" part $\oplus_{i\neq j}H^{i,j}(B\cl{G}/k)$ is given by the higher cohomology $H^{>0}({G},\on{Sym}^* \cl{\mathfrak{g}}^\vee)$.

	Using the computations of Hodge cohomology that we made in previous sections we will analyze this picture in the case when $k=\mbb F_2$, $G$ is $\mr{GL}_{4m+2}$, $\mr{SO}_{4m+2}$ or $\mr{Sp}_{4m+2}$, and $\Gamma$ is the center of $G$. Recall that for each of the $G$ under consideration, the Eilenberg-Moore spectral sequence in Hodge cohomology for $1\to \Gamma\to G\to\cl{G}\to 1$ degenerates at the $E_2$ page. Thus, in order to calculate the dimensions of $H^{i,j}(B\cl{G}/k)$, it will be enough to compute the dimension of the corresponding (see \Cref{rem:Hodge bigrading}) bigraded component on the $E_2$ page. Following the proofs of Theorems \ref{thm-hodge-pgl} and \ref{thm-hodge-pso} for $\mr{PGL}_{4m+2}$ and $\mr{PSO}_{4m+2}$, the $E_2$ page is in fact isomorphic to Hodge cohomology as a bigraded algebra (so we can also understand the multiplicative structure on $H^{*}({G},\on{Sym}^* \cl{\mathfrak{g}}^\vee)$ this way).
	
	\begin{rmk}\label{rem:Hodge bigrading}
	Recall that (assuming the degeneration of the Eilenber-Moore spectral sequence) a homogeneous class 
	$$x\in (\on{Cotor}^i_{H^{*,*}_\Hdg(B\Gamma)}(\F_2,H^{*,*}_\Hdg(BG/\mbb F_2)))^{h,j}\simeq (E_\infty^{i,j})^h$$ gives a class in $\gr_i(H_\Hdg^{h,i+j}(B\cl G/\F_2))$. Thus the bigrading we are interested in is given by $(h,i+j)$. We will call it \textit{Hodge bigrading} from now on and will denote it by $|x|_\Hdg\in \mbb Z^2$. 
	\end{rmk}
	
	\subsection{Projective linear group}
	
	When $\cl{G}=\on{PGL}_{n}$, the representation in question is $\mf{pgl}_{n}^\vee$. We have a short exact sequence of $\mr{GL}_{n}$-modules $$0\to \mbb F_2 \to\mf{gl}_{n}\to \mf{pgl}_{n} \to 0,$$ giving a short exact sequence 
	$$
	0\to \mf{pgl}_{n}^\vee \to\mf{gl}_{n}^\vee\to \mbb F_2 \to 0.
	$$
	\begin{rmk}
	    When $n$ is even this short exact sequence is non-split. Indeed, any such splitting would induce a Lie algebra direct sum decomposition of $\mf{gl}_{n}$ as $\mbb F_2\oplus [\mf{gl}_{n},\mf{gl}_{n}]$. However, when $n$ is even one has $\mbb F_2\subset [\mf{gl}_{n},\mf{gl}_{n}]$. Indeed, if $n=2$ then 
	    $$\begin{pmatrix}1 & 0 \\
	    0& 1\end{pmatrix}= \left[\begin{pmatrix}0 & 1 \\
	    0& 0\end{pmatrix}, \begin{pmatrix}0 & 0 \\
	    1& 0\end{pmatrix}\right], $$
	    and the general $n=2r$ case reduces to this one by considering the analogous block-diagonal matrices (with $r$ blocks of size $2$).
	\end{rmk}
	\begin{rmk}
	    We also note that if $n$ is even the representations $\mf{pgl}_{n}$ and $\mf{pgl}_{n}^\vee$ are not irreducible. Indeed, the trace function $\tr\colon \mf{ gl}_n \to \mbb F_2$ is $\mr{GL}_n$-invariant and is 0 on scalars $\mbb F_2\subset \mf{gl}_n$ and so defines a (non-zero) map $\mf{pgl}_{n}\to \mbb F_2$. Its kernel, however, is an irreducible $\mr{GL}_n$-module.
	\end{rmk}
	
	By \cite[Theorem 9.1]{totaro2018hodge}, the higher cohomology of $\mr{GL}_{n}$ with coefficients in $\on{Sym}^j\mf{gl}_{n}^\vee$ is $0$. In contrast, for even $n$, due to the non-splitness of the above short exact sequence, the higher cohomology of $\on{Sym}^j\mf{pgl}_{n}^\vee$ become quite complicated. Our computation (\Cref{thm-hodge-pso}) of Hodge cohomology of $B\mr{PGL}_{4m+2}$ allows to describe it fully in the case $n=4m+2$.

	Recall that the $E_2$ page for $\mr{PGL}_{4m+2}$ has been computed in \Cref{coaction-gl} as
	\[PH^{*}_\Hdg(B\mr{GL}_{4m+2}/\F_2)\ \oplus \  z_3\F_2[z_3]\otimes \F_2[c_1,b_2, \ldots,b_{2m+1}],\]
	where $PH^{*}_\Hdg(B\mr{GL}_{4m+2}/\F_2)$ is the $0$-th column $E_2^{0,*}$ and $z_3\F_2[z_3]\otimes \F_2[c_1,b_2, \ldots,b_{2m+1}]$ gives the rest.
	The Hodge-bidegrees here are given as follows: $PH^{*,*}_\Hdg(B\mr{GL}_{4m+2}/\F_2)$ is pure, $|z_3|_\Hdg=(1,2)$, $|c_1|_\Hdg=(1,1)$ and $|b_h|_\Hdg=(4h,4h)$. Therefore, we have an isomorphism 
	$$
	H^{>0}(\mr{GL}_{4m+2}, \on{Sym}^*\mf{pgl}_{4m+2}^\vee) \simeq z_3\F_2[z_3]\otimes \F_2[c_1,b_2, \ldots,b_{2m+1}].
	$$
	where $c_1,b_2, \ldots,b_{2m+1}\in H^0(\mr{GL}_{4m+2}, \mf{pgl}_{4m+2}^\vee)\simeq (\on{Sym}^*\mf{pgl}_{4m+2}^\vee)^{\mr{GL}_{4m+2}}$ are certain invariant polynomials of degrees $1,4,8,\ldots,8m+4$ (and which can be explicitly understood via \Cref{toda36-311}) and\footnote{This class is exactly the one that classifies the non-split extension $
	0\to \mf{pgl}_{n}^\vee \to\mf{gl}_{n}^\vee\to \mbb F_2 \to 0.
	$.} $z_3\in H^1(\mr{GL}_{4m+2}, \mf{pgl}_{4m+2}^\vee)$.
	This way we get that $H^{i}(\mr{GL}_{4m+2}, \on{Sym}^j\mf{pgl}_{4m+2}^\vee)$ has a basis consisting of monomials of the form $z_3^{i-j}f$, where $f$ is a monomial in $c_1,b_2,\dots,b_{2m+1}$ of total degree $j-i$. Therefore $\dim_{\mbb F_2} H^{i}(\mr{GL}_{4m+2}, \on{Sym}^j\mf{pgl}_{4m+2}^\vee)$ equals to the number of ways to write $j-i$ as a sum \[\gamma_1+4\beta_2+8\beta_3+\dots+(8m+4)\beta_{2m+1},\] where $\gamma_1$ and the $\beta_h$ are non-negative integers.
	
	\subsection{Projective orthogonal group}
	
	When $\cl{G}=\on{PSO}_{4m+2}$, the representation in question is $\mf{pso}_{4m+2}^\vee$. Since $\mu_2$ is not smooth, the ``Lie algebra" of $\mu_2$ is in fact a complex, namely the dual $\mbb L_{\mu_2/\mbb F_2}^\vee$ to the cotangent complex $\mbb L_{\mu_2/\mbb F_2}$. We have $H^0(\mbb L_{\mu_2/\mbb F_2}^\vee)=H^1(\mbb L_{\mu_2/\mbb F_2}^\vee)=\mbb F_2$. We have a fiber sequence $\mbb L_{\mu_2/\mbb F_2}^\vee \to \mf g \to \cl {\mf g}$ in the derived category of $G$-modules (where $G$ acts trivially on $\mbb L_{\mu_2/\mbb F_2}^\vee$) which gives an exact sequence of $\mr{SO}_{4m+2}$-modules
	\begin{equation}\label{eq:les for pso}
	0\to \mbb F_2 \to \mf{so}_{2r} \to \mf{pso}_{2r} \to \mbb F_2 \to 0
	\end{equation}
	as the long exact sequence of cohomology. However, for odd $r$ the first map $\mbb F_2 \to \mf{so}_{4m+2}$ is in fact split, as the lemma below shows.
	\begin{lemma}\label{lem:so_4m+2 in char 2}
	    The Lie algebra $\mf{so}_{4m+2}$ over $\F_2$ splits as $\F_2\oplus \mf{l}$ where $\mf{l}\simeq [\mf{so}_{4m+2}, \mf{so}_{4m+2}]$. This splitting is preserved by the $\mr{SO}_{4m+2}$-action and gives the decomposition of $\mf{so}_{4m+2}$ into a sum of simple representations.
	\end{lemma}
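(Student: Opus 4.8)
The plan is to work with the concrete model of $\mf{so}_{4m+2}$ coming from the natural representation, and to split off the trivial summand by means of an $\mr{SO}_{4m+2}$-invariant functional that exists only in characteristic $2$. Let $V\simeq\F_2^{\oplus(4m+2)}$ be the standard representation of $\mr{SO}_{4m+2}$ with its nondegenerate quadratic form $q$ and polar form $b(u,v)=q(u+v)+q(u)+q(v)$. Sending $u\wedge v$ to the endomorphism $w\mapsto b(v,w)u+b(u,w)v$ defines an injective, visibly $\mr{SO}_{4m+2}$-equivariant map $\wedge^2 V\to\mf{so}_{4m+2}$, which is an isomorphism for dimension reasons ($\dim=\binom{4m+2}{2}$); under it the image of $\mr{Lie}(\mu_2)=\mr{Lie}(Z(\mr{SO}_{4m+2}))$ is the line spanned by $\omega\coloneqq\sum_i e_i\wedge f_i$ (for a hyperbolic basis $e_1,f_1,\dots$), corresponding to $\mr{id}_V$. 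Since $b(v,v)=q(2v)+2q(v)=0$, the symmetric form $b\colon V\otimes V\to\F_2$ kills the squares $v\otimes v$ and so descends to an $\mr{SO}_{4m+2}$-invariant functional $\bar b\colon\wedge^2 V\to\F_2$, and $\bar b(\omega)=\sum_{i=1}^{2m+1}b(e_i,f_i)=2m+1\equiv 1\pmod 2$. Hence $\bar b$ is a split surjection and
\[\mf{so}_{4m+2}=\F_2\cdot\omega\ \oplus\ \mf l,\qquad \mf l\coloneqq\ker\bar b,\]
as $\mr{SO}_{4m+2}$-modules, with $\F_2\cdot\omega$ a central ideal and $\mf l$ an ideal.

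Next I would identify $\mf l$ with the derived algebra. Differentiating the invariance $\phi(\mr{Ad}(g)X)=\phi(X)$ shows that any $\mr{SO}_{4m+2}$-invariant functional on $\mf{so}_{4m+2}$ vanishes on $[\mf{so}_{4m+2},\mf{so}_{4m+2}]$; applied to $\bar b$ this gives $[\mf{so}_{4m+2},\mf{so}_{4m+2}]\subseteq\mf l$. For the reverse inclusion, fix a split maximal torus $T$ and write $\mf{so}_{4m+2}=\mf t\oplus\bigoplus_\alpha\mf g_\alpha$. A $T$-invariant functional kills every $\mf g_\alpha$ with $\alpha\neq 0$, so $\mf l=(\mf t\cap\mf l)\oplus\bigoplus_\alpha\mf g_\alpha$ (the zero weight space of $\wedge^2 V$ is exactly $\mf t$, and $\omega\notin\mf l$). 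Each $\mf g_\alpha$ lies in the derived algebra because $D_{2m+1}$ is simply laced: for $m\geq 1$ every root is a sum of two roots with an odd structure constant, so $\mf g_\alpha=[\mf g_\beta,\mf g_{\alpha-\beta}]$; and $\mf t\cap\mf l$ is spanned by the mod‑$2$ reductions of the coroots $h_\alpha=[e_\alpha,e_{-\alpha}]$. Hence $\mf l\subseteq[\mf{so}_{4m+2},\mf{so}_{4m+2}]$, so the two coincide and $\mf l$ is perfect. (The case $m=0$ is degenerate: $\mr{SO}_2\simeq\G_m$, $\mf{so}_2=\F_2\cdot\omega$ and $\mf l=0=[\mf{so}_2,\mf{so}_2]$.)

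Finally I would prove that $\mf l$ is an irreducible $\mr{SO}_{4m+2}$-module (the summand $\F_2$ being trivially simple), which is the main obstacle. First, the centre of $\mf{so}_{4m+2}$ equals $\{h\in\mf t:d\alpha(h)=0\text{ for all roots }\alpha\}=\F_2\cdot\omega$, which meets $\mf l$ trivially, so $\mf l$ has trivial centre. Now let $0\neq\mathfrak a\subseteq\mf l$ be an $\mr{SO}_{4m+2}$-submodule; it is a sum of $T$-weight spaces, and the nonzero $T$-weights of $\wedge^2 V$ are precisely the roots, each of multiplicity one, so either $\mathfrak a$ contains a root vector or $\mathfrak a\subseteq\mf t\cap\mf l$. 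In the first case, conjugating by lifts of the Weyl group $N_{\mr{SO}_{4m+2}}(T)/T$, which acts transitively on the roots of $D_{2m+1}$, forces $\mathfrak a$ to contain all root vectors, hence all coroot brackets, hence $\mathfrak a=\mf l$. In the second case, since $\mf l$ has trivial centre there is a root $\alpha$ with $d\alpha|_{\mf t\cap\mf l}\neq 0$, and conjugating a suitable $h\in\mathfrak a$ by a nontrivial element of the root subgroup $U_\alpha$ produces an element of $\mathfrak a$ with nonzero $\mf g_\alpha$-component, contradicting $\mathfrak a\subseteq\mf t$ unless $\mathfrak a=0$. Thus $\mf l$ is irreducible and $\mf{so}_{4m+2}=\F_2\oplus\mf l$ is a decomposition into simple $\mr{SO}_{4m+2}$-modules. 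The delicate input is controlling the $\mr{SO}_{4m+2}$-submodule structure of $\wedge^2 V$ in characteristic $2$ (equivalently, excluding $\mf t\cap\mf l$ as a submodule of $\mf l$); alternatively, this simplicity is part of the analysis of the adjoint representation of $D_r$ in characteristic $2$ carried out in \cite{chaput2010adjoint}, which one may simply cite. The same arguments apply verbatim to de Rham cohomology's perspective is not needed here, as the statement is purely about the $\mr{SO}_{4m+2}$-module $\mf{so}_{4m+2}$.
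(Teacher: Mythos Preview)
Your proof is correct and takes a genuinely different route from the paper's. The paper argues almost entirely by citing Hogeweij's classification of Lie ideals in the classical Lie algebras \cite{Hogeweij}: from that table, $\mf{z}(\mf{so}_{4m+2})$ and $[\mf{so}_{4m+2},\mf{so}_{4m+2}]$ are the only nontrivial ideals; a short Cartan--lattice computation (the vector $(1,\dots,1)$ versus the kernel of the sum-of-coordinates map on $\F_2^{2m+1}$) shows they meet trivially, and irreducibility of $\mf l$ then follows because any $\mr{SO}_{4m+2}$-submodule is automatically an ideal. By contrast, you construct the splitting intrinsically via the invariant functional $\bar b\colon\wedge^2 V\to\F_2$ (whose existence is the characteristic-$2$ miracle here), identify $\ker\bar b$ with the derived subalgebra by a direct root-space argument, and prove simplicity by hand using Weyl-group transitivity and unipotent conjugation. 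Your approach is more self-contained and makes the role of the parity $2m+1$ transparent through $\bar b(\omega)=2m+1$; the paper's is shorter but outsources the structural input. One small remark: when you write ``hence all coroot brackets,'' you are implicitly using that an $\mr{SO}_{4m+2}$-submodule of the adjoint representation is a Lie ideal (differentiate $\mr{Ad}$); this is fine, and is exactly the observation the paper invokes, but it would be cleaner to say so explicitly rather than appearing to take Lie brackets inside a mere submodule. Your final sentence is garbled and should be deleted.
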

	\begin{proof}
	    By \cite[Table 1]{Hogeweij}\footnote{Note that we are in the $D_\ell$-type $\ell$ odd, intermediate case, in the notations of \textit{loc.cit.}} we have that the center $\mf{z}(\mf{so}_{4m+2})$ and $\mf{l}\coloneqq [\mf{so}_{4m+2}, \mf{so}_{4m+2}]$ are the only non-trivial Lie ideals in $\mf{so}_{4m+2}$. In particular, $\mf{z}(\mf{so}_{4m+2})$ is 1-dimensional and is exactly given by the image of $\F_2$ under the above map. Thus we only need to check that $\mf{z}(\mf{so}_{4m+2})$ doesn't belong to $\mf l$. Let's identify a Cartan subalgebra $\mf{h}\subset \mf{so}_{4m+2}$ with $X_*(T)\otimes \mbb F_2$ (where $X_*(T)$ are cocharacters of the maximal torus corresponding to $\mf h$). E.g. by \cite[Section 1]{Hogeweij} the intersection of $[\mf{so}_{4m+2}, \mf{so}_{4m+2}]$ with the Cartan subalgebra $\mf{h}$ is given by the image of coroot lattice $R^\vee\otimes \mbb F_2 \to X_*(T)\otimes \F_2$. In the standard basis for $X_*(T)$ (dual to what is usually denoted denoted $\varepsilon_1,\ldots, \varepsilon_{2m+1}\in X^*(T)$) the center $\mf{z}(\mf{so}_{4m+2})$ is spanned by the vector $(1,1,\ldots, 1,1)$, while the image of $R^\vee\otimes \mbb F_2$ is described as the kernel of the sum-of-coordinates map $\F_2^{\oplus 2m+1}\xrightarrow{\Sigma} \F_2$. Since $2m+1$ is odd we see that $(1,1,\ldots, 1,1)$ doesn't belong to $\ker(\Sigma)$, and so $\mf{so}_{4m+2}\simeq \F_2\oplus \mf{l}$. The adjoint action of $\mr{SO}_{4m+2}$ preserves both the center and the commutator and so respects this decomposition. It remains to show that $\mf{l}$ is irreducible. But, any $\mr{SO}_{4m+2}$-invariant subspace would in particular give a Lie ideal in $\mf{l}$, of which there aren't any by \cite[Table 1]{Hogeweij} again.
	\end{proof}
	\begin{rmk}
	    Essentially by the definition of roots, the highest weight for $\mf{l}$ is the longest root $\theta\in X_*(T)$ of $\mr{SO}_{4m+2}$. Since $\mf l$ is irreducible we have $\mf l=L(\theta)$. Recall that the highest weight of the dual $L(\lambda)^\vee$ is $-w_0\lambda$ where $w_0$ is the longest element of the Weyl group, and so $L(\lambda)^\vee\simeq L(-w_0\lambda)$. Since $w_0\theta=-\theta$ we get that $L(\theta)^\vee\simeq L(\theta)$ and so $\mf l$ is self-dual.
	\end{rmk}
	
	Note that by Totaro's computation $H^{1,2}_\Hdg(B\mr{SO}_{4m+2}/\mbb F_2)\simeq \mbb F_2\cdot u_3$ and that 
	$$
	H^{1,2}_\Hdg(B\mr{SO}_{4m+2}/\mbb F_2)\simeq H^1(\mr{SO}_{4m+2},\mf{so}_{4m+2}^\vee)\simeq H^1(\mr{SO}_{4m+2},\mf l^\vee)\oplus H^1(\mr{SO}_{4m+2},\mbb F_2).
	$$
	Since $H^1(\mr{SO}_{4m+2},\mbb F_2)\simeq 0$ (either by Kempf vanishing or comparing with $H^{0,1}_\Hdg(B\mr{SO}_{4m+2}/\mbb F_2)$) we get 
	$$H^1(\mr{SO}_{4m+2},\mf l)\simeq H^1(\mr{SO}_{4m+2},\mf l^\vee) \simeq \mbb F_2.$$
	
	Now, from \Cref{lem:so_4m+2 in char 2} and (\ref{eq:les for pso}) we get a short exact sequence of $\mr{SO}_{4m+2}$-modules:
	$$
	0\to \mf l \to \mf{pso}_{4m+2} \to \F_2 \to 0.
	$$
	\begin{lemma}
	    This extension is non-split. 
	\end{lemma}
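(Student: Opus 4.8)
The plan is to show that a splitting of $0\to\mf l\to\mf{pso}_{4m+2}\to\F_2\to 0$ would force $\mf{pso}_{4m+2}$ to carry a nonzero $\mr{SO}_{4m+2}$-invariant vector, and then to rule this out by a center computation. First I would record the elementary observation that the sequence splits as $\mr{SO}_{4m+2}$-modules if and only if $(\mf{pso}_{4m+2})^{\mr{SO}_{4m+2}}\neq 0$. Indeed, a section $s\colon\F_2\to\mf{pso}_{4m+2}$ produces the nonzero invariant $s(1)$ (nonzero since it maps to $1$ in the quotient); conversely, any nonzero $\mr{SO}_{4m+2}$-invariant vector cannot lie in $\mf l$, because by \Cref{lem:so_4m+2 in char 2} the module $\mf l$ is irreducible and nontrivial and hence has no nonzero invariants, so it maps to $1\in\F_2$ and thereby defines a section.

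Next I would identify $(\mf{pso}_{4m+2})^{\mr{SO}_{4m+2}}$ with a subspace of the Lie-algebra center $\mf z(\mf{pso}_{4m+2})$. Since $\mr{SO}_{4m+2}\to\mr{PSO}_{4m+2}$ is an fppf epimorphism and the adjoint action of $\mr{SO}_{4m+2}$ on $\mf{pso}_{4m+2}=\mr{Lie}(\mr{PSO}_{4m+2})$ factors through it, one has $(\mf{pso}_{4m+2})^{\mr{SO}_{4m+2}}=(\mf{pso}_{4m+2})^{\mr{PSO}_{4m+2}}$. Differentiating the adjoint action of $\mr{PSO}_{4m+2}$ on its own Lie algebra, a $\mr{PSO}_{4m+2}$-invariant vector $v$ is killed by $\on{ad}(x)$ for \emph{every} $x\in\mf{pso}_{4m+2}$ (now $x$ ranges over all of $\mr{Lie}(\mr{PSO}_{4m+2})$, which is the point of passing to $\mr{PSO}$-invariance), i.e.\ $v\in\mf z(\mf{pso}_{4m+2})$.

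Finally I would invoke $\mf z(\mf{pso}_{4m+2})=0$ — in contrast with $\mf{so}_{4m+2}$, whose center is one-dimensional. This can be extracted from \cite[Table 1]{Hogeweij} (the adjoint case of type $D_\ell$ with $\ell=2m+1$ odd, in characteristic $2$), or checked directly: $\mf z(\mf{pso}_{4m+2})$ is contained in the Cartan subalgebra $\mf t\simeq X_*(T)\otimes\F_2$ (no root of type $D$ is divisible by $2$ in the character lattice, so no root vanishes on $\mf t$), and cuts it out as the locus where all roots vanish modulo $2$; since for the \emph{adjoint} group $X^*(T)$ is the root lattice, the simple roots form a basis dual to the fundamental coweights, so no nonzero element of $\mf t$ is annihilated by all of them mod $2$. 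Combining the three steps gives a contradiction, so the extension is non-split.

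I expect the last step to be the main obstacle: in characteristic $2$ the passage between a semisimple group and its Lie algebra is delicate, and the whole content is that, although $\mf z(\mf{so}_{4m+2})\neq 0$, the center vanishes for the adjoint form $\mf{pso}_{4m+2}$ (one should double-check the relevant line of Hogeweij's table, or spell out the lattice computation). As an alternative to the center argument one can proceed cohomologically: a splitting would give $\mf{pso}_{4m+2}^\vee\simeq\mf l^\vee\oplus\F_2\simeq\mf{so}_{4m+2}$ (using self-duality of $\mf l$ and \Cref{lem:so_4m+2 in char 2}), hence $H^{i,j}_\Hdg(B\mr{PSO}_{4m+2}/\F_2)\simeq H^{i,j}_\Hdg(B\mr{SO}_{4m+2}/\F_2)$ for all $i,j$; but comparing \Cref{thm-hodge-pso} with Totaro's $H^{*,*}_\Hdg(B\mr{SO}_{4m+2}/\F_2)\simeq\F_2[u_2,\dots,u_{4m+2}]$ shows, for instance, that $\dim_{\F_2}H^{2,3}_\Hdg(B\mr{PSO}_{4m+2}/\F_2)=1$ (spanned by $y_{\{3\}}$, since $x_2 y_{\{2\}}=0$ by (\ref{pso-rel1})) whereas $\dim_{\F_2}H^{2,3}_\Hdg(B\mr{SO}_{4m+2}/\F_2)=2$ (spanned by $u_2 u_3$ and $u_5$), a contradiction.
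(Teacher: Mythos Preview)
Your primary argument is correct and essentially the same as the paper's: both deduce non-splitness from the vanishing of $\mf z(\mf{pso}_{4m+2})$ via \cite[Table 1]{Hogeweij}, with your version spelling out more carefully why an $\mr{SO}_{4m+2}$-invariant must lie in the center (passing to the $\mr{PSO}_{4m+2}$-action and differentiating, plus a direct root-lattice check) where the paper simply asserts that a splitting would yield a Lie-algebra decomposition $\F_2\oplus\mf l$. Your cohomological alternative via comparing $H^{2,3}_\Hdg$ does not appear in the paper and provides an independent check.
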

	\begin{proof}
	    Due to $\mr{SO}_{4m+2}$-equivariance, the map $\mf{pso}_{4m+2} \to \F_2$ is in fact a map of Lie algebras and a splitting would also give a decomposition of the Lie algebra $\mf{pso}_{4m+2}$ as $\F_2\oplus \mf{l}$. However, the center $\mf{z}(\mf{pso}_{4m+2})$ is trivial (again, see \cite[Table 1]{Hogeweij}), and so this is impossible.
	\end{proof}
	As a consequence, we get that the $\mr{SO}_{4m+2}$-representation $\mf{pso}_{4m+2}$ is  the unique non-zero class in $\on{Ext}^1_{\mr{SO}_{4m+2}}(\mbb F_2,\mf l)\simeq H^1(\mr{SO}_{4m+2},\mf l)$. This class necessarily corresponds to the (unique non-zero) class $u_3\in H^{1,2}_\Hdg(B\mr{SO}_{4m+2}/\mbb \F_2)$.
	
	Passing to linear duals, we get a short (non-split) exact sequence
	$$
	0\to \mbb F_2 \to \mf{pso}_{4m+2}^\vee \to \mf l \to 0.
	$$
	Since $\mf l$ is self-dual, by the above discussion we also know that such a non-split extension is unique.
	
	\begin{rmk}
	    Using \cite[Table 1]{Hogeweij} one can see that the Lie algebra $\mf{spin}_{4m+2}$ is also a non-split extension of $\mf{l}$ by $\F_2$. By uniqueness, we get that the $\mr{SO}_{4m+2}$-representations $\mf{pso}_{4m+2}^\vee$ and $\mf{spin}_{4m+2}$ are isomorphic.
	\end{rmk}
	
	Let us now compute the cohomology of $\on{Sym}^*\mf{pso}^\vee_{4m+2}$. By \Cref{coaction-so} the $E_2$ page of the Eilenberg-Moore spectral sequence is given by \[PH^{*}_{\on{H}}(B\mr{SO}_{4m+2})\oplus z_2\F_2[z_2]\otimes \F_2[b_2,\dots,b_{2m+1}],\]
	with $PH^{*}_{\on{H}}(B\mr{SO}_{4m+2})$ being the 0-th column $E_2^{0,*}$ and $z_2\F_2[z_2]\otimes \F_2[b_2,\dots,b_{2m+1}]\simeq E_2^{>0,*}$ being the rest of $E_2$. The Hodge bigradings 
	here are given by $|z_2|_\Hdg=(1,1)$ and $|b_h|_\Hdg=(2h,2h)$. It follows that the non-pure part of the Hodge cohomology in fact lies in $E_2^{0,*}$ and embeds in the non-pure part of Hodge cohomology of $B\mr{SO}_{4m+2}$.

	On the level of representations we get the following: let $q^\vee \colon \on{Sym}^*\mf{pso}^\vee_{4m+2} \to \on{Sym}^*\mf{so}^\vee_{4m+2}$ be the natural map induced by $\mf{pso}^\vee_{4m+2}\to \mf{so}^\vee_{4m+2}$. Then $q^\vee$ induces an embedding
	$$
	H^{>0}(\mr{SO}_{4m+2}, \on{Sym}^*\mf{pso}^\vee_{4m+2}) \hookrightarrow H^{>0}(\mr{SO}_{4m+2}, \on{Sym}^*\mf{so}^\vee_{4m+2}).
	$$
	Moreover, the image can be described fairly explicitly. Namely, in the notations of \Cref{toda36-311-so}, $H^{>0}(\mr{SO}_{4m+2}, \on{Sym}^*\mf{pso}^\vee_{4m+2})$ can be identified with the ideal generated by non-pure elements $\cl{u}_{2k+1}$ and $y_{I}$ inside $PH^{*,*}_\Hdg(B\mr{SO}_{4m+2})$. It can be also seen as the intersection of the ideal $(u_3,u_5,\ldots, u_{4m+1})\subset H^{*,*}_\Hdg(\mr{SO}_{4m+2})$ (which is isomorphic to $H^{>0}(B\mr{SO}_{4m+2}, \on{Sym}^*(\mf{so}^\vee_{4m+2}))$ by Totaro's computation \cite[Theorem 11.1]{totaro2018hodge}) and the subalgebra $PH^{*,*}_\Hdg(B\mr{SO}_{4m+2})$ of primitive elements. This reduces the problem of computing $H^{i}(B\mr{SO}_{4m+2}, \on{Sym}^j(\mf{pso}^\vee_{4m+2}))$ to a much simpler linear algebra computation: namely one just needs to take the corresponding bigraded component in $(u_3,u_5,\ldots, u_{4m+1})\subset H^{*,*}_\Hdg(B\mr{SO}_{4m+2})$ and compute the intersection of kernels of $d_i$'s (from \Cref{condtr:d_i for so_n}) for all $i>0$.

	\subsection{Projective symplectic group}
	
	Assume now that $\cl{G}=\mr{PSp}_{4m+2}$. Similarly to the case of $\mr{PSO}_n$ we have an exact sequence of $\mr{PSp}_{4m+2}$-modules
	\begin{equation*}
	0\to \mbb F_2 \to \mf{sp}_{4m+2} \to \mf{psp}_{4m+2} \to \mbb F_2 \to 0.
	\end{equation*}
	which then gives an exact sequence 
	\begin{equation}\label{eq:les for psp}
	0\to \mbb F_2 \to  \mf{psp}_{4m+2}^\vee \to  \mf{sp}_{4m+2}^\vee\to \mbb F_2 \to 0
	\end{equation}
	by passing to duals.
	\begin{rmk}\label{rem:SP-case extension non-split}
In contrast to the $\mr{SO}_{4m+2}$-case, the map $\mbb F_2 \to \mf{sp}_{4m+2}$ is non-split. Indeed, coroots of $\mr{Sp}_{4m+2}$ span the coweight lattice $X^*(T)$, which forces the center $\mbb F_2$ to lie in the commutator $[\mf{sp}_{4m+2},\mf{sp}_{4m+2}]$. Also, the quotient $\mf{sp}_{4m+2}/\mbb F_2$ is not irreducible.	
	\end{rmk}

For brevity, we only sketch how to compute the second sheet of the Eilenberg-Moore spectral sequence in this case. Using \Cref{comparison-sp} we can explicitly identify the coaction of $H^*_\Hdg(B\mu_2/\F_2)$ on $H^*_\Hdg(B\mr{Sp}_{4m+2}/\F_2)$ with the coaction of  $H^*_{\mr{sing}}(B\mbb Z/2, \mbb F_2)$ on $H^*_{\mr{sing}}(B\mr{Sp}_{4m+2}(\mbb C), \mbb F_2)$. This identification then also induces an isomorphism of the primitive parts $PH^*_\Hdg(B\mr{Sp}_{4m+2}/\F_2)\simeq PH^*_{\mr{sing}}(B\mr{Sp}_{4m+2}(\mbb C), \mbb F_2)$. Moreover the computations of $\on{Cotor}$ using the twisted cochains (as in \Cref{compute-cotor}) are also compatible, which allows to identify the algebra structures on $$\on{Cotor}^*_{H^*_{\on{H}}(B\mu_2/\F_2)}(\F_2, H^*_{\on{H}}(B\mr{Sp}_{4m+2}/\F_2))\quad \text{and} \quad \on{Cotor}^*_{H^*_{\on{sing}}(B\mbb Z/2,\F_2)}(\F_2, H^*_{\on{sing}}(B\mr{Sp}_{4m+2},\F_2))$$ via \Cref{compatible} (and (\ref{algebra-map})).

This gives an identification 
$$
	E_2^{*,*}\simeq (\F_2[z_2,z_3]\otimes PH^{*}_{\on{H}}(B\mr{Sp}_{4m+2}/\F_2)) + \F_2[z_2,z_3,z_5,b_2,b_3,\dots,b_{2m+1}],
$$
where $b_h\in PH^{16h}_{\on{H}}(B\mr{Sp}_{4m+2}/\F_2)$ are certain elements defined similarly to \Cref{toda36-311} (or \cite[Lemma 3.10]{toda1987cohomology}) using the $*$-product of \Cref{con:star-product} for $a_\sharp\coloneqq q_2$. However, the twisted tensor product construction of (\ref{constr:twisted tensor product}) is naturally bigraded, which allows to compute the Hodge bidegrees (see \Cref{rem:Hodge bigrading}) for $E_2^{*,*}$. Namely, $|z_2|_\Hdg=(1,1)$, $|z_3|_\Hdg=(1,2)$, $|z_5|_\Hdg=(2,3)$ and $|b_h|_\Hdg=(8h,8h)$ (more generally, all elements in $PH^{*}_{\on{H}}(B\mr{Sp}_{4m+2}/\F_2)$ are pure).
\begin{rmk}
    There is another way to understand the bigradings of $z_i$, by computing the Hodge cohomology of $B\on{PSp}_{4m+2}$ in low degrees directly. Namely, 	Let $E$ be the Lagrangian Grassmannian, that is, $E\coloneqq \mr{PSp}_{4m+2}/P$ where $P\subset \on{PSp}_{4m+2}$ is the standard right-end root parabolic subgroup with Levi subgroup isomorphic to $\on{GL}_{2m+1}$; see \cite[Proposition 6.1]{pragacz1991algebro}. By \cite[Proposition 9.5]{totaro2018hodge} we have a spectral sequence
	\begin{equation}\label{symp-ss}E_2^{i,j}\coloneqq H^i_{\on{H}}(B\mr{PSp}_{4m+2}/\F_2)\otimes H^j_{\on{H}}(E/\F_2)\Rightarrow H^*_{\on{H}}(B\mr{GL}_{2m+1}/\F_2).\end{equation}
	The bigraded ring $H^{*,*}_{\on{H}}(E/\F_2)$ is well understood, as we now explain. By \cite[Proposition 7.1]{totaro2018hodge}, the cycle class map
	\[CH^*(E)\otimes_{\Z}\F_2\to H^*_{\on{H}}(E/\F_2)\]
	is an isomorphism. In particular, $H^{i,j}_{\on{H}}(E/\F_2)=0$ unless $i=j$. The Chow group $CH^*(E)$ is torsion-free and can be computed using the cell decomposition; see \cite[Corollary 6.3]{pragacz1991algebro}. %The singular cohomology groups of a flag variety are torsion-free, and the $\ell$-adic and singular cycle class map is an isomorphism for all flag varieties. Therefore $CH^*(E)\simeq H^*_{\on{sing}}(\on{PSp}_{4m+2}(\C)/P(\C);\Z)$.

	%Note that $E$ is defined over $\Z$ and its motive over every field is TThe motive of a flag variety is Tate, hence the $\ell$-adic cycle class map is an isomorphism in every degree to reduce to the classical computation in singular cohomology. 
	%Moreover, 
	%https://mathoverflow.net/questions/196546/hard-lefschetz-theorem-for-the-flag-manifolds
	
	Using (\ref{symp-ss}) and the above description of $H^*_{\on{H}}(E/\F_2)$, low-degree computations analogous to those in \Cref{pgl-small} show that
	\begin{equation}\label{psp-low}H^i_{\on{H}}(B\textrm{PSp}_{4m+2}/\F_2)=
	\begin{cases}
		\F_2 &\text{if $i=0$}\\
		0 &\text{if $i=1$}\\
		\F_2\ang{x_2} & \text{if $i=2$}\\
		\F_2\ang{x_3} & \text{if $i=3$}\\
		\F_2\ang{x_2^2}& \text{if $i=4$}\\
		\F_2\ang{x_2x_3,x_5}& \text{if $i=5$}.
\end{cases}
\end{equation}
	Here $x_2$ has bidegree $(1,1)$, $x_3$ has bidegree $(1,2)$ and $x_5$ has bidegree $(2,3)$.
	
\end{rmk}	

Returning to the computation of dimensions of Hodge cohomology, we have that the $E_\infty$ page of the Eilenberg-Moore spectral sequence is isomorphic to
	\[(\F_2[z_2,z_3]\otimes PH^{*,*}_{\on{H}}(B\mr{Sp}_{4m+2}/\F_2)) \oplus  \F_2[z_2,z_3,z_5,b_2,b_3,\dots,b_{2m+1}].\]
	
	From the representation theoretic point of view, we get the following: there are two classes $x_3\in H^{1}(\mr{Sp}_{4m+2}, \mf{psp}^\vee_{4m+2})$, $x_5\in H^{1}(\mr{Sp}_{4m+2}, \on{Sym}^2 \mf{psp}^\vee_{4m+2})$, such that the higher cohomology 
	$
	H^{>0}(\mr{Sp}_{4m+2},\on{Sym}^*\mf{psp}_{4m+2}^\vee)
	$ is generated by the ideal 
	$$(x_3,x_5)\subset \F_2[x_3,x_5]\subset H^{>0}(\mr{Sp}_{4m+2},\on{Sym}^*\mf{psp}_{4m+2}^\vee)$$ as a module over the invariants $(\on{Sym}^*\mf{psp}_{4m+2}^\vee)^{\mr{Sp}_{4m+2}}$. Moreover,
	\begin{itemize}
	    \item $(\on{Sym}^*\mf{psp}_{4m+2}^\vee)^{\mr{Sp}_{4m+2}}\otimes x_3\cdot \F_2[x_3]$ embeds into $H^{>0}(\mr{Sp}_{4m+2},\on{Sym}^*\mf{psp}_{4m+2}^\vee)$ via the action map;
	    \item the cokernel of the above map can be described as $x_5\cdot \F_2[x_2,x_3,x_5,b_2,\ldots,b_{2m+1}]$, where $x_2\in (\mf{psp}_{4m+2}^\vee)^{\mr{Sp}_{4m+2}}$ and $b_h\in (\on{Sym}^{8h}\mf{psp}_{4m+2}^\vee)^{\mr{Sp}_{4m+2}}$ are fairly explicit invariant polynomials.\footnote{In particular, $x_2\in (\mf{psp}_{4m+2}^\vee)^{\mr{Sp}_{4m+2}}$ is exactly the image of 1 under the map $\F_2\to \mf{psp}^\vee_{4m+2}$ from \Cref{eq:les for psp}.}
	\end{itemize}
	
	\begin{rmk}
	    The extension class given by $$x_3\in \on{Ext}^1_{\mr{Sp}_{4m+2}}(\F_2,\mf{psp}^\vee_{4m+2})\simeq  H^{1}(\mr{Sp}_{4m+2}, \mf{psp}^\vee_{4m+2})\simeq  H^{1,2}_\Hdg(B\mr{PSp}_{4m+2}/\mbb F_2)$$ can be described explicitly. Indeed, the exact sequence (\ref{eq:les for psp}) gives a class in $\on{Ext}^2_{\mr{Sp}_{4m+2}}(\F_2,\F_2)$, which is necessarily 0, since $\on{Ext}^2_{\mr{Sp}_{4m+2}}(\F_2,\F_2)\simeq H^2(\mr{Sp}_{4m+2},\F_2)\simeq H^{2,0}_\Hdg(B\mr{Sp}_{4m+2}/\F_2)\simeq 0$. Thus, (\ref{eq:les for psp}) comes from some $\mr{Sp}_{4m+2}$-representation $V$ with a two-step filtration $0\subset V_1\subset V_2\subset V$ and such that $V_1\simeq \F_2$, $V_2\simeq \mf{psp}_{4m+2}^\vee$, $V/V_1\simeq \mf{sp}_{4m+2}$ and $V/V_2\simeq \F_2$. In particular, $V$ fits into a short exact sequence 
	    $$
	    0\to \mf{psp}^\vee_{4m+2}\to V\to \F_2\to 0,
	    $$
	    giving a class $[V]\in \on{Ext}^1_{\mr{Sp}_{4m+2}}(\F_2,\mf{psp}^\vee_{4m+2})$. Moreover, one sees from \Cref{rem:SP-case extension non-split} that this extension is non-split and thus $[V]\neq 0$, which forces it to be equal to $x_3$ since $$\on{Ext}^1_{\mr{Sp}_{4m+2}}(\F_2,\mf{psp}^\vee_{4m+2})\simeq H^{1,2}_\Hdg(B\mr{PSp}_{4m+2}/\mbb F_2) \simeq \F_2.$$
	\end{rmk}
	\appendix

	\section{K\"unneth formula for de Rham cohomology}\label{app:de rham cohomology}	
	
	In this section we give a proof of the K\"unneth formula for de Rham cohomology in the context of Artin stacks. The generality we consider is bigger than what is necessary for the applications in the body of the paper: this doesn't really affect the proof and might be useful for a future use. 
	
	Let $R$ be a base ring. We will work in the setting of higher Artin stacks (in the sense of \cite[Section 1.3.3]{TV_HAGII}, see also \cite[Appendix A.1]{kubrak2021p-adic}) these are sheaves of spaces in \'etale topology on the site $\Aff_R$ of affine $R$-schemes, that admit a smooth $(n-1)$-representable atlas for some
	$n \ne 0$ (the latter being an inductively defined notion, see loc. cit. for more details). 
	
	\begin{rmk}
		A classical stack $\mc X\colon \Aff_R \to \on{Grpd}$ can be considered as a higher stack via composing with the nerve functor $N\colon \on{Grpd} \to \on{Spcs}$. The image of this functor can be identified with the subcategory spanned by higher stacks that take values in 1-truncated spaces  $\on{Spcs}_{\le 1} \hookrightarrow \on{Spcs}$ (a space $X\in \on{Spcs}$ is called 1-truncated if $\pi_i(X,x)=0$ for $i>1$ and any base point $x\in X$). See e.g. \cite{hollander2008homotopy}.
	\end{rmk}
	
	\begin{constr}\label{con: Pridham's hypercovers}
		A useful fact (\cite[Theorem 4.7]{Pridham_ArtinHypercovers}) is that for any $n$-Artin stack $\mc X$ there exists an $(n-1)$-coskeletal smooth hypercover $X_\bullet \to X$, such that each $X_i$ is a (possibly infinite) union of affine schemes. If we assume that $\mc X$ is smooth itself and, moreover, is quasi-compact and quasi-separated, the schemes $X_i$ can be chosen to be smooth affine schemes. 
	\end{constr}
	
	\begin{example}\label{ex:hypercover in X/G}
		The classical quotient stack $\mc X=[X/G]$ with $X$ and $G$ being smooth affine schemes over $R$ is a smooth qcqs 1-Artin stack. In this case a hypercover as in \Cref{con: Pridham's hypercovers} can be taken to be the \v Cech nerve of the smooth cover $X\to [X/G]$. We have $X_\bullet\simeq X\times G^{\times \bullet}$ with the standard maps.
	\end{example}
	
	Given a smooth higher Artin stack $\mc X$, one can consider its (relative) de Rham cohomology (\cite[Defintion 1.1.3]{KubrakPrikhodko_HdR}), defined as the homotopy limit
	$$
	\RG_\dR(\mc X/R)\coloneqq \lim_{(S\ra \mc X)\in(\Aff^{\sm}_{/\mc X})^\op }\RG_\dR(S/R).
	$$
	This functor satisfies \'etale descent (which follows from the analogous statement for Hodge cohomology). Since smooth maps are \'etale surjections, for a smooth qcqs Artin stack $\mc X$ one gets a more economical formula in terms of a hypercover $|X_\bullet|\to \mc X$ as in \Cref{con: Pridham's hypercovers}:
	$$
	\RG_\dR(\mc X/R) \ism \Tot \RG_\dR(X_\bullet/R) \in \DMod{R}
	$$
	the totalization\footnote{Or, in other words, $\lim_{[\bullet]\in \Delta}$.} of the cosimplicial complex $\RG_\dR(X_\bullet/R)$. Here, $\RG_\dR(X_n/R)\in \DMod{R}$ is given by the usual de Rham complex $$
	\Omega^0_{X_n/R}\to \Omega^1_{X_n/R}\to\Omega^2_{X_n/R}\to\ldots, $$ and the totalization above can be computed by the means of the corresponding double-complex:
	$$
	\xymatrix{\vdots & \vdots& \vdots& \\
		\Omega^2_{X_0/R}\ar[u]\ar[r]& \Omega^2_{X_1/R}\ar[u]\ar[r]& \Omega^2_{X_2/R} \ar[u]\ar[r]&\ldots \\
		\Omega^1_{X_0/R}\ar[u]\ar[r]& \Omega^1_{X_1/R}\ar[u]\ar[r]& \Omega^1_{X_2/R} \ar[u]\ar[r]&\ldots \\
		\Omega^0_{X_0/R}\ar[u]\ar[r]& \Omega^0_{X_1/R}\ar[u]\ar[r]& \Omega^0_{X_2/R} \ar[u]\ar[r]&\ldots 
	}
	$$
	In particular, in the case of \Cref{ex:hypercover in X/G}, $\RG_\dR(\mc X/R)$ agrees with the definition given by Totaro in \cite{totaro2018hodge}. We also note that $\RG_\dR(\mc X/R)$ lies in\footnote{Here, $\DMod{R}^{\ge 0}\subset \DMod{R}$ is the full subcategory spanned by complexes $M$ with $H^{-i}(M)=0$ for any $i>0$.} $\DMod{R}^{\ge 0}$ for any (smooth) $\mc X$.
	
	We are now ready to prove the K\"unneth formula. We say that a ring $R$ is of finite Tor-dimension if there exists $k\ge 0$ such that for any two (classical) modules $M,N\in \Mod_R$ their derived tensor product $M\otimes^{\mbb L}_RN$ lies in cohomological degrees $\ge -k$.
	
	\begin{prop}[K\"unneth formula for de Rham cohomology]\label{prop: Kunneth in de Rham} Let $\mc X$, $\mc Y$ be smooth qcqs higher Artin stacks over a base ring $R$ that is of finite Tor-dimension. Then multiplication induces a natural equivalence
		$$
		\RG_\dR(\mc X/R)\otimes_{R}^{\mbb L} \RG_\dR(\mc Y/R) \ism \RG_\dR(\mc X\times_R \mc Y/R).
		$$
	\end{prop}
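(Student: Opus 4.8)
The plan is to reduce to the case of affine schemes by choosing smooth hypercovers, apply the classical K\"unneth isomorphism one level at a time, and then commute the derived tensor product $-\otimes^{\mbb L}_R-$ past the resulting totalizations. This last step is the only substantial point, and it is precisely where the finite Tor-dimension hypothesis is needed.

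First I would use \Cref{con: Pridham's hypercovers} to choose smooth hypercovers $X_\bullet\to\mc X$ and $Y_\bullet\to\mc Y$ with every $X_m$ and $Y_n$ a smooth affine $R$-scheme. Since geometric realizations in the \'etale $\infty$-topos commute with finite products, the diagonal simplicial object $n\mapsto X_n\times_RY_n$ is a smooth hypercover of $\mc X\times_R\mc Y$ with affine terms; hence the descent presentation of $\RG_\dR(-/R)$ recalled above gives
\[
\RG_\dR(\mc X/R)\simeq\Tot\RG_\dR(X_\bullet/R),\qquad\RG_\dR(\mc Y/R)\simeq\Tot\RG_\dR(Y_\bullet/R),
\]
\[
\RG_\dR(\mc X\times_R\mc Y/R)\simeq\Tot_{\Delta\times\Delta}\bigl((m,n)\mapsto\RG_\dR(X_m\times_RY_n/R)\bigr).
\]
For the affine input, recall that for smooth affine $R$-schemes $U,V$ one has $\Omega^\bullet_{(U\times_RV)/R}\cong\Omega^\bullet_{U/R}\otimes_R\Omega^\bullet_{V/R}$ as complexes of $R$-modules; moreover smoothness forces $\mc O(U)$, hence each $\Omega^i_{U/R}$, to be flat over $R$, so the underived tensor product computes the derived one and $\RG_\dR(U\times_RV/R)\simeq\RG_\dR(U/R)\otimes^{\mbb L}_R\RG_\dR(V/R)$ (see \cite[Tag 0FMA]{stacks-project}). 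Taking $U=X_m$, $V=Y_n$ identifies the bicosimplicial object above with $(m,n)\mapsto\RG_\dR(X_m/R)\otimes^{\mbb L}_R\RG_\dR(Y_n/R)$.

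It then remains to prove the following ``linearity of totalization'' statement: for a cosimplicial object $A^\bullet$ of $\DMod{R}^{\ge 0}$ and any $B\in\DMod{R}^{\ge 0}$, the natural map $\Tot(A^\bullet)\otimes^{\mbb L}_RB\to\Tot(A^\bullet\otimes^{\mbb L}_RB)$ is an equivalence. Granting this, one applies it twice --- first, for each fixed $m$, with $A^\bullet=\RG_\dR(Y_\bullet/R)$ and $B=\RG_\dR(X_m/R)$, obtaining $\Tot_\Delta\bigl(\RG_\dR(X_m/R)\otimes^{\mbb L}_R\RG_\dR(Y_\bullet/R)\bigr)\simeq\RG_\dR(X_m/R)\otimes^{\mbb L}_R\RG_\dR(\mc Y/R)$; then with $A^\bullet=\RG_\dR(X_\bullet/R)$ and $B=\RG_\dR(\mc Y/R)$ --- and combines this with the displays above (using Fubini for $\Tot_{\Delta\times\Delta}$ and the fact that $\RG_\dR(-/R)$ lands in $\DMod{R}^{\ge 0}$) to conclude $\RG_\dR(\mc X\times_R\mc Y/R)\simeq\RG_\dR(\mc X/R)\otimes^{\mbb L}_R\RG_\dR(\mc Y/R)$.

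The heart of the matter, and the main obstacle, is the linearity statement, because $-\otimes^{\mbb L}_RB$ does not commute with infinite limits in general. To prove it I would write $\Tot(A^\bullet)=\lim_N\Tot_N(A^\bullet)$ as the limit of its partial totalizations. Each $\Tot_N$ is a finite limit, hence is preserved by the exact functor $-\otimes^{\mbb L}_RB$, so $\Tot_N(A^\bullet)\otimes^{\mbb L}_RB\simeq\Tot_N(A^\bullet\otimes^{\mbb L}_RB)$ and one is reduced to commuting $-\otimes^{\mbb L}_RB$ with the limit of the tower $\{\Tot_N(A^\bullet)\}_N$. Connectivity of $A^\bullet$ makes the totalization spectral sequence $E_1^{s,t}=H^t(A^s)\Rightarrow H^{s+t}(\Tot A)$ a first-quadrant spectral sequence, which forces $\on{fib}\bigl(\Tot(A^\bullet)\to\Tot_N(A^\bullet)\bigr)\in\DMod{R}^{\ge N+1}$ for every $N$, and likewise for the connective cosimplicial object $A^\bullet\otimes^{\mbb L}_RB$. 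Finally, if $R$ has Tor-dimension $\le d$, then tensoring an object of $\DMod{R}^{\ge c}$ with the connective complex $B$ lands in $\DMod{R}^{\ge c-d}$; feeding the two fiber estimates through, the towers $\{\Tot_N(A^\bullet)\otimes^{\mbb L}_RB\}_N$ and $\{\Tot_N(A^\bullet\otimes^{\mbb L}_RB)\}_N$ have transition fibers that become arbitrarily coconnected, and the comparison map $\Tot(A^\bullet)\otimes^{\mbb L}_RB\to\Tot(A^\bullet\otimes^{\mbb L}_RB)$ sits compatibly over each $\Tot_N(A^\bullet\otimes^{\mbb L}_RB)$ with both of its fibers in $\DMod{R}^{\ge N-d}$; letting $N\to\infty$ forces it to be an equivalence. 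This is exactly where finite Tor-dimension is indispensable: without it, tensoring with $B$ could spread cohomology out unboundedly and wreck the convergence of the totalization tower, so the reduction to the affine case would not suffice.
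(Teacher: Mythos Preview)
Your proposal is correct and follows essentially the same route as the paper: reduce to the affine case via smooth hypercovers, invoke the affine K\"unneth isomorphism (\cite[Tag~0FMB]{stacks-project}), and commute $-\otimes^{\mbb L}_R B$ past totalization using that finite Tor-dimension makes this functor left $t$-exact up to a finite shift. The only cosmetic difference is that the paper outsources your ``linearity of totalization'' step to \cite[Corollary~3.1.13]{kubrak2021p-adic}, whereas you spell it out directly via partial totalizations and the coconnectivity of the fibers $\on{fib}(\Tot\to\Tot_N)$; one small imprecision is that $A^\bullet\otimes^{\mbb L}_R B$ need not be connective but only in $\DMod{R}^{\ge -d}$, which is still enough for your convergence argument.
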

	\begin{proof}
		The proof is analogous to \cite[Proposition 2.2.15]{kubrak2021p-adic}, the main idea being to reduce to the case of affine schemes. Let $X_\bullet \to \mc X$ and $Y_\bullet \to \mc Y$ be hypercovers as in \Cref{con: Pridham's hypercovers}; this provides a hypercover $X_\bullet\times_R Y_\bullet \to X\times_R Y$ as well.  We have 
		$$
		\RG_\dR(\mc X/R)\otimes_{R}^{\mbb L} \RG_\dR(\mc Y/R)\ism \Tot\left(\RG_\dR(X_\bullet/R) \right)\otimes_{R}^{\mbb L} \RG_\dR(\mc Y/R).
		$$
		Under the Tor-finiteness assumption on $R$, the derived tensor product $-\otimes_{R}^{\mbb L} \RG_\dR(\mc Y/R)$ is left $t$-exact up to a shift, and thus by \cite[Corollary 3.1.13]{kubrak2021p-adic} we can move $-\otimes_{R}^{\mbb L} \RG_\dR(\mc Y/R)$ inside the totalization:
		$$
		\Tot\left(\RG_\dR(X_\bullet/R) \right)\otimes_{R}^{\mbb L} \RG_\dR(\mc Y/R) \ism \Tot\left(\RG_\dR(X_\bullet/R) \otimes_{R}^{\mbb L} \RG_\dR(\mc Y/R)\right).
		$$
		Now, using the hypercover $Y_\bullet \to \mc Y$ in the same way, we get
		$$
		\Tot\left(\RG_\dR(X_\bullet/R) \otimes_{R}^{\mbb L} \RG_\dR(\mc Y/R)\right)\ism \lim_{[\bullet_1,\bullet_2]\in \Delta\times \Delta} \RG_\dR(X_{\bullet_1}/R) \otimes_{R}^{\mbb L} \RG_\dR(Y_{\bullet_2}/R).
		$$
		Since $\Delta$ is sifted, the limit over $\Delta\times \Delta$ can be computed after restriction to the diagonal $\Delta\xrightarrow{\mr{diag}} \Delta\times \Delta$, and we get an equivalence $$\lim_{[\bullet_1,\bullet_2]\in \Delta\times \Delta} \RG_\dR(X_{\bullet_1}/R) \otimes_{R}^{\mbb L} \RG_\dR(Y_{\bullet_2}/R) \ism \lim_{[\bullet]\in \Delta} \RG_\dR(X_{\bullet}/R)\otimes_{R}^{\mbb L}\RG_\dR(Y_{\bullet}/R).$$
		
		We then have a commutative diagram 
		$$
		\xymatrix{\RG_\dR(\mc X/R)\otimes_{R}^{\mbb L}\RG_\dR(\mc Y/R) \ar[r]\ar[d]^\sim& \RG_\dR(\mc X\times_R \mc Y/R)\ar[d]^\sim\\
			\lim_{[\bullet]\in \Delta} \RG_\dR(X_{\bullet}/R)\otimes_{R}^{\mbb L}\RG_\dR(Y_{\bullet}/R) \ar[r]& \lim_{[\bullet]\in \Delta} \RG_\dR(X_{\bullet}\times_R Y_{\bullet}/R)
		}
		$$
		where the horizontal arrows are induced by multiplication, while the vertical ones are induced by pull-backs. The left vertical map is an equivalence by the above discussion, while the right one is an equivalence by descent.
		By \cite[Tag 0FMB]{stacks-project} the maps $$\RG_\dR(X_{\bullet}/R) \otimes_{R}^{\mbb L} \RG_\dR(Y_{\bullet}/R)\tto \RG_\dR(X_{\bullet}\times_R Y_{\bullet}/R)$$ are equivalences, thus so is the map between the limits. From the commutative diagram we then deduce the same for the upper horizontal map.
	\end{proof}
	
	\begin{cor}\label{cor: Kunneth in de Rham over a field}
		Let $R=k$ be a field and let $\mc X$, $\mc Y$ be smooth qcqs higher Artin stacks over $k$. Then multiplication induces a natural isomorphism of graded $k$-algebras
		$$
		H^*_\dR(\mc X/k)\otimes_k H^*_\dR(\mc Y/k) \ism H^*_\dR(\mc X\times_k \mc Y/k).
		$$
	\end{cor}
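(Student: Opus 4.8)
The plan is to deduce this directly from \Cref{prop: Kunneth in de Rham}. A field $k$ has Tor-dimension $0$, hence is of finite Tor-dimension, so the proposition applies and gives a natural equivalence $\RG_\dR(\mc X/k)\otimes_k^{\mbb L}\RG_\dR(\mc Y/k)\ism\RG_\dR(\mc X\times_k\mc Y/k)$ in $\DMod{k}$, induced by the external (wedge) product of de Rham complexes. Everything after this is formal, the point being to pass to cohomology while keeping track of the multiplicative structure.

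First I would use that over a field the derived tensor product already computes the tensor product on cohomology: for $C,D\in\DMod{k}$ there is a natural K\"unneth isomorphism $H^*(C\otimes_k^{\mbb L}D)\simeq H^*(C)\otimes_k H^*(D)$ of graded $k$-vector spaces (all terms are flat, so the K\"unneth spectral sequence degenerates). Applying this with $C=\RG_\dR(\mc X/k)$ and $D=\RG_\dR(\mc Y/k)$, and composing with $H^*$ of the equivalence above, yields the desired isomorphism of graded $k$-vector spaces $H^*_\dR(\mc X/k)\otimes_k H^*_\dR(\mc Y/k)\ism H^*_\dR(\mc X\times_k\mc Y/k)$. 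Naturality is inherited from the naturality of both the equivalence of \Cref{prop: Kunneth in de Rham} and the K\"unneth formula over a field.

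Next I would upgrade this to an isomorphism of graded $k$-algebras. The ring structure on $H^*_\dR(-/k)$ comes from the cup product, which is computed, via a smooth hypercover by affines as in \Cref{con: Pridham's hypercovers}, by the wedge product on the associated double complex of differential forms; the external product appearing in \Cref{prop: Kunneth in de Rham} is by construction $p_{\mc X}^*(\alpha)\wedge p_{\mc Y}^*(\beta)$, where $p_{\mc X}$ and $p_{\mc Y}$ are the two projections from $\mc X\times_k\mc Y$. A short diagram chase, using functoriality of the cup product under $p_{\mc X}^*$ and $p_{\mc Y}^*$ and graded-commutativity of the de Rham complex (so that forms pulled back from the two factors wedge-commute), shows that under the K\"unneth isomorphism the cup product on $H^*_\dR(\mc X\times_k\mc Y/k)$ corresponds to the graded tensor product multiplication on $H^*_\dR(\mc X/k)\otimes_k H^*_\dR(\mc Y/k)$, which gives the statement.

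The only step with any real content is the last one, namely the compatibility of the external product of \Cref{prop: Kunneth in de Rham} with the two algebra structures; the rest is an immediate consequence of the proposition together with the elementary K\"unneth formula over a field. I expect this bookkeeping to be the main, though still routine, obstacle.
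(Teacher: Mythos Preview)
Your proposal is correct and follows the same approach as the paper, which simply says ``This follows from \Cref{prop: Kunneth in de Rham} by passing to cohomology.'' You have supplied more detail than the paper does, in particular the verification that the isomorphism respects the algebra structures, which the paper leaves implicit.
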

	\begin{proof}
		This follows from \Cref{prop: Kunneth in de Rham} by passing to cohomology.
	\end{proof}

	%\bibliography{bibliografia}
	%\bibliographystyle{alpha}

\end{document}